\documentclass[12pt]{amsart}

\usepackage[all,color]{xy}
\usepackage{pb-diagram}
\usepackage[mathscr]{eucal}
\usepackage{hyperref}
\hypersetup{
    colorlinks=true, %set true if you want colored links
    linktoc=all,     %set to all if you want both sections and subsections linked
    linkcolor=blue,  %choose some color if you want links to stand out
}

\usepackage{xcolor}
\usepackage[normalem]{ulem}

\usepackage[toc,page]{appendix}
\usepackage{pifont}
\usepackage{combelow} % typeset comma-below letters as in the Romanian language

\RequirePackage{ifthen,setspace,enumitem,tikz}
\usetikzlibrary{arrows}

\DeclareMathAlphabet{\mathpzc}{OT1}{pzc}{m}{it}

\usepackage{amsfonts}
\usepackage{amsmath}
\usepackage{amssymb}

\newcommand{\ric}{\textnormal{Ric}}

\newcommand{\dbar}{\overline{\partial}}

\newcommand{\R}[4]{R_{#1 \overline{#2} #3 \overline{#4}}}

\newcommand{\diam}{\textnormal{Diam}}
\newcommand{\RCD}{\textnormal{RCD}}

\newcommand{\ddbar}{\sqrt{-1}\partial\dbar}

\DeclareMathOperator{\reg}{reg}
\DeclareMathOperator{\red}{red}
\DeclareMathOperator{\Exc}{Exc}
\DeclareMathOperator{\NNef}{NNef}
\DeclareMathOperator{\Ram}{Ram}

\def\cK{{\mathcal K}}
\def\cL{{\mathcal L}}

\def\cS{{\mathcal S}}

\def\Z{{\mathbb Z}}
\def\Q{{\mathbb Q}}
\def\R{{\mathbb R}}
\def\C{{\mathbb C}}

\def\B{{\mathbb B}} % stable base loci
\def\PSH{\textnormal{PSH}}
\def\Vol{\textnormal{Vol}}
\def\nilp{\textnormal{nilp}}
\def\Supp{\textnormal{Supp}}
\def\BottChern{\textnormal{BC}}
\def\Sym{\textnormal{Sym}}

\newcommand{\Alb}[0]{\operatorname{Alb}}
\newcommand{\alb}[0]{\operatorname{alb}}

\newtheorem{theorem}{Theorem}[section]
\newtheorem{proposition}{Proposition}[section]
\newtheorem{lemma}{Lemma}[section]
\newtheorem{definition}{Definition}[section]
\newtheorem{corollary}{Corollary}[section]
\newtheorem{remark}{Remark}[section]
\newtheorem{conjecture}{Conjecture}[section]

\numberwithin{equation}{section}

\begin{document}

\title[Fundamental groups of compact K{\"a}hler varieties]{Fundamental groups of compact K{\"a}hler varieties with nef anticanonical bundle}

\author[{Xin Fu, Bin Guo, Jian Song and Juanyong Wang}]{Xin Fu $^*$, Bin Guo$^\dagger$ , Jian Song$^{\dagger \dagger}$, Juanyong Wang $^{**}$ }

\thanks{Xin Fu is  supported by National Key R\&D Program of China 2024YFA1014800 and NSFC No. 12401073. Bin Guo and Jian Song are supported in part by the National Science Foundation under grants DMS-2203607 and DMS-2303508, and the collaboration grant 946730 from Simons Foundation. Juanyong Wang is supported by NSFC (Grants No.12301060 and 12288201) and National Key R\&D Program of China (Grant No. 2021YFA1003100).}

\address{$^*$ School of Science, Institute for Theoretical Sciences, Westlake University, Hangzhou 310030, China}

\email{fuxin54@westlake.edu.cn}

\address{$^\dagger$ Department of Mathematics \& Computer Science, Rutgers University, Newark, NJ 07102}

\email{bguo@rutgers.edu}

\address{$^{\dagger\dagger}$ Department of Mathematics, Rutgers University, Piscataway, NJ 08854}

\email{jiansong@math.rutgers.edu}

\address{$^{**}$ State Key Laboratory of Mathematical Sciences, Academy of Mathematics and Systems Science, Chinese Academy of Sciences, Beijing 100190, China}

\email{juanyong.wang@amss.ac.cn}

\begin{abstract}  It is proved by M. P{\u a}un (1997, 2017) that the fundamental group of a compact K{\"a}hler manifold $X$ is almost Abelian if the anti-canonical bundle $-K_X$ is nef. In this paper, we apply the recent geometric analytic theory of K{\"a}hler spaces developed by Guo-Phong-Song-Sturm to study fundamental groups of mildly singular compact K{\"a}hler varieties. We first extend P{\u a}un's result to log canonical pairs $(X,\Delta)$ with smooth $X$ and nef $-(K_X+\Delta)$ as well as to compact K{\"a}hler manifolds $X$ with pseudo-effective $-K_X$ under a suitable assumption on the singularities of $c_1(-K_X)$. We further prove that, for a $3$-dimensional  log canonical pair $(X,\Delta)$ with $X$ being klt, $\pi_1(X)$ is almost Abelian if $-(K_X+\Delta)$ is nef. Moreover, as one of the main ingredients for the proof of these results, we establish the surjectivity of the Albanese maps of compact normal complex varieties $X$ in Fujiki class $\mathscr{C}$ that admits an effective $\R$-divisor $\Delta$ such that the pair $(X,\Delta)$ is log canonical with nef anti-log canonical divisor $-(K_X+\Delta)$. This generalizes the corresponding theorems for projective varieties (Zhang, 2005), for klt pairs (Matsumura-Wang-Wu-Zhang, 2025) and for log smooth case (Fu-Han-Zou, 2025).
{\footnotesize }

\end{abstract}

\maketitle

% {\footnotesize \tableofcontents}

\section{Introduction}

Throughout this paper, we work over complex number $\C$. By abuse of notations, we adopt the additive notation for tensor products of line bundles (e.g., $L + M := L \otimes M$ for line bundles $L$ and $M$), and interchangeably use it with the addition of divisors; moreover, for a pair $(X,\Delta)$ with $X$ being a complex variety and $\Delta$ an $\R$-divisor such that $K_X+\Delta$ is an $\R$-line bundle, by abuse of notations, we call $K_X+\Delta$ the log canonical divisor of $(X,\Delta)$, although it is not necessarily an $\R$-divisor. A \textit{fibration} is a proper surjective morphism with connected fibers. 

\subsection{Motivation}
\label{ss:motivation}

Mildly singular compact K{\"a}hler varieties with nef anticanonical divisor, which appear as natural generalizations of Calabi-Yau manifolds and Fano varieties, especially in the framework of generalized pairs developed by Birkar-Zhang \cite{BZ16}, have been extensively studied over the last 30 years. Although the structure of the canonical fibrations associated with these varieties (namely, the Albanese maps and the MRC fibrations) have been elucidated by successive works \cite{DPS93,Zha96,Zha05,Pau01,Pau17,Cao19,CH19,CCM19,Wan22,MW,MWWZ25}, their fundamental groups are still far from well understood. Indeed, we have the following conjecture:
\begin{conjecture}
\label{conj:main}
Let $X$ be a compact normal K{\"a}hler variety, assume that there is an effective divisor $\Delta$ on $X$ such that $(X,\Delta)$ is a log canonical (lc) pair and the anti-log canonical divisor $-(K_X+\Delta)$ is nef. Then the fundamental group $\pi_1(X_{\reg})$ is almost Abelian.
\end{conjecture}

If $X$ is smooth and $\Delta=0$, the conjecture has been thoroughly settled, essentially due to M. P{\u a}un \cite{Pau97} (combined with the surjectivity result for the Albanese maps \cite{Pau17}). The result of P{\u a}un is further generalized to the orbifold case by \cite{Liu23}. When $X$ is singular or $\Delta\neq0$, the conjecture is still widely open except in the following two `extreme' cases:
\begin{itemize}
\item If $(X,\Delta)$ is weakly Fano (i.e. $-(K_X+\Delta)$ is big and nef), the conjecture can be reduced to the klt case and was known as the Conjecture of Gurjar-Zhang (see \cite{GZ94,GZ95}). In this case, a weak form of the conjecture (stated for the \'etale fundamental groups) was shown by \cite{Xu14,GKP16b}, then the conjecture was completely solved by L. Braun \cite{Bra21} in this case, based on the idea of Tian-Xu \cite{TX17}.
\item If $\Delta=0$, $X$ is Calabi-Yau (i.e. $K_X\equiv 0$), then it is shown by Greb-Guenancia-Kebekus \cite[Corollary 13.10]{GGK19} that any finite-dimensional representation of $\pi_1(X_{\reg})$ has almost Abelian image. See \cite[Theorem 13.1]{GGK19} and \cite[Proposition 8.23]{GKP16a} for relevant results. 
\end{itemize}

Instead of considering the fundamental groups of the regular loci $X_{\reg}$\,, which is quite difficult to study due to the lack of tools from both algebraic geometry and complex differential geometry, we consider in this paper the fundamental group $\pi_1(X)$ of the variety $X$ itself. Similar to Conjecture \ref{conj:main}, we also have the almost abelianity conjecture for $\pi_1(X)$. 

\begin{conjecture}
\label{conj:main1}
Let $X$ be a compact normal K{\"a}hler variety, assume that there is an effective divisor $\Delta$ on $X$ such that $(X,\Delta)$ is a lc pair and the anti-log canonical divisor $-(K_X+\Delta)$ is nef. Then the fundamental group $\pi_1(X)$ is almost Abelian.
\end{conjecture}

Note that by \cite[\S 0.7(B), p.33]{FL81} we have a surjection $\pi_1(X_{\reg})\twoheadrightarrow\pi_1(X)$, which means that $\pi_1(X_{\reg})$ contains more information than $\pi_1(X)$ although the former is usually much easier to tackle than the latter; indeed, for a singular variety $X$, in order to understand its topology, we need not only know the information of its \'etale covers (encoded by $\pi_1(X)$), but also that of its quasi-\'etale covers (encoded by $\pi_1(X_{\reg})$). In particular, we see that Conjecture \ref{conj:main1} is indeed a corollary of Conjecture \ref{conj:main}.
%it has the following two disadvantages: 
%\begin{itemize}
%\item First, for a singular variety $X$, we need not only know the information of its \'etale covers, which is encoded by $\pi_1(X)$, but also that of its quasi-\'etale covers, which is encoded by $\pi_1(X_{\reg})$;  
%\item Second, by \cite[Theorem 1.5]{GKP16b} it suffices to prove Conjecture \ref{conj:main} for maximally quasi-\'etale varieties (see \cite[Definition 2.11]{MW}), and in view of the structure theorem established by \cite{MW}, we can further reduce the conjecture to the case of rationally connected varieties and the case of Calabi-Yau varieties; however, the fundamental group $\pi_1(X)$ does not behave well under the maximally quasi-\'etale cover (see \cite[\S 1.15]{GZ94}), thus we cannot reduce Conjecture \ref{conj:main1} to the Calabi-Yau case.   
%\end{itemize}

Moreover, in the klt case, \cite[Theorem 1.5]{GKP16b} and \cite[Proposition 1.3]{Cam91} enable us to reduce Conjectures \ref{conj:main} and \ref{conj:main1} to the case of maximally quasi-\'etale varieties (see \cite[Definition 2.11]{MW}); and in view of the structure theorem established by \cite{MW}, if $X$ is projective we can further reduce Conjecture \ref{conj:main} (resp. Conjecture \ref{conj:main1}) to the case of rationally connected varieties and the case of Calabi-Yau varieties (resp. to the case of Calabi-Yau varieties, since rationally connected varieties are simply connected by \cite[Theorem 3.5]{Cam91}). The lc case remains much difficult for both conjectures.

Finally let us point out the relation of Conjectures \ref{conj:main} and \ref{conj:main1} to the famous conjectures of Koll{\'a}r (\cite{Kol95}) and Campana (\cite{Cam04}). 

First, it is clear that under the assumption of the existence of good minimal models, Conjecture \ref{conj:main1} implies the following conjecture of Koll{\'a}r:
\begin{conjecture}[{\cite[4.16, p.54]{Kol95}}] 
\label{Kollar} 
Let $X$ be an $n$-dimensional projective manifold with $\kappa(X)=0$, then $\pi_1(X)$ is almost abelian.
\end{conjecture}

Koll{\'a}r's conjecture is shown in complex dimension 3 \cite{Kol95} and 4 \cite{GKP16a} and is largely open in higher dimensions. 

Furthermore, both Conjecture \ref{conj:main1} and Conjecture \ref{Kollar} are special cases of Campana's Abelianity conjecture for special varieties \cite[Conjecture 7.1]{Cam04}. One should notice that the orbifold version of the Abelianity conjecture \cite[Conjecture 7.3]{Cam04} is not true for lc pais, as illustrated by \cite[Example 10.2]{CLM23}.
%while Conjecture \ref{conj:main} can be regarded a special case of the orbifold version of the Abelianity conjecture \cite[Conjecture 7.3]{Cam04} if $\Delta$ is a $\Q$-divisor.
Indeed, we will show in \S \ref{ss:special} that the variety $X$ (resp. the pair $(X,\Delta)$) in the statement of Conjecture \ref{conj:main} is special in the sense of \cite[Definition 2.1(2)]{Cam04} (resp. a special orbifold in the sense of \cite[Definition 2.41]{Cam04} if $\Delta$ is a $\Q$-divisor). If $X$ is projective, these specialness results have already been established by F. Campana in \cite[Theorem 10.3]{Cam16}. Moreover, the linear version of the Abelianity conjecture \cite[Conjecture 7.1]{Cam04} is known by \cite[Theorem 7.8]{Cam04} (see also \cite[Proposition 3.9]{LOWYZ24}).

\subsection{Main results} 
The recent development in K\"ahler metric measure spaces \cite{CCT02,Song14I,Song14II,SSW20,Sze25, Sze25II,GPSS1,GPSS2,GPSS3,GS,FGS25,CCHSTT25} provides a crucial link between complex geometry and differential geometry via nonlinear PDEs. In particular, in this paper, the RCD structure on such K\"ahler spaces  \cite{Sze25,FGS25} enables applications of Riemannian geometry to the geometric and topological structures of local and global K\"ahler spaces with singularities. A main question in this exciting new research field is the following conjecture. 
\begin{conjecture}\label{conrcd}
    Let $X$ be a normal K\"ahler variety with klt singularities. Suppose \begin{enumerate}
        \item $\omega$ is a Kahler current with $L^p$ volume measure for some $p>1$,
        \item the Ricci current of $\omega$ is bounded below by $-\omega$. 
        \item $\omega$ is smooth on a Zariski open subset $X^\circ$ of $X$.
    \end{enumerate} 
    Then the metric completion of the metric measure space $(X^\circ, \omega, \omega^n)$ is a non-collapsed RCD space homeomorphic to $X$.
    
    \end{conjecture}
Conjecture \ref{conrcd} is confirmed in the case of $3$-dimensional projective varieties \cite{FGS25} and in the case of K\"ahler spaces whose singularities can be resolved with relative nef anti-canonical bundles \cite{Sze25} or relative effective anti-canonical bundles \cite{FGS25}.

Our first main result generalizes P{\u a}un's theorem to lc pairs $(X,\Delta)$, where $\Delta$ always denotes an effective $\mathbb R$-divisor (not necessarily with simple normal crossing support) unless otherwise mentioned. More precisely, we prove the following theorem:

%\textcolor{red}{with $X$ being smooth and $\Delta$ not necessarily SNC.Let us remark that when $X$ is smooth, we expect Conjectures \ref{conj:main} and \ref{conj:main1} to hold for log canonical pairs (in the singular case we feel that this expectation might be a bit too optimistic). }

\begin{theorem} \label{thm:main0} 
Let $X$ be a compact klt K\"ahler variety paired with an effective $\R$-divisor $\Delta$ such that $(X,\Delta)$ is log canonical and $-(K_X+\Delta)$ is nef. Then the fundamental group $\pi_1(X)$ is almost Abelian if Conjecture \ref{conrcd} holds. 

\end{theorem}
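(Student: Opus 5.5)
The strategy follows P{\u a}un's argument. We build, for every $\epsilon>0$, a K\"ahler metric with Ricci curvature bounded below by $-\epsilon$ and controlled geometry. Conjecture \ref{conrcd} then turns this into an RCD space, on which Riemannian tools (Bishop--Gromov volume comparison and Milnor--Gromov type growth estimates) apply. Combined with the surjectivity of the Albanese map, this will force $\pi_1(X)$ to be almost Abelian.

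\textbf{Step 1: approximate metrics.} Fix a K\"ahler form $\omega_X$ on $X$. Since $-(K_X+\Delta)$ is nef, for each $\epsilon>0$ the class $-c_1(K_X+\Delta)+\epsilon[\omega_X]$ is K\"ahler. Solving a singular complex Monge--Amp\`ere equation in the class $[\omega_X]$ gives a K\"ahler current $\omega_\epsilon$ with
$$\ric(\omega_\epsilon)\ge -\epsilon\,\omega_\epsilon+[\Delta]\ge-\epsilon\,\omega_\epsilon,$$
smooth on the Zariski open set $X^\circ=X_{\reg}\setminus\Supp\Delta$.

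One must check that the volume measure $\omega_\epsilon^n$ lies in $L^p$ for some $p>1$. This holds if we replace $\Delta$ by $(1-\delta)\Delta$ and put the defect $\delta\Delta$ into the right-hand side via a fixed smooth representative. Then $(X,(1-\delta)\Delta)$ is klt, because $X$ is klt and $(X,\Delta)$ is lc. The cost is that the lower bound becomes $\ric\ge -(\epsilon+C\delta)\omega_\epsilon$. By nefness we may choose $\delta=\delta(\epsilon)\to0$. The volume $\int\omega_\epsilon^n=\int\omega_X^n$ stays fixed, and the diameter is uniformly bounded, by the diameter estimates of Guo--Phong--Song--Sturm. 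After rescaling, Conjecture \ref{conrcd} shows that the metric completion $(Y_\epsilon,d_\epsilon)$ is a noncollapsed $\RCD(-\epsilon',2n)$ space homeomorphic to $X$, with $\epsilon'\to0$, $\diam\le D$ and volume $V$ fixed.

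\textbf{Step 2: growth estimate.} Since $Y_\epsilon$ is homeomorphic to $X$, we have $\pi_1(Y_\epsilon)=\pi_1(X)=:\Gamma$. Lift $d_\epsilon$ to the universal cover; RCD spaces are semi-locally simply connected (Wang), so the universal cover exists and is again $\RCD(-\epsilon',2n)$. Bishop--Gromov comparison on the cover, applied as in Milnor's argument, gives
$$\#\{\gamma\in\Gamma:\ |\gamma|_{\epsilon}\le R\}\le C(n)\,(V/v_\epsilon)\,\tfrac{\mathrm{Vol}_{-\epsilon'}(R+D)}{V},$$
where $v_\epsilon$ is the volume of a fundamental domain.

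Following P{\u a}un, we compare word length against a fixed generating set with the length $|\gamma|_\epsilon$; these are comparable with constants controlled by $D$. For fixed $R$, letting $\epsilon\to0$ yields polynomial growth of degree at most $2n$ on balls of size $R$. A uniform-in-$R$ version needs care: following P{\u a}un, one uses Gromov's theorem in the form of ``almost nilpotent'' (Kapovitch--Wilking, Breuillard--Green--Tao, valid for RCD spaces by Deng--Santos--Zamora--Zhao). This gives that $\Gamma$ is almost nilpotent.

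\textbf{Step 3: upgrade to almost Abelian.} Pass to a finite \'etale cover $X'$ such that $\pi_1(X')$ is nilpotent and torsion-free modulo a finite subgroup. The pair $(X',\Delta')$ obtained by pullback still satisfies the hypotheses. Then use the surjectivity of the Albanese map for lc pairs with nef $-(K_X+\Delta)$, one of the paper's main ingredients. This gives $b_1(X')=2\dim\Alb(X')$, with $\alb$ surjective having connected fibers and $\pi_1$ of the fibers mapping to a finite group. Apply this to the cover of $X'$ corresponding to each finite-index subgroup, which again satisfies the hypotheses; the result is that the rank of the abelianization is stable under passage to finite index subgroups. A nilpotent group with this property has a finite-index abelian subgroup (P{\u a}un's group-theoretic lemma, via Malcev completion and the Heisenberg obstruction in the Hodge-theoretic form of $H^1$ and cup products, valid on compact K\"ahler klt spaces).

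\textbf{Main obstacle.} The main obstacle is Step 2. We need the growth/almost nilpotency estimates to hold uniformly as $\epsilon\to0$, which requires uniform diameter bounds and noncollapsing on the singular approximate metrics including the boundary $\Delta$. We also need to identify $\pi_1$ of the RCD space with $\pi_1(X)$, which is exactly where Conjecture \ref{conrcd} is used.
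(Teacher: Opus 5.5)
\textbf{Gap 1: the uniform diameter bound.} The real problem is the uniform diameter bound you assert in Step 1, and all of Step 2 depends on it. The Milnor count needs translates of a fundamental domain of diameter $\le D$. The comparison of word length with $|\gamma|_\epsilon$ needs a \emph{fixed} generating set of uniformly bounded displacement, and that does not follow from a diameter bound alone. The Guo--Phong--Song--Sturm estimates require a uniform entropy-type bound on the density $\omega_\epsilon^n/\omega_X^n$, which your construction does not supply.

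In the genuinely lc case no such bound can hold. Suppose a component of $\Delta$ has coefficient $1$. Replacing it by $1-\delta$ with $\delta\to 0$ creates cone angle $2\pi\delta$ along it. At such a point $p$, Bishop--Gromov monotonicity gives $V=\mu_\epsilon(Y_\epsilon)\le \delta\, v_{-\epsilon'}(\diam Y_\epsilon)$, where $v_{-\epsilon'}(r)$ is the volume of an $r$-ball in the $2n$-dimensional model space. Hence $\diam Y_\epsilon\to\infty$. Already for $(\mathbb{P}^1,[0]+[\infty])$ you get ever thinner, longer footballs of fixed area.

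\textbf{The missing idea.} The paper uses the segment lemma of Demailly--Peternell--Schneider (Proposition \ref{DPS}, Lemma \ref{dps}). Every $\omega_\epsilon$ lies in the fixed class $[\theta]$, so $\int_X\omega_\epsilon\wedge\theta^{n-1}$ is independent of $\epsilon$. Averaging $\omega_\epsilon$-lengths of segments over $K\times K$ in charts of the regular locus then shows the following: for a suitable base point $p_\epsilon$, each element of a fixed generating set of $\pi_1(X)$ is represented by a loop of $\omega_\epsilon$-length $\le D$, with $D$ independent of $\epsilon$. No diameter bound is involved. Rescale to $\epsilon\omega_\epsilon$, which is $\RCD(-1,2n)$ by Conjecture \ref{conrcd}. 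These loops then lie in $B(p_\epsilon,\sqrt{\epsilon}D)\subset B(p_\epsilon,\varepsilon(n))$, so $\pi_1(B(p_\epsilon,\varepsilon(n)))\to\pi_1(X)$ is onto. The RCD Margulis lemma (Theorem \ref{DSZZ}) then gives virtual nilpotency directly. Your counting argument could be rescued with the same lemma, by using the large-volume sets $U_{\delta,\epsilon}$ instead of a bounded fundamental domain and then invoking Gromov's polynomial growth theorem. As written, it does not go through.

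\textbf{Gap 2: the group-theoretic lemma in Step 3 is false.} In every finitely generated nilpotent group, all finite-index subgroups have abelianization of the same rank, namely $\dim\mathfrak n/[\mathfrak n,\mathfrak n]$ for the common Malcev Lie algebra. For example, every finite-index subgroup of the Heisenberg group has rank $2$. So this stability cannot force virtual abelianity. Your claim that $\pi_1$ of the Albanese fibres maps to a finite group is also unjustified.

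What surjectivity of the Albanese map actually gives is this. Apply Theorem \ref{thm:Alb} on a resolution $X'$, where $\pi_1(X')\cong\pi_1(X)$ by Takayama. Fibre integration against a K\"ahler form (Lemma \ref{lemma:Kahler-cohomology}) gives injectivity of $H^2(\Alb_{X'},\Q)\to H^2(X',\Q)$. Together with the isomorphism on $H_1$, Stallings' theorem shows that the torsion-free nilpotent completion of $\pi_1(X')$ is $\pi_1(\Alb_{X'})$, hence abelian (Campana). Applying this to a finite \'etale cover with nilpotent fundamental group, which still satisfies the hypotheses, yields almost abelianity. Your remark about cup products points in this direction, but the $H^2$ input has to be stated and used.
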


Theorem \ref{thm:main0} is a showcase that displayes the  connections among complex PDEs, Riemannian geometry, algebraic geometry and topology . We will prove that Conjecture \ref{conrcd} always holds if $X$ is smooth or $\dim_\C X =3$, which immediately leads to the following corollary of 
Corollary \ref{cor:main1}.

\begin{corollary}\label{cor:main1} Let $X$ be a compact klt K\"ahler variety paired with an effective $\R$-divisor $\Delta$ such that $(X,\Delta)$ is log canonical and $-(K_X+\Delta)$ is nef. Then the fundamental group $\pi_1(X)$ is almost Abelian
in  either of the following two cases:
\begin{enumerate}

\item $X$ is smooth, 

\item $\dim_{\C} X=3$. 

\end{enumerate}

\end{corollary}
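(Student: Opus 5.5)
Corollary \ref{cor:main1} follows from Theorem \ref{thm:main0} once Conjecture \ref{conrcd} is verified in the two cases. Theorem \ref{thm:main0} will be used as a black box: almost abelianity of $\pi_1(X)$ holds whenever Conjecture \ref{conrcd} is true for the relevant $X$. Its proof presumably goes through Pa\u{u}n-type arguments on a family of approximating metrics with almost nonnegative Ricci, together with the surjectivity of the Albanese map. The plan is therefore to check that, for the metrics $\omega$ arising in that proof, the metric completion of $(X^\circ,\omega,\omega^n)$ is a non-collapsed RCD space homeomorphic to $X$.

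\textbf{Case (1), $X$ smooth.} Here the singular behaviour of $\omega$ comes only from $\Delta$. The $L^p$ density condition reflects the klt part of $\Delta$, and lc components are handled by perturbing the coefficients slightly below $1$. I would take a log resolution $\mu:\tilde X\to X$ of $(X,\operatorname{Supp}\Delta)$, or simply use $\mathrm{id}_X$, since the conjecture concerns singularities of $X$ and $X$ is smooth.

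The relevant result is the cited theorem of Sz\'ekelyhidi \cite{Sze25}, refined in \cite{FGS25}. It states that if $X$ admits a resolution with relatively nef (resp. relatively effective) anticanonical bundle, then the conclusion of Conjecture \ref{conrcd} holds. For smooth $X$ the identity is such a resolution, with trivial relative anticanonical bundle, so the hypothesis is satisfied trivially. What must be checked is that their framework allows Ricci lower bounds in the current sense with $L^p$ densities, smooth away from a divisor. I expect this is exactly the setting of \cite{Sze25,FGS25}.

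\textbf{Case (2), $\dim_\C X=3$.} Conjecture \ref{conrcd} is stated to be confirmed for $3$-dimensional projective varieties in \cite{FGS25}, so the issue is the K\"ahler, non-projective case. I would argue along the following lines.

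\begin{enumerate}
\item Take a terminalization or a suitable crepant partial resolution $\mu:Y\to X$ using the MMP for K\"ahler threefolds. For klt threefold singularities one can arrange that $-K_{Y/X}$ is relatively effective: write $K_Y=\mu^*K_X+\sum a_iE_i$ with $a_i\in(-1,0]$ after extracting only divisors of nonpositive discrepancy.
\item Apply the local criterion of \cite{FGS25} (relatively effective anticanonical bundle).
\item Observe that the RCD property and the homeomorphism are local near singular points. Since klt threefold singularities are isolated up to codimension-two quotient-like loci and are analytically local, the projectivity used in \cite{FGS25} should enter only through local resolutions. This yields the conjecture for compact K\"ahler klt threefolds.
\end{enumerate}

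\textbf{Main obstacle.} The hardest step is the reduction from K\"ahler to the local or projective statements in dimension $3$, that is, ensuring the existence of a resolution with relatively effective anticanonical bundle in the analytic category. I would first try to localize the argument of \cite{FGS25}, using relative MMP over a small Stein neighbourhood of each singular point, which is available for threefolds. Once both cases of the conjecture are established, Theorem \ref{thm:main0} gives almost abelianity of $\pi_1(X)$.
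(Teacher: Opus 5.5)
Your case (1) is fine in outline and matches the paper's route. By Lemma \ref{1-2}, $-K_X=-(K_X+\Delta)+\Delta$ is pseudoeffective and asymptotically klt, and Theorem \ref{zz} and Proposition \ref{B} then build the $\RCD(-\epsilon,2n)$ spaces directly. Two points your appeal to \cite{Sze25,FGS25} leaves implicit. First, the perturbation $\Delta\mapsto(1-\delta)\Delta$ only makes $-(K_X+(1-\delta)\Delta)+\epsilon\theta$ K\"ahler when $\delta\ll\epsilon$. Second, for non-projective $X$ the homeomorphism needs the local partial $C^0$ estimate together with the Schwarz lemma. The genuine gap is in case (2).

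Your reduction there needs a \emph{desingularization} $\mu:Y\to X$ with $-K_{Y/X}$ relatively effective, i.e.\ all discrepancies $\le 0$. Such a resolution does not exist for a general klt threefold. Extracting only divisors of nonpositive discrepancy gives a terminalization, which is terminal but not smooth. At a singular $\mathbb{Q}$-factorial terminal point, e.g.\ $\frac{1}{2}(1,1,1)$, every resolution has exceptional divisors, and all of them have strictly positive discrepancy. So $-K_{Y/X}$ is anti-effective there, and the criterion of \cite{FGS25} (cf.\ Theorem \ref{thm:main2}) does not apply. This is not an analytic-versus-algebraic issue; it fails already for projective threefolds.

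The three-dimensional input you are missing is the one listed in Remark \ref{rmk}. After terminalizing by the relative K\"ahler MMP, the remaining isolated terminal points are handled because analytic terminal threefold singularities are locally $\mathbb{Q}$-smoothable. This is how \cite[Theorem 1.1]{FGS25} obtains the RCD structure near them.

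Your step (3) also misplaces where projectivity enters \cite{FGS25}. It enters in two places, neither of which is local resolutions:
\begin{enumerate}
\item regularizing the positive part of the Ricci current, which is unnecessary here because nefness of $-K_X$ lets one take $f_\epsilon$ smooth in \eqref{cma3d};
\item separating points with sections of an ample bundle to prove injectivity of $\iota$, which the paper replaces by the local $L^2$-estimate and the local partial $C^0$ estimate on strongly pseudoconvex neighbourhoods (Appendix \ref{appI}).
\end{enumerate}

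Finally, your case (2) never deals with $\Delta$, which can pass through the singularities of $X$. The paper handles it in Proposition \ref{Prop:antinefpair} as follows:
\begin{enumerate}
\item perturb to the klt pair $(X,(1-\epsilon^2)\Delta)$, which is possible since $X$ is klt;
\item regularize $|\sigma|_h^{-2}$ by $(|\sigma|_h^2+\delta)^{-1}$, with a $\delta$-uniform Ricci lower bound from the Schwarz lemma;
\item apply the $\Delta=0$ result for each $\delta$;
\item identify the Gromov--Hausdorff limit with the completion of $X^\circ\setminus\Delta$ via volume densities, and re-prove injectivity of $\iota$.
\end{enumerate}
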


We remark that  Conjecture \ref{conj:main1} can be established for higher dimensional singular varieties in some special cases where certain special resolution of singularities for $X$ are assumed, see Theorem \ref{thm:main2}.

%Concerning the item (1), when $\Delta$ is SNC with coefficients of the form $1-\frac{1}{a}, a\in \mathbb{Z}_{>0}$, it is obtained by Zhining Liu in \cite{Liu23}. Moreover, 
In the statement of Corollary \ref{cor:main1}, if we assume the pair $(X,\Delta)$ to be klt,  case (1) can be obtained from the structure theorem \cite[Theorem 1.2]{MWWZ25} and the classical Beauville-Bogomolov decomposition \cite{Bea83}; and if we further assume $X$ to be projective, case (2) can be obtained by \cite[Theorem 1.2]{MW} and \cite[Theorem 6]{BF24} . However, in order to establish the log canonical case, for which the structure theorem of the form \cite{MWWZ25} fails (see \cite{BFPT24}), much extra effort is needed. Indeed, even the log Calabi-Yau case, as a special case of our Corollary \ref{cor:main1} above, is totally unknown. Our approach is independent of the structure theorem and it provides a uniform treatment for problems of such types. Indeed, we can prove the following slightly more general result for the compact K{\"a}hler manifold $X$ with a pseudoeffective anticanonical bundle $-K_X$ under a natural assumption of the singularities of $c_1(-K_X)$. 

\begin{theorem}  
\label{thm:main1} 
Let $X$ be a compact K{\"a}hler manifold. Suppose $-K_X$ is pseudo-effective and asymptotically klt. Then the fundamental group $\pi_1(X)$ is almost Abelian. 
\end{theorem}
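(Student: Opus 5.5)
We follow P{\u a}un's strategy: produce, on $X$ and on every finite étale cover, Kähler metrics with Ricci curvature almost bounded below and controlled volume and diameter, pass to a noncollapsed limit, and apply Gromov-type polynomial growth / Bishop--Gromov arguments. As a first reduction, "asymptotically klt" means that $c_1(-K_X)$ contains closed positive currents $T_\epsilon \geq -\epsilon\omega_0$ whose potentials $\phi_\epsilon$ satisfy a uniform integrability $e^{-p\phi_\epsilon}\in L^1$ for a fixed $p>1$. This is the condition that makes the Monge--Amp\`ere densities below uniformly $L^{p'}$.

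For each $\epsilon>0$ we solve, by Yau's theorem (or its twisted version with $L^p$ right-hand side of Ko\l odziej/EGZ type), for a Kähler metric $\omega_\epsilon\in[\omega_0]$ with
\[
\ric(\omega_\epsilon) = -\epsilon\omega_\epsilon + T_\epsilon + \epsilon\omega_0 \geq -\epsilon\omega_\epsilon
\]
in the current sense. Equivalently, $\omega_\epsilon^n = e^{\epsilon\varphi_\epsilon - \phi_\epsilon + c_\epsilon}\omega_0^n$. The asymptotically klt hypothesis gives uniform $L^{p}$ bounds on the volume densities, hence uniform $C^0$ bounds on $\varphi_\epsilon$. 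By the Guo--Phong--Song--Sturm theory this gives uniform diameter bounds and uniform volume noncollapsing for $(X,\omega_\epsilon)$. The same bounds pass to every finite étale cover $\tilde X\to X$ by pulling back: volume scales by the degree and the $L^p$ density bound is local. Where $T_\epsilon$ is singular we either regularize (Demailly) so that $\omega_\epsilon$ is genuinely smooth with $\ric\geq -2\epsilon\omega_\epsilon$ off a small set, or invoke the RCD results (Szekelyhidi, Fu--Guo--Song) to say $(X,\omega_\epsilon)$ is an $\RCD(-\epsilon,2n)$ space. The smooth-$X$ case of Conjecture \ref{conrcd} asserted earlier covers exactly this.

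Next we run P{\u a}un's argument. Take a finite-index normal subgroup, fix generators of $\pi_1(X)$, and consider the universal cover with the lifted metrics. Bishop--Gromov on $\RCD(-\epsilon,2n)$ spaces with $\epsilon\to0$, together with the uniform diameter and noncollapsing, gives a uniform bound on the number of generators of short loops. This yields a uniform polynomial bound on word growth up to scale $\sim \epsilon^{-1/2}$. By the Gromov/Kapovitch--Wilking type argument, $\pi_1(X)$ is then almost nilpotent.

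Finally we upgrade nilpotent to Abelian. After passing to a finite étale cover, $\pi_1$ is torsion-free nilpotent. It is Kähler, and its abelianization has rank $b_1$. Here we use the surjectivity of the Albanese map (proved in this paper under lc/nef hypotheses, and in the pseudo-effective asymptotically klt case via the same $L^2$/positivity argument of P{\u a}un 2017) together with the fact that the Albanese fibers carry analogous metrics. The argument of \cite{Pau97} then shows that the nilpotent group must be virtually $\Z^{b_1}$: a nontrivial 2-step central extension would contradict the volume growth forced by the Ricci lower bound, whose growth degree is at most $2n$ and is realized by the Albanese torus directions. I expect the main obstacle to be the uniform geometric control, namely diameter and noncollapsing bounds on all finite covers, when $T_\epsilon$ is merely asymptotically klt rather than smooth. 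That is where the uniform $L^p$ integrability and the RCD compactness machinery must be used carefully.
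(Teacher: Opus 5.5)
\textbf{First gap: the uniform estimates in $\epsilon$ do not exist.} Your virtual-nilpotency step assumes a uniformity in $\epsilon$ that the hypothesis does not give, and that in fact fails.

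Asymptotically klt only says that $\mathscr{J}(\phi_\epsilon)=\mathscr{O}_X$ for each $\epsilon$. Openness then gives $e^{-\phi_\epsilon}\in L^{p_\epsilon}$, with both $p_\epsilon>1$ and the norm depending on $\epsilon$; this is all that Lemma \ref{lem41} asserts.

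No uniform bound holds even when $-K_X$ is nef. Take $X=\mathbb{P}(E)$ over an elliptic curve, with $E$ the non-split extension of $\mathscr{O}$ by $\mathscr{O}$. Then $-K_X$ is nef, but $2[C]$ is the only closed positive current in $c_1(-K_X)$, where $C$ is the distinguished section. Hence every $L^1$-limit of the potentials normalized by $\sup\phi_\epsilon=0$ has $e^{-\phi_0}\sim|s_C|^{-4}$, which is not integrable. Fatou's lemma then forces $\int_X e^{-\phi_\epsilon}\to\infty$. Consequently the normalized Monge--Amp\`ere densities concentrate along $C$ and cannot stay bounded in any $L^p$ with $p>1$.

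So Ko\l{}odziej and GPSS give no $\epsilon$-uniform diameter or non-collapsing bounds. Your Bishop--Gromov/Milnor count needs a piece of fundamental domain with bounded $\omega_\epsilon$-diameter and $\omega_\epsilon$-volume bounded below, so it has no input.

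\textbf{The missing idea.} What you are missing is the DPS segment inequality (Proposition \ref{DPS}, Lemma \ref{dps}). All the $\omega_\epsilon$ lie in the fixed class $[\theta]$, so $\int_X\omega_\epsilon\wedge\theta^{n-1}$ does not depend on $\epsilon$. Averaging $\omega_\epsilon$-lengths of segments then yields a base point and loops representing a fixed generating set of $\pi_1(X)$, each of $\omega_\epsilon$-length at most $D$ with $D$ independent of $\epsilon$.

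The volumes in that lemma are measured by $\theta^n$, not $\omega_\epsilon^n$, so it cannot feed a volume count. Instead, one rescales $(\hat X_\epsilon,d_\epsilon)$ by $\sqrt{\epsilon}$ to obtain an $\RCD(-1,2n)$ space in which the generators lie in a ball of radius $\sqrt{\epsilon}D<\varepsilon(n)$. One then applies the RCD Margulis lemma (Theorem \ref{DSZZ}), which is local and insensitive to collapsing. The only per-$\epsilon$ input is Theorem \ref{zz}, for which a non-uniform $p_\epsilon$ suffices.

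\textbf{Second gap: the abelian upgrade.} Growth cannot separate abelian from nilpotent groups. The integral Heisenberg group has growth degree $4\le 2n$ for $n\ge 2$, and Heisenberg nilmanifolds are almost flat. So no Ricci-lower-bound or volume-growth argument excludes them; the K\"ahler input has to enter through the Albanese map.

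The paper first proves that $\alb_X$ is a fibration (Theorem \ref{thm:psef-star-Alb}). This is not a formal consequence of P{\u a}un's 2017 argument. It uses the twisted positivity of Proposition \ref{prop:positivity}, which is valid for singular metrics with trivial multiplier ideals. That positivity is applied to an elimination of the MRC fibration, together with Ou's theorem that a non-uniruled base has pseudoeffective canonical class (Proposition \ref{prop:Zhang}).

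It then invokes Campana's theorem (Proposition \ref{prop:Alb_surj_pi1}, Corollary \ref{cor:Campana}). This rests on Stallings' criterion, with the $H_2$-surjectivity coming from injectivity of $H^2(\Alb,\Q)\to H^2(X,\Q)$, proved by fibre integration of a K\"ahler form. The conclusion is that the torsion-free nilpotent completion of $\pi_1$ is $\pi_1(\Alb)\simeq\Z^{2q}$. Apply this to a finite \'etale cover with nilpotent fundamental group, which still satisfies the hypotheses. That group is then finite-by-abelian, so $\pi_1(X)$ is almost abelian.
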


It is easy to see that case (2) of Corollary \ref{cor:main1} is indeed a special case of Theorem \ref{thm:main1} (see Lemma \ref{1-2}). Here, {\itshape asymptotically klt} classes can be regarded as an analytic counterpart for (or in some sense, a generalization of) effective $\mathbb{Q}$-divisors with log canonical singularities, as originally considered in birational geometry and we postpone its precise definition to Definition \ref{def:asym_klt}.

%Suppose $X$ is projective in Corollary \ref{cor:main1}, then $\pi(X)$ is almost abelian by the same argument in \cite{}. Naturally, we propose the following conjecture in the singular setting. 
%
%\begin{conjecture}\label{conj:main1} Let $X$ be a normal projective variety. Suppose $\Delta$ is an effective $\mathbb{Q}$-divisor such that the pair $(X, \Delta)$ is klt and $-(K_X + \Delta)$ is nef. Then $\pi(X)$ is almost abelian. 
%\end{conjecture}

%In higher dimensions, one can show that Conjecture \ref{conj:main1}  holds if $(X, \Delta)$ is smoothable.  

The third main result of our paper generalizes the Qi Zhang's surjectivity theorem for the Albanese maps of log canonical pairs with nef anti-log canonical divisor to possibly singular complex varieties in Fujiki class $\mathscr{C}$. It serves as one of the main ingredients for the proof of Theorem \ref{thm:main0} and is also of independent interest.

\begin{theorem}
\label{thm:Alb}
Let $X$ be compact normal complex variety in Fujiki class $\mathscr{C}$. Assume that there is an effective $\R$-divisor $\Delta$ on $X$ such that $(X,\Delta)$ is log canonical and $-(K_X+\Delta)$ is nef. Then any smooth model of the Albanese map $\alb_X: X\dashrightarrow\Alb_X$ of $X$ is a fibration, namely, a proper surjective morphism with connected fibres.
\end{theorem}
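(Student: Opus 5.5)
We reduce to a log smooth situation on a resolution and then run the argument of Zhang, Păun and Fu--Han--Zou, with nefness replaced by an approximation argument adapted to class $\mathscr{C}$. Take a log resolution $\mu\colon Y\to X$ with $Y$ a compact Kähler manifold; this is possible since $X$ is in class $\mathscr{C}$, so it is bimeromorphic to a compact Kähler manifold. Write
\[
K_Y+\Delta_Y=\mu^*(K_X+\Delta)+E ,
\]
where $\Delta_Y$ is the strict transform of $\Delta$ plus the exceptional divisors with coefficient $1$ (coefficients clipped at $1$), and $E\ge 0$ is $\mu$-exceptional. The lc hypothesis ensures that $\Delta_Y$ is an SNC boundary with coefficients in $[0,1]$. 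Moreover, $\Alb_X=\Alb_Y$ and the induced map $\alb_Y\colon Y\to \Alb_Y$ is a smooth model of $\alb_X$. Since any two smooth models are bimeromorphic over $\Alb_X$, it suffices to show that $\alb_Y$ is surjective with connected fibres.

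For surjectivity, suppose the image $Z=\alb_Y(Y)\subsetneq \Alb_Y$. By Ueno's theorem, after quotienting by the stabilizer subtorus we may assume $Z$ is of general type; passing to the Stein factorization and a desingularization, we obtain a fibration $f\colon Y'\to Z'$ onto a positive-dimensional variety of general type, with $Y'\to Y$ bimeromorphic. The key input is the positivity of relative log canonical divisors (Campana, Cao--Păun, Fujino) for the klt/lc pair on $Y'$: $K_{Y'/Z'}+\Delta_{Y'}$ is pseudo-effective modulo $f$-exceptional and vertical corrections. Twisting by a rich supply of forms coming from $Z'$ then yields that $K_{Y'}+\Delta_{Y'}-\varepsilon f^*A$ is pseudo-effective for some ample $A$ on $Z'$ and some $\varepsilon>0$. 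On the other hand, $-(K_{Y'}+\Delta_{Y'})+E'$ is the pullback of a nef class plus an exceptional effective class. Adding these and pushing forward to $X$ (where $E'$ disappears), we get that $-\varepsilon\,\mu_*f^*A$ is pseudo-effective modulo nef $+$ exceptional. Intersecting with $\omega^{n-1}$ for a Kähler class $\omega$ on $Y'$, using that $\mu^*$ of a nef class is a limit of classes $\mu^*\alpha+\delta\omega$ with $\alpha$ Kähler, gives a contradiction with $\int f^*A\wedge\omega^{n-1}>0$. For the lc (non-klt) part of the boundary we perturb, replacing $\Delta_Y$ by $(1-t)\Delta_Y$; this works because nefness of $-(K_X+\Delta)$ is an open-ended condition after adding $t\,\omega$, and the error terms are controlled as $t\to 0$. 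This is the strategy of Fu--Han--Zou in the log smooth case.

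Connectedness of fibres follows the same pattern. Suppose the Stein factorization $Y\to Z\to\Alb_Y$ has $Z\to\Alb_Y$ finite of degree $>1$. Then $Z$ is finite over a torus, hence, after normalization, either étale over it (contradicting the universality of the Albanese) or with positive Kodaira dimension by Kawamata's theorem. In the latter case the same positivity argument, now using $\kappa(Z)>0$ instead of general type, again contradicts nefness.

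I expect the main obstacle to be the first step, namely positivity of $K_{Y'/Z'}+\Delta_{Y'}$ in the non-projective Kähler setting with lc (coefficient-one) boundary. Here one must rely on the analytic results (Păun--Takayama type $L^2$ positivity of direct images and the twisted version of Cao--Păun) rather than MMP, and handle the non-klt boundary by the perturbation $\Delta_Y\mapsto(1-t)\Delta_Y$ combined with the nef approximation.
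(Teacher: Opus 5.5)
Your proposal has a genuine gap at the positivity step for $f\colon Y'\to Z'$. It fails in two ways.

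\textbf{The untwisted statement is not available.} Nothing supplies pseudo-effectivity of $K_{Y'/Z'}+\Delta_{Y'}$ up to vertical and $f$-exceptional corrections. Its restriction to a general fibre $F$ is $K_F+\Delta_F$, so it could only hold if $K_F+\Delta_F$ were pseudo-effective, and your hypotheses do not give that. The argument really needs the \emph{twisted} statement, with twist $N=-\mu^*(K_X+\Delta)+\Delta_{Y'}$. That is, you need $K_{Y'/Z'}+N-\Ram(f)=-f^*K_{Z'}+E'-\Ram(f)$ to be pseudo-effective; pushing forward to $X$ then contradicts bigness of $K_{Z'}$.

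On the resolution this twist is not nef, because of the discrepancy part $E'$. It is only pseudo-effective and asymptotically klt. Your perturbation $\Delta_Y\mapsto(1-t)\Delta_Y$ does not touch $E'$, and $E'$ cannot be absorbed into the boundary since it enters with a negative sign. For the same reason you are not in the log smooth setting of Fu--Han--Zou: on $Y'$ the class $-(K_{Y'}+\Delta_{Y'})=-\mu^*(K_X+\Delta)-E'$ is not nef.

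\textbf{The morphism is not projective.} The positivity results for such singular twists that you invoke (P{\u a}un--Takayama, the twisted Cao--P{\u a}un version, and the paper's Proposition \ref{prop:positivity}) are proved via relative Bergman kernels. They need the morphism to be projective, since an $f$-ample $A$ is required to produce non-zero fibrewise sections of $mK_{X_y}+A+mN$. The fibration $Y'\to Z'$ obtained from the Albanese map and Ueno's reduction has no reason to be projective when $X$ is only in class $\mathscr{C}$; its fibres may be non-algebraic. So the step you flag as the main obstacle is not resolved by the tools you name. Your connectedness argument via Kawamata's theorem inherits the same problem.

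\textbf{The missing idea} is to never apply positivity to the Albanese-type fibration. The paper proceeds as follows.
\begin{enumerate}
\item It applies Proposition \ref{prop:positivity} only to a model $\phi\colon M\to Y$ of the MRC fibration, which is projective by Claudon--H{\"o}ring, with twist $N=-K_M+F_+$. This gives that $-\phi^*K_Y+F_+$ is pseudo-effective.
\item Pushing currents forward to the normal space $X$ shows that $-\pi_*\phi^*c_1(K_Y)$ is pseudo-effective.
\item Since $Y$ is not uniruled, Ou's theorem gives that $K_Y$ is pseudo-effective, so this class vanishes. A support-theorem argument then yields $K_Y\sim_{\Q}D_Y\ge 0$ with $\phi^*D_Y$ $\pi$-exceptional, hence $\kappa(Y)=0$.
\item By \cite[Theorem C]{Wan21}, $\alb_Y$ is a fibration.
\item Because the MRC fibres are rationally connected, $H^1(M,\mathscr{O}_M)\cong H^1(Y,\mathscr{O}_Y)$. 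Hence the induced map $\Alb_M\to\Alb_Y$ is a surjective isogeny with connected fibres, i.e.\ an isomorphism.
\end{enumerate}
This gives surjectivity and connectedness of the fibres of $\alb_M$ simultaneously.
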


Here, a compact complex variety is said to be in Fujiki class $\mathscr{C}$ if it is bimeromorphic to a compact K{\"a}hler manifold; or equivalently, by \cite[Th{\'e}or{\`e}me 3]{Var86}, if it is the image of a compact K{\"a}hler manifold. Theorem \ref{thm:Alb} can be regarded as the generalization of the following results:
\begin{itemize} 
\item If $X$ is smooth and $\Delta=0$, then the surjectivity of $\alb_X$ is proved in \cite[Theorem 1.4]{Pau17}.
\item If $X$ is projective (but not necessarily smooth), then Theorem \ref{thm:Alb} is established in \cite{Zha05} (see also \cite{Zha96}). Further results (e.g. flatness and semistability) are proved in \cite[Lemma 4.1]{Wan22} ($X$ is assumed to be Cohen-Macaulay), \cite[Proposition 4.1]{EIM23} ($X$ is assumed to be smooth) and \cite[Theorem 1.3]{BFPT24}.
\item If $(X,\Delta)$ is a klt pair, then Theorem \ref{thm:Alb} is proved in \cite[Corollary 3.13]{MWWZ25}
\item If $(X,\Delta)$ is log smooth, then the surjectivity of $\alb_X$ is shown in \cite[Theorem 1.3]{FHZ25} by establishing a positivity theorem for the variation of fibrewise K{\"a}hler-Einstein metrics. 
\end{itemize}

Let us remark that as long as the surjectivity of $\alb_X$ is proven, one can easily deduce the connectedness of the fibres from \cite[Theorem C]{Wan21}. 

Moreover, by applying the same argument we generalize Chen-Zhang's theorem \cite[Main Theorem]{CZ13} and Deng's theorem \cite[Theorem B]{YDeng21}, addressing a classical question of Demailly-Peternell-Schneider \cite[Problem 4.13.a]{DPS01}, see Theorem \ref{thm:Chen-Zhang-Deng}.

\subsection{Strategy and main idea of the proof}
First, let us briefly explain P{\u a}un's proof of Conjecture \ref{conj:main} (or \ref{conj:main1}) in the case $X$ smooth and $\Delta=0$. It is based on the following 3 main ingredients:
\begin{itemize}
\item[(a)] The Margulis lemma (see \cite[Theorem, p.1250]{Pau97} or Theorem \ref{magu}), which is established by Cheeger-Colding \cite{CC96} and generalized by \cite{KW}.
\item[(b)] The construction of a family of K{\"a}hler metrics $\omega_\epsilon$ in a fixed K{\"a}hler class such that $\ric(\omega_\epsilon)\geq -\epsilon\omega_{\epsilon}$, and this can be done by Aubin-Yau's theorem \cite{Aub78,Yau78}.  
\item[(c)] The surjectivity of the Albanese map, which is established in the projective case by \cite{Zha96} and generalized to K{\"a}hler case by \cite{Pau01,Pau17}. (c) combined with \cite[Corollaire 3.1]{Cam95} implies that $\pi_1(X)$ is almost Abelian if and only if it is virtually nilpotent. 
\end{itemize}
(b) combined with (a) implies that there exists a uniform constant $r$, only depending on the dimension of $X$, such that the image of the fundamental group of a geodesic ball of radius $r$ in $\pi_1(X)$ is virtually nilpotent. And the key observation of \cite{Pau97} is the following
\begin{itemize}
\item[(d)] for an appropriately chosen point $x\in X$ and $\epsilon$ sufficiently small, the inclusion of the geodesic ball of radius $r$ with respect to some $\omega_\epsilon$ induces a surjection of fundamental groups.
\end{itemize}

In this paper we follows the same line as \cite{Pau97} to establish Theorem \ref{thm:main0}, Corollary \ref{cor:main1} \ref{thm:main1} and \ref{thm:main2}. To this end, it suffices to generalize (a)(b)(c)(d) to the case of lc pairs. The generalization of (d) is somehow direct, and the main observation is that singularities do not cause essential trouble to generalize the key lemma \cite[Lemma 1.3]{DPS93}. (c) is proved for the klt paris in \cite[Corollary 3.13]{MWWZ25} based on the recent works of Claudon-H{\"o}ring \cite{CH24} and of Ou \cite{Ou25}, and in this paper we will establish the lc case by a similar argument, see \cite{FHZ25} for the case that $(X,\Supp(\Delta))$ is log smooth. As for (a) and (b), the situation turns out to be quite subtle: a direct generalization seems impossible, at least within the category of (complete) Riemannian manifolds. Indeed, people have already been aware of this kind of difficulty which appear ubiquitously in the study of singular varieties from a differential-geometric viewpoint, not only in the study of fundamental groups, and this is why the notion of RCD spaces are introduced (see \cite{Gig15}), as natural extension (in the metric sense) of complete Riemannnian manifolds with Ricci curvature bounded below. For compact RCD spaces, the Margulis lemma (a) is established by \cite{DSZZ}; and the remaining difficulty is to show that a compact K{\"a}hler space $X$ satisfying the conditions of Conjecture \ref{conj:main} is a K{\"a}hler RCD space. By the work of \cite{EGZ} we can establish (b) for lc pairs in a weak sense, namely, construct a series of K{\"a}hler currents $\omega_\epsilon$ with $\ric(\omega_\epsilon)\geq-\epsilon\omega_\epsilon$ in the sense of current; yet it turns out to be a difficult task to deduce from this that $X$ is a K{\"a}hler RCD space (see \cite[Conjecture 1.1]{FGS25}), and this deduction is only known in the case of Corollary \ref{cor:main1}, Theorem \ref{thm:main1} and Theorem \ref{thm:main2}.

The rest part of the paper is organized as follows: In section \ref{sec:nilpotency}, we prove that $\pi_1(X)$ is almost (or virtually) nilpotent under the assumption of Theorems \ref{thm:main0}, \ref{thm:main1} and Corollary \ref{cor:main1}. In section \ref{sec:abelianity} we prove Theorem \ref{thm:Alb}, which allows us to conclude the proof; based on this argument, we also establish the specialness of the varieties in Theorem \ref{thm:Alb}. 
   %%%%%%%%%%%%%%%%%%%%%%%%%%%%%%%%%%%%%%%%%%%%

%

  %%%%%%%%%%%%%%%%%%%%%%%%%%%%%%%%%%%%%%%%%%%%

  %\section{Margulis Lemma for RCD spaces }
 \bigskip

\noindent\textbf{Acknowledgment:} We would like to thank Professor Henri Guenancia, Mihai P{\u a}un, Chenyang Xu for very helpful conversations. We are grateful to Professor Jiayin Pan for bringing \cite{DSZZ} to our attention and also grateful to Professor Fr{\'e}d{\'e}ric Campana for pointing to us the relation of our main theorem to his Abelianity conjecture, and to Professor Masataka Iwai for reminding us about the the generic nefness theorem for the tangent sheaves in our case and showing us the deduction of the specialness from it.

%Xin Fu is  supported by National Key R\&D Program of China 2024YFA1014800 and NSFC No. 12401073. JW is partially supported by NSFC (Grants No.12301060 and 12288201) and National Key R\&D Program of China (Grant No. 2021YFA1003100).

%%%%%%%%%%%%%%%%%%%%%%%%%%%%%%%%%%%%%%%%%%%%
 \section{Proof of virtual nilpotency}
 \label{sec:nilpotency}
  In this section, we proceed to prove Theorem \ref{thm:main1}, Theorem \ref{thm:main0} and Corollary \ref{cor:main1}. In particular, we prove that the fundamental group $\pi_1(X)$ under the assumption of these theorems are virtually nilpotent and leave the proof of almost abelian property to next section. As mentioned in the introduction, in each case, the idea is the same: we  are  able to construct a sequence of Ricci almost non negative RCD spaces which are homeomorphic to $X$, then me way apply the RCD version of Margulis lemma \ref{DSZZ} to prove the fundamental group is virtual nilpotent.
 \subsection{Asymptotically klt class} 
 In this subsection, we recall the following definition of asymptotically klt classes on a compact K\"ahler manifold and give some remarks about this class. Intuitively, these classes can be regarded as an analytic counterpart for generalized log canonical pairs (see \cite[Definition 4.1]{BZ16}).
\begin{definition}
\label{def:asym_klt}
Let $X$ be a compact K{\"a}hler manifold and let $\alpha\in H^{1,1}(X,\R)$ be a pseudoeffective class (i.e. containing a semipositive closed current). Choose a K{\"a}hler form $\omega$ on $X$, and consider the set $\alpha[-\epsilon\omega]$ of all the almost positive closed $(1,1)$-currents $T\in\alpha$ satisfying $T\ge -\epsilon\omega$ and let $T_{\min,\epsilon}\in\alpha[-\epsilon\omega]$ be one with minimal singularities $($see e.g., \cite[\S 2.8]{Bou04}$)$. By writing $T_{\min,\epsilon}=\theta_\epsilon+dd^c\varphi_\epsilon$ with $\theta_\epsilon\in\alpha$ being a smooth $(1,1)$-form and $\varphi_\epsilon$ being a quasi-psh function, we define the $\epsilon$-asymptotic multiplier ideal of $\alpha$ (with respect to $\omega$) to be
\[
\mathscr{J}_{\epsilon,\omega}(\alpha):=\mathscr{J}(\varphi_\epsilon).
\]
The definition does not depend on the choice of $T_{\min,\epsilon}$ since any two currents with minimal singularities have equivalent singularities and thus the same multiplier ideal sheaves. We simply write $\mathscr{J}(\alpha)$ for $\mathscr{J}_{0,\omega}(\alpha)$ as it apparently does not depend on $\omega$.

We say that the class $\alpha$ is asymptotically klt if 
\[
\mathscr{J}_{\epsilon,\omega}(\alpha)=\mathscr{O}_X 
\]
for every $\epsilon>0$. Clearly this definition does not depend on the choice of the K{\"a}hler form $\omega$. Similarly, we say that an $\R$-line bundle $L$ is asymptotically klt if its Chern class $c_1(L)$ is so. 
\end{definition}
\begin{remark}
\label{rmk:asym_klt}
$($a$)$ We can reformulate the definition of asymptotically klt classes as follows. For any K{\"a}hler class $\omega$, we have $\mathscr{J}_{\epsilon_1,\omega}(\alpha)\supseteq\mathscr{J}_{\epsilon_2,\omega}(\alpha)$ for $\epsilon_1\ge\epsilon_2$, thus we can define the {\itshape asymptotic multiplier ideal} of $\alpha$ to be 
\[
\mathscr{J}_+(\alpha):=\bigcap_{\epsilon>0}\mathscr{J}_{\epsilon,\omega}(\alpha),
\]
which is independent of the K{\"a}hler form $\omega$ chosen. Clearly we have $\mathscr{J}(\alpha)\subseteq\mathscr{J}_+(\alpha)$, and the inclusion is in general not an equality $($just like log canonical singularities$)$. And it is easy to see that $\alpha$ is asymptotically klt if and only if $\mathscr{J}_+(\alpha)=\mathscr{O}_X$. 

$($b$)$ By definition, nef classes are asymptotically klt. Moreover, being asymptotically klt is a regularity condition posed for pseudoeffective classes that can be regarded as a singular version of the nefness. Indeed, we can prove that the non-nef locus (see \cite[Defintion 3.3]{Bou04}) of a pseudoeffective class $\alpha\in H^{1,1}(X,\R)$ satisfies
\begin{equation}
\label{eq:non-nef-locus}
\NNef(\alpha)=\bigcup_{m\in\Z_{>0}}V(\mathscr{J}_+(m\alpha))_{\red},
\end{equation}
here, for any ideal sheaf $\mathscr{I}\subseteq\mathscr{O}_X$, $V(\mathscr{I})$ denotes the complex subspace defined by $\mathscr{I}$. For a general account of non-nef loci, see Appendix \ref{app:nonnef}.

$($c$)$ Let $L$ be a pseudoeffective $\mathbb{R}$-line bundle on $X$, and fix a K{\"a}her form $\omega$ on $X$. Then $L$ is asymptotically klt if and only if, for every $\epsilon>0$, $L$ admits a possibly singular Hermitian metric $h_\epsilon$ such that $\sqrt{-1}\Theta_{h_\epsilon}(L)\ge -\epsilon\omega$ and $\mathscr{J}(h_\epsilon)=\mathscr{O}_X$.
\end{remark}

The following lemma show that asymptotic klt classes can be regarded as the analytic counterpart of generalized log canonical pairs in the bimeromorphic sense. Indeed, given a generalized log canonical pair $(X,B+\mathbb{M})$, we can find a smooth model $Y\to X$ such that the condition $B_Y^{\ge0}+M_Y$ satisfies the condition of the lemma. 
\begin{lemma} \label{1-2}
Let $X$ be a compact K{\"a}hler manifold and $\Delta$ an effective $\R$-divisor such that the pair $(X, \Delta)$ is log canonical (lc). Then $\eta+\{\Delta\}$ is pseudoeffective and asymptotically klt for any nef class $\eta\in H^{1,1}(X,\R)$. 
\end{lemma}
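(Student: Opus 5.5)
Pseudo-effectivity is immediate: $\eta$ is nef, hence pseudoeffective, and $\{\Delta\}$ contains the positive current $[\Delta]$. The sum of two pseudoeffective classes is pseudoeffective.

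For asymptotic klt we use the criterion of Remark \ref{rmk:asym_klt}(c). Fix a K\"ahler form $\omega$ and $\epsilon>0$. We need a current $T_\epsilon\in\eta+\{\Delta\}$ with $T_\epsilon\ge-\epsilon\omega$ and trivial multiplier ideal. Since the multiplier ideal of a current with minimal singularities in $(\eta+\{\Delta\})[-\epsilon\omega]$ is at least as large as that of any current in this set, producing one such $T_\epsilon$ suffices.

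The natural candidate is $T_\epsilon=\theta_\epsilon+(1-\delta)[\Delta]+\delta\,S$. Here $\theta_\epsilon\in\eta$ is a smooth form with $\theta_\epsilon\ge-\frac{\epsilon}{2}\omega$, which exists because $\eta$ is nef. The main obstacle is the log canonical (rather than klt) condition: $[\Delta]$ itself has non-trivial multiplier ideal along components with coefficient $1$ and at lc centres, so $[\Delta]$ must be perturbed.

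We do this by scaling $\Delta$ by $(1-\delta)$ and compensating with a smooth form. Let $\sigma\in\{\Delta\}$ be any smooth representative. Write $\{\Delta\}=(1-\delta)\{\Delta\}+\delta\{\Delta\}$ and represent $\delta\{\Delta\}$ by the smooth form $\delta\sigma$. Choose $\delta>0$ small so that $\delta\sigma\ge-\frac{\epsilon}{2}\omega$, which is possible since $X$ is compact. We then set
\[
T_\epsilon=\theta_\epsilon+\delta\sigma+(1-\delta)[\Delta],
\]
which satisfies $T_\epsilon\ge-\epsilon\omega$.

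Locally $T_\epsilon$ has potential equal to a smooth function plus $(1-\delta)\sum_i a_i\log|f_i|$, where $\Delta=\sum_i a_iD_i$ and $f_i$ is a local equation of $D_i$. Hence $\mathscr{J}(T_\epsilon)=\mathscr{J}((1-\delta)\Delta)$, the algebraic (analytic) multiplier ideal of the divisor. Since $X$ is smooth and $(X,\Delta)$ is lc, the pair $(X,(1-\delta)\Delta)$ is klt for every $\delta\in(0,1)$: on a log resolution, every discrepancy satisfies $a(E,X,(1-\delta)\Delta)=(1-\delta)a(E,X,\Delta)+\delta a(E,X,0)$, with $a(E,X,\Delta)\ge-1$ and $a(E,X,0)>0$ for exceptional $E$ (and coefficients of $(1-\delta)\Delta$ are $<1$). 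Thus $\mathscr{J}((1-\delta)\Delta)=\mathscr{O}_X$, using the standard identification of analytic multiplier ideals of divisors with those computed on a log resolution; this identification holds in the analytic setting as well. Hence $\mathscr{J}_{\epsilon,\omega}(\eta+\{\Delta\})=\mathscr{O}_X$ for all $\epsilon>0$, which is the claim.
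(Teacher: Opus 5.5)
Your proof is correct and follows the paper's route: the paper writes $\eta+\{\Delta\}+\epsilon\{\theta\}$ as the K{\"a}hler class $\eta+\epsilon\{\theta\}+\delta\{\Delta\}$ plus $(1-\delta)\{\Delta\}$, where the first class is K{\"a}hler for small $\delta$ by openness of the K{\"a}hler cone, and your explicit choice of smooth forms with $\theta_\epsilon+\delta\sigma\ge-\epsilon\omega$ is just that openness argument written out. You also spell out why $(X,(1-\delta)\Delta)$ is klt, so that $\mathscr{J}((1-\delta)\Delta)=\mathscr{O}_X$, and why a single such current controls the minimal-singularity multiplier ideal; the paper's one-line proof leaves both points implicit.
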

\begin{proof}
Let $\theta$ be a K{\"a}hler form on $X$, then for any $\epsilon>0$ the class $\eta+\epsilon\cdot\{\theta\}$ is K{\"a}hler thus so is $\eta+\epsilon\cdot\{\theta\}+\delta(\epsilon)\cdot\{\Delta\}$ for $\delta(\epsilon)>0$ sufficiently small since the K{\"a}hler cone is open. 
\end{proof}

\subsection{K{\"a}hler RCD space with almost nonnegative Ricci curvature }
 
 In this subsection, in each set up of Theorem \ref{thm:main1}, Corollary \ref{cor:main1}, we construct a sequence of Ricci almost non negative RCD spaces which are homeomorphic to $X$.

\subsubsection{Asymptotic klt case:}
Firstly, we treat the case of Theorem \ref{thm:main1}. More precisely, we prove the following:

\begin{theorem}\label{zz} Let $(X, \theta)$ be an $n$-dimensional K{\"a}hler manifold equipped with a smooth K{\"a}hler form $\theta$. Suppose that $-K_X$ is pseudoeffective and asymptotically klt.
Then for any $\epsilon>0$, there exists a K{\"a}hler current $\omega_\epsilon \in [\theta]$ satisfying the following. 
\begin{enumerate}

\item $\omega_\epsilon$ has bounded local potential and is smooth on $X\setminus Z_\epsilon$ for some analytic subvariety $Z_\epsilon$ of $X$.

\smallskip

\item Let $(X_\epsilon, d_\epsilon, \mu_\epsilon)$ be the metric completion of $(X\setminus Z_\epsilon, \omega_\epsilon, \omega_\epsilon^n)$. Then $(X_\epsilon, d_\epsilon, \mu_\epsilon)$ is a compact $\RCD(-\epsilon, 2n)$-space homeomorphic to $X$.

\end{enumerate}

\end{theorem}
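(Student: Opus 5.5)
We construct $\omega_\epsilon$ by solving a twisted complex Monge--Amp\`ere equation whose right-hand side is built from a singular metric on $-K_X$ with trivial multiplier ideal, and then invoke the RCD theory for Kähler currents with Ricci lower bounds and $L^p$ densities on spaces with a good resolution.

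\textbf{Step 1: choice of the singular metric.} Fix $\epsilon>0$. By Remark \ref{rmk:asym_klt}(c), $-K_X$ admits a singular Hermitian metric $h_\epsilon$ with $\sqrt{-1}\Theta_{h_\epsilon}(-K_X)\ge -\epsilon\theta$ and $\mathscr{J}(h_\epsilon)=\mathscr{O}_X$. Write $\sqrt{-1}\Theta_{h_\epsilon}=\eta+dd^c\psi_\epsilon$, with $\eta\in c_1(-K_X)$ smooth and $\psi_\epsilon$ being $(\eta+\epsilon\theta)$-psh. By openness of multiplier ideals (Guan--Zhou), $\mathscr{J}((1+\delta)\psi_\epsilon)=\mathscr{O}_X$ for some $\delta>0$, so $e^{-\psi_\epsilon}\in L^{1+\delta}$.

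We would also like $\psi_\epsilon$ to have analytic singularities, so that it is smooth off an analytic set $Z_\epsilon$. To get this, replace $\psi_\epsilon$ by a Demailly regularization with analytic singularities. This costs an arbitrarily small loss $\epsilon'\theta$ in positivity, and, by the equisingular/openness argument, keeps the multiplier ideal trivial. Write $Z_\epsilon$ for its polar set.

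\textbf{Step 2: the Monge--Amp\`ere equation.} Choose a smooth volume form $\Omega$ with $\ric(\Omega)=\eta$, and solve
\[
(\theta+dd^c\varphi_\epsilon)^n=c_\epsilon e^{-\epsilon\varphi_\epsilon-\psi_\epsilon}\Omega .
\]
The density lies in $L^p$ for some $p>1$. Hence Ko{\l}odziej's estimates, together with \cite{EGZ}, give a unique bounded solution that is smooth on $X\setminus Z_\epsilon$ by local regularity. On $X\setminus Z_\epsilon$ we compute
\[
\ric(\omega_\epsilon)=\eta+dd^c\psi_\epsilon+\epsilon\, dd^c\varphi_\epsilon=\sqrt{-1}\Theta_{h_\epsilon}+\epsilon\omega_\epsilon-\epsilon\theta\ge \epsilon\omega_\epsilon-2\epsilon\theta .
\]
This is not yet $\ge-\epsilon\omega_\epsilon$. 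To repair this we use the twist with the opposite sign, solving instead
\[
(\theta+dd^c\varphi)^n=e^{\epsilon\varphi-\psi}\Omega .
\]
This gives $\ric=\eta+dd^c\psi-\epsilon\omega+\epsilon\theta\ge-\epsilon\omega$ in the sense of currents on $X$, and smoothly on $X\setminus Z_\epsilon$. The equation is still solvable with bounded potential, since $e^{-\psi}\in L^p$ and $\epsilon\varphi$ is bounded above. This gives (1), after normalizing the constant.

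\textbf{Step 3: the RCD property (main obstacle).} We now have a Kähler current on the smooth manifold $X$ with bounded potential, $L^p$ volume density, Ricci bounded below by $-\epsilon\omega_\epsilon$, and smoothness off $Z_\epsilon$. This is Conjecture \ref{conrcd} in the special case where $X$ is itself smooth, so that the identity is a resolution with trivially nef relative anticanonical bundle. We would therefore invoke the results of \cite{Sze25,FGS25} for this setting.

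Concretely, the plan is to approximate $\omega_\epsilon$ by smooth Kähler metrics $\omega_{\epsilon,j}$. These arise by smoothing $\psi$ (Demailly), so that $\ric(\omega_{\epsilon,j})\ge-\epsilon\omega_{\epsilon,j}$ genuinely holds, while uniform diameter and volume-noncollapsing bounds come from the $L^p$ density through \cite{GPSS1,GPSS2}. Gromov compactness then gives a noncollapsed $\RCD(-\epsilon,2n)$ limit. The delicate part is identifying this limit with the metric completion of $(X\setminus Z_\epsilon,\omega_\epsilon)$ and showing that it is homeomorphic to $X$. That requires showing $Z_\epsilon$ has zero capacity/codimension in the limit and that the distance functions converge locally smoothly off $Z_\epsilon$ with uniform control near $Z_\epsilon$, via the Green function and diameter estimates of \cite{GPSS1,GPSS2,GS}. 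This identification is the step we expect to be hardest.
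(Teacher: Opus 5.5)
Your Steps 1--2 reproduce the paper's construction: openness, Demailly regularization to a potential $F_\epsilon$ with analytic singularities, and $(\theta+\ddbar\varphi_\epsilon)^n=e^{\epsilon\varphi_\epsilon-F_\epsilon}\theta^n$, which gives $\ric(\omega_\epsilon)\ge-\epsilon\omega_\epsilon$ as currents. The regularization loss $\epsilon'$ is harmless if you start from $\epsilon/2$. The gap is in Step 3, which carries all the content of (2).

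First, the approximation you propose cannot exist in general. If $\omega\in[\theta]$ is a smooth K\"ahler metric with $\ric(\omega)\ge-\epsilon\omega$, then $\ric(\omega)+\epsilon\omega$ is a smooth semipositive representative of $c_1(-K_X)+\epsilon[\theta]$, so this class must be nef. Asymptotic kltness does not give that. On $\mathbb{F}_n$ with $n\ge3$, $-K_X=C_0+(C_0+(n+2)f)$ is asymptotically klt by Lemma~\ref{1-2}, yet $-K_X\cdot C_0=2-n<0$. Said differently, smooth regularization of $\psi$ loses positivity in proportion to its Lelong numbers along $Z_\epsilon$, and that loss does not tend to $0$.

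Only a uniform Ricci bound is available. The paper rescales $\theta$ so that $\ric(\theta)+\theta>0$ and absorbs the potential: $\psi_\epsilon:=\varphi_\epsilon+F_\epsilon$ is psh for the K\"ahler form $\ric(\theta)+(1+\epsilon)\theta$. It therefore has smooth decreasing approximants $\psi_{\epsilon,j}$, by \cite{BK07} plus a regularized maximum to get smooth convergence off $Z_\epsilon$. Solving $(\theta+\ddbar\varphi_{\epsilon,j})^n=e^{(1+\epsilon)\varphi_{\epsilon,j}-\psi_{\epsilon,j}}\theta^n$ gives only $\ric(\omega_{\epsilon,j})\ge-(1+\epsilon)\omega_{\epsilon,j}$, so the limit is a priori only $\RCD(-2,2n)$. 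You then need a step your plan lacks. The limit is almost smooth in Honda's sense, with $X\setminus Z_\epsilon$ in its regular part where $\ric(\omega_\epsilon)\ge-\epsilon\omega_\epsilon$, and \cite[Lemma 2.3]{FGS25} then upgrades the global bound to $-\epsilon$.

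Second, you leave open the identification of the limit with $\hat X_\epsilon$ and the homeomorphism with $X$, and the tools you name do not supply them. The missing ingredient is the Chern--Lu/Schwarz lemma. The uniform Ricci lower bound and the $L^\infty$ bound on $\varphi_{\epsilon,j}$ give $\omega_{\epsilon,j}\ge c\,\theta$ with $c$ independent of $j$. Combined with the bounded volume density on compacts $K\subset X\setminus Z_\epsilon$, this gives $\omega_{\epsilon,j}\le C_K\theta$, and hence smooth convergence there by Evans--Krylov. Together with almost convexity of the complement of the real codimension-$\ge2$ set $Z_\epsilon$, this identifies the Gromov--Hausdorff limit with $\hat X_\epsilon$. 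The same lower bound makes the identity maps to $(X,\theta)$ uniformly Lipschitz, so they converge to a Lipschitz surjection $\iota_\infty:\hat X_\epsilon\to X$.

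Injectivity of $\iota_\infty$ requires separating points by holomorphic functions, which Green function and diameter estimates cannot provide. Citing \cite{Sze25,FGS25} does not settle it either. As Remark~\ref{rmk} notes, the separation step in \cite{FGS25} uses sections of an ample line bundle, which is unavailable here since $X$ need not be projective and $[\theta]$ need not be rational. The paper localizes instead. On a small Euclidean ball $B$, each $\omega_{\epsilon,j}$ is the curvature of the trivial bundle, and tangent cones are good by \cite{LS1}. The local partial $C^0$ estimate then produces holomorphic functions on $B$ separating any two points of $\iota_\infty^{-1}(B)$.
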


Now we proceed to the proof. Without loss of generality, we can assume that $\theta + \ric(\theta)>0$  after a fixed rescaling $\theta$.  For each $\epsilon>0$, by our assumption there exists $f_\epsilon\in \PSH(X, \ric(\theta)+ \epsilon \theta)$ such that $\mathscr{J}(f_\epsilon)=\mathscr{O}_X$. Then by openness \cite{Ber13,GZ15} we have the following lemma. 

\begin{lemma} \label{lem41}For any $\epsilon>0$, there exist $p>1$ and $f_\epsilon \in \PSH(X, \ric(\theta)+ \epsilon \theta)$ such that 
$$e^{-f_\epsilon}\in L^p(X). $$  

\end{lemma}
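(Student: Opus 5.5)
The plan is to combine Remark~\ref{rmk:asym_klt}(c) with the strong openness theorem of Guan--Zhou \cite{GZ15} (equivalently Berndtsson's openness \cite{Ber13}), applied on a compact manifold.

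First we unpack the hypothesis. Since $c_1(-K_X)=\{\ric(\theta)\}$ and $-K_X$ is asymptotically klt, Definition~\ref{def:asym_klt} gives, for each $\epsilon>0$, a current $T_{\min,\epsilon}$ with minimal singularities in $c_1(-K_X)[-\epsilon\theta]$ whose multiplier ideal is trivial. Write
\[
T_{\min,\epsilon}=\ric(\theta)+dd^c f_\epsilon .
\]
The condition $T_{\min,\epsilon}\ge -\epsilon\theta$ is exactly $f_\epsilon\in \PSH(X,\ric(\theta)+\epsilon\theta)$, and $\mathscr{J}(f_\epsilon)=\mathscr{O}_X$ by assumption. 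Equivalently, Remark~\ref{rmk:asym_klt}(c) supplies metrics $h_\epsilon$ on $-K_X$, and $f_\epsilon$ is their weight relative to the metric induced by $\theta^n$. Note that this choice of $f_\epsilon$ is just the one made in the sentence preceding the lemma.

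Next we upgrade $\mathscr{J}(f_\epsilon)=\mathscr{O}_X$ to an $L^p$ statement. Triviality of the multiplier ideal means that every point $x\in X$ has a neighbourhood $U_x$ on which $e^{-f_\epsilon}\in L^1(U_x)$. (Here we use the convention $\mathscr{J}(\varphi)$ built from $e^{-\varphi}$; if the paper's convention is $e^{-2\varphi}$, we rescale $f_\epsilon$ accordingly.) By the strong openness theorem there is then a $p_x>1$ and a possibly smaller neighbourhood $V_x\ni x$ with $e^{-p_xf_\epsilon}\in L^1(V_x)$. We only need the weak form at the trivial ideal: $\mathscr{J}(f)=\mathscr{O}$ implies $\mathscr{J}(p f)=\mathscr{O}$ near $x$ for some $p>1$. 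This is Berndtsson's openness conjecture, proved in \cite{Ber13}, and also follows from \cite{GZ15}. We may apply it because locally $f_\epsilon$ differs from a genuine psh function by a smooth function, namely a local potential of $\ric(\theta)+\epsilon\theta$.

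Finally, compactness finishes the argument. We cover $X$ by finitely many $V_{x_1},\dots,V_{x_N}$ and set $p=\min_i p_{x_i}>1$. By Hölder's inequality on each bounded piece, $e^{-pf_\epsilon}\in L^1(V_{x_i})$ for every $i$, so $e^{-f_\epsilon}\in L^p(X)$. Here $p$ may depend on $\epsilon$, which is all the lemma claims.

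The only nontrivial input is the openness theorem itself. The main point of care is bookkeeping: the quasi-psh versus psh reduction, the normalization convention for $\mathscr{J}$, and the fact that the $L^p$ exponent is uniform over $X$ only after we use compactness.
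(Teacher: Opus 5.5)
Your proposal is correct and follows the paper's proof: asymptotic kltness of $-K_X$ gives an $(\ric(\theta)+\epsilon\theta)$-psh function $f_\epsilon$ with $e^{-f_\epsilon}\in L^1$, and the solution of the Demailly--Koll\'ar openness conjecture \cite{Ber13,GZ15} upgrades this to $L^p$ for some $p>1$. You also spell out the finite-cover and H\"older step that yields one exponent $p$ on all of $X$, which the paper leaves implicit.
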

\begin{proof} By our assumption, $-K_X$ is pseudoeffective and asymptotically klt, thus there is a $(\ric(\theta)+\epsilon\theta)$-psh function $f_\epsilon$ such that $e^{-f_\epsilon}$ is $L^1$-integrable. By Demailly-Koll{\'a}r's openness conjecture solved by \cite{Ber13,GZ15}, $e^{-f_\epsilon}$ is $L^p$ integrable for some $p>1$. The lemma is thus proved.
  \end{proof}
\begin{lemma} \label{lem42} For any $\epsilon>0$, there exists $F_\epsilon \in \PSH(X, \ric(\theta)+ \epsilon \theta)$ satisfying the following 

\begin{enumerate}

\item $F_\epsilon\in C^\infty(X\setminus \cS)$ for some analytic subvariety $\cS$ of $X$ and $F_\epsilon$ has analytic singularities.

\smallskip

\item $e^{-F_\epsilon}\in L^p(X)$ for some $p>1$.

\end{enumerate}

\end{lemma}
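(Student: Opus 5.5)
The plan is to regularize the function $f_\epsilon$ of Lemma \ref{lem41} by Demailly's approximation theorem with analytic singularities, keeping the integrability exponent $p>1$ and paying for the regularization with a small extra loss in the lower bound. Fix $\epsilon>0$. Apply Lemma \ref{lem41} with $\epsilon/2$ in place of $\epsilon$: we get $f\in \PSH(X,\ric(\theta)+\tfrac{\epsilon}{2}\theta)$ and $p_0>1$ with $e^{-f}\in L^{p_0}(X)$.

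Demailly's regularization for quasi-psh functions on a compact K\"ahler manifold, in the Bergman-kernel version with analytic singularities, then gives a sequence $f_m$ with the following properties:
\begin{enumerate}
\item each $f_m$ has analytic singularities and is smooth outside the analytic set $\cS_m=\{f_m=-\infty\}$;
\item $f_m\ge f-C/m$, so $f_m$ is less singular than $f$;
\item $\ric(\theta)+\tfrac{\epsilon}{2}\theta+dd^c f_m\ge -\delta_m\theta$ with $\delta_m\to0$;
\item the multiplier ideals are preserved, $\mathscr{J}(cf_m)\supseteq\mathscr{J}(cf)$, or more simply $e^{-f_m}\le e^{C/m}e^{-f}$ by the lower bound in (2).
\end{enumerate}
Choose $m$ large so that $\delta_m\le \epsilon/2$, and set $F_\epsilon=f_m$. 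Then $F_\epsilon\in\PSH(X,\ric(\theta)+\epsilon\theta)$. Integrability follows directly from (2): $e^{-p_0F_\epsilon}\le e^{p_0C/m}e^{-p_0f}\in L^1$, so (2) of the lemma holds with $p=p_0$.

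For smoothness, Demailly's construction $f_m=\frac{1}{2m}\log\sum_j|\sigma_{j}|^2$, with $\sigma_j$ an orthonormal basis of local weighted Bergman spaces, glued via a partition of unity and a regularized max, is smooth off its polar set, which is analytic. So $\cS:=\cS_m$ works for (1). If one prefers exactly smooth off the poles with analytic-singularity type $\frac{c}{2}\log\sum|g_j|^2+O(1)$, this is the standard output of the gluing procedure.

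The main obstacle is step (3): the loss of positivity $\delta_m$ in Demailly's theorem depends on the Lelong numbers and curvature of $\theta$, and one must confirm it tends to zero, rather than merely staying bounded, for a fixed reference form $\ric(\theta)+\tfrac{\epsilon}{2}\theta$ that is not itself positive. This is handled by the version of the theorem for $\gamma$-psh functions with $\gamma$ an arbitrary smooth closed real $(1,1)$-form, where the loss is $\delta_m\theta$ with $\delta_m\to0$. Since we reserved $\epsilon/2$ of slack, it is absorbed.
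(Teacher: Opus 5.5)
Your proposal is correct and follows the same route as the paper. You take $f_{\epsilon/2}$ from Lemma \ref{lem41}, regularize it by Demailly's approximation with analytic singularities, absorb the positivity loss into the reserved $\epsilon/2$ slack, and get $e^{-F_\epsilon}\in L^p$ from a lower bound of $F_\epsilon$ by $f_{\epsilon/2}$. The only cosmetic difference is that the paper cites the decreasing approximation of \cite[Theorem (13.12), p.134]{Dem10}, giving $F_\epsilon\ge f_{\epsilon/2}$ exactly, whereas you use the Bergman-kernel version with $f_m\ge f-C/m$; both yield the same exponent $p$.
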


\begin{proof} We fix $f_{\epsilon/2}$ as chosen in Lemma \ref{lem41}. By Demailly's regularization \cite[Theorem (13.12), p.134]{Dem10}, there exist $f_{k, \epsilon}\in \PSH(X, \ric(\theta)+ 2\epsilon \theta)$ such that 

\begin{enumerate}

\item $f_{k, \epsilon/2}$ is smooth on $X\setminus Z_k$ of an analytic set $Z_k$ with 
$$Z_0\subset Z_1\subset ... \subset Z_k \subset ...\subset X. $$

\item $f_{k, \epsilon/2}$ has analytic singularities and $f_{k, \epsilon/2}$ decreasingly converge to $f_{\epsilon/2}$.  

\end{enumerate}
The lemma is then proved by choose $F_\epsilon = f_{k, \epsilon/2}$ for sufficiently large $k>0$. 
\end{proof}

We consider the following complex Monge-Amp{\`e}re equation 
\begin{equation}\label{eqn1}
(\theta + \ddbar \varphi_\epsilon)^n = e^{\epsilon\varphi_\epsilon - F_\epsilon }\theta^n. 
\end{equation}
Its induced curvature equation for $\omega_\epsilon = \theta + \ddbar \varphi_\epsilon$ is given by
\begin{equation}\label{ricepsilon}
\ric(\omega_\epsilon) = - \epsilon \omega_\epsilon + (\ric(\theta) + \epsilon \theta + \ddbar F_\epsilon) \geq - \epsilon \omega_\epsilon 
\end{equation}
in the sense of currents.

Ko\l{}odziej's theorem \cite{Kol} combined with Lemma \ref{lem42} immediately gives the following $L^\infty$-estimate. 

\begin{lemma} Fix $\epsilon\in (0, 1)$ and let $Z_\epsilon$ be the analytic subvariety of $X$ associated to the singularities of $F_\epsilon$, then there exists a unique $\varphi_\epsilon \in \PSH(X, \theta) \cap L^\infty(X) \cap C^\infty(X^\circ)$ solving \eqref{eqn1}, where $X^\circ:=X\setminus Z_\epsilon$. 
\end{lemma}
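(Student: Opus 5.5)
We prove existence, boundedness and uniqueness of a $\theta$-psh solution of \eqref{eqn1} via Ko\l{}odziej's theory (with the twist $e^{\epsilon\varphi}$ handled as in \cite{EGZ}), and then interior smoothness on $X^\circ$ via local higher-order estimates.

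\textbf{Existence and $L^\infty$ bound.} Write $g:=e^{-F_\epsilon}$. By Lemma \ref{lem42}, $g\in L^p(X,\theta^n)$ for some $p>1$. Since $e^{\epsilon\varphi}$ appears on the right-hand side, we first solve the untwisted problem $(\theta+\ddbar\psi)^n=c\, g\,\theta^n$, normalized by $\sup\psi=0$. Ko\l{}odziej's theorem \cite{Kol} gives a unique continuous solution with $\|\psi\|_{L^\infty}\le C(\|g\|_{L^p},p,\theta)$.

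For the twisted equation, we follow \cite{EGZ}. The constants $\psi+a$ give sub/supersolutions once $a$ is chosen so that $e^{\epsilon(\psi+a)}$ brackets $c$. Since $\psi$ is bounded, choosing $a$ large (resp. small) makes $\psi+a$ a supersolution (resp. $\psi-a'$ a subsolution). Concretely, run a Schauder fixed point or a continuity method in $\epsilon$ on smooth approximations $g_j=e^{-\max(F_\epsilon,-j)}$ smoothed. These give smooth solutions $\varphi_j$ with uniform $L^\infty$ bound, obtained from the maximum principle combined with Ko\l{}odziej's estimate, since $\|g_j\|_{L^p}$ is uniformly bounded. Passing to the limit using stability of Ko\l{}odziej's solutions then gives a bounded (indeed continuous) solution $\varphi_\epsilon$.

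\textbf{Uniqueness.} This follows from the comparison principle for bounded $\theta$-psh functions. If $\varphi,\varphi'$ are two solutions, then on $\{\varphi<\varphi'\}$ we have $(\theta+\ddbar\varphi)^n\le(\theta+\ddbar\varphi')^n$ there, by monotonicity of $e^{\epsilon\cdot}$. The standard argument of \cite{EGZ} then forces $\varphi\ge\varphi'$, and by symmetry $\varphi=\varphi'$.

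\textbf{Smoothness on $X^\circ$.} On $X^\circ$, $F_\epsilon$ is smooth with analytic singularities along $Z_\epsilon$. We apply the approximation above with smooth $F_{\epsilon,j}\downarrow F_\epsilon$ and derive local $C^2$ estimates on compacts $K\Subset X^\circ$. For this we use a Tsuji/Păun-type trick. Choose a quasi-psh $\rho$ with analytic singularities along $Z_\epsilon$ and $\theta+\ddbar\rho\ge\delta\theta$; this is available after perturbing, since $\theta$ is Kähler, for instance by taking $\rho=\eta\log|s|^2$, or simply using the analytic singularities of $F_\epsilon$ itself together with a Kähler form. Then apply the Aubin–Yau/Siu $C^2$ estimate to $\log\mathrm{tr}_\theta\omega_j-A(\varphi_j-\eta\rho)$. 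The main obstacle is controlling $\ddbar F_\epsilon$ from below. This is handled because $\ric(\theta)+\epsilon\theta+\ddbar F_\epsilon\ge0$ (as $F_\epsilon$ is quasi-psh), so Păun's version of the Laplacian estimate, which needs only a lower bound on the Ricci curvature of the right-hand side, applies. This yields $\mathrm{tr}_\theta\omega_j\le C e^{-A\eta\rho}$, hence local uniform $C^2$ bounds on compacts of $X^\circ$. Evans–Krylov and Schauder bootstrapping then give local $C^{k}$ bounds, so $\varphi_\epsilon\in C^\infty(X^\circ)$.
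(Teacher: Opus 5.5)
Your proposal is correct and follows the paper's route. The paper's entire proof is Ko\l{}odziej's $L^\infty$ theorem applied to the density $e^{-F_\epsilon}\in L^p$ from Lemma \ref{lem42}, with the twist $e^{\epsilon\varphi_\epsilon}$, uniqueness and interior regularity treated as standard (cf.\ \cite{EGZ}), and your sub/supersolution bracketing, comparison-principle uniqueness and approximation argument just make those steps explicit; the one thing to fix is the smoothness step. There the term to absorb in the Aubin--Yau inequality is $-\Delta_\theta F_{\epsilon,j}/\mathrm{tr}_\theta\omega_j$, which needs an upper bound on $\ddbar F_{\epsilon,j}$ and which a generic Tsuji weight $\rho$ does not control, so the weight must be the approximant itself: P{\u a}un's test function $\log\mathrm{tr}_\theta\omega_j+F_{\epsilon,j}-A\varphi_j$, using only a uniform bound $\ddbar F_{\epsilon,j}\ge -C\theta$, gives $\mathrm{tr}_\theta\omega_j\le Ce^{-F_\epsilon}$ on $X^\circ$ (alternatively, as in the paper's Lemma \ref{Sch}, combine the Chern--Lu lower bound $\omega_j\ge c\theta$ from the Ricci lower bound with the local upper bound on $\omega_j^n/\theta^n$).
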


 %We have the following lemma by standard regularization for $F_\epsilon$ along with the $L^\infty$-estimate and the Schwarz lemma.

%\begin{lemma} There exists a subvariety $Z_\epsilon$ of $X$ such that 
%
%
%$$\varphi_\epsilon \in C^\infty(X\setminus Z_\epsilon). $$
%

%
%\end{lemma}

We define 
$$(\hat X_\epsilon, d_\epsilon, \mu_\epsilon) = \overline{(X\setminus Z_\epsilon, \omega_\epsilon, \omega_\epsilon^n)}$$ as the metric completion of $X\setminus Z_\epsilon$.

If we let 
$$\psi_\epsilon  = \varphi_\epsilon + F_\epsilon \in PSH(X, \ric(\theta) + (1+\epsilon)\theta),$$ 
then equation (\ref{eqn1}) is equivalent to 
\begin{equation}\label{eqn2}
(\theta+ \ddbar \varphi_\epsilon)^n = e^{(1+\epsilon) \varphi_\epsilon - \psi_\epsilon}\theta^n. 
\end{equation}

\begin{lemma}\cite[Lemma 2.4]{FGS25} For each $\epsilon\in (0,1)$, there exist $\psi_{\epsilon, j} \in \PSH(X, \ric(\theta) +(1+\epsilon) \theta)\cap C^\infty(X)$ such that $\psi_{\epsilon, j}$ converges decreasingly to $\psi_\epsilon$ as $j\rightarrow\infty$. Furthermore, we can assume that $\psi_{\epsilon, j}$ converge smoothly to $\psi_\epsilon$ away from $Z_\epsilon$.
%\textcolor{red}{can we?the following argument works}

\end{lemma}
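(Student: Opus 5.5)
The plan is to regularize $\psi_\epsilon$ with Demailly's regularization and then use the analytic singularities of $F_\epsilon$ to get smooth convergence off $Z_\epsilon$. Recall that $\psi_\epsilon=\varphi_\epsilon+F_\epsilon$. Here $\varphi_\epsilon$ is bounded and $\theta$-psh, while $F_\epsilon$ is $(\ric(\theta)+\epsilon\theta)$-psh with analytic singularities along $Z_\epsilon$ and smooth on $X^\circ=X\setminus Z_\epsilon$. So $\psi_\epsilon$ is quasi-psh with respect to the smooth form $\chi:=\ric(\theta)+(1+\epsilon)\theta$, which is a Kähler form by our normalization $\theta+\ric(\theta)>0$. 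Moreover $\psi_\epsilon$ is smooth on $X^\circ$.

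First, I would apply Demailly's global regularization \cite[Theorem (13.12)]{Dem10}, or Demailly's smoothing via the exponential map on a compact Kähler manifold. Since $\chi$ is Kähler, this gives smooth functions $\tilde\psi_j\downarrow\psi_\epsilon$ with $\chi+\ddbar\tilde\psi_j\ge -\delta_j\theta$, where $\delta_j\to0$. To remove the loss of positivity, set
\[
\psi_{\epsilon,j}=(1-t_j)\tilde\psi_j+t_j\cdot 0-c_j ,
\]
with $t_j\to0$ chosen so that $t_j\chi\ge\delta_j\theta$, which is possible because $\chi\ge c\theta$. Then $\chi+\ddbar\psi_{\epsilon,j}\ge (1-t_j)(\chi+\ddbar\tilde\psi_j)+t_j\chi\ge (t_j c-\delta_j)\theta\ge 0$. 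Monotonicity is restored by adding small constants $c_j\downarrow 0$ and passing to a subsequence, using that $\psi_\epsilon$ is bounded above. This is the standard trick.

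The key remaining point is smooth convergence away from $Z_\epsilon$. Demailly's kernel construction $\tilde\psi_j(x)=\int\psi_\epsilon(\exp_x(\zeta))\rho_{\delta_j}(|\zeta|)\,d\lambda(\zeta)$, or any local convolution patched by a partition of unity, is a convolution-type average at scale $\delta_j\to0$. On any compact $K\subset X^\circ$, once $\delta_j$ is smaller than $\mathrm{dist}(K,Z_\epsilon)$, this average only sees values of $\psi_\epsilon$ on a compact neighborhood of $K$ where $\psi_\epsilon$ is $C^\infty$. Smooth convergence $\tilde\psi_j\to\psi_\epsilon$ in $C^k(K)$ for all $k$ then follows by differentiating under the integral. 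The convex-combination modification with $t_j,c_j\to0$ preserves smooth local convergence.

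I expect the main obstacle to be the interplay between monotonicity and the positivity correction. A cleaner alternative would use a Richberg-type gluing: take $\max_\eta(\psi_\epsilon,\ \log\sum|g_i|^2\text{-type model}-A)$ near $Z_\epsilon$, which is possible because $F_\epsilon$ has analytic singularities. If the kernel argument gets messy, I would fall back on that. Either way, the statement follows from \cite[Lemma 2.4]{FGS25} as cited, applied with the smooth Kähler reference form $\chi$.
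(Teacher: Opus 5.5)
\textbf{The gap is in your first step.} Neither construction you cite gives globally smooth approximants whose loss of positivity tends to zero here, and the K\"ahler property of $\chi$ is used by neither. \cite[Theorem (13.12)]{Dem10} produces functions with \emph{analytic singularities} satisfying $\nu(\varphi_m,x)\ge\nu(\psi_\epsilon,x)-n/m$. Since $\psi_\epsilon=\varphi_\epsilon+F_\epsilon$ has positive Lelong numbers along $Z_\epsilon$, these approximants are $-\infty$ on $Z_\epsilon$ for large $m$, so they are not smooth. This is exactly how the paper itself uses that theorem in Lemma \ref{lem42}.

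Demailly's exponential-map/Kiselman--Legendre smoothing gives approximants that are smooth only off $E_c=\{\nu(\psi_\epsilon,\cdot)\ge c\}$. Its loss of positivity is $\min(\lambda_j,c)\,u+\delta_j\theta$, where $\lambda_j\uparrow\nu(\psi_\epsilon,\cdot)$ and $u\ge0$ controls the negative part of the curvature of $T_X$. To make the approximants smooth everywhere you need $c>\max_X\nu(\psi_\epsilon,\cdot)$. Then near $Z_\epsilon$ the loss stays of order $\nu(\psi_\epsilon,x)\,u$, which $t_j\chi$ with $t_j\to0$ cannot absorb.

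The missing input is the B\l{}ocki--Ko\l{}odziej theorem \cite{BK07}, which is what the paper uses. It relies precisely on $\chi$ being K\"ahler and yields smooth $\chi$-psh functions $\psi_i\downarrow\psi_\epsilon$ with no loss at all, which also makes your convex-combination step unnecessary. Alternatively, you could smooth the bounded truncations $\max(\psi_\epsilon,-A)$, whose Lelong numbers vanish, and then diagonalize. A smaller point: subtracting $c_j$ with $c_j\downarrow0$ works against monotonicity. Normalizing $\tilde\psi_j\le0$ already makes $(1-t_j)\tilde\psi_j$ decreasing.

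\textbf{Your smooth-convergence argument has a second problem.} Differentiating under the integral works for a single-scale linear kernel average. The positivity-preserving regularizations are nonlinear: an infimum over scales, or Richberg-type gluing of local convolutions with quadratic corrections as in \cite{BK07}. For these, $C^\infty$ convergence off $Z_\epsilon$ is not automatic.

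The paper avoids analysing the regularization at all. From $\psi_i\downarrow\psi_\epsilon$, Dini's theorem gives uniform convergence on each compact $K\subset\subset X\setminus Z_\epsilon$. One then replaces $\psi_i$ by the regularized maximum $\mathcal M_{\tau_i}(\psi_i,\psi_\epsilon+\delta_i)$, with $\tau_i\ll\delta_i\to0$ chosen monotonically.
\begin{itemize}
\item Near $Z_\epsilon$, where $\psi_\epsilon\to-\infty$, it equals $\psi_i$.
\item On $K$ it equals $\psi_\epsilon+\delta_i$ once $\|\psi_i-\psi_\epsilon\|_{L^\infty(K)}$ is small compared with $\delta_i$.
\item Elsewhere both entries are smooth.
\end{itemize}
So the new sequence is smooth, $\chi$-psh and decreasing, and it converges smoothly on compacta of $X\setminus Z_\epsilon$ trivially.

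Your Richberg fallback with a $\log\sum|g_i|^2$ model is itself singular on $Z_\epsilon$, so it does not produce smooth functions. Your closing appeal to \cite[Lemma 2.4]{FGS25} is circular, since that is the statement being proved.
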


\begin{proof}Recall that, we have assuemd that $\ric(\theta) + \theta$ is K\"ahler. By \cite{BK07},  there exists $\psi_i \in \PSH(X, \ric(\theta) + (1+\epsilon)\theta) \cap C^\infty(X)$ such that $\psi_i$ converge to $\psi_\epsilon$ decreasingly. Since $\psi_\epsilon$ is smooth on $X\setminus Z_\epsilon$, $\psi_i$ converges to $\psi_\epsilon$ in $L^\infty(\cK)$ for any $\cK\subset\subset X\setminus Z_\epsilon$. 

We would like to modify $\psi_i$ so that it converges smoothly on $\cK$.  Note that $\varphi_\epsilon$ is bounded and $F_\epsilon(x)$ goes to $-\infty$ as $x\to Z_\epsilon,$ so $\psi_\epsilon\in\PSH(X, \ric(\theta) + (1+\epsilon)\theta)$ and goes to $-\infty$ when $x\rightarrow Z_\epsilon$. Let $\tilde \psi_{i, \tau, \delta} = \mathcal{M}_{\tau} (\psi_i, \psi_\epsilon + \delta )$ be the regularized maximum of $\psi_i$ and $\psi_\epsilon+\delta $  for $\delta, \tau>0$ (c.f. \cite[Lemma I.5.18, p.43]{agbook}). By definition, we have $\tilde\psi_{i, \tau, \delta}\in \PSH(X, \ric(\theta) + (1+\epsilon)\theta)$ with $\tilde\psi_{i, \tau, \delta}\geq \psi_i\geq \psi$ . Furthermore, $\tilde\psi_{i, \tau, \delta} \in C^\infty(X)$ since $\tilde \psi_{i, \tau, \delta} = \mathcal M_\tau(\psi_i)$ near $X\setminus Z_\epsilon$ for sufficiently small $\tau>0$. 

For any $K\subset\subset X\setminus Z_\epsilon$,  $\tilde \psi_{i, \tau, \delta}= \mathcal M_\tau(\psi_\epsilon  + \delta)$ on $K$ for sufficiently large $i>0$ and sufficiently small $\delta\gg\tau$, since $\psi_i$ converges to $\psi$ uniformly in $L^\infty(K)$.  At the same time, $\tilde \psi_{i, \tau, \delta} = \mathcal M_\tau(\psi_i)$ near $Z_\epsilon$ as $\psi_\epsilon+\delta $ tends to $-\infty$ along $X\setminus Z_\epsilon$. By choosing suitable $\tau_i, \delta_i \rightarrow 0$, $\tilde \psi_{i, \tau_i, \delta_i}$ converges to $\psi$ smoothly on any fixed compact subset of $X\setminus Z_\epsilon$. This proves the lemma.
\end{proof}
%\textcolor{red}{I think the argument is OK. Yet, there is one point that I am bit confused: since $X$ is smooth, the convolution is always smooth, it seems that there is no need to add the term $\delta^2F_\epsilon$ to control the behvaiour near $Z_\epsilon$. }

We can consider the following complex Monge-Amp{\`e}re equation
\begin{equation}\label{eqn3}
(\theta + \ddbar \varphi_{\epsilon, j})^n = e^{(1+\epsilon)\varphi_{\epsilon, j} - \psi_{\epsilon, j} }\theta^n.
\end{equation}
The corresponding curvature equation for $\omega_{\epsilon,j} = \theta + \ddbar \varphi_{\epsilon, j}$ is given by
\begin{equation}\label{ric}\ric(\omega_{\epsilon,j}) = - (1+\epsilon)  \omega_{\epsilon,j} + (\ric(\theta) + (1+\epsilon) \theta + \ddbar \psi_{\epsilon, j}) \geq -(1+\epsilon) \omega_{\epsilon, j}. \end{equation}
\begin{lemma} For any $\epsilon\in (0,1)$, there exist $p>1$ and $C>0$ independent of $j$ such that for all $j>0$ we have
$$\|e^{-\psi_{\epsilon, j}}\|_{L^p(X)} \leq C, $$
$$\|\varphi_{\epsilon, j} \|_{L^\infty(X)} \leq C $$
and
$$\diam(X, \omega_{\epsilon, j}) \leq C. $$
In addition, the Ricci curvature $\omega_{\epsilon, j}$ is uniformly bounded below by $-2$. 

\end{lemma}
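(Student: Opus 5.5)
The plan is to prove the four assertions in order: the $L^p$ bound, the $L^\infty$ bound, the Ricci lower bound, and finally the diameter bound, which is the only one needing real work.

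\emph{The $L^p$ bound.} Since $\psi_{\epsilon,j}$ decreases to $\psi_\epsilon$, we have $\psi_{\epsilon,j}\ge \psi_\epsilon$, and hence $e^{-\psi_{\epsilon,j}}\le e^{-\psi_\epsilon}$ pointwise. Moreover $\psi_\epsilon=\varphi_\epsilon+F_\epsilon$ with $\varphi_\epsilon$ bounded, so
\[
e^{-\psi_\epsilon}\le e^{\|\varphi_\epsilon\|_{L^\infty}}e^{-F_\epsilon}.
\]
By Lemma \ref{lem42}, $e^{-F_\epsilon}\in L^p(X)$ for some $p>1$. This gives $\|e^{-\psi_{\epsilon,j}}\|_{L^p}\le C$ uniformly in $j$.

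\emph{The $L^\infty$ bound.} Equation \eqref{eqn3} has a smooth right-hand side for each fixed $j$. Since the coefficient $1+\epsilon$ in front of $\varphi_{\epsilon,j}$ is positive, Aubin--Yau gives a unique smooth solution. For the uniform bound:
\begin{itemize}
\item At a maximum point of $\varphi_{\epsilon,j}$ the maximum principle gives $(1+\epsilon)\sup\varphi_{\epsilon,j}\le \sup_X\psi_{\epsilon,j}$. This is bounded above uniformly, because $\psi_{\epsilon,j}\le \psi_{\epsilon,1}$.
\item For the lower bound, write the equation as $\omega_{\epsilon,j}^n=e^{(1+\epsilon)\varphi_{\epsilon,j}}e^{-\psi_{\epsilon,j}}\theta^n$. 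The density $e^{(1+\epsilon)\varphi_{\epsilon,j}}e^{-\psi_{\epsilon,j}}$ is uniformly bounded in $L^p$, using the upper bound on $\varphi_{\epsilon,j}$ and the first step.
\item Ko\l{}odziej's estimate \cite{Kol}, applied after normalizing $\sup\varphi_{\epsilon,j}$, bounds the oscillation of $\varphi_{\epsilon,j}$.
\item Total volume is fixed: $\int_X\omega_{\epsilon,j}^n=\int_X\theta^n$. The left side is at most $e^{(1+\epsilon)\sup\varphi_{\epsilon,j}}\|e^{-\psi_{\epsilon,j}}\|_{L^1}$, so $\sup\varphi_{\epsilon,j}$ is bounded below. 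Combined with the oscillation bound, this yields $\|\varphi_{\epsilon,j}\|_{L^\infty}\le C$.
\end{itemize}

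\emph{The Ricci bound.} This is immediate from \eqref{ric}, since $\psi_{\epsilon,j}$ is smooth and $(\ric(\theta)+(1+\epsilon)\theta)$-psh. Thus $\ric(\omega_{\epsilon,j})\ge -(1+\epsilon)\omega_{\epsilon,j}\ge -2\omega_{\epsilon,j}$ for $\epsilon<1$.

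\emph{The diameter bound (main obstacle).} I expect this to be the hard step, since there is no a priori $C^0$ metric control near $Z_\epsilon$. Two routes are available:
\begin{itemize}
\item If a uniform lower bound $\omega_{\epsilon,j}\ge c\,\theta$ held, the diameter bound would follow from the lower Ricci bound together with volume considerations. However, such a bound seems unavailable near $Z_\epsilon$.
\item Instead, I would invoke the diameter estimates of Guo--Phong--Song--Sturm \cite{GPSS1,GPSS2}. These bound $\diam(X,\omega)$ in terms of $\|\omega^n/\theta^n\|_{L^p}$ (through an entropy bound $\int \log(\omega^n/\theta^n)\,\omega^n$), the fixed class $[\theta]$, and $n,p$. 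Here the densities $e^{(1+\epsilon)\varphi_{\epsilon,j}-\psi_{\epsilon,j}}$ are uniformly in $L^p$, so that estimate applies directly.
\item As a fallback, one could use Myers-type/Bishop--Gromov arguments for the smooth metrics. The Ricci lower bound $-2$ and the fixed volume give a diameter bound, provided we rule out long thin regions. A uniform non-collapsing lower bound on volumes of unit balls would do this, and it might be obtained from the GPSS Green's function estimates.
\end{itemize}
I would try the GPSS diameter theorem first, checking only that its hypotheses are uniform in $j$.
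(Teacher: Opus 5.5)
Your proposal is correct and follows the paper's own argument. The $L^p$ bound comes from $\psi_{\epsilon,j}\ge\psi_\epsilon=\varphi_\epsilon+F_\epsilon$ and Lemma~\ref{lem42}, and the $L^\infty$ bound from Ko\l{}odziej's estimate; your maximum-principle and volume-normalization details fill in what the paper calls standard. The Ricci bound comes from \eqref{ric}, and the diameter bound from \cite{GPSS2} via the uniform $L^p$ bound on the densities.

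One side remark on your discarded route: the uniform lower bound $\omega_{\epsilon,j}\ge c\,\theta$ that you call unavailable does hold by the Chern--Lu/Yau $C^2$ argument (Lemma~\ref{Sch}). However, a metric lower bound only bounds distances from below, so it would not give the diameter estimate anyway; choosing GPSS is the right call.
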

\begin{proof} Note that $\psi_{\epsilon,j}\geq \psi_\epsilon$ and the $L^p$ bound of $e^{-\psi_\epsilon}$ follows from the boundedness of $F_\epsilon$ and $\varphi_\epsilon$ \cite{Kol}. So $e^{-\psi_{\epsilon,j}}$ is uniformly $L^p$ bounded. Then the uniform $L^\infty$ bound of $\varphi_{\epsilon,j}$ is standard by \cite{Kol}. The diameter bound of $\omega_{\epsilon,j}$ follows from \cite{GPSS2}.
\end{proof}
\begin{lemma}\label{Sch}
There is a constant $C>0$ independent of $j$ such that
$$\omega_{\epsilon,j}\geq C\theta.$$ Moreover, for any compact set $K\subset\subset X\setminus Z_\epsilon$, $\varphi_{\epsilon, j}$ converge smoothly to $\varphi_\epsilon $ on $X\setminus Z_\epsilon$ after passing to a subsequence. 
\end{lemma}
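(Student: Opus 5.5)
We prove the lower bound $\omega_{\epsilon,j}\ge C\theta$ with the Chern--Lu inequality, i.e.\ the Schwarz lemma applied to the identity map $(X,\omega_{\epsilon,j})\to(X,\theta)$, and then obtain smooth convergence on compact subsets of $X\setminus Z_\epsilon$ from local higher-order estimates. Since $\psi_{\epsilon,j}$ is smooth, \eqref{eqn3} has a smooth solution, so $\omega_{\epsilon,j}$ is a genuine K\"ahler metric and the maximum principle can be applied on $X$. Set $u=\log\mathrm{tr}_{\omega_{\epsilon,j}}\theta$. Let $-B$ (with $B\ge0$) be a lower bound for the holomorphic bisectional curvature of $\theta$; this depends only on $\theta$. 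By \eqref{ric}, $\ric(\omega_{\epsilon,j})\ge-(1+\epsilon)\omega_{\epsilon,j}$. The Chern--Lu inequality then gives
\[
\Delta_{\omega_{\epsilon,j}}u\ \ge\ -(1+\epsilon)-2B\,\mathrm{tr}_{\omega_{\epsilon,j}}\theta .
\]
We also have $\Delta_{\omega_{\epsilon,j}}\varphi_{\epsilon,j}=n-\mathrm{tr}_{\omega_{\epsilon,j}}\theta$. Consider $H=u-A\varphi_{\epsilon,j}$ with $A=2B+1$. It satisfies
\[
\Delta_{\omega_{\epsilon,j}}H\ \ge\ \mathrm{tr}_{\omega_{\epsilon,j}}\theta-(1+\epsilon)-An .
\]
At a maximum point of $H$ this forces $\mathrm{tr}_{\omega_{\epsilon,j}}\theta\le C$. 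Using the uniform bound $\|\varphi_{\epsilon,j}\|_{L^\infty}\le C$ from the preceding lemma, we conclude $\sup_X H\le C$ and hence $\mathrm{tr}_{\omega_{\epsilon,j}}\theta\le C$ everywhere. This is exactly $\omega_{\epsilon,j}\ge C^{-1}\theta$, with a constant independent of $j$.

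For the convergence statement, fix $K\subset\subset X\setminus Z_\epsilon$ and a slightly larger $K'$ with $K\subset\subset K'\subset\subset X\setminus Z_\epsilon$. By the previous lemma, $\psi_{\epsilon,j}$ converges smoothly to $\psi_\epsilon$ on $K'$, so it has uniform $C^k$ bounds there. The volume form equation \eqref{eqn3} gives $\omega_{\epsilon,j}^n=e^{(1+\epsilon)\varphi_{\epsilon,j}-\psi_{\epsilon,j}}\theta^n$, which is bounded above on $K'$ because $\varphi_{\epsilon,j}$ is uniformly bounded and $\psi_{\epsilon,j}$ is bounded on $K'$. Combined with the lower bound $\omega_{\epsilon,j}\ge C^{-1}\theta$, this gives the upper bound $\omega_{\epsilon,j}\le C_{K'}\theta$ on $K'$. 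Thus $\omega_{\epsilon,j}$ is uniformly equivalent to $\theta$ on $K'$, and the equation is uniformly elliptic there.

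We then apply the local Evans--Krylov theory (or the local Calabi $C^3$ estimate) and Schauder bootstrapping to \eqref{eqn3} on $K'$. This yields uniform $C^{k}(K)$ bounds on $\varphi_{\epsilon,j}$ for all $k$. By Arzel\`a--Ascoli and a diagonal argument over an exhaustion of $X\setminus Z_\epsilon$, a subsequence converges in $C^\infty_{loc}(X\setminus Z_\epsilon)$ to some limit $\varphi_\infty$.

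It remains to identify $\varphi_\infty$ with $\varphi_\epsilon$. Since $\psi_{\epsilon,j}$ decreases to $\psi_\epsilon$, the densities $e^{-\psi_{\epsilon,j}}$ converge to $e^{-\psi_\epsilon}$ in $L^p$ by monotone and dominated convergence, using the uniform $L^p$ bound from the preceding lemma. Kolodziej's stability theorem then gives $\varphi_{\epsilon,j}\to\varphi_\epsilon$ in $L^\infty(X)$. Therefore $\varphi_\infty=\varphi_\epsilon$, and the uniqueness of the solution of \eqref{eqn2} shows that the limit does not depend on the subsequence.

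The main obstacle is the $j$-independence of the Chern--Lu step. It works only because the Ricci lower bound \eqref{ric} is uniform, since $\psi_{\epsilon,j}$ is $(\ric(\theta)+(1+\epsilon)\theta)$-psh, and because the $C^0$ bound on $\varphi_{\epsilon,j}$ is uniform. Both facts were established earlier, so no control on $\psi_{\epsilon,j}$ near $Z_\epsilon$ is needed for the lower bound.
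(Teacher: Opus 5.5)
Your proposal is correct and follows the paper's route. The paper's ``Yau's $C^2$ estimate'' from the uniform Ricci lower bound \eqref{ric} is exactly your Schwarz-lemma (Chern--Lu) argument with the uniform $C^0$ bound, followed by the local upper bound on compact subsets of $X\setminus Z_\epsilon$ from the volume form and Evans--Krylov; your Ko\l{}odziej-stability step also supplies the identification of the limit with $\varphi_\epsilon$, which the paper leaves implicit. One small slip: Chern--Lu requires an \emph{upper} bound $B$ on the bisectional curvature of the target metric $\theta$ (the lower bound is what Yau's estimate for $\mathrm{tr}_\theta\omega$ uses), but this is harmless since $\theta$ is a fixed smooth metric on compact $X$.
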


\begin{proof} By the Ricci lower bound \eqref{ric}, $\omega_{\epsilon,j}\geq C\theta$ follows from Yau's $C^2$ estimate. Note $\psi_{\epsilon,j}$ converge smoothly to $\psi_\epsilon$ on $X\setminus Z_\epsilon$. This implies that $\omega_{\epsilon,j}\leq C_K\theta$ for some positive constant $C_K$. Then higher order estimates will follow from Evans-Krylov theory. 
\end{proof}

\begin{lemma}[{\bfseries =Proof of Theorem \ref{zz}}]
\label{A} 
$(X, \omega_{\epsilon, j}, \omega_{\epsilon,j}^n)$ converges to $( \hat X_\epsilon, d_\epsilon, \mu_\epsilon)$ in Gromov-Hausdorff topology as $j\rightarrow \infty$ with  $\hat X_\epsilon$ homeomorphic to $X.$
Moreover,  $(\hat X_\epsilon, d_\epsilon, \mu_\epsilon)$ is an $\RCD(-\epsilon, 2n)$-space.

\end{lemma}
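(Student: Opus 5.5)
We follow the strategy of \cite{Sze25,FGS25}: realize $(\hat X_\epsilon,d_\epsilon,\mu_\epsilon)$ as a Gromov--Hausdorff limit of smooth manifolds with uniform Ricci and diameter bounds, identify the limit with the metric completion of the regular part, and then improve the curvature bound from $-(1+\epsilon)$ to $-\epsilon$.

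\textbf{Step 1 (limit space).} By the uniform diameter bound, the lower bound $\ric(\omega_{\epsilon,j})\ge-2\omega_{\epsilon,j}$ and the fixed volume $\int_X\theta^n$, Gromov compactness gives a subsequence with $(X,\omega_{\epsilon,j},\omega^n_{\epsilon,j})\to(Y,d_Y,\mu_Y)$ in measured GH topology. This limit is a non-collapsed $\RCD(-2,2n)$ space, since smooth manifolds with these bounds are $\RCD$ and the class is closed under mGH limits.

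\textbf{Step 2 (identification).} By Lemma \ref{Sch}, $\omega_{\epsilon,j}\to\omega_\epsilon$ smoothly on compacts of $X\setminus Z_\epsilon$, so $(X\setminus Z_\epsilon,\omega_\epsilon)$ embeds locally isometrically into $Y$. We must show:
\begin{itemize}
\item the embedding is a global isometry onto a dense, full-measure open subset;
\item $Y$ is the metric completion $\hat X_\epsilon$.
\end{itemize}
This step is the main obstacle. The plan is to use the volume bound: $\omega_{\epsilon,j}^n=e^{(1+\epsilon)\varphi_{\epsilon,j}-\psi_{\epsilon,j}}\theta^n$ is uniformly $L^p$, so tubular neighbourhoods of $Z_\epsilon$ have uniformly small $\omega_{\epsilon,j}$-volume. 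Combined with Bishop--Gromov volume comparison, in the spirit of \cite{GPSS2}, this shows:
\begin{itemize}
\item the image of $Z_\epsilon$ in $Y$ has measure zero;
\item almost every pair of points in $X\setminus Z_\epsilon$ is joined by almost minimizing curves avoiding a neighbourhood of $Z_\epsilon$, using the segment inequality and the fact that $Z_\epsilon$ has real codimension $\ge2$.
\end{itemize}
Hence $d_Y$ restricted to $X\setminus Z_\epsilon$ equals the induced length metric of $\omega_\epsilon$, and $Y=\hat X_\epsilon$.

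\textbf{Step 3 (homeomorphism).} The estimate $\omega_{\epsilon,j}\ge C\theta$ makes the identity maps $(X,\omega_{\epsilon,j})\to(X,\theta)$ uniformly Lipschitz. They therefore converge to a $1$-Lipschitz surjection $\Phi:\hat X_\epsilon\to X$ that is the identity on $X\setminus Z_\epsilon$. For injectivity we need that points of $\hat X_\epsilon$ over one point $z\in Z_\epsilon$ are at distance zero. This follows from a uniform H\"older-type bound on $d_{\omega_{\epsilon,j}}$ in terms of $d_\theta$, derived from $\|\varphi_{\epsilon,j}\|_{L^\infty}\le C$ and the $L^p$ volume density, again as in \cite{GPSS2,FGS25}. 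A continuous bijection from a compact space to a Hausdorff space is a homeomorphism.

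\textbf{Step 4 (curvature).} We upgrade $\RCD(-2,2n)$ to $\RCD(-\epsilon,2n)$. On the regular part $\ric(\omega_\epsilon)\ge-\epsilon\omega_\epsilon$ holds smoothly by \eqref{ricepsilon}. Since the singular set $Z_\epsilon$ has measure zero and finite-codimension-$2$ type (its capacity vanishes), and $\hat X_\epsilon$ is already known to be $\RCD(-2,2n)$, we apply the local-to-global criterion of \cite{Sze25,FGS25}. Concretely, we verify the Bochner inequality with constant $-\epsilon$ weakly, using cut-off functions near $Z_\epsilon$ with small $W^{1,2}$ energy, which exist because $\omega_\epsilon$ has bounded potential and the $L^p$ volume density controls capacity. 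This yields the $\RCD(-\epsilon,2n)$ condition.
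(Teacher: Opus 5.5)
Your overall structure matches the paper's: Gromov compactness for $\omega_{\epsilon,j}$, identification of the limit with the completion of $(X\setminus Z_\epsilon,\omega_\epsilon)$, the Lipschitz limit $\iota_\infty$ of the identity maps, and an upgrade from $\RCD(-2,2n)$ to $\RCD(-\epsilon,2n)$ using \eqref{ricepsilon} on $X\setminus Z_\epsilon$. The real divergence is injectivity in Step 3. The paper proves injectivity with a local partial $C^0$ estimate. The Schwarz lemma of \cite{CCHSTT25} puts $\omega_{\epsilon,j}$-balls inside a Euclidean ball $B$. On $B$, $\omega_{\epsilon,j}$ is the curvature of a trivial bundle, and the tangent cones are good by \cite{LS1}. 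Holomorphic peak functions on $B$ then separate any two points of the limit lying over $B$. You instead rely on a uniform bound $d_{\omega_{\epsilon,j}}(x,y)\le C\,d_\theta(x,y)^\beta$. As you justify it, this bound is not established. An $L^\infty$ bound on $\varphi_{\epsilon,j}$ does not by itself control $d_{\omega_{\epsilon,j}}$ at small scales. What the paper takes from \cite{GPSS2} is only the diameter bound. And \cite{FGS25} gets the homeomorphism through partial $C^0$ estimates, not through such a comparison. Since this bound is the entire content of injectivity, you have to prove it.

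In this smooth setting it can be proved along your lines. First, the densities $e^{(1+\epsilon)\varphi_{\epsilon,j}-\psi_{\epsilon,j}}$ are uniformly in $L^p$. Ko\l{}odziej's H\"older continuity theorem, in its uniform form due to Demailly--Dinew--Guedj--Hiep--Ko\l{}odziej--Zeriahi, then gives $\|\varphi_{\epsilon,j}\|_{C^\alpha(X,\theta)}\le C$ independently of $j$. Next, integrate by parts against a cut-off for $B_s(x)\subset B_{2s}(x)$ in a chart: $\int_{B_s(x)}\omega_{\epsilon,j}\wedge\theta^{n-1}\le Cs^{2n}+Cs^{2n-2}\,\mathrm{osc}_{B_{2s}(x)}\varphi_{\epsilon,j}\le Cs^{2n-2+\alpha}$. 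Since $|v|^2_{\omega}\le \mathrm{tr}_\theta\omega\,|v|^2_\theta$, the direction-average of the $\omega_{\epsilon,j}$-lengths of radial segments of $\theta$-length $r$ from $x$ is bounded by $\int_{B_r(x)}\sqrt{\mathrm{tr}_\theta\omega_{\epsilon,j}}\,|y-x|^{1-2n}$. Splitting into dyadic annuli and applying Cauchy--Schwarz shows this is at most $Cr^{\alpha/2}$. By Chebyshev, if $d_\theta(x,y)=\rho$ there is a point $w\in B_{2\rho}(x)\cap B_{2\rho}(y)$ such that both segments $[x,w]$ and $[w,y]$ have $\omega_{\epsilon,j}$-length at most $C\rho^{\alpha/2}$. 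Hence $d_{\omega_{\epsilon,j}}\le C\,d_\theta^{\alpha/2}$ uniformly in $j$. This passes to the limit and closes your Step 3; it even gives a H\"older inverse to $\iota_\infty$.

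Compared with the paper, this route is more elementary and quantitative. However, it uses the smoothness of $X$ essentially (segments in charts), whereas the paper's local partial $C^0$ argument is the one reused in the singular settings (Proposition \ref{thm:FGS}, Proposition \ref{Prop:antinefpair}, Appendix \ref{appI}). One caution on Step 2 if you run it through the segment inequality as sketched. Small tube volume is not enough; you need $\Vol_{\omega_{\epsilon,j}}(T_\delta(Z_\epsilon))=o(\delta)$ to beat the crossing length $\gtrsim\delta$ given by $\omega_{\epsilon,j}\ge C\theta$. H\"older's inequality with an $L^p$ density only gives $O(\delta^{2-2/p})$, which suffices only when $p>2$.
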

\begin{proof} By the Ricci lower bound \eqref{ric}, and uniform diameter upper bound \cite{GPSS2} (independent of $j$), standard Cheeger-Colding theory implies that up to a subsequence $(X, \omega_{\epsilon, j}, \omega_{\epsilon,j}^n)$ converges to a compact metric measure space in Gromov-Hausdorff topology  as $j\rightarrow \infty$. Since $Z_\epsilon$ is an analytic subvariety of $X$ of Hausdorff codimension no less than $2$, $X\setminus Z_\epsilon$ is uniformly almost convex in $(X, \omega_{\epsilon, j})$ (\cite{Gromov}). Therefore the limiting space  must coincide with $(\hat X_\epsilon, d_\epsilon, \mu_\epsilon)$. It is clear that $\hat X_\epsilon\in \RCD(-2,2n)$ and moreover it is an almost smooth metric measure space in the sense of Honda \cite{Ho}. Then by the Ricci lower bound on   $X \setminus Z_\epsilon$ \eqref{ricepsilon},  we can improve the global Ricci lower bound to $-\epsilon$ and so $ \hat X_\epsilon \in \RCD(-\epsilon, 2n)$ (c.f. \cite[Lemma 2.3]{FGS25} for more details).  

By Lemma \ref{Sch}, the identity map $\iota_j: (X,\omega_{\epsilon,j})\rightarrow (X,\theta)$ is a sequence of uniform Lipschitz map. Furthermore, $\iota$ will converge to a Lipschitz surjective map $\iota_\infty: \hat X_\epsilon \to X$. 

Next we will show that $\iota_\infty$ is injective by a local version of the fundamental partial $C^0$-estimate of Tian \cite{Tia90} (see also the fundamental work of Donaldson-Sun \cite{DS14}). One cannot directly apply the global partial $C^0$-estimate since $[\theta]$ is not necessarily a rational class. However, due to the Schwarz lemma \cite[Theorem 1.2]{CCHSTT25},  if we fix an open Euclidean ball $B \subset X$ (in particular, it is strongly pseudoconvex), $B$ contains a geodesic ball of $\omega_{\epsilon, j}$ for $j$ sufficiently large. So each $\omega_{\epsilon, j}$ is the curvature of a trivial line bundle $L_j$ on $B$ and $(B, \omega_{\epsilon,j})$ converge locally to $(B_\epsilon, d_\epsilon) \subset \hat X_\epsilon$.  By \cite{LS1}, each tangent cone of $(B_\epsilon, d_\epsilon)$ is good in the sense of \cite{CDSII}. We can immediately apply the partial $C^0$-estimates on $B$ in \cite{LS1} as a quantitative Kodaira embedding theorem. In particular, for any two different points $p,q\in B_\epsilon$, there is a holomorphic function $\sigma$ on $B$ such that $\iota_\infty^*\sigma(p)\neq \iota_\infty^*\sigma(q)$. This implies that $\iota_\infty(p)\neq \iota_\infty(q)$. Hence $\iota_\infty$ is injective on $\iota_\infty^{-1}(B)$, hence injective on $X_\epsilon$.

Finally, we remark that $X\setminus Z_\epsilon$ lies in the regular part of $\hat X_\epsilon$ as $\omega_{\epsilon,j}$ converges smoothly to $\omega_\epsilon$ away from $Z_\epsilon$. 
\end{proof}

%%%%%%%%%%%%%%%%%%%%%%%%%%%%%%%%%%%%%%%%%%%%%

%\textcolor{red}{XIN: REWRITE A LITTLE BIT
%On the other hand, to further exhibit our theme in this paper: we can derive the nilpotency property of the fundamental groups of varieties under consideration in this note by constructing a sequence of Ricci almost nonnegative RCD space, we treat the klt pair case of Theorem \ref{cor:main1} by constructing conic type metrics. We fix the set-up as follows:} 

%Suppose $X$ is an $n$-dimensional K{\"a}hler manifold and 

%Let $\omega_0 \in \alpha$ be a fixed K{\"a}hler metric on $X$. Since $-(K_X + \Delta)$ is nef, there exists $f\in PSH(X, \ric(\omega_0)- \ric(h))$ such that $f$ has minimal singularities. In particular, the Lelong number of $f$ vanishes everywhere.

%Since $-(K_X+\Delta)$ is klt, there exists $p>1$ such that $  |\sigma|_h^{-2} \in L^p(X, (\omega_0)^n). $ Hence

%\begin{lemma} For all $\epsilon\in (0,1)$, there exists a unique solution $\varphi_\epsilon \in \PSH(X, \omega_0)\cap L^\infty(X)\cap C^\infty(X\setminus \Delta)$ to equation (\ref{cma1}).  
%\end{lemma}
%Similarly, we define 
%
%$$(\hat X_\epsilon, d_\epsilon, \mu_\epsilon) = \overline{(X\setminus \Delta ,\omega_\epsilon, \omega_\epsilon^n)}$$ as the metric completion of $X\setminus \Delta$. 

%\begin{proposition}\label{B}   $\hat X_\epsilon$ homeomorphic to $X.$
%Moreover,  $(\hat X_\epsilon, d_\epsilon, \mu_\epsilon)$ is an $\RCD(-\epsilon, 2n)$-space. 
%\end{proposition}
%\begin{proof} The proof is the same as the proof of Lemma \ref{A}.
%\end{proof}
\subsubsection{complex dimension three case:}
We proceed to construct a sequence of RCD spaces with almost nonnegative Ricci curvature under the assumption of  Corollary \ref{cor:main1}.  By Lemma \ref{1-2}, item $(2)$ of Corollary \ref{cor:main1} is a special case of Theorem \ref{thm:main1}, so we will focus on item $(1)$ of Corollary \ref{cor:main1} from now on.  We will first assume $\Delta=0$. Let $X$ be a $3$-dimensional normal K\"ahler variety with log terminal singularities. We let $X^\circ$ be the smooth part of $X$. Suppose $-K_X$ is nef.  Let $\theta$ be a smooth K{\"a}hler form and $\Omega$ be a smooth adapted volume measure on $X$. 
%We may also assume that the class of $\theta- K_X$ is K\"ahler and 
 %
 %$$\ric(\Omega) + \theta$$
 %
 %is a smooth K{\"a}hler metric on $X$. 
 %
 Since $-K_X$ is nef, for any $\epsilon\in (0,1)\cap \mathbb{Q}$, $c_1(X) +\epsilon[\theta]$ is K\"ahler class and so there exists $f_\epsilon \in \PSH(X, \ric(\Omega) + \epsilon \theta)\cap C^\infty(X)$.

 We will consider the following complex Monge-Amp{\`e}re equation
\begin{equation}\label{cma3d}
(\theta + \ddbar \varphi_\epsilon)^n = e^{\epsilon \varphi_\epsilon+ f_\epsilon} \Omega. 
\end{equation}

 We let $\omega_\epsilon = \theta + \ddbar \varphi_\epsilon$.  Then the curvature equation induced by equation (\ref{cma3d}) is given by 
 \begin{equation}\label{curveq3d}
 \ric(\omega_\epsilon) = -\epsilon \omega_\epsilon + \left( \ric(\Omega) + \epsilon \theta + \ddbar f_\epsilon \right) \geq -\epsilon \omega_\epsilon.
 \end{equation}

 Then the following regularity result for solution to \eqref{curveq3d} is standard (c.f. \cite{EGZ}). 
 \begin{lemma} For any $\epsilon \in (0,1) \cap \mathbb{Q}$, there exists a unique $\varphi_\epsilon \in \PSH(X, \theta) \cap L^\infty(X) \cap C^\infty(X^\circ)$. 
 
 \end{lemma}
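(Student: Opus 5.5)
This is the existence and regularity statement in the style of Eyssidieux--Guedj--Zeriahi \cite{EGZ}: for each $\epsilon\in(0,1)\cap\mathbb{Q}$, equation \eqref{cma3d} has a unique solution $\varphi_\epsilon\in\PSH(X,\theta)\cap L^\infty(X)\cap C^\infty(X^\circ)$. The plan has three steps. First pass to a resolution and obtain a bounded solution by Ko\l{}odziej's theorem. Then get uniqueness from the comparison principle. Finally get smoothness on the regular locus from a Laplacian estimate with barriers along the exceptional divisor.

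\textbf{Existence.} Take a log resolution $\pi:Y\to X$ that is an isomorphism over $X^\circ$, with exceptional divisor $E=\sum E_i$ of simple normal crossings. Since $X$ is klt and $\Omega$ is an adapted volume form, we can write $\pi^*\Omega = \prod_i|s_i|^{2a_i}\,dV_Y$ with all $a_i>-1$. Hence $\pi^*\Omega$ has density in $L^p(Y)$ for some $p>1$. The function $f_\epsilon$ is smooth on $X$, hence bounded, so $e^{f_\epsilon}\pi^*\Omega$ also has $L^p$ density. The class $\pi^*[\theta]$ is semipositive and big. The equation
\[
(\pi^*\theta+\ddbar\phi)^n=e^{\epsilon\phi+\pi^*f_\epsilon}\pi^*\Omega
\]
has a twisting factor $e^{\epsilon\phi}$ that is increasing in $\phi$. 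The EGZ/Ko\l{}odziej theory for semipositive big classes with $L^p$ densities then gives a bounded $\pi^*\theta$-psh solution. One can solve the twisted equation either by a fixed-point argument on the normalizing constant or directly by the variational method. Since $\phi$ is bounded, it is constant on the connected fibres of $\pi$ ($X$ is normal), so it descends to $\varphi_\epsilon\in\PSH(X,\theta)\cap L^\infty(X)$.

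\textbf{Uniqueness.} Suppose $\varphi_1,\varphi_2$ are two bounded solutions. On the set $\{\varphi_1<\varphi_2\}$ the right-hand side for $\varphi_1$ is strictly smaller than the right-hand side for $\varphi_2$. The comparison principle for bounded $\theta$-psh functions then forces this set to have zero Monge--Amp\`ere measure, hence to be empty. By symmetry $\varphi_1=\varphi_2$. The sign $\epsilon>0$ of the exponent is what makes this argument work.

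\textbf{Smoothness on $X^\circ$.} This is the main obstacle, and I would follow \cite{EGZ} (Paun's Laplacian estimate). Choose $\pi^*\theta-\delta[E]$ Kähler on $Y$ for a suitable effective exceptional divisor, so that $\pi^*\theta-\delta\ddbar\log|s_E|^2$ is bounded below by a Kähler form away from $E$. Approximate the density on $Y$ by smooth positive densities and solve the equations on $Y$ with Kähler classes $\pi^*[\theta]+t[\omega_Y]$. The $C^0$ bound is uniform by Ko\l{}odziej. For the Laplacian bound, run the Aubin--Yau/Siu $C^2$ estimate with barrier $\log|s_E|^2$. The key input is a lower bound for the bisectional curvature of the reference form together with a quasi-psh lower bound for $\log$ of the density. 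This yields
\[
\Delta_{\omega_Y}\phi\le C|s_E|^{-2N}.
\]
Evans--Krylov and Schauder then give local $C^\infty$ bounds on compact subsets of $Y\setminus E\cong X^\circ$. Passing to the limit, by uniqueness, gives the smooth solution there. The delicate point is handling the poles and zeros of $\pi^*\Omega$ along $E$ (the $a_i$ of either sign) in the $C^2$ estimate. I expect Paun's trick of writing the density as $e^{\psi^+-\psi^-}$ with $\psi^\pm$ quasi-psh to suffice, since $f_\epsilon$ itself is smooth.
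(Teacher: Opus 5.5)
Your proposal is correct and is essentially the route the paper takes: the paper only remarks that existence, uniqueness and smoothness on $X^\circ$ are standard, citing \cite{EGZ}, and your three steps spell out that standard argument (a bounded solution on a resolution via Ko\l{}odziej's estimate, uniqueness from the comparison principle, smoothness on $X^\circ$ from a barrier along the exceptional divisor plus P{\u a}un's Laplacian estimate). One harmless sign slip: with $\ddbar\log|s_E|^2_h=[E]-\Theta_h(E)$, the form that is Kähler away from $E$ is $\pi^*\theta+\delta\ddbar\log|s_E|^2_h$, not $\pi^*\theta-\delta\ddbar\log|s_E|^2_h$.
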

Now $\omega_\epsilon$ is a smooth K{\"a}hler metric on $X^\circ$ and we define
 $$(\hat X_\epsilon, d_\epsilon, \mu_\epsilon) = \overline{(X^\circ, \omega_\epsilon, \omega_\epsilon^n)}.$$
 as the metric completion of the metric measure space $(X^\circ, \omega_\epsilon, \omega_\epsilon^n)$. The following regularity result of $(\hat X_\epsilon, d_\epsilon, \mu_\epsilon)$  is established in \cite[Theorem 1.1]{FGS25}. 

\begin{proposition}[\cite{FGS25}] 
\label{thm:FGS}
For any $\epsilon \in (0,1) \cap \mathbb{Q}$,   $(\hat X_\epsilon, d_\epsilon, \mu_\epsilon)$ is a compact $\RCD(-\epsilon, 2n)$-space homeomorphic $X$. In particular,
\begin{enumerate}

\item  the regular part of $\hat X_\epsilon$ coincides with $X^\circ$,  
 
 \smallskip
 
\item the identity map $\iota: (X^\circ, \omega_\epsilon) \rightarrow (X^\circ, \theta)$ extends to a surjective Lipschitz map from $ (\hat X_\epsilon, d_\epsilon)$ to  $(X, \theta)$.

\end{enumerate}

 \end{proposition}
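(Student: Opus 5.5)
The plan follows the same scheme as Lemma \ref{A}, with one change: the Ricci-bounded smooth approximations now live on a resolution of $X$, since $X$ itself is singular.

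\textbf{Step 1: approximating metrics.} Since $\dim X=3$ and $X$ is klt, take a log resolution $\pi: Y\to X$. Following \cite{FGS25}, solve on $Y$ a family of perturbed Monge--Amp\`ere equations
\[
(\pi^*\theta+s\theta_Y+\ddbar\varphi_{\epsilon,j})^n=c_{j}\,e^{\epsilon\varphi_{\epsilon,j}+f_{\epsilon,j}}\,\pi^*\Omega,
\]
where $\theta_Y$ is a K\"ahler form on $Y$, $s=s_j\to0$, and $f_{\epsilon,j}$ is a smooth approximation of $f_\epsilon$ chosen so that the right-hand side is a smooth positive volume form. The adapted measure $\pi^*\Omega$ has poles and zeros along the exceptional divisor; these must be regularized by a factor $\prod(|s_E|^2+t_j)^{a_E}$. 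Each solution then gives a smooth K\"ahler metric on $Y$ whose Ricci curvature is bounded below uniformly in $j$.

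For $t_j$ fixed, the corresponding Ricci form is exactly the source of the difficulty. It gets a lower bound only where the exceptional coefficients are positive, i.e.\ for discrepancies $a_E\ge0$. For the negative discrepancies $a_E\in(-1,0)$ allowed by klt, one needs the special structure of $3$-dimensional klt singularities used in \cite{FGS25}, which is why I invoke their construction rather than redo it.

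\textbf{Step 2: uniform estimates.} Uniform $L^\infty$ bounds on $\varphi_{\epsilon,j}$ follow from Ko{\l}odziej's estimate \cite{Kol}, since the densities are uniformly $L^p$ by the klt condition. Uniform diameter bounds follow from \cite{GPSS2}. Local smooth convergence to $\omega_\epsilon$ on $X^\circ$ follows from Yau's $C^2$ estimate together with a Chern--Lu/Schwarz lemma comparison with $\pi^*\theta$, followed by Evans--Krylov.

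\textbf{Step 3: identifying the limit.} Cheeger--Colding compactness gives a Gromov--Hausdorff limit. The complement of $X^\circ$ has real codimension at least $2$, so almost convexity \cite{Gromov} identifies this limit with $\hat X_\epsilon$. The limit is RCD by stability, and Honda's almost smooth criterion \cite{Ho} improves the lower bound to $\RCD(-\epsilon,2n)$ using \eqref{curveq3d} on $X^\circ$. The Schwarz lemma gives that $\pi$ composed with the identity is uniformly Lipschitz. This yields the surjective Lipschitz map $\hat X_\epsilon\to X$ in (2).

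\textbf{Step 4: homeomorphism and regular part.} Injectivity is the main obstacle. I would use the local partial $C^0$ estimate of \cite{LS1} on Stein neighbourhoods, as in Lemma \ref{A}, to separate points by holomorphic functions pulled back from $X$. Exceptional fibres of $\pi$ must collapse to points; this holds because holomorphic functions on $X$ are constant on the fibres. For (1), points of $X^\circ$ are regular by smooth convergence. Conversely, at a singular point of $X$ the tangent cone is a metric cone over a klt singularity. It cannot be $\R^{2n}$: otherwise the partial $C^0$ estimate would exhibit a local biholomorphism to $\C^n$, contradicting singularity. Establishing this requires the $3$-dimensional analysis of \cite{FGS25}, which is why the statement is cited as \cite[Theorem 1.1]{FGS25}.
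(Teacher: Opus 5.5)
\textbf{Step 1 misplaces the obstruction, and the uniform Ricci lower bound you assert does not hold.} With the regularized density $\prod_E(|s_E|^2+t)^{a_E}$ times a smooth volume form, the Ricci form picks up the term $-\sum_E a_E\,\ddbar\log(|s_E|^2+t)$. By \eqref{eqn:reg}, $\ddbar\log(|s_E|^2+t)$ is bounded below by a fixed smooth form. Its positive part $t|\partial s_E|^2_h/(|s_E|^2+t)^2$, however, blows up along $E$ as $t\to0$.

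So divisors with $a_E\le 0$ are harmless: they are poles of $\pi^\ast\Omega$, exactly like the factor $|\sigma|_h^{-2}$ in Proposition \ref{Prop:antinefpair}. It is the divisors with $a_E>0$ that destroy any uniform bound, which is the reverse of what you wrote. Such divisors cannot be avoided in general: over a terminal singular point, every exceptional divisor of a log resolution has $a_E>0$.

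This is why \cite{FGS25} works on a terminalization $X'\to X$ rather than a log resolution (Remark \ref{rmk}, item 2). A terminalization extracts only divisors with $a_E\in(-1,0]$, so $-K_{X'/X}$ is effective and contributes a positive current to the Ricci form. The remaining singularities of $X'$ are isolated terminal threefold singularities, and these are handled using their local $\Q$-smoothability. These two ingredients are the ``special structure'' you leave unnamed. Your Steps 2--3 rely on the approximation they produce, so they do not go through as written.

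\textbf{The projectivity hypothesis of \cite{FGS25} is not addressed.} You cite \cite[Theorem 1.1]{FGS25} as if it applied directly, but it is proved for projective $X$ with polarized $\theta$. Here $X$ is only a compact K{\"a}hler threefold and $[\theta]$ is an arbitrary K{\"a}hler class. The paper's proof consists of checking the four places where projectivity enters (Remark \ref{rmk}):
\begin{enumerate}
\item No regularization of $\ric(\Omega)+\epsilon\theta+\ddbar f_\epsilon$ is needed, since $f_\epsilon$ is already smooth; your approximations $f_{\epsilon,j}$ are superfluous.
\item Terminalizations exist by the relative MMP for K{\"a}hler threefolds.
\item Analytic terminal threefold singularities are locally $\Q$-smoothable \cite{KM98}.
\item Separation of points by sections of an ample bundle is replaced by the local partial $C^0$ estimate on strongly pseudoconvex domains (Appendix \ref{appI}).
\end{enumerate}
Your Step 4 makes the last replacement, but the other three points are not addressed.
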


 The following remark is very important.

\begin{remark}\label{rmk}In \cite[Theorem 1.1]{FGS25}, we need to assume that $X$ is projective and $\theta$ is a polarized K\"ahler metric. We point out the places where the projectivity of $X$ is used. 

\begin{enumerate}
\item In \cite[Lemma 2.4]{FGS25}, we need to regularize the possibly singular positive current $\ric(\Omega) + \epsilon \theta + \ddbar f_\epsilon$ in \eqref{curveq3d}. 
\item We have used MMP in complex dimension three to show that terminization for a klt variety exists. 

\item We use three dimensional terminal singularity is locally $\mathbb Q$ smoothable. 
\item In \cite[Lemma 5.15]{FGS25}, we use sections of ample line bundle $L$ on $X$ to separate points and further show that $\iota$ is homeomorphic.
\end{enumerate}
Below, we deal with the above four items one by one.
\begin{enumerate}
\item  For the first point, $\ric(\Omega) + \epsilon \theta + \ddbar f_\epsilon$ is smooth in our context, so no regularization is needed. 
\item For the second point, in the K\"ahler case, relative MMP holds in complex dimension three. \item For the third point, an analytic termimal singularity in complex dimension three is locally  $\mathbb Q$-smoothable (c.f. \cite[Page 162, Notation 5.29]{KM98}). 
\item To show that $\iota$ is homeomorphic. Partial $C^0$ type estimate in \cite[Lemma 5.15]{FGS25} can be done locally on a strongly pseudoconvex domain. We refer the readers to Appendix \ref{appI} for details.
\end{enumerate}
In sum, Proposition \ref{thm:FGS} holds without $X$ being projective.

\end{remark}

Now we treat the case of a klt K\"ahler pair $(X,\Delta)$ with $-(K_X+\Delta)$ being nef. Let $\Delta = \sum_{i=1}^I a_i D_i$, where $D_i$ is a prime divisor and $a_i\in (0,1)$, $\sigma_i$ be the defining section of $D_i$ and $h_i$ be a smooth hermitian metric of the line bundle associated to $D_i$. We let $\sigma$ be the multi-valued section satisfying
\begin{equation}\label{AAA}|\sigma|= \prod_{i=1}^I  |\sigma_i|^{a_i}  \quad \textnormal{and} \quad h = \prod_{i=1}^I (h_i)^{a_i}. 
\end{equation}
If $-(K_X+\Delta)$ is nef, then $-(K_X +\Delta) + \epsilon \theta$ is K{\"a}hler for any $\epsilon>0$. Therefore for any $\epsilon>0$, there exists $f_\epsilon\in C^\infty(X)$  such that
\begin{equation}\label{eqn:A}\ric(\Omega)+\ddbar f_\epsilon-\ric(h)\geq -\frac{\epsilon}{2}\theta, ~\sup_X f_\epsilon =0.
\end{equation}
%
%or equivalently, 
%
%$$f_\epsilon \in \PSH(X, \ric(\omega_0) -\ric(h) + \epsilon \omega_0). $$
%
Consider the complex Monge-Amp{\`e}re equation
\begin{equation}\label{eqn:pair}
(\theta + \ddbar \varphi_\epsilon)^n =   |\sigma|_h^{-2} e^{\epsilon \varphi_\epsilon - f_\epsilon}\Omega .
\end{equation}
By the klt assumption, for each fixed $\epsilon>0$,  \eqref{eqn:pair} admits a solution  $\varphi_\epsilon\in C^\infty(X^\circ\setminus \Delta)\cap L^\infty(X)$ (c.f.\cite{EGZ}), where $X^\circ$ is the regular part of $X$. 
Set $\omega_\epsilon = \omega_0 + \ddbar \varphi_\epsilon$. Direct calculation implies that
\begin{equation}\label{eqn:bbb}\ric(\omega_\epsilon) = - \epsilon \omega_\epsilon + \left( \ric(\Omega)-\ric(h) + \ddbar f_\epsilon \right) + [\Delta] \geq - \epsilon \omega_\epsilon \end{equation}
globally in the sense of currents.

 %Now $\omega_\epsilon:=\theta+\ddbar\varphi_\epsilon$ is a smooth K{\"a}hler metric on $X^\circ\setminus\Delta$ satisfying  $\ric(\omega_\epsilon)\geq -\epsilon\omega_\epsilon$. 
 Similarly, we define
 \begin{equation}\label{eqn:rcd}(\hat X_\epsilon, d_\epsilon, \mu_\epsilon) = \overline{(X^\circ\setminus\Delta, \omega_\epsilon, \omega_\epsilon^n)},\end{equation}
 as the metric completion of the metric measure space $(X^\circ\setminus\Delta, \omega_\epsilon, \omega_\epsilon^n)$.

 We will establish the regularity  of $(\hat X_\epsilon, d_\epsilon, \mu_\epsilon)$  in Proposition \ref{Prop:antinefpair}, which extends the result of \cite{FGS25} to pairs. Before that, we remark that if $X$ is smooth, it is easy to establish the RCD property of $(\hat X_\epsilon, d_\epsilon, \mu_\epsilon)$.  

\begin{proposition}\label{B}\cite[Lemma 5.7]{FGS25} Suppose $X$ is smooth, then  $(\hat X_\epsilon, d_\epsilon, \mu_\epsilon)$ is a $\RCD(-\epsilon, 2n)$-space. Moreover,  $\hat X_\epsilon$ homeomorphic to $X.$

\end{proposition}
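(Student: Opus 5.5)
The plan follows the scheme of Lemma \ref{A}: approximate $\omega_\epsilon$ by smooth Kähler metrics on $X$ with a uniform Ricci lower bound, pass to a Gromov--Hausdorff limit, and identify that limit with $(\hat X_\epsilon,d_\epsilon,\mu_\epsilon)$.

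\textbf{Approximating metrics.} Since $X$ is smooth, I regularize the conic factor. For $\delta>0$ put
\begin{equation*}
|\sigma|_{h,\delta}^{2}:=\prod_{i=1}^I\big(|\sigma_i|^2_{h_i}+\delta\big)^{a_i},
\end{equation*}
and solve the smooth Aubin--Yau equation
\begin{equation*}
(\theta+\ddbar\varphi_{\epsilon,\delta})^n=|\sigma|_{h,\delta}^{-2}e^{\epsilon\varphi_{\epsilon,\delta}-f_\epsilon}\Omega .
\end{equation*}
The standard computation (as for conic Kähler--Einstein approximations) gives
\begin{equation*}
\ddbar\log(|\sigma_i|^2+\delta)\ge -C\theta ,
\end{equation*}
with the positive part concentrating near $D_i$. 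Hence $\ric(\omega_{\epsilon,\delta})\ge -C\theta-\epsilon\omega_{\epsilon,\delta}$ with $C$ independent of $\delta$. To convert this into a lower bound of the form $-K\omega_{\epsilon,\delta}$, I use Yau's Schwarz/$C^2$ lemma, giving $\omega_{\epsilon,\delta}\ge c\,\theta$ uniformly.

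\textbf{Uniform estimates and convergence.} Because $(X,\Delta)$ is klt, $|\sigma|_h^{-2}\in L^p$ for some $p>1$, so the densities $|\sigma|^{-2}_{h,\delta}$ are uniformly $L^p$. Kołodziej's estimate then gives $\|\varphi_{\epsilon,\delta}\|_{L^\infty}\le C$, and \cite{GPSS2} gives a uniform diameter bound. On compact sets $K\subset\subset X\setminus\Delta$ the right-hand side converges smoothly. Local Laplacian and Evans--Krylov estimates then give smooth convergence of $\varphi_{\epsilon,\delta}$ to $\varphi_\epsilon$ there, by the same argument as in Lemma \ref{Sch}.

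\textbf{Identifying the limit and its Ricci bound.} Gromov precompactness gives a subsequential GH limit, which is an $\RCD(-K,2n)$ space. Since $\Delta$ has real codimension two and $\omega_{\epsilon,\delta}\ge c\,\theta$ uniformly, $X\setminus\Delta$ is uniformly almost convex by \cite{Gromov}. Therefore the limit is $(\hat X_\epsilon,d_\epsilon,\mu_\epsilon)$. This space is almost smooth in Honda's sense \cite{Ho}, and $\ric(\omega_\epsilon)\ge-\epsilon\omega_\epsilon$ holds on $X\setminus\Delta$ by \eqref{eqn:bbb}. The argument of \cite[Lemma 2.3]{FGS25} then upgrades the curvature bound to $\RCD(-\epsilon,2n)$.

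\textbf{Homeomorphism to $X$.} The uniform lower bound $\omega_{\epsilon,\delta}\ge c\,\theta$ makes the identity maps $(X,\omega_{\epsilon,\delta})\to(X,\theta)$ uniformly Lipschitz. They therefore converge to a Lipschitz surjection $\iota_\infty:\hat X_\epsilon\to X$. For injectivity I would use the local partial $C^0$-estimate on Euclidean balls, exactly as in Lemma \ref{A}: trivialize the line bundle on a ball $B$ and use \cite{LS1} to separate points by holomorphic functions.

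The main obstacle is the uniform Ricci lower bound for the regularized metrics. The negative part of $\ddbar\log(|\sigma_i|^2+\delta)$ is only bounded by $-C\theta$, so one needs the Schwarz lemma (or Chern--Lu) to dominate $\theta$ by $\omega_{\epsilon,\delta}$ uniformly in $\delta$. If that fails, I would fall back on the Honda/almost-smooth criterion applied directly to $\omega_\epsilon$, using its bounded potential and the $L^p$ volume density.
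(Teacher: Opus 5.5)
Your proposal is correct and is essentially the paper's argument. The paper cites \cite[Lemma 5.7]{FGS25} for the $\RCD$ property and otherwise runs the scheme of Lemma \ref{A}; your $\delta$-regularization of $|\sigma|_h^2$, used in place of the $\psi$-trick \eqref{eqn2}--\eqref{eqn3}, is exactly the one the paper uses in Proposition \ref{Prop:antinefpair}. The obstacle you flag is not one, so no fallback is needed. In the Chern--Lu inequality the term $-C\theta$ contributes $-C|\theta|^2_{\omega_{\epsilon,\delta}}\ge -C(\mathrm{tr}_{\omega_{\epsilon,\delta}}\theta)^2$. After dividing by $\mathrm{tr}_{\omega_{\epsilon,\delta}}\theta$, this is absorbed just like the bisectional-curvature term. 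The uniform bound on $\|\varphi_{\epsilon,\delta}\|_{L^\infty}$ then gives $\omega_{\epsilon,\delta}\ge c\,\theta$, and hence $\ric(\omega_{\epsilon,\delta})\ge-(\epsilon+C/c)\,\omega_{\epsilon,\delta}$.
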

\begin{proof} 
In \cite[Lemma 5.7]{FGS25}, it is proved that  $(\hat X_\epsilon, d_\epsilon, \mu_\epsilon)$ is a $\RCD(-\epsilon, 2n)$-space.
%The argument is the same as \ref{A}. The point is that in this case, there exists $p>1$ such that$  |\sigma|_h^{-2} \in L^p(X, (\omega_0)^n). $ Hence for all $\epsilon\in (0,1)$, there exists a unique solution $\varphi_\epsilon \in \PSH(X, \omega_0)\cap L^\infty(X)\cap C^\infty(X\setminus \Delta)$ to equation (\ref{eqn:pair}). And one may show that $\hat X_\epsilon$ can be approximated by a sequence of smooth K\"ahler metric with Ricci curvature uniformly bounded below via the trick \eqref{eqn2} and \eqref{eqn3}.  
The rest of the proof is the same as the proof of Lemma \ref{A}.
\end{proof}

\begin{proposition}\label{Prop:antinefpair}
For any $\epsilon \in (0,1) \cap \mathbb{Q}$,   $(\hat X_\epsilon, d_\epsilon, \mu_\epsilon)$ is a compact $\RCD(-\epsilon, 2n)$-space homeomorphic $X$. \end{proposition}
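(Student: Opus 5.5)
Here $X$ is a $3$-dimensional klt K\"ahler variety, $\Delta$ has coefficients in $(0,1)$, and $\omega_\epsilon$ solves \eqref{eqn:pair}. The plan is to reduce to the case $\Delta=0$ of Proposition \ref{thm:FGS} by resolving the singularities of $X$ in a controlled way and absorbing $\Delta$ into the construction of \cite{FGS25}.

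First, use the relative MMP in complex dimension three (Remark \ref{rmk}(2)) to take a terminalization $\pi: Y\to X$ with $K_Y+\Delta_Y+E=\pi^*(K_X+\Delta)$, where $\Delta_Y$ is the strict transform of $\Delta$ and $E$ is an effective divisor with coefficients $<1$ (klt). Pulling back \eqref{eqn:pair}, $\pi^*\omega_\epsilon$ solves on $Y$ a degenerate Monge--Amp\`ere equation whose right-hand side is $|\sigma_{\Delta_Y+E}|^{-2}e^{\epsilon\varphi_\epsilon-f_\epsilon}\Omega_Y$, with density in $L^p$ for some $p>1$. Since three-dimensional terminal singularities are isolated and locally $\mathbb{Q}$-smoothable (Remark \ref{rmk}(3)), we follow \cite[\S 5]{FGS25}: perturb the class $\pi^*[\theta]$ to a K\"ahler class $\pi^*[\theta]-\delta[\text{exc}]$ on $Y$, and smooth the conic factor by replacing $|\sigma_i|^{2a_i}$ with $(|\sigma_i|^2+t)^{a_i}$. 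As in \cite{CGP13,EGZ}, the resulting approximating metrics have Ricci curvature bounded below by $-\epsilon-\tau$ on their smooth loci, with $\tau\to0$. This is the standard trick of adding a conic-smoothing term whose Ricci form is bounded below by a negative multiple of the reference metric, which is absorbed after rescaling $\theta$.

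Next, obtain uniform $L^\infty$ estimates from \cite{Kol,EGZ}, a uniform diameter bound from \cite{GPSS2}, and a local Schwarz lemma \cite{CCHSTT25} giving $\omega\geq c\,\theta$ uniformly. Smooth convergence on compact subsets of $X^\circ\setminus\Delta$ follows from Evans--Krylov. Gromov--Hausdorff limits of the approximations are then $\RCD(-\epsilon-\tau,2n)$. Since $\Delta\cup X_{\rm sing}$ has real codimension at least $2$, $X^\circ\setminus\Delta$ is almost convex \cite{Gromov}, so the limit is exactly $(\hat X_\epsilon,d_\epsilon,\mu_\epsilon)$. Letting $\tau\to0$ and using stability of RCD gives $\RCD(-\epsilon,2n)$. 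Alternatively, this space is almost smooth in Honda's sense and satisfies \eqref{eqn:bbb} on its smooth part, which improves the lower bound exactly as in the proof of Lemma \ref{A} and \cite[Lemma 2.3]{FGS25}.

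Finally, for the homeomorphism: the uniform lower bound $\omega\geq c\,\theta$ gives a surjective Lipschitz map $\iota_\infty:\hat X_\epsilon\to X$. Injectivity comes from the local partial $C^0$ estimate on strongly pseudoconvex neighborhoods (Appendix \ref{appI}, \cite{LS1}), which separates points by holomorphic functions, as in Lemma \ref{A}.

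The main obstacle is making the partial $C^0$ estimate work near points of $\Delta\cap X_{\rm sing}$. There tangent cones are no longer known to be good in the sense of \cite{CDSII}, because the conic angles interact with the quotient singularities. The first thing I would try is to work on the local index-one (orbifold) cover, or on the $\mathbb{Q}$-smoothing, where the pair is locally a conic-type pair. There I would show that tangent cones are metric cones with a complex structure that is a normal affine variety, via the two-dimensional singular-set analysis in \cite{FGS25}, and then run \cite{LS1} as before.
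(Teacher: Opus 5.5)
Your proposal leaves the decisive step open: injectivity of $\iota_\infty$ over $X_{\rm sing}$, and in particular over $\Delta\cap X_{\rm sing}$. There you only outline what you would try: index-one covers, identifying tangent cones as normal affine cones, and goodness in the sense of \cite{CDSII}.

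The missing observation is that goodness is not what is needed. The local partial $C^0$ estimate, Lemma \ref{particalc0local}, only requires that the $\varepsilon$-singular set of each tangent cone have zero capacity. Codimension-two conic strata satisfy this, so the cone angles of $\Delta$ need no separate analysis. The paper takes this input at points over $X_{\rm sing}$ from \cite[Lemma 6.7]{FGS25} and concludes with \cite[Corollaries 6.1, 6.2]{FGS25}. Over the smooth locus of $X$, including smooth points of $\Delta$, the regularized metrics are smooth with Ricci curvature bounded below uniformly in the regularization parameter, so \cite{LS1} applies on a strongly pseudoconvex neighbourhood.

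The paper also organizes the approximation so that \cite{FGS25} can be used as a black box. You terminalize and merge all regularizations into one family on $Y$, which obliges you to redo \cite{FGS25} with a boundary passing through terminal points. The paper instead keeps $X$ and replaces $|\sigma|_h^{-2}$ by $(|\sigma|_h^2+\delta)^{-1}$. It then applies \cite{FGS25}, in the non-projective form of Remark \ref{rmk}, to get $\RCD(-1,2n)$ spaces $\hat X_{\epsilon,\delta}$ homeomorphic to $X$, and only afterwards lets $\delta\to0$.

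Two other steps fail as written.

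First, smoothing the conic factor does not give $\ric\ge-(\epsilon+\tau)\omega$ with $\tau\to0$. By \eqref{eqn:reg} the smoothing produces the term $\frac{t}{|\sigma|_h^2+t}\ric(h)$, which has no sign and is of size $O(1)$ on a $\sqrt{t}$-neighbourhood of $\Delta$. With $[\theta]$ fixed, it can only be bounded by $C\theta\le C'\omega$ through the Schwarz lemma \cite{CCHSTT25}. Absorbing it into the spare $\epsilon$-term would require enlarging the class by a factor comparable to $1/\epsilon$. That changes $\hat X_\epsilon$ and destroys the uniformity needed in Lemma \ref{lemma:an}. So one only gets a uniform $\RCD(-1,2n)$ bound. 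The improvement to $\RCD(-\epsilon,2n)$ must come from Honda's almost-smooth argument using \eqref{eqn:bbb} on $X^\circ\setminus\Delta$. The route you list as an \emph{alternative} is therefore the only valid one, and it is the paper's.

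Second, Gromov's codimension-two almost-convexity is applied in Lemma \ref{A} to metrics that are smooth on all of $X$. Your approximants are singular at the terminal points of $Y$. So this argument does not by itself show that the limit distance equals the length metric of $(X^\circ\setminus\Delta,\omega_\epsilon)$. The paper instead proves that the metric regular part of the $\delta\to0$ limit is exactly $X^\circ\setminus\Delta$. Limits of points over $X_{\rm sing}$ cannot be regular, by semicontinuity of the volume density. Limits of points over $\Delta$ have density at most the cone angle. The paper then invokes convexity of the regular set \cite{Deng2021}.
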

\begin{proof}Now the additional difficulty to deal with equation \eqref{eqn:pair} is that we have one more degenerate conic term $|\sigma|_h^{-2}$, where $|\sigma|_h^{-2}$ is defined as in \eqref{AAA}. 
To apply the result of \cite{FGS25}, the key observation is that there is a natural regularization of the quasi PSH function $|\sigma|_h^{-2}$. 
More precisely, we regularize \eqref{eqn:pair} and consider
\begin{equation}
(\theta + \ddbar \varphi_\epsilon)^n =   (|\sigma|^2_h+\delta)^{-1} e^{\epsilon \varphi_\epsilon - f_\epsilon}\Omega .
\end{equation}
The following computation  is well known
\begin{equation}\label{eqn:reg}
\ddbar\log(|\sigma|^2_h+\delta)=-\ric(h)+\frac{\delta}{|\sigma|^2_h+\delta}\ric(h)+\frac{\delta}{(|\sigma|^2_h+\delta)^2}|\partial \sigma\wedge\overline{\partial \sigma}|^2_h.\end{equation}
Direct calculation shows that 

\[\ric(\omega_{\epsilon,\delta})=-\epsilon\ddbar\varphi_\epsilon+\ric(\Omega)+\ddbar f_\epsilon+\ddbar\log(|\sigma|^2_h+\delta).\]
In particular, if $\delta<<\epsilon$, then by \eqref{eqn:A},\eqref{eqn:reg} and \cite[Theorem 1.2]{CCHSTT25}, we have 
%$$\ric(\omega_{\epsilon,\delta})\geq -\epsilon\omega_{\epsilon,\delta}-\epsilon\theta.$$
%By the Schwarz lemma \cite{CCHSTT25}, 
$$\ric(\omega_{\epsilon,\delta})\geq -C\omega_{\epsilon,\delta},$$
for some $C$ independent of $\delta$. Without lose of generality, we assume that $C=1$.
By Remark \ref{rmk} and \eqref{eqn:reg}, we may apply \cite{FGS25} to conclude that for each fixed $1>>\delta>0$, the metric completion \[(\hat X_{\epsilon,\delta}, d_{\epsilon,\delta}, \mu_{\epsilon,\delta}) = \overline{(X^\circ, \omega_{\epsilon,\delta}, \omega^n_{\epsilon,\delta})}\]
is $\RCD(-1, 2n)$ space and moreover, $\hat X_{\epsilon,\delta}$ is homeomorphic to $X$.

Now by the compactness of $RCD(-1,2n)$, when $\delta\rightarrow 0$, $(\hat X_{\epsilon,\delta}, d_{\epsilon,\delta}, \mu_{\epsilon,\delta})\rightarrow (Y, d_Y,\mu_Y)$ in the Gromov-Hausdorff sense. Also it is clear that the convergence is $C^\infty$ on $X^\circ\setminus\Delta$. We claim that the metric regular part of $Y$ coincides with the open set $X^\circ\setminus\Delta$, which in turn implies that $Y$ is isometric to $\hat X_\epsilon$ by convexity of regular part \cite{Deng2021}. To prove the claim, we argue by contradiction. Suppose there is a metric regular point $p\in Y$ but not in $X^\circ\setminus \Delta$. We pick a sequence of points $p_i\in \hat X_{\epsilon,\delta_i}$ converging to $p$. By the lower semi continuity of volume density in the Gromov-Hausdorff topology, $p_i$ (up to subsequence) is not contained in the singular set of $X$. So we may assume that all points $p_i$ are contained in $\Delta$. If all points are contained in the regular part of $\Delta$, by the regularity theory of Monge-Ampere equation with conic singularity \cite{FGS25}, the volume density of $p$ will be no larger than the cone angle depending on the coefficient of $\Delta$, in particular, $p$ is not metric regular. If a subsequence of points $p_i$ are contained in the singular part of $\Delta$, then by the lower semi continuity of volume density in the Gromov-Hausdorff topology, again $p$ is not metric regular. This finishes the proof of claim.

At last we show that the metric completion $\hat X_\epsilon$ (or the Gromov-Hausdorff limit $Y$) is homeomorphic to $X$. We follow the argument of 
\cite[Section 6]{FGS25} and the only difference is that here we also need to deal with the boundary divisor $\Delta$. We sketch the proof. If there are two different points $p_1,p_2$ such that $\iota(p_1)=\iota(p_2)=q\in X$. By the Schwarz Lemma \cite{CCHSTT25}, there is Lipchitz map $\iota$ from $\hat X_\epsilon\rightarrow X$, so there is strongly pseudoconvex neighbourhood $ U$ of $q$ such that $\iota^{-1}{U}$ will contain a sufficiently small metric ball near $p_1,p_2$. If $q$ is a complex analytic singular point of $X$, then by \cite[Corollary 6.2]{FGS25} (or Appendix \ref{appI}), $\iota(p_1)\neq \iota(p_2)$,  contradiction. The key is that, in this case, the tangent cone at $p_1,p_2$ will capacity zero by \cite[Lemma 6.7]{FGS25} and Partial $C^0$ estimate applies \cite[Corollary 6.1]{FGS25}.  Now if $q$ is a complex analytic regular point of $X$, then $\omega_{\epsilon,\delta}$ is a sequence of smooth metric with Ricci curvature bounded below (independent of $\delta$) near $p_1$ and $p_2$, then by the partial $C^0$ estimate of \cite{LS1} (applied locally on a strongly pseudoconvex domain, c.f.\cite[Lemma 5.9]{FGS25} and Appendix \ref{appI}), in particular, $\iota(p_1)\neq \iota(p_2)$,  contradiction.

Finally by the Ricci lower bound on   $X^\circ \setminus \Delta$ \eqref{eqn:bbb},  we can improve the global Ricci lower bound to $-\epsilon$ by Honda's trick and so $ \hat X_\epsilon \in \RCD(-\epsilon, 2n)$
\end{proof}

Immediately, we can improve Proposition \ref{Prop:antinefpair} to the log canonical pair via a perturbation argument.
\begin{lemma} Let $(X,\Delta)$ be a  three dimensional log canonical pair and suppose that there is an $\R$-divisor $\Delta_0\ge0$ such that $(X,\Delta_0)$ is klt. If $-(K_X+\Delta)$ is nef, then for a fix K\"ahler class $[\theta]$ and $\epsilon>0$ sufficiently small, there is a K\"ahler current $\omega_\epsilon\in[\theta]$ and moreover $\omega_\epsilon$ induces a K\"ahler $\RCD(-2\epsilon,2n)$-space as in \eqref{eqn:rcd}
\end{lemma}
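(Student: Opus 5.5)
The plan is to perturb the boundary toward $\Delta_0$ and reduce to the klt case of Proposition \ref{Prop:antinefpair}. For $t\in(0,1)$ set
\[
\Delta_t:=(1-t)\Delta+t\Delta_0 .
\]
Log discrepancies are affine in the boundary. Since $(X,\Delta)$ is lc and $(X,\Delta_0)$ is klt, the pair $(X,\Delta_t)$ is klt for every $t\in(0,1]$. Here I fix a common log resolution of $(X,\Delta+\Delta_0)$ and compare the coefficients of each exceptional and non-exceptional divisor: the discrepancy with respect to $\Delta_t$ is the convex combination of a number $\ge -1$ and a number $>-1$.

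Next I control the anticanonical side. Write
\[
-(K_X+\Delta_t)=-(K_X+\Delta)+t(\Delta-\Delta_0).
\]
The class $c_1(\Delta-\Delta_0)$ is a fixed class in $H^{1,1}(X,\R)$, so $-t\,c_1(\Delta-\Delta_0)\le t\,C[\theta]$ for a constant $C$ depending only on $\theta$, $\Delta$ and $\Delta_0$. Choosing $t=t(\epsilon)\le \epsilon/(2C)$, the class $-(K_X+\Delta_t)+\epsilon[\theta]$ is the sum of the nef class $-(K_X+\Delta)+\tfrac{\epsilon}{2}[\theta]+\tfrac{\epsilon}{2}[\theta]$ and $t\,c_1(\Delta-\Delta_0)$, and is therefore Kähler. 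Hence there is $f_\epsilon\in C^\infty(X)$ with
\[
\ric(\Omega)+\ddbar f_\epsilon-\ric(h_t)\ge -\tfrac{\epsilon}{2}\theta,
\]
which is exactly the analogue of \eqref{eqn:A}. Here $h_t$ is the product metric \eqref{AAA} for $\Delta_t$.

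With this in place I solve \eqref{eqn:pair} with $\sigma_t$ and $h_t$ attached to $\Delta_t$. Klt-ness gives $|\sigma_t|_{h_t}^{-2}\in L^p$ for some $p>1$ on a resolution, so \cite{EGZ} yields a bounded solution $\varphi_\epsilon$ that is smooth on $X^\circ\setminus\Delta_t$. The curvature identity \eqref{eqn:bbb} then gives $\ric(\omega_\epsilon)\ge-\epsilon\omega_\epsilon+[\Delta_t]\ge -\epsilon\omega_\epsilon$. The proof of Proposition \ref{Prop:antinefpair} used only the klt property of the pair, the smoothness of the twisting form, and the regularization \eqref{eqn:reg}; all of these now hold for $\Delta_t$. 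Running that proof verbatim shows that the metric completion of $(X^\circ\setminus\Delta_t,\omega_\epsilon,\omega_\epsilon^n)$ is an $\RCD(-\epsilon,2n)$-space homeomorphic to $X$, and in particular an $\RCD(-2\epsilon,2n)$-space, as claimed. The slack from $-\epsilon$ to $-2\epsilon$ absorbs the case in which one prefers $t=\epsilon/C$ without the halving.

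The main obstacle is that Proposition \ref{Prop:antinefpair} was stated for a fixed klt pair, whereas $\Delta_t$ now varies with $\epsilon$ and its coefficients may approach $1$. I expect this to be harmless, because for each fixed $\epsilon$ the pair $(X,\Delta_{t(\epsilon)})$ is a fixed klt pair, and no uniformity in $\epsilon$ is claimed. I would check only that the components of $\Delta_0$ not contained in $\Supp\Delta$ are handled by the same conic analysis, since the argument allowed arbitrary prime divisors with coefficients in $(0,1)$.
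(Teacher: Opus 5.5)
Your proposal is correct and follows essentially the paper's proof. The paper also perturbs the boundary toward $\Delta_0$, taking $\Delta_{\epsilon^2}=(1-\epsilon^2)\Delta+\epsilon^2\Delta_0$, so that $-(K_X+\Delta_{\epsilon^2})+\epsilon[\theta]=-(K_X+\Delta)+\epsilon\bigl([\theta]+\epsilon\,c_1(\Delta-\Delta_0)\bigr)$ is K\"ahler by openness of the K\"ahler cone, and then invokes Proposition \ref{Prop:antinefpair} for this klt pair; your linear choice $t\le\epsilon/(2C)$ does the same job and even gives $\RCD(-\epsilon,2n)$ directly, whereas the paper's bookkeeping only yields the stated $-2\epsilon$.
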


\begin{proof}
For any $0<\epsilon\ll1$ fixed, we have the following identity
\[
-(K_X+(1-\epsilon^2)\Delta+\epsilon^2\Delta_0)=-(K_X+\Delta)+(\epsilon^2\Delta-\epsilon^2\Delta_0+\epsilon\theta)-\epsilon\theta.
\]
So for $\epsilon$ sufficiently small, $-(K_X+(1-\epsilon^2)\Delta)+\epsilon\theta$ is a K{\"a}hler class. Moreover, $(X,(1-\epsilon^2)\Delta+\epsilon^2\Delta_0)$ is a klt pair. So by Proposition \ref{Prop:antinefpair}, we can construct a K\"ahler RCD(-2$\epsilon,2n$) space in a fixed K\"ahler class $[\theta]$.
\end{proof}

Now we proceed to prove that  $\pi_1(X)$ is virtually nilpotent under the assumption of Theorem \ref{thm:main1}, Corollary \ref{cor:main1}.
 
\subsection{Margulis Lemma and virtual nilpotency of \texorpdfstring{$\pi_1(X)$}{text}} 
In this subsection, we finish the proof of nilpotency of $\pi_1(X)$ by using Margulis Lemma. Let us first record the RCD version of Margulis Lemma of \cite{DSZZ}, which in turn is based on the idea of (\cite{CC96,KW}).
\begin{theorem}\label{DSZZ}{\cite{DSZZ}}
\label{magu} There exist $\varepsilon(n) > 0$ and $M(n) \in \mathbb{Z}^+$ such that if $(X, d, \mu)$ is a compact $m$ dimensional $RCD^*(-1,n)$ and $p\in X$, the image of the map
\begin{equation}\label{mor}\pi_1(B(p, \varepsilon(n))) \rightarrow \pi_1(X, p)\end{equation}
induced by the inclusion contains a subgroup of index $\leq M(n)$ that admits a nilpotent basis of
length $\leq n$.

\end{theorem}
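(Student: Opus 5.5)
The plan is to follow the Kapovitch--Wilking strategy for the generalized Margulis lemma, adapting each step from smooth manifolds with $\ric\ge -(n-1)$ to $\RCD^*(-1,n)$ spaces. The first step is to pass to covering spaces. By the work of Mondino--Wei, a compact $\RCD^*(-1,n)$-space $(X,d,\mu)$ has a universal cover $(\tilde X,\tilde d,\tilde\mu)$. This cover is again an $\RCD^*(-1,n)$-space, and $\pi_1(X,p)$ acts on it by measure-preserving isometries. The image of \eqref{mor} is then identified with the subgroup
\[
\Gamma_\varepsilon(\tilde p):=\langle \gamma\in\pi_1(X,p)\;:\;\tilde d(\tilde p,\gamma\tilde p)\le 2\varepsilon\rangle .
\]
To justify this, I would use that an RCD space is semi-locally simply connected, by Wang's result on RCD spaces, so loops in $B(p,\varepsilon)$ lift to short deck transformations. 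The statement is thereby reduced to a purely equivariant one. For all $\varepsilon\le\varepsilon(n)$ and every $\tilde p$, the group $\Gamma_\varepsilon(\tilde p)$ should contain a subgroup of index $\le M(n)$ admitting a nilpotent basis of length $\le n$.

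The second step argues by contradiction. Suppose there are $\RCD^*(-1,n)$-spaces $X_i$ and radii $\varepsilon_i\to0$ for which the conclusion fails with index bound $i$. Rescale the metrics by $\varepsilon_i^{-1}$, so the curvature bound improves to $\RCD^*(-\varepsilon_i^2,n)$. Then pass to pointed equivariant Gromov--Hausdorff limits $(\tilde X_i,\tilde p_i,\Gamma_i)\to(Y,y,G)$. Here $Y$ is an $\RCD(0,n)$-space and $G$ is a closed subgroup of $\mathrm{Isom}(Y)$, which is a Lie group by Guijarro--Santos and Sosa. The key structural input is a rescaling-and-induction scheme in the spirit of Kapovitch--Wilking, by induction on the ``rank'' of the limit, i.e.\ the dimension of the $\R^k$-factor split off in tangent cones at infinity. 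I would use the following ingredients:
\begin{enumerate}
\item the splitting theorem of Gigli, to split off lines in limits of rescaled covers;
\item the constancy of the essential dimension along limits (Bru\`e--Semola), to guarantee that the rank strictly increases under successive blow-ups and is bounded by $n$;
\item a ``gap'' property for the limit actions, namely that small subgroups of $G$ near the identity are trivial, which uses that $G$ is Lie.
\end{enumerate}
At each stage one shows that the short generators act almost as translations on the split Euclidean factor. One then extracts commutator estimates showing that iterated commutators shrink, which gives a nilpotent basis in the sense of Kapovitch--Wilking, with one basis element added per unit of rank, hence length $\le n$.

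The main obstacle is the induction step, where the Kapovitch--Wilking argument uses smooth-manifold tools. These are the Cheeger--Colding almost splitting with the Hessian estimates, and Reifenberg-type control of the regular set, which are used to show that the orbits of the short subgroups are ``almost nilpotent''. In the RCD setting I would first try to replace the Hessian estimates by Gigli's second-order calculus together with the almost splitting of Bru\`e--Naber--Semola. I would also replace the Reifenberg control by the Bru\`e--Semola regular-set theory. Finally, the uniform index bound $M(n)$, rather than one depending on $X$, comes from the contradiction argument. This step needs a uniform finiteness of the groups $G/G_0$ arising in limits, which I would obtain from the Bishop--Gromov volume doubling valid on $\RCD^*(-1,n)$-spaces via a packing argument for orbits.
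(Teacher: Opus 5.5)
The paper does not prove this statement; it imports it from \cite{DSZZ}. The proof there is a transplantation of the Kapovitch--Wilking argument \cite{KW} to $\RCD$ spaces, so your overall architecture is the right one: Mondino--Wei universal cover, Wang's semi-local simple connectedness, equivariant limits with Lie isometry groups, contradiction plus induction.

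Your reduction step is fine up to a harmless imprecision. The image of $\pi_1(B(p,\varepsilon))\to\pi_1(X,p)$ is not equal to $\Gamma_\varepsilon(\tilde p)$. It is only squeezed between the subgroup generated by deck transformations of displacement $<\varepsilon$ and $\Gamma_\varepsilon(\tilde p)$: cut a loop into short arcs and join the division points to $p$ by minimal geodesics. This suffices, because a subgroup of a group with a nilpotent basis of length $\le n$ again has one, and the index does not increase under intersection.

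As a proof, however, the proposal has a genuine gap exactly where the content of the theorem lies. You flag the induction step yourself as the obstacle and do not carry it out, and the mechanism you sketch for it does not work under a lower Ricci bound.

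\emph{Commutator shrinking is the wrong mechanism.} Shrinking of iterated commutators of short generators is Gromov's almost-flat mechanism and relies on two-sided sectional curvature (holonomy) control. With only $\ric\ge -1$, such estimates hold at best inside the limit Lie group $G$. Equivariant Gromov--Hausdorff convergence does not transport them back to $\Gamma_i$.

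\emph{The missing ingredient is the Rescaling Theorem.} What drives \cite{KW} is that, on a subset of $B_1(p_i)$ of almost full measure, there is a \emph{common} sequence of scales $\lambda_i\to\infty$ at which the blow-ups converge to $\R^k\times K$ with $K$ compact and not a point, uniformly in the base point. This uniformity is what lets the induction hypothesis, applied at finer scales where the Euclidean factor is larger, control the whole group of short loops. It is obtained by moving points along flows of $\nabla b$ for harmonic $\delta$-splitting maps $b$. These flows are almost isometries on balls of every radius around points where a maximal function of $|\mathrm{Hess}\,b|^2$ is small.

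\emph{The $\RCD$ version of that ingredient is the hard part.} On an $\RCD$ space these flows exist only as regular Lagrangian flows, defined $\mu$-a.e.\ and controlled only in a measure-theoretic sense. Gigli's calculus and Bru\`e--Naber--Semola do give you the $L^2$ Hessian bounds. Upgrading them to the scale-uniform almost-isometry property is precisely where \cite{DSZZ} must prove new regularity estimates for regular Lagrangian flows. Bru\`e--Semola's regular-set theory does not replace this.

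\emph{The index bound.} The uniform bound $M(n)$ cannot come from uniform finiteness of $G/G_0$. Take $X_i=T^{n-1}\times S^1$ with circle factor of length $\varepsilon_i$. The rescaled limit group is $G=\Z$ acting by unit translations, so $G/G_0\cong\Z$ is infinite. The index bound has to be carried through the induction itself.
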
 
To apply the Margulis Lemma for the sequence of Ricci almost non negative RCD space $(X_\epsilon,d_\epsilon,\nu_\epsilon)$ constructed in previous subsection, it will be helpful to show that the morphism \eqref{mor} is surjective for $\epsilon$ sufficiently small. In general, it is hard to control the growth of  the diameters of $\omega_\epsilon$, the following trick from \cite{DPS93} will be useful, which says that we are able to control the length of suitable representatives of generators of $\pi_1(X)$ uniformly in $\epsilon$.  The key point is that $\omega_\epsilon$ lie in the same class.
 \begin{lemma}\label{dps} Fix a set of generators $\{\gamma_1,\gamma_2,...\gamma_n\}$ of $\pi_1(X)$. We may pick a point $p_\epsilon\in\tilde X$, where $\tilde X$ is the universal cover of $X$, such that there is a path connecting $p_\epsilon$ and $\gamma_j(p_\epsilon),j=1,2...n,$ contained in the regular part of $\tilde X$, whose $\omega_\epsilon$ length is bounded by a constant $D$ independent of $\epsilon$.
\end{lemma}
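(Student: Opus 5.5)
We adapt the argument of \cite[Lemma 1.3]{DPS93}. Its key input is that all the $\omega_\epsilon$ lie in the fixed class $[\theta]$, and so have a fixed total volume and uniformly controlled averaged lengths. We lift everything to the universal cover $\pi:\tilde X\to X$ and write $\tilde\omega_\epsilon=\pi^*\omega_\epsilon$. This is a smooth K\"ahler form on the preimage $\tilde X^\circ$ of the open set $X^\circ$ where $\omega_\epsilon$ is smooth (for instance $X\setminus Z_\epsilon$, $X_{\reg}$, or $X^\circ\setminus\Delta$, depending on the case).

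\textbf{Step 1: fixed smooth paths.} Fix a base point $\tilde x_0\in\tilde X^\circ$. For each $j$ choose a smooth path $c_j:[0,1]\to\tilde X$ from $\tilde x_0$ to $\gamma_j(\tilde x_0)$. Since $X\setminus X^\circ$ is an analytic set of real codimension $\geq2$, a general-position argument lets us choose $c_j$ inside $\tilde X^\circ$. We then thicken $c_j$ into a family: take a small coordinate ball $U\ni\tilde x_0$ with $U\subset\tilde X^\circ$ and $\gamma_j(U)\subset\tilde X^\circ$. Using local holomorphic charts along $c_j$, build a smooth map $\Phi_j:U\times[0,1]\to\tilde X$ with $\Phi_j(x,0)=x$, $\Phi_j(x,1)=\gamma_j(x)$, and $\Phi_j(\cdot,t)$ a diffeomorphism onto its image with Jacobian bounded above and below, uniformly in $t$. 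For a.e.\ $x\in U$ the path $t\mapsto\Phi_j(x,t)$ avoids the measure-zero set $\tilde X\setminus\tilde X^\circ$, by Fubini. Everything here is independent of $\epsilon$.

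\textbf{Step 2: averaged length estimate.} By Cauchy--Schwarz,
$$\int_U \mathrm{length}_{\tilde\omega_\epsilon}\bigl(\Phi_j(x,\cdot)\bigr)\,\theta^n(x)\le \int_0^1\!\!\int_U |\partial_t\Phi_j|_{\tilde\omega_\epsilon}\,\theta^n\,dt\le C\Bigl(\int_{K}\mathrm{tr}_{\theta}\tilde\omega_\epsilon\,\theta^n\Bigr)^{1/2},$$
where $K$ is the compact set $\Phi_j(\overline U\times[0,1])$. The second inequality uses the change of variables $y=\Phi_j(x,t)$ and the uniform Jacobian bounds. Since $K$ is covered by finitely many fundamental domains, we get
$$\int_K \mathrm{tr}_\theta\tilde\omega_\epsilon\,\theta^n\le N\, n\int_X\omega_\epsilon\wedge\theta^{n-1}=N\,n\int_X\theta^n.$$
The equality uses that $\omega_\epsilon\in[\theta]$ has bounded potentials, so the Bedford--Taylor product does not charge the singular set and only cohomology enters; in the singular case one computes on a resolution. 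Summing over $j$ and applying Chebyshev, a set of positive $\theta^n$-measure in $U$ has all $n$ path lengths bounded by $D:=2C'/\mathrm{vol}_\theta(U)$, which is independent of $\epsilon$. Choose $p_\epsilon$ in this set, avoiding the null sets from Step 1.

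\textbf{Main obstacle.} The delicate point is justifying Step 2 when $\omega_\epsilon$ is only a current. We must check two things. First, $\int_X\omega_\epsilon\wedge\theta^{n-1}$ is computed correctly by integrating over $X^\circ$, with no mass on the pluripolar set $X\setminus X^\circ$; this holds because $\varphi_\epsilon\in L^\infty$. Second, the paths through $p_\epsilon$ really stay in the metric regular part. On singular $X$ this also requires $\Phi_j$ to be built in the ambient space so that a.e.\ path misses $\tilde X_{\mathrm{sing}}$. I would handle this by pulling back to a resolution, or by working in local embeddings, where the complement is again of measure zero.
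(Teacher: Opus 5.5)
Your argument is correct, but it follows a different route from the paper's. The paper first proves the segment inequality of Proposition~\ref{DPS}, after Demailly--Peternell--Schneider. It averages the $\omega_\epsilon$-length of straight segments in convex coordinate charts and chains overlapping charts. This yields, inside a fixed set $U$ in the regular part containing a fundamental domain $E$, a subset $U_{\delta,\epsilon}$ of almost full measure in which any two points are joined by paths of length $\le C\delta^{-1/2}$. It then uses two facts: $\gamma_j(U_{\delta,\epsilon})\cap U_{\delta,\epsilon}\neq\emptyset$ for volume reasons, and $\gamma_j$ is an isometry of $\tilde\omega_\epsilon$. Together these join $p$ to $\gamma_j(p)$ by two such paths: one from $p$ to $\gamma_j(q_j)$, then the $\gamma_j$-image of a path from $q_j$ to $p$.

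You instead average the length directly over an $\epsilon$-independent family of paths from $x$ to $\gamma_j(x)$. The analytic input is the same in both: the $L^1$ bound on $\mathrm{tr}_\theta\omega_\epsilon$ forced by $[\omega_\epsilon]=[\theta]$, Cauchy--Schwarz, and Chebyshev. Your version is shorter, with no chaining of charts and no overlap/isometry step. The price is building an isotopy $\Phi_j$ from $\mathrm{id}_U$ to $\gamma_j|_U$ with uniform Jacobian bounds: flow along $c_j$, then connect the two differentials in $GL^+_{2n}(\mathbb{R})$, which works because $\gamma_j$ is biholomorphic. The paper's route needs only straight segments, and it gives the more reusable statement that a set of almost full measure has $\omega_\epsilon$-intrinsic diameter $O(\delta^{-1/2})$.

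Two points need tightening.

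First, the claim that a.e.\ path avoids $\tilde X\setminus\tilde X^\circ$ \emph{by Fubini} does not follow from measure zero alone. Fubini only gives that a.e.\ path spends zero time there, and a real hypersurface would be crossed by almost every path. Use real codimension $\ge 2$ instead. The bad set $B$ has $\mathcal{H}^{2n-1}(B)=0$, so $B\times[0,1]$ is $\mathcal{H}^{2n}$-null. Pulling back by the bi-Lipschitz map $(x,t)\mapsto(\Phi_j(x,t),t)$ and projecting to $U$ then shows that the set of $x$ whose path meets $B$ is null. The paper's segments need the same remark.

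Second, in the asymptotically klt case $Z_\epsilon$ depends on $\epsilon$. So $c_j$ and the tube must be chosen in the $\epsilon$-independent regular locus $\tilde X_{\mathrm{reg}}$, not in $\tilde X^\circ$; if $X$ is smooth this is all of $\tilde X$. The locus $\tilde X_{\mathrm{reg}}$ is connected because $\pi_1(X_{\mathrm{reg}})\to\pi_1(X)$ is surjective for normal $X$. Avoiding $\pi^{-1}(Z_\epsilon)$ or $\pi^{-1}(\Delta)$ is then left to the a.e.\ argument above. With this choice $K\subset\tilde X_{\mathrm{reg}}$, so no ambient-space construction or resolution is needed for the paths.
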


\begin{proof}
Fix an open set $U\subset \tilde X$ such that $U$ contains a fundamental domain $E$ and $\Vol_\omega(\gamma_j(U)\cap U)>\Vol_\omega(E)-\delta$. Since $\Vol_\omega(X_{sing})=0$, we further shrink $U$ such that $U$ lies in the regular part of $\tilde X$ with the following still holds,
\begin{equation}
\Vol_\omega(\gamma_j(U)\cap U)>\Vol_\omega(E)-\delta.\end{equation}
Now we pick a base point $p\in U$ and we apply Proposition \ref{DPS} to the open sets $U_1=U_2=U$, then we have that any two points $x_1\in U_{1,\delta,\epsilon}, x_2\in U_{2,\delta,\epsilon} $ can be joined by a path $\gamma$ (lies in the smooth part of $\tilde X$) of
length $\leq C\delta^{-1/2}$ with respect to $\omega_\epsilon$ and moreover, we still have
\begin{equation}
\Vol_\omega(\gamma_j(U_{\delta,\epsilon})\cap U_{\delta,\epsilon})>\Vol_\omega(E)-2\delta.\end{equation}
In particular $\gamma_j(U_{\delta,\epsilon})\cap U_{\delta,\epsilon}$ is nonempty, so we may fix a point $p_j\in\gamma_j(U_{\delta,\epsilon})\cap U_{\delta,\epsilon}$. To conclude, we may connect $p$ and $\gamma_j(p)$ by two pieces: one connecting $p$ and $p_j$ and one connecting $p_j=\gamma_j(q_j)$ and $\gamma_j(p)$ (Note $\gamma_j$ is an isometry of $\tilde X$). By Proposition \ref{DPS}, the total length is bounded by $2C\delta^{-1/2}$, which is independent of $\epsilon$.
\end{proof}

 Now we can easily prove the virtual nilpotency property of $\pi_1(X)$ based on the idea of P{\u a}un. 
 \begin{lemma} \label{lemma:an}Under the assumption of Theorem \ref{thm:main1}, Corollary \ref{cor:main1}, $\pi_1(X)$ is virtually nilpotent.

\end{lemma}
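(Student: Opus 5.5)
The plan is to run P{\u a}un's argument on the family of RCD spaces constructed above. In each case of the statement (Theorem \ref{zz} for Theorem \ref{thm:main1}; Proposition \ref{B}, Proposition \ref{Prop:antinefpair} and the subsequent perturbation lemma for Corollary \ref{cor:main1}), we have, for every small $\epsilon>0$, a K\"ahler current $\omega_\epsilon\in[\theta]$ in a \emph{fixed} class. Its metric completion $(\hat X_\epsilon,d_\epsilon,\mu_\epsilon)$ is a compact $\RCD(-2\epsilon,2n)$-space homeomorphic to $X$. Since the homeomorphism identifies fundamental groups, $\pi_1(\hat X_\epsilon)\cong\pi_1(X)$. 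Moreover, $\pi_1(X)$ is finitely generated because $X$ is a compact complex space. We fix generators $\gamma_1,\dots,\gamma_N$ once and for all.

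Next we rescale to make the curvature bound uniform. Put $d'_\epsilon=\lambda_\epsilon d_\epsilon$ with $\lambda_\epsilon=\sqrt{2\epsilon}$ (say). The rescaled space is $\RCD(-1,2n)$, and in fact it becomes $\RCD(-\lambda',2n)$ with $\lambda'\to0$ if we rescale less aggressively. In either normalization Theorem \ref{DSZZ} applies: for every $p\in\hat X_\epsilon$, the image of $\pi_1(B_{d'_\epsilon}(p,\varepsilon(n)))\to\pi_1(\hat X_\epsilon,p)$ contains a nilpotent subgroup of index $\le M(n)$. In unscaled terms, $B_{d'_\epsilon}(p,\varepsilon(n))=B_{d_\epsilon}(p,\varepsilon(n)/\sqrt{2\epsilon})$, a $d_\epsilon$-ball whose radius tends to $\infty$ as $\epsilon\to0$.

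The key step is the analogue of (d). By Lemma \ref{dps}, applied to the lift of $\omega_\epsilon$ to the universal cover $\tilde X$, there are points $p_\epsilon\in\tilde X$ and paths from $p_\epsilon$ to $\gamma_j(p_\epsilon)$ in the regular part whose $\omega_\epsilon$-lengths are bounded by $D$, independently of $\epsilon$. Projecting these paths to $X$ gives loops $c_j$ based at $\bar p_\epsilon=\pi(p_\epsilon)$. They represent the conjugates of the $\gamma_j$ under a single change of base point, so together they still generate $\pi_1(X,\bar p_\epsilon)$, and they have $d_\epsilon$-length $\le D$. Choosing $\epsilon$ so small that $\varepsilon(n)/\sqrt{2\epsilon}>D$, every $c_j$ lies in $B_{d_\epsilon}(\bar p_\epsilon,\varepsilon(n)/\sqrt{2\epsilon})$. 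The inclusion-induced map \eqref{mor} at $\bar p_\epsilon$ is therefore surjective, and Theorem \ref{DSZZ} shows that $\pi_1(X)$ contains a nilpotent subgroup of index $\le M(n)$.

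I expect the main obstacle to be this transfer step. Lemma \ref{dps} yields lengths measured by the smooth metric $\omega_\epsilon$ on the regular part of $\tilde X$, whereas Theorem \ref{DSZZ} concerns balls in the metric completion $\hat X_\epsilon$. One must check two things. First, the $d_\epsilon$-length of a curve in $X^\circ$ (or $X\setminus Z_\epsilon$) agrees with its $\omega_\epsilon$-length; this holds by the definition of the completion. Second, the identification $\pi_1(\hat X_\epsilon)\cong\pi_1(X)$ via $\iota$ sends these loops to the intended classes; since $\iota$ is the identity on the regular part, this holds. A further point is that Lemma \ref{dps} relies on Proposition \ref{DPS}, whose length estimate uses only $[\omega_\epsilon]=[\theta]$ and the lower Ricci bound in the sense of currents. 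I would verify that this estimate holds for the singular $\omega_\epsilon$ by applying it to the smooth approximants $\omega_{\epsilon,j}$ (respectively $\omega_{\epsilon,\delta}$) and passing to the limit using smooth convergence on compact subsets of the regular locus.
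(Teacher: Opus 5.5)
Your proposal is correct and follows the paper's proof. You rescale so that Theorem \ref{DSZZ} applies, use Lemma \ref{dps} to get generating loops of $\omega_\epsilon$-length at most $D$ independent of $\epsilon$, and take $\epsilon$ small enough that these loops lie in the Margulis ball, which makes \eqref{mor} surjective; your condition $\sqrt{2\epsilon}\,D<\varepsilon(n)$ is the correctly normalized form of the paper's $\epsilon D^2\le\varepsilon(n)$. One correction: Proposition \ref{DPS} uses no Ricci bound, only that $\omega_\epsilon\in[\theta]$ (through $\int\omega_\epsilon\wedge\theta^{n-1}=\int\theta^n$), and it is already stated for bounded $\theta$-psh potentials that are $C^2$ off a subvariety, so it applies directly to $\varphi_\epsilon$ and your detour through smooth approximants is unnecessary.
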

\begin{proof}Let $\epsilon(n)$ be the constant in  Lemma \ref{dps}. By choosing $\epsilon$ sufficiently small such that $\epsilon D^2\leq \epsilon(n)$ (D is the constant in Lemma \ref{dps}), it follows from the Lemma \ref{dps}  that $\pi_1(B_d(p, \varepsilon(n))) \rightarrow \pi_1(X, p)$ is surjective. Hence $\pi_1(X)$ is virtually nilpotent by Theorem \ref{DSZZ}. 
\end{proof}
 
\begin{remark}\label{add} It is clear from the proof that if conjecture \ref{conrcd} holds and the variety $X$ satisfies the assumption of Theorem \ref{thm:main0}, we may also construct a sequence of Ricci almost non negative RCD spaces which are homeomorphic to $X$ and hence $\pi_1(X)$ is almost nilpotent by Margulis Lemma.
\end{remark}

%%%%%%%%%%%%%%%%%%%%%%%%%

\section{From virtual nilpotence to almost abelianity of \texorpdfstring{$\pi_1(X)$}{text}}
\label{sec:abelianity}
%In this section we give the proof of {\hyperref[thm:main1]{Theorem \ref*{thm:main2}}, preciesly, we show that for normal projective variety $X$ with klt singularities and nef anticanonical divisor, its fundamental group $\pi_1(X)$ is almost Abelian. Indeed, the theorem can be deduced immediately from {\hyperref[thm:main1]{Theorem \ref*{thm:main1}} 
The main object of this section is to deduce the almost abelianity of $\pi_1(X)$ from the virtual nilpotency result Lemma \ref{lemma:an} established in previous sections. Using a result of Camapana (see Appendix \ref{app:Campana} for a general account), the main task is to show that the Albanese map is surjective. To this end, we need the following the positivity theorem for twisted relative canonical divisors analogous to \cite[Proposition 3.1]{MWWZ25}.
\begin{proposition}
\label{prop:positivity}
Let $f: X \to Y$ be a projective morphism between compact K{\"a}hler manifolds. Let $A$ be an $f$-relatively ample line bundle on $X$ and $N$ a $\Q$-line bundle on $X$ such that $c_1(N)$ is pseudoeffective and asymptotically klt. 
Assume that $f_*\mathscr{O}_X(mK_{X/Y} + A + mN )$ is a non-zero sheaf for $m\in\Z_{>0}$ sufficiently large and divisible. Then the $\Q$-line bundle $K_{X/Y}+N-\Ram(f)$ is pseudo-effective, where $\Ram(f)$ denotes the ramification divisor of $f$, i.e. 
\[
\Ram(f):=\sum_i (e_i-1)W_i\,,
\]
where $W_i$ runs over all the prime divisors on $X$ such that $f(W_i)$ is of codimension $1$ on $Y$ and $e_i$ is the multiplicity of $f^\ast f(W_i)$ along $W_i$.
\end{proposition}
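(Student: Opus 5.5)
We follow the scheme of \cite[Proposition 3.1]{MWWZ25}, which rests on the positivity of twisted relative canonical bundles (Berndtsson--P{\u a}un, P{\u a}un--Takayama, Cao) via fibrewise Bergman kernel metrics. The only new ingredient is that the twist $N$ is no longer effective klt. We replace it by a family of singular metrics with trivial multiplier ideals and curvature $\ge -\epsilon\omega$, and then pass to the limit $\epsilon\to0$.

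\textbf{Step 1: the twisting metrics.} Fix a K{\"a}hler form $\omega_X$ on $X$. By Remark \ref{rmk:asym_klt}(c), for every $\epsilon>0$ the $\Q$-line bundle $N$ carries a singular metric $h_\epsilon$ with $\sqrt{-1}\Theta_{h_\epsilon}(N)\ge-\epsilon\omega_X$ and $\mathscr{J}(h_\epsilon)=\mathscr{O}_X$. By the openness theorem \cite{Ber13,GZ15}, as in Lemma \ref{lem41}, we may even take $\mathscr{J}(h_\epsilon^{1+\delta})=\mathscr{O}_X$ for some small $\delta>0$. Since $X$ is compact K{\"a}hler and $f$ is projective, we pick a K{\"a}hler form $\omega_Y$ on $Y$ and $A$ with smooth metric $h_A$ positive on fibres. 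After twisting $A$ by $f^*$ of a sufficiently positive class, we may assume $\sqrt{-1}\Theta_{h_A}(A)+C f^*\omega_Y\ge\omega_X$. The error $-\epsilon\omega_X$ can then be absorbed by an $\epsilon$-multiple of the class $\{A\}+f^*\{C\omega_Y\}$.

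\textbf{Step 2: relative Bergman kernel metrics.} For $m$ large and divisible, consider $L_m:=m(K_{X/Y}+N)+A$ with the metric $h_\epsilon^{m}h_A$. Its curvature is $\ge -m\epsilon\omega_X+\Theta_{h_A}(A)$, which is positive in the $\omega_X$ direction, up to a pull-back from $Y$, once $m\epsilon\le 1$. By the hypothesis $f_*\mathscr{O}_X(mK_{X/Y}+A+mN)\neq0$ we get a nonzero direct image. The triviality of $\mathscr{J}(h_\epsilon)$ guarantees that the $L^{2/m}$-integrability condition holds on generic fibres, so the $m$-th relative Bergman kernel metric on $mK_{X/Y}+mN+A$ is well defined. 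Following P{\u a}un--Takayama and \cite{Cao19}, this metric has semipositive curvature up to $-m\epsilon\omega_X$-type errors controlled by the pull-back of $\omega_Y$. Moreover, as in \cite{MWWZ25} (Campana's argument for multiple fibres), the Bergman metric has singularities along $\Ram(f)$ of order at least $m\,\Ram(f)$, by the $L^{2/m}$ extension property across generic points of $W_i$. Dividing by $m$, we obtain that
\[
K_{X/Y}+N-\Ram(f)+\tfrac1m\{A\}+\epsilon\,\{\omega_X\}+\tfrac{C}{m}f^*\{\omega_Y\}
\]
is pseudoeffective.

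\textbf{Step 3: the limit.} We first let $m\to\infty$ with $\epsilon$ fixed; this requires $m\epsilon\lesssim1$, so we take $\epsilon=1/m$ jointly. Since the pseudoeffective cone is closed, the limit gives pseudoeffectivity of $K_{X/Y}+N-\Ram(f)$.

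\textbf{Expected obstacle.} The main difficulty is to keep uniformity when $\epsilon$ and $m$ go to zero and infinity together. The nonvanishing of $f_*$ and the $L^{2/m}$-integrability of sections with respect to $h_\epsilon^m$ must hold uniformly, and one needs the curvature error $-m\epsilon\omega_X$ to remain bounded. I would first try to choose $\epsilon=\epsilon_m=1/m^2$. I would then check that the generic-fibre integrability only uses $\mathscr{J}(h_\epsilon|_{X_y})=\mathscr{O}_{X_y}$, which follows from Step 1 by restriction to a very general fibre (Ohsawa--Takegoshi).
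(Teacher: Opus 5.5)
Your proposal is correct and is essentially the paper's proof. The paper likewise uses the metrics $h_{\epsilon_m}^m h_A$ on $A+mN$ with $\epsilon_m\le 1/m$, which have trivial multiplier ideal and curvature $\ge -f^\ast\omega_Y$. It forms the relative $m$-Bergman kernel metric by applying Berndtsson--P{\u a}un locally over $Y$ after twisting by $e^{-f^\ast\varphi}$ with $dd^c\varphi=\omega_Y$, obtains curvature $\ge m[\Ram(f)]-f^\ast\omega_Y$, divides by $m$, and lets $m\to\infty$ using closedness of the pseudoeffective cone.

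Two minor corrections. First, $\mathscr{J}(h_\epsilon|_{X_y})=\mathscr{O}_{X_y}$ for almost every $y$ follows from Fubini's theorem, not from Ohsawa--Takegoshi, which gives the opposite inclusion $\mathscr{J}(\phi|_{X_y})\subseteq\mathscr{J}(\phi)|_{X_y}$. Second, the uniformity you worry about is not needed: for each $m$ you already get a current in a class converging to $c_1(K_{X/Y}+N-\Ram(f))$ whose negative part tends to zero.
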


\begin{proof}
The proof is quite similar to that of \cite[Proposition 3.1(a)]{MWWZ25}. Fix K{\"a}hler forms $\omega_X$ and $\omega_Y$ on $X$ and $Y$ respectively. Since $N$ is pseudoeffective and asymptotically klt, for every $\epsilon>0$ we can construct a singular Hermitian metric $h_\epsilon$ on $N$ such that  $\sqrt{-1}\Theta_{h_\epsilon}(N)\geq -\epsilon\omega_X$ and $\mathscr{J}(h_\epsilon)=\mathscr{O}_X$ by Remark \ref{rmk:asym_klt}(c), which implies that $\mathscr{J}(h_\epsilon|_{X_y})=\mathscr{O}_{X_y}$ for $y\in Y$ a.e. by \cite[Remark 3.29]{Pau18}. Since $A$ is $f$-relatively ample, we have a $\mathscr{C}^{\infty}$ Hermitian metric $h_A$ on $A$ such that $\sqrt{-1}\Theta_{h_A}(A)+f^\ast\omega_Y\geq\omega_X$. For every $m\in\Z_{>0}$, we choose $\epsilon_m\le 1/m$ and consider the singular metric $h_m:=h_{\epsilon_m}^m\cdot h_A$ on $A+mN$, then we have 
\begin{equation}
\label{h_m_curv}
\sqrt{-1}\Theta_{h_m}(A+mN)\geq -m\epsilon_m\omega_X+\omega_X-f^\ast\omega_Y\geq -f^\ast\omega_Y.
\end{equation}
Moreover, since $h_A$ is a $\mathscr{C}^{\infty}$-metric, we have
\begin{equation}
\mathscr{J}(h_m^{1/m})=\mathscr{J}(h_{\epsilon_m}\cdot h_A^{1/m})=\mathscr{J}(h_{\epsilon_m})=\mathscr{O}_X.
\end{equation}
Then by H{\"o}lder's inequality and our assumption, for $y\in Y$ general, there is $0\neq u\in H^0(X_y, \mathscr{O}_{X_y}(mK_{X_y}+A+mN))$ satisfying
\[
\int_{X_y}|u|^{\frac{2}{m}}e^{-\frac{\phi_m}{m}}<+\infty,
\]
where $\phi_m$ denotes the local weight of $h_m$. Then we can construct the relative $m$-Bergman kernel metric $h_{X/Y,h_m}^{(m)}$ on $mK_{X/Y}+A+mN$ (see \cite[\S A.2]{BP10}, \cite[\S 4.2]{PT18} and \cite[\S 3.2]{Cao17}) so that
\begin{equation}
\label{m-Bergman_curv}
\sqrt{-1}\Theta_{h_{X/Y,h_m}^{(m)}}(mK_{X/Y}+A+mN)-m[\Ram(f)]\geq -f^\ast\omega_Y
\end{equation}
by \cite[Theorem 2.3]{Wan21} (see also \cite[Remark 2.5]{CP17}, \cite[Theorem 1.2]{CP25} and \cite[Theorem 5.1]{MWWZ25}). Here the subtle point is that we cannot directly apply \cite[Theorem 0.1]{BP10} (\cite[Theorem 4.2.7]{PT18}, \cite[Theorem 3.5]{Cao17}) since the curvature current of $h_m$ is not necessarily semipositive, instead we can only apply them locally over $Y$ in the following way: locally we write $\omega_Y=dd^c\varphi$ for some smooth strictly psh function $\varphi$, then from \eqref{h_m_curv} we have $\sqrt{-1}\Theta_{he^{-f^\ast\varphi}}(A+mN)\geq0$ and we can apply \cite[Theorem 0.1]{BP10} to get the relative $m$-Bergman kernel metric $h_{X/Y,h_me^{-f^\ast\varphi}}^{(m)}$ on $mK_{X/Y}+A+mN$ with semipositive curvature current (these can only be defined locally and cannot glue together!), by construction, locally we have 
\[
h_{X/Y,h_me^{-f^\ast\varphi}}^{(m)}=h_{X/Y,h_m}^{(m)}\cdot e^{-f^\ast\varphi},
\]
hence we get the desired curvature bound \eqref{m-Bergman_curv}. Now let $m\to+\infty$, \eqref{m-Bergman_curv} ensures the existence of a closed positive current in $c_1(K_{X/Y}+N-\Ram(f))$, in other word $K_{X/Y}+N-\Ram(f)$ is pseudoeffective and the proposition is thus proved.
\end{proof}

\begin{remark}
\label{rmk:positivity}
$($a$)$ The main difference between Proposition \ref{prop:positivity} and \cite[Proposition 3.1]{MWWZ25} is that when $N$ is nef, we can construct a series of $\mathscr{C}^\infty$-metrics $h_\epsilon$ on $N$ of which the curvature form have arbitrarily small negative part, while in our case the metrics $h_\epsilon$ are in general singular; although the singularities of $h_\epsilon$ are mild (having trivial multiplier ideal), we cannot guarantee that $h_\epsilon\cdot h_L$ still have trivial multiplier ideal for some $\epsilon>0$, hence we lose the flexibility to twist the additional line bundle $L$; for the same reason, we should assume $A$ to be relatively ample, instead of relatively big.      

$($b$)$ In the statement of Proposition \ref{prop:positivity}, we can slightly weaken the condition on $N$: it suffices to assume that $N$ is pseudoeffective and that the cosupport of $\mathscr{J}_+(c_1(N))$ (see Remark \ref{rmk:asym_klt}(a)) does not dominate $Y$. See also \cite[Theorem 2.1]{Wan22}. 
\end{remark}

\subsection{Surjectivity of the Albanese map}
\label{ss:Alb}

The main goal of this subsection is to prove Theorem \ref{thm:Alb} as well as the surjectivity of Albanese maps under the assumption of Corollary \ref{cor:main1}. 

%As for Theorem \ref{cor:main1}, surjectivity of the Albanese map has recently been established by \cite[Corollary 3.13]{MWWZ25} (see Theorem \ref{thm:Alb}). 
%\textcolor{red}{confusing here}
\begin{theorem}
\label{thm:psef-star-Alb}
Let $X$ be a compact K{\"a}hler manifold. Suppose that $-K_X$ is pseudoeffective and asymptotically klt. Then the Albanese map $\alb_X: X\to\Alb_X$ of $X$ is a fibration.
\end{theorem}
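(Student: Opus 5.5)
We follow the strategy of \cite[Corollary 3.13]{MWWZ25} (itself modelled on \cite{Pau17,Cao19}), with Proposition \ref{prop:positivity} replacing the nef positivity statement used there. Once $\alb_X$ is shown to be surjective, connectedness of the fibres follows from \cite[Theorem C]{Wan21}, as remarked after Theorem \ref{thm:Alb}. So the whole task is surjectivity.

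We argue by contradiction. Suppose $Y:=\alb_X(X)\subsetneq \Alb_X$. Take the Stein factorisation and a resolution, then replace $X$ by a suitable bimeromorphic modification $\mu:X'\to X$. This yields a fibration $f:X'\to Y'$, where $Y'$ is a compact K\"ahler manifold with a generically finite map $Y'\to Y\subset\Alb_X$. By the classical theorem of Ueno on subvarieties of tori, $Y$ is fibred by translates of a subtorus $K$ over a variety of general type. After passing to the quotient $\Alb_X/K$ and composing, we may assume that $Y$ is itself of general type and of positive dimension. In particular, $K_{Y'}$ is big, since $Y'\to Y$ is generically finite onto a variety of general type.

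Next we transport the positivity. Write $K_{X'}=\mu^*K_X+E$ with $E$ exceptional. Since $X$ is smooth, $E\ge0$. Set $N:=-\mu^*K_X$. Then $c_1(N)$ is pseudoeffective, and we must show it is still asymptotically klt. Pull back the metrics $h_\epsilon$ of Remark \ref{rmk:asym_klt}(c). Their curvature satisfies $\ge -\epsilon\mu^*\omega$, and we absorb the defect into $-\epsilon\omega_{X'}$ by adding a small multiple of $\omega_{X'}$. The multiplier ideal of the pullback weight is contained in $\mathscr{J}$ of the weight twisted by $E$, but as a class we only need $\mathscr{J}(\mu^*h_\epsilon)$ to be trivial away from $\mu$-exceptional loci. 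This is the one subtle point. Rather than this, we use Remark \ref{rmk:positivity}(b): it suffices that the cosupport of $\mathscr{J}_+(c_1(N))$ does not dominate $Y'$. Since $\mu$ is an isomorphism over a dense open set of $X$ and the exceptional locus maps to a proper subset whenever $\mu$ is chosen as a sequence of blow-ups with centres over the indeterminacy of the map to $Y$, we have to check this domination condition carefully. This is the step we expect to be the main obstacle. If necessary, we instead choose $X'$ with $\mu$ an isomorphism over the general fibre, which is possible because the Albanese map is already a morphism on the smooth $X$. In that case we can take $X'=X$, $f=\alb_X$ followed by Stein factorisation, and only $Y'$ needs resolving. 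This avoids the problem entirely.

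Now apply Proposition \ref{prop:positivity} to $f$ with $A$ an $f$-ample line bundle; projectivity of $f$ can be arranged by the usual reductions. We have $K_{X/Y'}+N=-f^*K_{Y'}+(E)$, where $E=0$ in the simplified setup. Nonvanishing of $f_*\mathscr{O}(mK_{X/Y'}+A+mN)=f_*\mathscr{O}(A-mf^*K_{Y'})$ holds because its general fibre restriction is $A|_{X_y}$, which has sections for $m$ divisible. The proposition then gives that $-f^*K_{Y'}-\Ram(f)$ is pseudoeffective, so $-f^*K_{Y'}$ is pseudoeffective. Pushing forward by $f_*(\cdot\wedge\omega^{\dim X-\dim Y'})$ shows that $-K_{Y'}$ is pseudoeffective on $Y'$. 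This contradicts the bigness of $K_{Y'}$ when $\dim Y'>0$. Hence $\alb_X$ is surjective.
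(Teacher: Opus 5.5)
\textbf{There is a genuine gap: Proposition \ref{prop:positivity} cannot be applied to your fibration.} Your argument breaks at ``apply Proposition \ref{prop:positivity} to $f$ \dots; projectivity of $f$ can be arranged by the usual reductions''. It cannot be arranged. A projective morphism has projective fibres, and the general fibre of your $f$ is determined up to bimeromorphism by the fibration. Here $f$ is the Albanese map followed by the Ueno quotient $\Alb_X\to\Alb_X/K$ and Stein factorisation. For a compact K\"ahler $X$ its general fibre may a priori be non-Moishezon (torus or non-projective K3 fibres, say), and then no model of $f$ is projective.

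Nor is projectivity a removable hypothesis in Proposition \ref{prop:positivity}. The metrics $h_\epsilon$ on $N$ only satisfy $\sqrt{-1}\Theta_{h_\epsilon}(N)\ge-\epsilon\omega_X$, which is negativity in all directions, including the fibre directions. In \eqref{h_m_curv} it is the $f$-ample $A$, with $\sqrt{-1}\Theta_{h_A}(A)+f^\ast\omega_Y\ge\omega_X$, that absorbs this negativity, so that relative Bergman kernel positivity can be applied locally over $Y$. Without a relatively ample line bundle the paper has no substitute; it remarks in the proof of Theorem \ref{thm:special} that the twisted positivity is not available for non-projective morphisms. This is exactly the difficulty that led P{\u a}un to relative adjoint transcendental classes in \cite{Pau17}, already for nef $-K_X$.

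\textbf{How the paper avoids this.} The paper never applies positivity to the Albanese map. It uses the MRC fibration $\psi:X\dashrightarrow Y$, whose model $\phi:M\to Y$ is projective by \cite[Theorem 1.2]{CH24}. Proposition \ref{prop:positivity} with $N=-\pi^\ast K_X$ shows that $-\pi_\ast\phi^\ast c_1(K_Y)$ is pseudoeffective. Since $Y$ is not uniruled, $K_Y$ is pseudoeffective by \cite[Theorem 1.1]{Ou25}, and together these give $\kappa(Y)=0$ (Proposition \ref{prop:Zhang}(a)). Then $\alb_Y$ is a fibration by \cite[Theorem C]{Wan21}, and $h^1(\mathscr{O}_M)=h^1(\mathscr{O}_Y)$ forces $\Alb_X\to\Alb_Y$ to be an isomorphism. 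You could keep your Ueno reduction, since a subvariety of a torus contains no rational curves and so your map factors through $\psi$. But the positivity still has to be applied on the projective MRC model, so repairing your argument amounts to switching to the paper's route.

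\textbf{A secondary inaccuracy.} You cannot take $X'=X$, because resolving the normal Stein factorisation base $Z'$ creates indeterminacies. If you choose $\mu$ to be an isomorphism away from the preimage of the non-isomorphism locus of $Y'\to Z'$, the domination condition of Remark \ref{rmk:positivity}(b) is harmless. However, $E\neq0$ can then map onto exceptional divisors of $Y'\to Z'$, and pushing forward only yields that $-K_{Y'}+D$ is pseudoeffective for some $D\ge0$ exceptional over $Z'$. This still contradicts bigness of $K_{Y'}$, since it would make the exceptional divisor $D$ big, but that step has to be supplied.
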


Theorem \ref{thm:psef-star-Alb} can be regarded as the generalization of  \cite[Proposition 2.7.1]{DPS01}: the surjectivity of $\alb_X$ is established if $-K_X$ is pseudoeffective and $\mathscr{J}(c_1(-K_X))=\mathscr{O}_X$ (see Definition \ref{def:asym_klt}). In fact they can prove that $\alb_X$ is a holomorphic submersion (smooth morphism).

%It follows immediately from the theorem above and Lemma \ref{1-2} that the Albanese map of a compact K{\"a}hler mainfold $X$ is a fibration if there is an effective $\R$-divisor $\Delta$ on $X$ such that $(X,\Delta)$ is log canonical and $-(K_X+\Delta)$ is nef. 
%%Indeed, by writing $-K_X=-(K_X+\Delta)+\Delta$ we find that $-K_X$ is pseudoeffective and asymptotically klt; to see this, just notice the following fact: let $\theta$ be a K{\"a}hler form on $X$ and put $\eta:=c_1(-(K_X+\Delta))$, then for any $\epsilon>0$ the class $\eta+\epsilon\cdot\{\theta\}$ is K{\"a}hler thus so is $\eta+\epsilon\cdot\{\theta\}+\delta(\epsilon)\cdot\{\Delta\}$ for $\delta(\epsilon)>0$ sufficiently small since the K{\"a}hler cone is open. 
%In summary, we have the result below:

%\begin{corollary}
%\label{cor:nef-lc-Alb}
%Let $X$ be a compact K{\"a}hler manifold. Suppose that there is an effective $\R$-divisor $\Delta$ on $X$ such that $(X,\Delta)$ is log canonical and $-(K_X+\Delta)$ is nef. Then the Albanese map $\alb_X: X\to\Alb_X$ of $X$ is a fibration.
%\end{corollary}

%If $\Delta$ has SNC support, then the surjectivity of $\alb_X$ has been obtained in \cite[Theorem 1.3]{FHZ25} by establishing a positivity theorem for the variation of fibrewise K{\"a}hler-Einstein metrics. And if $\Delta$ is a $\Q$-divisor, we can generalize this corollary to singular case.

The proofs of Theorems \ref{thm:Alb} and \ref{thm:psef-star-Alb} are essentially the same as that of \cite[Corollary 3.13]{MWWZ25}, as long as we establish Proposition \ref{prop:Zhang} below, which is a generalization of \cite[Main Theorem]{Zha05} and \cite[Proposition 3.12(a)]{MWWZ25}:

\begin{proposition}
\label{prop:Zhang}
Let $X$ be a compact normal complex variety in Fujiki class $\mathscr{C}$ and assume that $X$ satisfies the condition of Theorem \ref{thm:Alb} or Theorem \ref{thm:psef-star-Alb}. Let $\psi: X\dashrightarrow Y$ be a meromorphic mapping with $Y$ being a compact K{\"a}hler manifold and let $\phi: M\to Y$ be an elimination of indeterminacies of $\psi$ with $\pi:M\to X$ being the natural bimeromorphic morphism. Assume moreover that $\phi$ is projective and $Y$ is not uniruled. Then we have
\begin{itemize}
\item[(a)] $K_Y$ is $\Q$-linearly equivalent to an effective divisor $D_Y$, such that $\phi^\ast D_Y$ is $\pi$-exceptional, and $[D_Y]$ is the unique closed semipositive $(1,1)$-current in $c_1(K_Y)$. In particular, $\nu(X)=\kappa(Y)=0$.  
\item[(b)] $\Delta$ is horizontal with respect to $\psi$.
\item[(c)] $\Ram(\phi)$ is $\pi$-exceptional.
\end{itemize}
\end{proposition}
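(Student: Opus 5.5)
We follow the strategy of \cite[Main Theorem]{Zha05} and \cite[Proposition 3.12(a)]{MWWZ25}, with Proposition \ref{prop:positivity} as the key positivity input. First we pass to $M$. Since $\pi$ is bimeromorphic and $X$ is normal, we can write
\[
K_M+\Delta_M=\pi^\ast(K_X+\Delta)+E,
\]
where $\Delta_M$ is the strict transform of $\Delta$ plus the exceptional divisors with discrepancy $-1$, and $E\ge 0$ is $\pi$-exceptional; here we use that $(X,\Delta)$ is lc. We may assume $M$ is smooth, after replacing it by a further resolution. We then set $N:=\pi^\ast(-(K_X+\Delta))+\Delta_M$. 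This class is pseudoeffective and asymptotically klt by Lemma \ref{1-2}, since the pullback of a nef class is nef and $(M,\Delta_M)$ is lc; if needed we take a log resolution so that $\Delta_M$ has SNC support. In the setting of Theorem \ref{thm:psef-star-Alb}, $\pi=\mathrm{id}$, $\Delta=0$ and $N=-K_X$, which is pseudoeffective and asymptotically klt by assumption. In both cases $K_M+N=E$ is effective and $\pi$-exceptional. Since only the classes matter, we reduce to the case where $N$ is a $\Q$-line bundle by a small perturbation that keeps the asymptotic klt property, using that the Kähler cone is open.

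Next we apply Proposition \ref{prop:positivity} to $\phi:M\to Y$ with $N$ as above and $A$ a $\phi$-ample line bundle. The non-vanishing hypothesis on the fibres holds because $mK_{M_y}+A+mN|_{M_y}=A+mE|_{M_y}$, which has sections for $m$ large and divisible. The conclusion is that $K_{M/Y}+N-\Ram(\phi)$ is pseudoeffective. Using $K_M+N\equiv E$, this gives
\[
E-\phi^\ast K_Y-\Ram(\phi)\ \text{is pseudoeffective}.
\]
Since $Y$ is not uniruled, $K_Y$ is pseudoeffective; this is \cite{BDPP} in the projective case and \cite{Ou25} for Kähler manifolds. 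Hence there is a closed positive current $T\in c_1(K_Y)$, and $\phi^\ast T+\Ram(\phi)+S\equiv E$ for some positive current $S$. Pushing forward by $\pi$ gives $\pi_\ast(\phi^\ast T+\Ram(\phi)+S)\equiv 0$ on the Kähler (Fujiki) space $X$. Therefore all these positive currents vanish after pushforward, so $\phi^\ast T+\Ram(\phi)+S$ is a positive current cohomologous to the exceptional divisor $E$. By the negativity lemma, or more precisely by Boucksom's rigidity of exceptional divisors (the divisor $E$ is exceptional, so it is the unique positive current in its class), we get $\phi^\ast T+\Ram(\phi)+S=[E]$. This yields (c) directly, and shows that $\phi^\ast T$ is a divisor current supported on $\Exc(\pi)$. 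The rigidity then gives (a): $T=[D_Y]$ for an effective $\R$-divisor $D_Y$, $\phi^\ast D_Y$ is $\pi$-exceptional, and $[D_Y]$ is the only positive current in $c_1(K_Y)$, which forces $\nu(Y)=0$. Abundance for numerical dimension zero on compact Kähler manifolds then makes $D_Y$ a $\Q$-divisor with $K_Y\sim_\Q D_Y$ and $\kappa(Y)=0$; if needed we quote \cite{Ou25}/\cite{CH24} or Nakayama's $\nu=0$ argument.

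For (b), suppose a component $\Delta_1$ of $\Delta$ is vertical for $\psi$. We then use $N':=N-t\,\Delta'_1$, where $\Delta'_1$ is the strict transform of $\Delta_1$ and $t>0$ is small. This class is still pseudoeffective and asymptotically klt, since the lc pair keeps this property when $\Delta_M$ is decreased. Running the same argument gives $E+t\Delta'_1-\phi^\ast K_Y-\Ram(\phi)$ pseudoeffective. Pushing forward no longer gives zero, and this is where verticality is needed. Because $\Delta'_1$ maps into a proper subvariety of $Y$, we get a contradiction by intersecting with $\phi^\ast$ of Kähler classes and comparing with the moving curve given by the fibres; this step is more delicate. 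The \textbf{main obstacle} is the rigidity argument above in the non-projective, singular-$X$ setting: making sense of ``pushforward of a positive current cohomologous to zero vanishes'' on a Fujiki-class $X$, and the uniqueness of positive currents in exceptional classes. I would first try to handle this entirely on $M$, by testing against $\pi^\ast\omega^{n-1}$ for a Kähler form $\omega$ on a Kähler model.
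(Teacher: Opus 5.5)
Your treatment of (a) and (c) is correct and essentially the paper's. You apply Proposition \ref{prop:positivity} once with $K_M+N=E$, take a positive current $T\in c_1(K_Y)$ from \cite{Ou25}, and use rigidity of the effective $\pi$-exceptional class $\{E\}$ on $M$. The paper instead drops $\Ram(\phi)$ and pushes the class down to a Bott--Chern class $\alpha$ on $X$, where $\pm\alpha$ pseudoeffective forces $\alpha=0$. Your route on $M$ gives (c) and the uniqueness of $[D_Y]$ at once: for another positive $T'\in c_1(K_Y)$, reuse the same $S$ to get $\phi^\ast T'=\phi^\ast T$.

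The obstacle you flag needs no K{\"a}hler form on $X$. Write $R-[E]=dd^c u$ on $M$. Then $u$ is psh on $M\setminus\Exc(\pi)\simeq X\setminus\pi(\Exc(\pi))$, extends across this codimension $\ge 2$ set by Grauert--Remmert, and is therefore constant, so $R=[E]$.

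Minor slips:
\begin{itemize}
\item $\Delta_M$ must contain every exceptional divisor of negative discrepancy, not only those of discrepancy $-1$; otherwise $E$ is not effective.
\item In the setting of Theorem \ref{thm:psef-star-Alb}, $\pi$ is not the identity; you must work with $N=-\pi^\ast K_X$ on $M$.
\item The paper obtains $K_Y\sim_\Q D_Y$ from \cite[Corollary 5.9(b)]{Wan21}; neither \cite{Ou25} nor \cite{CH24} is the right reference here.
\end{itemize}

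The genuine gap is in (b), and it comes from a sign error. With $N':=N-t\Delta'_1$ you have $K_M+N'=E-t\Delta'_1$. So Proposition \ref{prop:positivity} gives that $E-t\Delta'_1-\phi^\ast K_Y-\Ram(\phi)$ is pseudoeffective, not the class with $+t\Delta'_1$ that you wrote. Your class is already pseudoeffective by your first step, since you only added an effective divisor, so no contradiction can be extracted from it. The ``moving curve'' comparison cannot help either, because general fibres of $\phi$ do not meet a vertical $\Delta'_1$.

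You have also misplaced where verticality enters. It is what makes the non-vanishing hypothesis of Proposition \ref{prop:positivity} hold: $(mK_{M/Y}+A+mN')|_{M_y}=(A+mE)|_{M_y}$ for general $y$ only when $\Delta'_1$ does not dominate $Y$. For a horizontal $\Delta'_1$ this hypothesis can fail.

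With the correct sign the argument closes immediately. Adding $\phi^\ast T$, $[\Ram(\phi)]$ and $t[\Delta'_1]$ to a positive current in $\{E-t\Delta'_1-\phi^\ast K_Y-\Ram(\phi)\}$ gives a positive current in $\{E\}$. By your rigidity it equals $[E]$, so $t\Delta'_1\le E$ would be $\pi$-exceptional. That is absurd for the strict transform of a component of $\Delta$.

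This is exactly the paper's proof of (b), except that the paper removes the whole vertical part at once. It takes $N=-(K_M+\Delta_M^v)+F_+\sim_\Q-\pi^\ast(K_X+\Delta)+\Delta_M^h$ and pushes forward to get that $-\pi_\ast(\Delta_M^v+\Ram(\phi))$ is pseudoeffective.
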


Before giving the proof of Proposition \ref{prop:Zhang}, let us briefly explain how to deduce Theorems \ref{thm:Alb} and \ref{thm:psef-star-Alb} from Proposition \ref{prop:Zhang}, for the convenience of the readers: 
\begin{center}
\begin{tikzpicture}[scale=2.4]
\node (A) at (0,0) {$Y$};
\node (B) at (-0.7,1.7) {$M$};
\node (C) at (0,1) {$X$};
\node (A1) at (1.2,0) {$\Alb_Y$.};
\node (C1) at (1.2,1) {$\Alb_X$\,};
\path[->,font=\scriptsize,>=angle 90]
(B) edge[bend right] node[below left]{$\phi$} (A)
(B) edge node[above right]{$\pi$} (C)
(B) edge[bend left] node[above right]{$\alb_M$} (C1)
(A) edge node[below]{$\alb_Y$} (A1)
(C1) edge node[right]{$\exists f$} (A1);
\path[dashed,->,font=\scriptsize,>=angle 90]
(C) edge node[right]{$\psi$} (A)
(C) edge node[above]{$\alb_X$} (C1);
\end{tikzpicture}
\end{center}
Let $X$ be a compact normal complex variety in Fujiki class $\mathscr{C}$ satisfying the condition of Theorem \ref{thm:Alb} or \ref{thm:psef-star-Alb}. Let $\psi: X\dashrightarrow Y$ be the MRC fibration of $X$, and let $\phi$ and $\pi$ as in Proposition \ref{prop:Zhang}. Since $X$ is in Fujiki class $\mathscr{C}$, we may arrange $M$ and $Y$ to be compact K{\"a}hler manifolds, and let $\alb_X: X\dashrightarrow\Alb_X$ (resp. $\alb_Y: Y\to\Alb_Y$) be the Albanese map of $X$ (resp. $Y$). Then $\alb_X\circ\pi$ coincides with the Albanese map $\alb_M$ of $M$, which yields a morphism of complex tori $f:\Alb_X\to \Alb_Y$ from the universal property of the Albanese maps, and we have the commutative diagram above. From \cite[Theorem 1.2]{CH24} we know that $\phi$ is a projective morphism, then we can apply Proposition \ref{prop:Zhang}(a) and conclude that $\kappa(Y)=0$, which in turn implies that $\alb_Y$ is a fibration by \cite[Theorem C]{Wan21} and thus $f$ is also a fibration. On the other hand, from the property of MRC fibrations, we have $H^1(M,\mathscr{O}_M)\simeq H^1(Y,\mathscr{O}_Y)$, hence $\Alb_X$ and $\Alb_Y$ have the same dimension. Since $f$ is surjective, it must be a isogeny (see \cite[\S 2.3-2.4, pp.7-8]{Deb99}); but $f$ is a fibration, it is a fortiori an isomorphism. Therefore $\alb_M$ is a fibration. In particular, if $X$ is smooth, $\alb_X$ is also an everywhere defined fibration.

Now let us turn to the proof of Proposition \ref{prop:Zhang}. In the remaining part of this subsection, all the pullbacks are taken in the sense of $\Q$-line bundles (or $\R$-line bundles).
\begin{proof}[Proof of {Proposition \ref{prop:Zhang}}]
With Proposition \ref{prop:positivity} being established, the proof of the proposition is almost the same as \cite[Proposition 3.13(a)]{MWWZ25} and the key ingredients is the significant progress in K{\"a}hler geometry recently made by Ou \cite{Ou25} (see also \cite{CP25}). In the sequel we will treat the case of Theorem \ref{thm:Alb} and that of Theorem \ref{thm:psef-star-Alb} uniformly, to this end let us first fix some notations: 
%%%%we can write $\phi^\ast K_Y\sim\pi^\ast L+D$ for some line bundle $L$ on $X$ and some $\pi$-exceptional divisor $D$, where $L$ denotes the reflexive hull of the direct image sheaf $\pi_\ast\mathscr{O}(\phi^\ast K_Y)$ (cf. \cite[Lemma 3.4]{MWWZ25}, in our case, since $X$ is smooth, we can argue ad-hocly); moreover, since $X$ is smooth, we can write $K_M\sim \pi^\ast K_X+F$ for some effective $\pi$-exceptional divisor $F$.
Up to further blowing up $M$, we may assume that there is an effective $\Q$-divisor $F_+$ on $M$ such that $N:=-K_M+F_+$ is pseudoeffective and asymptotically klt. Indeed, in the case of Theorem \ref{thm:psef-star-Alb}, we can write $K_M=\pi^\ast K_X+F_+$ for $F_+\ge0$ $\pi$-exceptional $\Z$-divisor, then $N:=-K_M+F_+=-\pi^\ast K_X$ is pseudoeffective and asymptotically klt; while for the case of Theorem \ref{thm:Alb} we may assume that $\pi$ is a log resolution of $(X,\Delta)$ and we can write 
\[
K_M+\Delta_M\sim_\R \pi^\ast(K_X+\Delta)+F_+,
\]
where $\Delta_M=\pi^{-1}_\ast\Delta+F_-$, $F_+$ and $F_-$ are effective $\pi$-exceptional $\R$-divisors with no common components, and they all have SNC supports, then by log canonicity we see that $\Delta_M\le 1$ (i.e. the coefficients of the components in $\Delta_M$ are all $\le 1$), hence $N:=-K_M+F_+\sim_{\Q} -\pi^\ast(K_X+\Delta)+\Delta_M$ is pseudoeffective and asymptotically klt (see the proof of Lemma \ref{1-2}), furthermore, we can perturb $F_+$ to a $\Q$-divisor $F'_+$ such that $F'_+-F_+$ is sufficiently small and effective with support contained in $\Supp(F_+)$, then, by replacing $F_+$ by $F'_+$ we may arrange $N=-K_M+F_+$ to be a $\Q$-line bundle and $N$ remains pseudoeffective and asymptotically klt since $\Delta_M$ and $F_+$ have no common components. Then we show the item (a) in 4 steps as in the proof of \cite[Proposition 3.13]{MWWZ25}: 
\begin{enumerate}
\item Applying Proposition \ref{prop:positivity} to $\phi: M\to Y$ and $N=-K_M+F_+$, we obtain that $-\phi^\ast K_Y+F_+$ is pseudoeffective. Let $\tilde T$ be a closed positive $(1,1)$-current in $c_1(-\phi^\ast K_Y+F_+)$, then locally $\tilde T=dd^c\tilde\varphi$ for some (local) psh function $\tilde\varphi$ on $M$, by Grauert-Remmert's extension theorem \cite[Satz 4, p.181]{GR56} $\tilde\varphi|_{X\backslash\pi(E)}$ extends to a (local) psh function $\varphi$ on $X$ (see \cite[D{\'e}finition 1.5]{Dem85}), thus we get a closed positive $(1,1)$-current $T$ with local potential on $X$ (see \cite[Definition 4.6.2, p.430]{BEG13}), which defines a Bott-Chern class $\alpha=\{T\}\in H^{1,1}_{\BottChern}(X)$ such that $\alpha=\pi_\ast c_1(-\phi^\ast K_Y+F_+)=-\pi_\ast\phi^\ast c_1(K_Y)$. Here our argument avoids the use of \cite[Lemma 3.4]{MWWZ25} which requires the strong $\Q$-factoriality of $X$.  
\item Since $Y$ is not uniruled, by \cite[Theorem 1.1]{Ou25} $K_Y$ is pseudoeffective and thus so is $-\alpha$ by the same argument as above. Combine this with (1) we see that $\alpha=0$. 
\item Take a closed positive $(1,1)$-current $T_0\in c_1(K_Y)$, then $\pi_\ast\phi^\ast T_0=0$, thus the support of $T_0$ is contained in $\Exc(\pi)$, and by the second theorem of support \cite[III.2.14, p.143]{agbook} we see that $\phi^\ast T_0=[D]$ for some effective $\pi$-exceptional $\R$-divisor $D$; moreover, since $[D\cap M_y]=\phi^\ast T_0|_{M_y}=0$ for general $y\in Y$, $D$ is vertical and again by the theorem of support we have $T_0=[D_Y]$ for some $D_Y\geq 0$ and $D=\phi^\ast D_Y$. Moreover, $T_0=[D_Y]$ is unique in $c_1(K_Y)$ by the negativity lemma \cite[Lemma 1.3]{Wan21}.
\item From (3) we see that $K_Y\equiv D_Y\geq 0$, by \cite[Corollary 5.9(b)]{Wan21} we have $K_Y\sim_{\Q} D_Y$. Moreover, since $D=\phi^\ast D_Y$ is $\pi$-exceptional, we have $\nu(Y)=\kappa(Y)=0$ (see \cite[Definition (18.13), p.198]{Dem10}). And (a) is thus established.
\end{enumerate}
The items (b) and (c) are direct consequence of (a): by applying Proposition \ref{prop:positivity} to $N=-(K_M+\Delta_M^v)+F_+\sim_{\Q}-\pi^\ast(K_X+\Delta)+\Delta^h_M$ we have that $-\phi^\ast K_Y-\Delta_M^v-\Ram(\phi)+F_+$ is pseudoeffective. By pushing forward via $\pi_\ast$ we obtain that $-\pi_\ast(\Delta_M^v+\Ram(\phi))$ is pseudoeffective, implying that $\pi_\ast\Delta_M^v=\pi_\ast\Ram(\phi)=0$ since both are effective, and (b) and (c) are thus established. 
\end{proof}

\subsection{On a question of Demailly-Peternell-Schneider for K{\"a}hler varieties}
\label{ss:DPS}
Before entering into the final step of the proof of our main theorems, let us observe that the positivity result of Proposition \ref{prop:positivity} leads to the following generalization of Chen-Zhang's theorem \cite[Main Theorem]{CZ13} and Deng's theorem \cite[Theorem B]{YDeng21}, thereby answering affirmatively a classical question of Demailly-Peternell-Schneider \cite[Problem 4.13.a]{DPS01} (see also \cite[Question 3.4]{Pet12}) in the context of compact complex varieties in Fujiki class $\mathscr{C}$ (note however that we should assume the morphism $f$ to be projective). Before stating these results, let us observe the following useful lemma, which can be shown by the same argument as in Step (1) of the proof of Proposition \ref{prop:Zhang} or directly by \cite[Proposition  4.6.3, pp.230-231]{BEG13}.

\begin{lemma}
\label{lemma:bimero-psef}
Let $X$ be a compact normal complex variety and let $g: X'\to X$ be proper modification with smooth $X'$. For $\alpha\in H^{1,1}_{\BottChern}(X)$, $\alpha$ is pseudoeffective if and only if $g^\ast\alpha+c_1(E)\in H^{1,1}_{\BottChern}(X')$ is pseudoeffective for some (not necessarily effective) $g$-exceptional divisor $E$.   
\end{lemma}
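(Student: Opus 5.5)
We prove the two implications separately. The forward one is immediate; the converse is a Grauert--Remmert extension argument, as in Step (1) of the proof of Proposition \ref{prop:Zhang}. Throughout, $\alpha$ is a Bott--Chern class of closed $(1,1)$-forms with local potentials on $X$ (cf. \cite[Definition 4.6.2]{BEG13}). Fix a smooth representative $\theta$ of $\alpha$, so that locally on $X$ we have $\theta=dd^c u$ for a smooth function $u$ on a local embedding. Put $Z:=g(\Exc(g))$. Since $X$ is normal, $Z$ has codimension $\ge 2$, and $g$ is an isomorphism over $X\setminus Z$.

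($\Rightarrow$) Let $T\in\alpha$ be a closed positive current with local potentials. Locally $T=dd^c v$ with $v$ psh. Then $g^\ast v$ is psh, so $g^\ast T$ is a well-defined closed positive current in $g^\ast\alpha$. Hence $g^\ast\alpha+c_1(E)$ is pseudoeffective with $E=0$.

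($\Leftarrow$) Let $T'$ be a closed positive current in $g^\ast\alpha+c_1(E)$. Choose a smooth Hermitian metric $h$ on $\mathscr{O}_{X'}(E)$ and the canonical meromorphic section $\sigma_E$. By the Poincar\'e--Lelong formula the smooth form $\theta_E:=\sqrt{-1}\Theta_h(E)$ satisfies
\[
[E]=\theta_E+dd^c\log|\sigma_E|_h^2 .
\]
Write $T'=g^\ast\theta+\theta_E+dd^c\varphi'$ with $\varphi'$ quasi-psh on $X'$, and set $\varphi:=\varphi'-\log|\sigma_E|^2_h$. Then
\[
T'=g^\ast\theta+[E]+dd^c\varphi .
\]
Since $E$ is supported in $\Exc(g)$, on $X'\setminus\Exc(g)\cong X\setminus Z$ this reads $T'=g^\ast\theta+dd^c\varphi$. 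Consequently $\varphi$ descends to a function $\varphi_X$ on $X\setminus Z$. On each small open set $U\subset X$ where $\theta=dd^c u$, the function $u+\varphi_X$ is psh on $U\setminus Z$, because it is a local potential of the positive current $T'$ there.

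Since $X$ is normal and $\operatorname{codim}Z\ge 2$, the Grauert--Remmert extension theorem \cite[Satz 4, p.181]{GR56} (in the form of \cite[D\'efinition 1.5]{Dem85}) extends $u+\varphi_X$ uniquely to a psh function on $U$. Subtracting the smooth function $u$, the local extensions of $\varphi_X$ agree on overlaps, since they coincide on the dense set $U\setminus Z$. Hence they glue to a global $\theta$-psh function $\tilde\varphi$ on $X$. The current $T:=\theta+dd^c\tilde\varphi$ is then closed and positive, has local potentials $u+\tilde\varphi$, and lies in the Bott--Chern class $\{\theta\}=\alpha$. Therefore $\alpha$ is pseudoeffective.

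The point needing care is this gluing step. One must check that $\varphi_X$, a priori defined only on $X\setminus Z$, extends to a single global quasi-psh function rather than just to local potentials; otherwise the class of the extended current could differ from $\alpha$. This is exactly where the decomposition $T'=g^\ast\theta+[E]+dd^c\varphi$ is used: it isolates the entire contribution of $E$ on the exceptional locus, so off $\Exc(g)$ the difference from $g^\ast\theta$ is the global $dd^c\varphi$. This handles $E$ of either sign, so no effectivity of $E$ is needed.
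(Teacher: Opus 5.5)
Your proof is correct and is essentially the paper's own argument, which proves the lemma by the Grauert--Remmert extension of Step (1) in the proof of Proposition \ref{prop:Zhang} (or, alternatively, by citing \cite[Proposition 4.6.3, pp.230-231]{BEG13}). Your explicit splitting $T'=g^\ast\theta+[E]+dd^c\varphi$ makes precise two points the paper leaves implicit: how the extended local potentials glue to a global function, and why the resulting class is $\alpha$ even when $E$ is not effective.
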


\begin{theorem}
\label{thm:Chen-Zhang-Deng}
Let $f:X\to Y$ be projective morphism between compact normal complex varieties, such that $X$ is in Fujiki class $\mathscr{C}$ and admits an effective $\R$-divisor $\Delta$ such that $(X,\Delta)$ is a log canonical pair and $-(K_X+\Delta)$ is pseudoeffective and that $Y$ is $\Q$-Gorenstein. Assume that there is a desingularization $\pi:X'\to X$ and a $\Q$-line bundle $L_Y$ on $Y$ such that the non-nef locus of $c_1(-\pi^\ast(K_X+\Delta)-\pi^\ast f^\ast L_Y)$ does not dominate $Y$. 
Then $-K_Y-L_Y$ is pseudoeffective. 

In particular, if $L_Y=0$ and $-(K_X+\Delta)$ is nef, then $-K_Y$ is pseudoeffective. 
\end{theorem}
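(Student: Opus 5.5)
We reduce to smooth models and apply Proposition \ref{prop:positivity}, in the same way as in the proof of Proposition \ref{prop:Zhang}. First we take a resolution $\mu: Y'\to Y$. Then we choose a desingularization $\pi: M\to X$ dominating the given $X'$, with $\pi$ a log resolution of $(X,\Delta)$, such that $f$ lifts to a morphism $\phi: M\to Y'$. Since $f$ is projective and $M\to X$ is projective over $X$ (a composite of blow-ups), $\phi$ is projective. Because $X$ is in Fujiki class $\mathscr{C}$, we may take $M$ and $Y'$ to be compact K\"ahler manifolds. As in Proposition \ref{prop:Zhang}, we write
\[
K_M+\Delta_M\sim_\R \pi^\ast(K_X+\Delta)+F_+,
\]
where $\Delta_M=\pi^{-1}_\ast\Delta+F_-\le 1$ has SNC support. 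We set
\[
N:=-K_M+F_+-\phi^\ast\mu^\ast L_Y\sim_\R -\pi^\ast(K_X+\Delta)-\pi^\ast f^\ast L_Y+\Delta_M
\]
(with pullbacks taken as $\Q$-line bundles, using the $\Q$-Gorenstein hypothesis on $Y$), and we perturb $F_+$ slightly so that $N$ becomes a $\Q$-line bundle.

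The key step is to check that $N$ satisfies the weakened hypothesis of Remark \ref{rmk:positivity}(b): $N$ is pseudoeffective, and the cosupport of $\mathscr{J}_+(c_1(N))$ does not dominate $Y'$. Put $\beta:=c_1(-\pi^\ast(K_X+\Delta)-\pi^\ast f^\ast L_Y)$, pulled back to $M$. By \eqref{eq:non-nef-locus}, the hypothesis that $\NNef(\beta)$ does not dominate $Y$ gives that $\mathscr{J}_+(\beta)$ is trivial over a nonempty Zariski open set of $Y'$. (Here we use that the non-nef locus is compatible under pullback from $X'$ to $M$ up to exceptional sets lying over $\pi(\Exc)$; this point needs care.) Adding the lc boundary $\Delta_M$ with SNC support and coefficients $\le1$ keeps the asymptotic multiplier ideal trivial generically. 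This follows from the openness argument of Lemma \ref{1-2}, applied fibrewise over a general point of $Y'$ together with Fubini/Ohsawa--Takegoshi restriction, as in \cite[Remark 3.29]{Pau18}. Next we need the nonvanishing hypothesis of Proposition \ref{prop:positivity}. Take $A$ to be $\phi$-ample. On a general fibre $M_y$, the bundle $mK_{M_y}+A+mN|_{M_y}\sim mF_+|_{M_y}+A+(\text{torsion-free part})$ restricts to $A+mF_+$ up to $\Q$-linear equivalence, so it has sections. We therefore get that
\[
K_{M/Y'}+N-\Ram(\phi)=-\phi^\ast K_{Y'}+F_+-\phi^\ast\mu^\ast L_Y-\Ram(\phi)
\]
is pseudoeffective, hence so is $-\phi^\ast K_{Y'}-\phi^\ast\mu^\ast L_Y+F_+$.

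Finally we push forward. The divisor $F_+$ is $\pi$-exceptional. Following Step (1) of the proof of Proposition \ref{prop:Zhang} and Lemma \ref{lemma:bimero-psef}, we restrict a positive current to the complement of the exceptional loci and extend by Grauert--Remmert. This yields a closed positive current in $c_1(-K_{Y}-L_Y)$, viewed as a Bott--Chern class on $Y$: we write $K_{Y'}=\mu^\ast K_Y+E$ with $E$ $\mu$-exceptional and apply Lemma \ref{lemma:bimero-psef} to $\mu$. The difficulty is that $\phi_\ast$ of a $\pi$-exceptional divisor need not be $\mu$-exceptional. To handle this, we push forward via $\phi$ rather than $\pi$. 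A positive current in $\phi^\ast(\cdot)+F_+$ pushes to $Y'$ after removing the divisorial part supported on $F_+$. Since $F_+$ is $\pi$-exceptional, it is supported over a codimension $\ge2$ set of $X$, and components of $F_+$ that dominate a divisor of $Y'$ can be dealt with by the negativity lemma \cite[Lemma 1.3]{Wan21}, restricting to general curves in $Y'$. I expect this pushforward step, together with the generic triviality of $\mathscr{J}_+(N)$, to be the main obstacles. For the final statement: if $L_Y=0$ and $-(K_X+\Delta)$ is nef, then the non-nef locus is empty, and the conclusion follows.
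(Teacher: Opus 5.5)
Your setup, the choice of $N$, the nonvanishing check on general fibres and the application of Proposition \ref{prop:positivity} with Remark \ref{rmk:positivity}(b) agree with the paper. The gap is the final descent to $Y$, which you flag yourself; the remedies you sketch do not close it.

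\textbf{Why your sketch fails.} The class $-\phi^\ast(K_{Y'}+\mu^\ast L_Y)+F_+$ is not pulled back from $Y'$, and neither of your proposed moves fixes this.
\begin{enumerate}
\item You cannot subtract ``the divisorial part supported on $F_+$'' from a positive current $T$ in this class. Nothing forces the generic Lelong numbers of $T$ along the components of $F_+$ to reach the coefficients of $F_+$.
\item Fibre integration $\phi_\ast(T\wedge\omega_M^r)$ turns each component of $F_+$ whose image is a non-$\mu$-exceptional divisor of $Y'$ into an effective divisorial term on the wrong side. Such components do occur, e.g.\ when the log resolution blows up a curve of $X$ dominating a divisor of $Y$. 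Neither the negativity lemma nor restriction to curves removes these terms.
\end{enumerate}

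\textbf{The missing idea.} Push forward along $\pi$ \emph{first}. Since $F_+$ is $\pi$-exceptional, Lemma \ref{lemma:bimero-psef} (i.e.\ the Grauert--Remmert argument of Step (1) of Proposition \ref{prop:Zhang}) disposes of it. This leaves a positive current $S$ in $-\pi^\ast f^\ast(K_Y+L_Y)=-\phi^\ast\mu^\ast(K_Y+L_Y)$, which is a genuine pullback under the surjective map $\phi$. For such classes fibre integration is clean: $\phi_\ast(S\wedge\omega_M^r)$ is a positive current in $c\,\mu^\ast c_1(-K_Y-L_Y)$ with $c=\int_{M_y}\omega_M^r>0$, by the computation in Lemma \ref{lemma:Kahler-cohomology}. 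Lemma \ref{lemma:bimero-psef} for $\mu$ then concludes. That is the paper's route.

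\textbf{A caveat about the paper's order.} In that order the term to absorb is $\phi^\ast(K_{Y'}-\mu^\ast K_Y)$ rather than $F_+$. The paper lumps it into the $\pi$-exceptional divisor. That is fine for its effective part, which can be dropped, but not for its negative part when $f$ has divisors over $\mu(\Exc\mu)$. Already for $f$ the blow-up of the vertex of a cone over an elliptic curve, with $\Delta$ the exceptional curve and $M=Y'=X$, one gets $\phi^\ast(K_{Y'}-\mu^\ast K_Y)=-\Delta$, which is not $\pi$-exceptional. Your order handles exactly this term. So combine the two: perform the $\pi$-pushforward and the fibre integration over $Y\setminus\mu(\Exc\mu)$, where $K_{Y'}-\mu^\ast K_Y$ vanishes. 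Then extend the resulting quasi-psh potential across the codimension $\ge 2$ set $\mu(\Exc\mu)$ by Grauert--Remmert.

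\textbf{A smaller point on the multiplier-ideal step.} Converting the non-nef-locus hypothesis into triviality of $\mathscr{J}_+(\beta)$ discards what you need. Triviality of multiplier ideals is not preserved under adding $[\Delta_M]$: a small positive generic Lelong number along a coefficient-one component of $\Delta_M$ already breaks it. Also, Lemma \ref{1-2} rests on openness of the K\"ahler cone, which says nothing about the non-nef class $\beta$. Instead use the hypothesis directly, as the paper does:
\begin{enumerate}
\item Pass to $M$ via Lemma \ref{lemma:pullback-nnef}.
\item For each $\epsilon$, take a current in $\beta$ that is $\ge-\epsilon\omega_M$ and has zero Lelong numbers at every point of a general fibre.
\item Regularize it by Demailly to analytic singularities, so it is smooth near general fibres.
\item Only then add a current in $c_1(\Delta_M)$ with trivial multiplier ideal. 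Alternatively, use Skoda's $L^q$-integrability for all $q$ together with H\"older.
\end{enumerate}
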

\begin{proof}
By \cite[Th{\'e}or{\`e}me 3]{Var86} $Y$ is also in Fujiki class $\mathscr{C}$ and thus we can take a desingularization $\pi_Y:Y'\to Y$ such that $Y'$ is a compact K{\"a}hler manifold. By Lemma \ref{lemma:pullback-nnef} we may freely replace $X'$ by a further blow up, so that $X'$ is a compact K{\"a}hler manifold, $\pi:X'\to X$ is a projective modification and a log resolution of $(X,\Delta)$ and $f': X'\to Y'$ is an elimination of indeterminacies of $X\dashrightarrow Y'$. Pick a K{\"a}hler form $\omega$ on $X'$. Then we can write 
\[
K_{X'}+\Delta'\sim_{\R} \pi^\ast(K_X+\Delta)+F_+\,,
\]
where $\Delta'=\pi_\ast^{-1}\Delta+F_-$, $F_+$ and $F_-$ are effective $\pi$-exceptional $\R$-divisors with no common components; since $(X,\Delta)$ is log canonical, $\Delta'\le1$, hence for every $\epsilon>0$ there is a current $T_{\Delta',\epsilon}\in c_1(\Delta')$ such that $T_{\Delta',\epsilon}\ge-\frac{\epsilon}{2}\omega$ and $\mathscr{J}(\varphi_{\Delta',\epsilon})=\mathscr{O}_{X'}$ where $\varphi_{\Delta',\epsilon}$ is a local potential of $T_{\Delta',\epsilon}$. By assumption, the non-nef locus of $c_1(-\pi^\ast(K_X+\Delta)-\pi^\ast f^\ast L_Y)$ does not dominate $Y$, then for every $\epsilon>0$, there is a closed positive $(1,1)$-current $T^\circ_\epsilon\in c_1(-\pi^\ast(K_X+\Delta)-\pi^\ast f^\ast L_Y)$ such that $T^\circ_\epsilon\ge -\frac{\epsilon}{4}\omega$ and $\nu(T^\circ_\epsilon, x)=0$ for every $x\in X'_y$ for a general $y\in Y$, then by Demailly's regularization (see \cite[Theorem 2.1(ii)]{Bou04}), there is a a closed positive $(1,1)$-current $T_\epsilon\in c_1(-\pi^\ast(K_X+\Delta)-\pi^\ast f^\ast L_Y)$ with analytic singularities such that $T_\epsilon\ge-\frac{\epsilon}{2}\omega$ and $\nu(T_\epsilon,x)=0$ for every $x\in X'_y$, in particular this implies that $T_\epsilon|_{X'_y}$ is a smooth $(1,1)$-form. Then consider the current $T_{N,\epsilon}=T_{\Delta',\epsilon}+T_\epsilon\in c_1(N)$ where $N:=-K_{X'}+F_+-\pi^\ast f^\ast L_Y=-\pi^\ast(K_X+\Delta-f^\ast L_Y)+\Delta'$, we have $T_{N,\epsilon}\ge-\epsilon\omega$ and $\mathscr{J}(\varphi_N|_{X'_y})=\mathscr{O}_{X'_y}$ for general $y\in Y$, hence the cosupport of $\mathscr{J}_+(c_1(N))$ does not dominate $Y$. Moreover, as in the proof of Proposition \ref{prop:Zhang}, we can perturb $F_+$ to render it a $\Q$-divisor, so that $N=-K_{X'}+F_+-\pi^\ast f^\ast L_Y$ is $\Q$-line bundle and remains pseudoeffective and the cosupport of $\mathscr{J}_+(c_1(N))$ does not dominate $Y$. Then Proposition \ref{prop:positivity} and Remark \ref{rmk:positivity}(b) implies that $-(f')^\ast K_{Y'}-\pi^\ast f^\ast L_Y+F_+$ is pseudoeffective, hence so is $-f^\ast(K_Y+L_Y)$ by Lemma \ref{lemma:bimero-psef} since $-(f')^\ast K_{Y'}-\pi^\ast f^\ast L_Y+F_++ \pi^\ast f^\ast(K_Y+L_Y)$ is a $\pi$-exceptional divisor, thus $-\pi^\ast f^\ast(K_Y+L_Y)=-(f')^\ast\pi_Y^\ast(K_Y+L_Y)$ is pseudoeffective. Finally, by Lemma \ref{lemma:Kahler-cohomology} we find that $-\pi_Y^\ast(K_Y+L_Y)$ is pseudoeffective and so is $-K_Y-L_Y$. The theorem is thus established. 
\end{proof}

\subsection{Specialness of compact K{\"a}hler varieties with nef anticanonical divisor}
\label{ss:special}
Let $X$ be a compact variety in the Fujiki class $\mathscr{C}$ that admits an effective $\R$-divisor $\Delta$ such that the pair $(X,\Delta)$ is log canonical and the anti-log canonical divisor $-(K_X+\Delta)$ is nef. 
As promised in \S \ref{ss:motivation}, we will prove in this section that $(X,\Delta)$ is a special orbifold in the sense of Campana \cite[Definition 2.41]{Cam04}. 
%The general reference for this subsection is \cite{Cam04}, we invite the reader to consult this article for all the related notions, like specialness \cite[Definition 2.1 \& Definition 2.41]{Cam04} and the core map. 
Before giving the proof of this result, we first briefly recall the related notions in \cite{Cam04,Cam11}. Let $f:X\to Y$ be fibration between compact complex varieties and let $(X,\Delta)$, the basic observation in \cite{Cam04} is that, in order to comprehend the geometry of this fibration, one should take into consideration, not only $(X,\Delta)$ and $Y$, but also an `orbifold base' divisor $B_{f,\Delta}$ on $Y$ that encodes the ramification of $f$ in codimension $1$, even if $\Delta=0$. From its definition \cite[\S 1.6.1]{Cam04} we find that $B_{f,\Delta}$ is most effective divisor $B$ such that $f^\ast B\leqslant \Ram(f)+\Delta^{v}+E$ for some exceptional divisor $E$, where $\Delta^h$ (resp. $\Delta^v$) denotes the horizontal part (resp. vertical) part of the divisor $\Delta$. See \cite[Lemma 7.2 \& Remark 7.3]{Wan21}. 

For a meromorphic fibration $f:(X,\Delta)\dashrightarrow Y$, the canonical dimension of $f$, denoted by $\kappa(f,\Delta)$, is defined to be the Kodaira dimension $\kappa(Y',B_{f',\Delta'})$ for a sufficiently high holomorphic model $f': (X',\Delta')\to Y'$ of $f$ (see \cite[Definition 2.40]{Cam04}). A meromorphic $f$ is called a {\itshape general type} fibration if $\dim Y>0$ and $\kappa(f,\Delta)=\dim Y$, and we say that $(X,\Delta)$ is an {\itshape special orbifold} if it does not admit any general type fibration (see \cite[Definition 2.41]{Cam04}); in particular, a variety $X$ is called {\itshape special} if $(X,0)$ is a special orbifold. Moreover, for an lc pair $(X,\Delta)$, among all the general type fibrations $(X,\Delta)\dashrightarrow Y$ (including the constant map), there is a unique one (up to bimeromorphic equivalence) that dominates all the others (in the bimeromorphic sense) by \cite[Th{\'e}or{\`e}me 10.1]{Cam11}, and it called the {\itshape core map} of $(X,\Delta)$; clearly, $(X,\Delta)$ is a special orbifold if its core map is a constant map. Now let us turn to the proof of: 

\begin{theorem}[K{\"a}hler version of {\cite[Theorem 10.3]{Cam16}}]
\label{thm:special}
Let $X$ be a compact variety in the Fujiki class $\mathscr{C}$ that admits an effective $\R$-divisor $\Delta$ such that the pair $(X,\Delta)$ is log canonical and the anti-log canonical divisor $-(K_X+\Delta)$ is nef. Then $X$ is special. Moreover, if $\Delta$ is a $\Q$-divisor, then $(X,\Delta)$ is a special orbifold. 
\end{theorem}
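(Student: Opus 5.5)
We argue by contradiction via the core map, and reduce everything to the positivity statement of Proposition \ref{prop:positivity}. Suppose $(X,\Delta)$ (with $\Delta$ a $\Q$-divisor, or $\Delta$ replaced by $0$ for the first assertion) admits a general type fibration $f:(X,\Delta)\dashrightarrow Y$ with $\dim Y>0$. Since $X$ is in Fujiki class $\mathscr{C}$, we take a sufficiently high holomorphic model $f':X'\to Y'$, with $\pi:X'\to X$ a log resolution of $(X,\Delta)$, $X'$ and $Y'$ compact K{\"a}hler manifolds, and $f'$ neat enough to compute $\kappa(f,\Delta)=\kappa(Y',B_{f',\Delta'})$. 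By \cite[Theorem 1.2]{CH24}, as in the deduction of Theorem \ref{thm:Alb}, we may assume $f'$ is projective (general type bases are not uniruled, so the argument of \cite{CH24} applies to the relevant fibration). As in the proof of Proposition \ref{prop:Zhang}, we write
\[
K_{X'}+\Delta'\sim_{\R}\pi^\ast(K_X+\Delta)+F_+,\qquad \Delta'=\pi^{-1}_\ast\Delta+F_-\le 1,
\]
and set $N:=-(K_{X'}+\Delta'^{v})+F_+\sim_{\Q}-\pi^\ast(K_X+\Delta)+\Delta'^{h}$, which is pseudoeffective and asymptotically klt by Lemma \ref{1-2} (after perturbing $F_+$ to a $\Q$-divisor as before).

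The key step is to apply Proposition \ref{prop:positivity} to $f'$ and $N$, with $A$ any $f'$-ample line bundle; the nonvanishing of $f'_\ast\mathscr{O}(mK_{X'/Y'}+A+mN)$ is automatic for $m$ large since $A$ is relatively ample and $mK_{X'/Y'}+mN$ is fibrewise $m\Delta'^h$-twisted trivially. We obtain that
\[
-f'^\ast K_{Y'}-\Delta'^{v}-\Ram(f')+F_+
\]
is pseudoeffective. Now by the description of the orbifold base recalled above, $f'^\ast B_{f',\Delta'}\le \Ram(f')+\Delta'^v+E$ for some $f'$-exceptional $E$, hence $-f'^\ast(K_{Y'}+B_{f',\Delta'})+F_++E$ is pseudoeffective. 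Taking $F_+$ and $E$ into account (they are $\pi$-exceptional resp. $f'$-exceptional, so do not contribute to the pushforward positivity on $Y'$), we want to deduce that $-(K_{Y'}+B_{f',\Delta'})$ is pseudoeffective on $Y'$, modulo divisors that are exceptional for $f'$; this uses a variant of Lemma \ref{lemma:bimero-psef} together with the negativity-type argument of Step (3) of Proposition \ref{prop:Zhang}.

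To conclude: $K_{Y'}+B_{f',\Delta'}$ is big (as $\kappa=\dim Y'$), while its negative is pseudoeffective up to $f'$-exceptional corrections. Restricting to curves cut out by K{\"a}hler classes (or pairing with a movable class) gives $(K_{Y'}+B)\cdot\omega^{d-1}>0$ and $\le 0$ simultaneously, a contradiction. The main obstacle is the middle step: controlling the $f'$-exceptional divisors $E$ and $F_+$ when passing from $X'$ down to $Y'$, since $f'$-exceptional divisors map to codimension $\ge2$ subsets of $Y'$ and the pseudoeffectivity of an $f'$-pullback plus such divisors must be converted to pseudoeffectivity on $Y'$; I would handle this by pushing forward the positive current via $f'_\ast$ after wedging with $\omega_{X'}^{\dim X'-\dim Y'}$, which kills $f'$-exceptional contributions, and by choosing the neat model in Campana's sense so that $B_{f',\Delta'}$ is realized without extra exceptional error. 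The first assertion ($X$ special) follows by the same argument with $\Delta$ in $N$ but with the orbifold base computed for $(X,0)$, since $\Delta'^v\ge0$ only helps.
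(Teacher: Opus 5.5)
Your argument breaks at its first step. Proposition \ref{prop:positivity} needs $f'$ to be a projective morphism, and nothing supplies this for an arbitrary general type fibration $f:(X,\Delta)\dashrightarrow Y$ when $X$ is only in Fujiki class $\mathscr{C}$. The paper invokes \cite[Theorem 1.2]{CH24} only for the MRC fibration, where projectivity comes from the rational connectedness of its fibres, not from the base being non-uniruled. The fibration you want to exclude could a priori have non-algebraic general fibres (think of torus fibres), and then no holomorphic model of it is projective.

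This is exactly the obstruction the paper flags in its proof, and it is why the paper never applies the positivity theorem to the fibration under test. It applies it only to the MRC fibration $\psi:X\dashrightarrow Y$, obtaining Proposition \ref{prop:Zhang}. Hence $\phi^\ast B_{\phi,\Delta_M}$ is $\pi$-exceptional and $h^0(Y,k(K_Y+B_Y))$ is bounded independently of $k$. Since rationally connected varieties are special, the core map factors through $\psi$ via some $g:Y\dashrightarrow C_X$ \cite[Theorem 3.7]{Cam04}. Campana's weak positivity of orbifold direct images for $g$ \cite[Theorem 4.11]{Cam04} then gives $h^0(Y,bmm_0(K_Y+B_Y))\ge h^0(C_X,b(m-1)m_0(K_{C_X}+B_c))$, which forces $\dim C_X=0$. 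For the orbifold statement, the paper first applies Theorem \ref{thm:sRC} to the general (Moishezon) fibre of $\psi$, to get the factorization of $c_{X,\Delta}$ through $\psi$.

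Even granting projectivity, your middle step fails as written. $F_+$ is $\pi$-exceptional but not $f'$-exceptional: its components may dominate $Y'$ or map onto divisors of $Y'$. So $(f')_\ast([F_+]\wedge\omega_{X'}^{\dim X'-\dim Y'})$ is in general a nonzero positive current. You then only learn that $-c\,(K_{Y'}+B_{f',\Delta'})$ plus some positive class is pseudoeffective, which does not contradict bigness.

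The right move is to push forward to $X$ via $\pi$, as in Step (1) of the proof of Proposition \ref{prop:Zhang}. On a neat model the error divisor $E$ is $\pi$-exceptional too. Then $\pi_\ast(f')^\ast c_1(K_{Y'}+B_{f',\Delta'})$ is both pseudoeffective and anti-pseudoeffective on $X$, hence zero as in Step (2) there. This contradicts the fact that it contains the nonzero positive current $\pi_\ast(f')^\ast T$, for $T$ a K{\"a}hler current in $c_1(K_{Y'}+B_{f',\Delta'})$.

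With this repair your route is essentially the paper's proof of Theorem \ref{thm:sRC}. That is valid when the relevant model can be taken projective (e.g.\ $X$ Moishezon), but it does not reach the K{\"a}hler case.
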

\begin{proof}
For the first assertion, consider the core map $c_X:X\dashrightarrow (C_X,B_c)$ of $X$ and the MRC fibration $\psi:X\dashrightarrow Y$. Since rationally connected varieties are special, by \cite[Theorem 3.7]{Cam04} we have a fibration $g: Y\dashrightarrow C_X$ such that $c_X=\psi\circ g$. Let $\phi: M\to Y$ be an elimination of indeterminacies of $\psi$, and up to passing to a higher model, we may assume that $g$ is a morphism and $c_X\circ\pi=\phi\circ g$ and we put $B_Y:=B_{\phi,\Delta_M}$. By Proposition \ref{prop:Zhang}(b)(c) and \cite[Lemma 7.2 \& Remark 7.3]{Wan21} we see that $\phi^\ast B_Y$ is $\pi$-exceptional. By \cite[Theorem 4.11]{Cam04} $\mathscr{F}:=g_\ast(m_0(K_{Y/C_X}+B_Y-g^\ast B_{g,B_Y})+E)$ is weakly positive for every sufficiently divisible $m_0\in\Z_{>0}$ and for some $g$-exceptional divisor $E$. In particular, for any ample line bundle $A$ on $C_X$ and for each $m\in\Z_{>0}$, there is $b\in\Z_{>0}$ such that we have a non-zero map
\[
\mathscr{O}_{C_X}\to\Sym^{bm}\mathscr{F}\otimes A^{\otimes b}\to g_\ast(bmm_0(K_{Y/C_X}+B_Y-g^\ast B_{g,B_Y})+bmE+bA),
\]
which is a fortiori an injection. Moreover, by Kodaira lemma we may assume that $m_0(K_{C_X}+B_{g,B_Y})\ge A$, so that We have
\begin{align*}
h^0(Y,bmm_0(K_Y+B_Y)+bmE) &\ge h^0(C_X,(bm-1)m_0(K_{C_X}+B_{g,B_Y}))\\
&\ge h^0(C_X,b(m-1)m_0(K_{C_X}+B_{c}))
\end{align*}
where the second inequality results from \cite[Proposition 1.30]{Cam04}. Since we may assume that $g$ is neat in the sense of \cite[Definition 1.2]{Cam04}, we have
\[
h^0(Y,bmm_0(K_Y+B_Y))\ge h^0(C_X,b(m-1)m_0(K_{C_X}+B_c)),
\]
By Proposition \ref{prop:Zhang}(a) we see that the LHS is upper bounded by a constant independent of $m$, but $(C_X,B_c)$ is of log general type, this implies that $\dim C_X=0$ and the first assertion is thus proved.

The second assertion is proved in \cite[Theorem 10.3]{Cam16} when $X$ is projective (although Campana states the result for smooth $X$, it is easy to observe that the argument also works if $X$ is singular and Moishezon; indeed, one can also give a more analytic proof based on Proposition \ref{prop:positivity} as explained below). Nevertheless, Campana's proof cannot be generalized to the K{\"a}hler case for the following 2 reasons:
\begin{itemize}
\item The positivity theorem for the twisted relative canonical bundle fails if the morphism is not projective. Of course, we can mimic the proof of the the first assertion by considering the sRC quotient (see \cite[Theorem 1.5]{Cam16}), but its construction is unclear if the variety is not projective.
\item Even we have a reasonable construction of the sRC quotient for compact K{\"a}hler manifolds, in order to conclude as in Campana's argument, we need to establish the Abundance conjecture for lc pairs in the numerical dimension $0$ case, which is unknown in the K{\"a}hler case (the projective case is established by Kawamata \cite[Theorem 1]{Kaw13} and \cite[Theorem 0.1]{CKP12}), see also \cite[Remark 5.8]{Wan21}.
\end{itemize}
In order to circumvent these difficulties, we consider the core map $c_{X,\Delta}: (X,\Delta)\dashrightarrow (C_{X,\Delta},B_{c,\Delta})$ of $(X,\Delta)$ and the MRC fibration $\psi:X\dashrightarrow Y$ of $X$. The general fibre $F$ of $\psi$ (note that MRC fibrations are almost holomorphic) is rationally connected, in particular, it is Moishezon, thus we can apply \cite[Theorem 10.3]{Cam16} (see Theorem \ref{thm:sRC} below) to obtain that $(F,\Delta|_F)$ is a special orbifold. Then, similar to the proof of the first assertion, we get a factorization $g:Y\dashrightarrow C_{X,\Delta}$, and the same argument as in the proof of the first assertion shows that $\dim C_{X,\Delta}=0$, hence the second assertion.  
\end{proof}

To end up this subsection, we briefly sketch the proof of the following bimeromorphic version of \cite[Theorem 10.3]{Cam16} for the convenience of the readers.
\begin{theorem}
\label{thm:sRC}
Let $X$ be a compact Moishezon variety that admits an effective $\Q$-divisor $\Delta$ such that the pair $(X,\Delta)$ is log canonical and the anti-log canonical divisor $-(K_X+\Delta)$ is nef, and let $f:(X,\Delta)\dashrightarrow Z$ be a meromorphic fibration, with $(Z,B_Z)$ being the orbifold base of a neat model of $f$ (see \cite[Definition 2.5]{Cam16}). Assume that $K_Z+B_Z$ is pseudoeffective, then $\kappa(K_Z+B_Z)=\nu(K_Z+B_Z)=0$. In particular, $(X,\Delta)$ is a special orbifold.
\end{theorem}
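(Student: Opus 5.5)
We will pass to a sufficiently high smooth model and apply the positivity result Proposition \ref{prop:positivity}, as in the proof of Proposition \ref{prop:Zhang}. Since $X$ is Moishezon, we may take a projective resolution $\pi: M\to X$ that is a log resolution of $(X,\Delta)$, together with a neat holomorphic model $\phi: M\to Z$ of $f$ with $Z$ smooth projective; in particular $\phi$ is a projective morphism. As before we write
\[
K_M+\Delta_M\sim_{\Q}\pi^\ast(K_X+\Delta)+F_+ ,
\]
where $\Delta_M\le 1$ and $F_+\ge0$ is $\pi$-exceptional. We split $\Delta_M=\Delta_M^h+\Delta_M^v$ with respect to $\phi$, and set
\[
N:=-(K_M+\Delta_M^v)+F_+\sim_{\Q}-\pi^\ast(K_X+\Delta)+\Delta_M^h .
\]
Then $N$ is pseudoeffective and asymptotically klt (Lemma \ref{1-2}), after perturbing $F_+$ if necessary. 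By neatness, $\phi^\ast B_Z\le \Ram(\phi)+\Delta_M^v+E$ for some $\pi$-exceptional $E$ (\cite[Lemma 7.2 \& Remark 7.3]{Wan21}). Applying Proposition \ref{prop:positivity} to $\phi$ and $N$, we find that $-\phi^\ast K_Z-\Delta_M^v-\Ram(\phi)+F_+$ is pseudoeffective, and therefore so is $-\phi^\ast(K_Z+B_Z)+E'$ for some $\pi$-exceptional divisor $E'$. The nonvanishing hypothesis of Proposition \ref{prop:positivity} holds by choosing $A$ to be $\phi$-ample and large, since $K_{M_z}+A+mN|_{M_z}$ is then effective on general fibres by the fibrewise twisted positivity.

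Pushing forward by $\pi_\ast$ as in Step (1) of Proposition \ref{prop:Zhang} (via Lemma \ref{lemma:bimero-psef}), we see that the class $-\pi_\ast\phi^\ast c_1(K_Z+B_Z)$ is pseudoeffective on $X$. On the other hand, $K_Z+B_Z$ is pseudoeffective by assumption, so $\pi_\ast\phi^\ast c_1(K_Z+B_Z)$ is pseudoeffective as well, and hence it vanishes. We then repeat Steps (3) and (4) of Proposition \ref{prop:Zhang}. Any closed positive current $T_0\in c_1(K_Z+B_Z)$ has $\phi^\ast T_0$ supported on $\Exc(\pi)$. By the support theorem, $\phi^\ast T_0=[\phi^\ast D_Z]$ for some effective divisor $D_Z$ on $Z$ with $\phi^\ast D_Z$ $\pi$-exceptional, and this current is unique by the negativity lemma. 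Consequently $\nu(K_Z+B_Z)=0$. Abundance in numerical dimension $0$ for the projective lc pair $(Z,B_Z)$ (\cite{Kaw13,CKP12}) then gives $K_Z+B_Z\sim_{\Q}D_Z$, so $\kappa=\nu=0$.

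For specialness, observe that if $f$ were of general type with $\dim Z>0$, then $K_Z+B_Z$ would be big, and in particular pseudoeffective. The above would then force $\kappa(K_Z+B_Z)=0<\dim Z$, a contradiction; hence $(X,\Delta)$ admits no general type fibration.

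The main obstacle is the application of Proposition \ref{prop:positivity}. One must arrange that the orbifold base divisor $B_Z$, which has fractional multiplicities coming from $\Delta^v$ and from multiple fibres, is genuinely dominated by $\Ram(\phi)+\Delta_M^v$ up to exceptional divisors. For this I would rely on neatness of the model together with the inf-multiplicity definition of $B_Z$, working with $\Q$-coefficients throughout. A second, more routine, point is checking the nonvanishing hypothesis of the proposition.
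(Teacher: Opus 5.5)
Your proposal is correct and follows essentially the same route as the paper. The shared steps are:
\begin{itemize}
\item a neat projective model;
\item Proposition \ref{prop:positivity} applied to $N=-(K_M+\Delta_M^v)+F_+$;
\item \cite[Lemma 7.2 \& Remark 7.3]{Wan21} to obtain that $-\phi^\ast(K_Z+B_Z)+E'$ is pseudoeffective;
\item Steps (1)--(4) of Proposition \ref{prop:Zhang} to get $\nu=0$;
\item abundance for $\nu=0$ via \cite{Kaw13,CKP12}.
\end{itemize}
The differences are minor. For specialness you argue directly: a general type fibration would make $K_Z+B_Z$ big, hence pseudoeffective, which contradicts $\kappa=0$; the paper instead cites Campana's additivity theorem. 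You also verify the nonvanishing hypothesis, which the paper does not. That hypothesis indeed holds, but the reason is simpler than you state: $(K_{M/Z}+N)|_{M_z}=F_+|_{M_z}\ge 0$ on a general fibre, so no ``fibrewise positivity'' is needed. Also, the term should read $mK_{M_z}$, not $K_{M_z}$.
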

\begin{proof}
This result essentially proved by \cite[Theorem 10.3]{Cam16}, we give here a more analytic proof based on Proposition \ref{prop:positivity}. 
As in the proof of Proposition \ref{prop:Zhang}, let $\phi: (M,\Delta_M)\to(Z,B_Z)$ be a neat model of $f$ such that such that $M$ and $Z$ are projective manifolds (since $X$ is Moishezon), $B_Z=B_{\phi,\Delta_M}$ and we can write
\[
K_M+\Delta_M\sim_{\Q}\pi^\ast(K_X+\Delta)+F_+
\]
where $\pi:M\to X$ is the natural bimeromorphic morphism, $\Delta_M=\pi_\ast^{-1}\Delta+F_-$, $F_+$ and $F_-$ are effective $\pi$-exceptional divisors without comment components, and $\Delta_M$ and $F_+$ have SNC supports. Then we apply Proposition \ref{prop:positivity} to $\phi$ and $N=-(K_M+\Delta_M^v)+F_+$ and we obtain that $-\phi^\ast K_Z-\Delta_M^v-\Ram(\phi)+F_+$ is pseudoeffective. By \cite[Lemma 7.2 \& Remark 7.3]{Wan21}, we have that $-\phi^\ast(K_Z+B_Z)+E'$ is pseudoeffective for some exceptional divisor $E'$. On the other hand,  $K_Z+B_Z$ is pseudoeffective by our assumption, thus we can argue as in the proof of Proposition \ref{prop:Zhang} to obtain that for any semipositive closed $(1,1)$-current $T\in c_1(K_Z+B_Z)$, $T=[D_Z]$ for some effective divisor $D_Z$ such that $\phi^\ast D_Z$ is $\pi$-exceptional. This implies that the numerical dimension $\nu(K_Z+B_Z)=0$, and we conclude by the abundance conjecture established in this case \cite[Theorem 1.3]{Kaw13} (see also \cite[Theorem 0.1]{CKP12}). Moreover, if $f$ is a general type fibration, the additivity theorem \cite[Theorem 4.2]{Cam04} leads to $\dim Z=0$, which means that $(X,\Delta)$ is a special orbifold.
\end{proof}

\subsection{Generic nefness of the tangent sheaf}
As in \cite[Theorem 6.1]{Ou23}, \cite[Theorem 1.4]{MWWZ25} and \cite[Theorem 6.1, Proposition 6.5]{IJZ25}, the nefness of the anticanonical divisor implies the generic nefness of the tangent sheaf, here the generic nefness is defined as in \cite[Setup 6.2]{IJZ25}. Before stating the theorem, let us briefly recall the definitions in \cite{IJZ25}. Throughout this subsection, let $X$ be a compact normal variety in Fujiki class $\mathscr{C}$ of dimension $n$ and $\gamma\in H_{2n-2}(X,\R)$ be either the non-pluripolar product $\langle\alpha^n\rangle$ (\cite[\S 3.1]{IJZ25}) of a big class $\alpha\in H^{1,1}_\BottChern(X,\R)$ with vanishing property (\cite[Definition 3.23]{IJZ25}) or $\gamma=\alpha_1\cdots\alpha_{n-1}$ for nef classes $\alpha_i\in H^{1,1}_\BottChern(X,\R)$ (see \cite[Definition 2.2]{IJZ25}); note that $\gamma$ is a movable class by \cite[Remakr 3.11(5)]{IJZ25}. Let $\mathscr{E}$ be a torsion free sheaf on $X$, then the slope $\mu_\gamma(\mathscr{E})$ of $\mathscr{E}$ with respect to $\gamma$ is defined in \cite[Definition-Lemma 3.20]{IJZ25}, and in a standard way we can define $\mu_{\gamma,\min}(\mathscr{E})$ and $\mu_{\gamma,\max}(\mathscr{E})$, and the Harder-Narasimhan filtration exists for any $\mathscr{E}$ (with respect to $\gamma$), see \cite[Setup 6.2]{IJZ25}. We say that $\mathscr{E}$ is generically nef with respect to $\gamma$ if $\mu_{\gamma,\min}(\mathscr{E})\ge0$.

\begin{theorem}
Let $X$ be a compact variety in Fujiki class $\mathscr{C}$. Assume that there is an $\R$-divisor $\Delta\ge0$ on $X$ such that $(X,\Delta)$ is a lc pair and $-(K_X+\Delta)$ is nef. Then the tangent sheaf $T_X$ of $X$ is generically nef with respect to $\gamma$. 
\end{theorem}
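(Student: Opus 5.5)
We argue by contradiction and follow the strategy of \cite[Theorem 6.1]{Ou23} and \cite[Theorem 1.4]{MWWZ25}, with Proposition \ref{prop:positivity} replacing the positivity results used there. Suppose $\mu_{\gamma,\min}(T_X)<0$. Then the last piece of the Harder--Narasimhan filtration of $T_X$ with respect to $\gamma$ gives a torsion free quotient $T_X\twoheadrightarrow\mathscr{Q}$ with $\mu_\gamma(\mathscr{Q})<0$. Dually, there is a saturated subsheaf $\mathscr{F}\subsetneq T_X$ (the penultimate step of the filtration) with $\mu_{\gamma,\min}(\mathscr{F})>\mu_{\gamma,\max}(T_X/\mathscr{F})$ and $\mu_{\gamma,\min}(\mathscr{F})>0$. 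When $\mathscr{F}$ is the maximal destabilizing subsheaf, we may instead take a piece $\mathscr{F}$ with $\mu_{\gamma,\min}(\mathscr{F})>0$ and $\mu_{\gamma,\min}(\mathscr{F})>2\mu_{\gamma,\max}(T_X/\mathscr{F})$.

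The standard O'Neill-tensor argument then shows that $\mathscr{F}$ is closed under the Lie bracket, so it is a foliation. The relevant map $\wedge^2\mathscr{F}\to T_X/\mathscr{F}$ vanishes by slope comparison, which is valid because the HN formalism for the movable class $\gamma$ exists by \cite[Setup 6.2]{IJZ25}. Next we need an algebraicity criterion for foliations with positive minimal slope with respect to a movable class in the Kähler setting, namely a Bogomolov--McQuillan/Campana--P{\u a}un type theorem. Ou's result \cite{Ou25} (see also \cite{CP25}) gives the Kähler version for classes of this form, and \cite[Theorem 6.1]{IJZ25} treats exactly this $\gamma$. This criterion says $\mathscr{F}$ is algebraic with rationally connected general leaves. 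Taking the closure of the leaves yields a meromorphic fibration $\psi:X\dashrightarrow Y$ with $\mathscr{F}=T_{X/Y}$ generically, and $Y$ is not uniruled after replacing $\psi$ with a factorization through the MRC fibration. The same algebraicity argument applied to the quotient should also give $\mathscr{F}\supseteq T_{X/Y_{\rm MRC}}$.

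Now take an elimination of indeterminacies $\phi:M\to Y$ with $\pi:M\to X$. This map is projective by \cite[Theorem 1.2]{CH24}, since its fibres are rationally connected. Proposition \ref{prop:Zhang} applies and gives that $\Ram(\phi)$ is $\pi$-exceptional, $\Delta$ is $\psi$-horizontal, and $K_Y\sim_\Q D_Y$ with $\phi^\ast D_Y$ $\pi$-exceptional. On the big open set where $\psi$ is defined and equidimensional, this yields $\det(T_X/\mathscr{F})=\psi^\ast\det T_Y$ up to $\pi$-exceptional and $\Ram$ contributions, all of which are invisible to $\gamma$-slopes. So $c_1(T_X/\mathscr{F})\cdot\gamma=-c_1(K_Y)$ pulled back $\cdot\gamma=0$, because $D_Y$ pulls back to an exceptional divisor. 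Thus $\mu_\gamma(T_X/\mathscr{F})=0$.

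Finally we compare slopes. We have $c_1(T_X)\cdot\gamma=-K_X\cdot\gamma\ge\Delta\cdot\gamma\ge0$ by nefness of $-(K_X+\Delta)$ and movability of $\gamma$. Now $T_X/\mathscr{F}\cong\psi^\ast T_Y$ generically. We need to show $\mu_{\gamma,\min}(\psi^\ast T_Y)\ge0$, and this is the main obstacle. The first approach is to use that $K_Y\equiv D_Y$ has numerical dimension zero: a quotient of $\psi^\ast T_Y$ of negative slope would, via $\psi^\ast\Omega_Y$, give a positive-slope subsheaf of $\Omega_Y$. Such a subsheaf would contradict $\nu(Y)=0$ by the Kähler version of Campana--P{\u a}un generic semipositivity applied to $\Omega_Y$ against the pushed-forward class $\psi_\ast\gamma$. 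Failing that, we would pass to a quasi-étale cover and invoke a Beauville--Bogomolov type decomposition for $Y$, using $\kappa(Y)=0$, to get $T_Y$ polystable of slope $0$. Combining this with $\mathscr{F}$ having positive minimal slope, the minimal slope of $T_X$ equals $\min(\mu_{\gamma,\min}(\mathscr{F}),\mu_{\gamma,\min}(T_X/\mathscr{F}))\ge0$, which contradicts the assumption.
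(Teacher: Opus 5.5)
There is a genuine gap where you make $Y$ non-uniruled. Rational connectedness of the leaf closures only gives $\mathscr{F}\subseteq T_{X/Y_{\mathrm{MRC}}}$. The reverse inclusion is asserted (``should also give'') without argument, and no slope argument supplies it: $T_{X/Y_{\mathrm{MRC}}}/\mathscr{F}$ sits inside $T_X/\mathscr{F}$, all of whose HN slopes are $\le 0$, and nothing forces rationally connected directions to have positive $\gamma$-slope. Replacing $\psi$ by a map through the MRC fibration changes the foliation and loses the slope information you need. Without non-uniruledness you cannot invoke Proposition~\ref{prop:Zhang}, since its proof needs $K_Y$ pseudoeffective via Ou's theorem. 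So your computation $\mu_\gamma(T_X/\mathscr{F})=0$ is unsupported.

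Your last step is also misdirected. You never need $\mu_{\gamma,\min}(\psi^\ast T_Y)\ge 0$, and the proposed remedies fail as stated. First, $\mu_{\min}(\Omega_Y)\ge 0$ does not by itself exclude positive-slope subsheaves of $\Omega_Y$. Second, a destabilizing quotient of $\psi^\ast T_Y$ need not descend to $Y$. Third, a Beauville--Bogomolov decomposition needs a numerically trivial canonical class on a minimal model, not merely $\kappa(Y)=0$.

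The missing observation is that Proposition~\ref{prop:positivity} alone suffices and imposes no condition on $Y$. Take $\mathscr{F}$ to be the largest HN piece all of whose graded pieces have positive slope. It exists because $c_1(T_X)\cdot\gamma\ge 0$; note that the penultimate step need not have $\mu_{\gamma,\min}>0$. Then every graded piece of $T_X/\mathscr{F}$ has slope $\le 0$ and the last one has slope $<0$, so $\mu_\gamma(T_X/\mathscr{F})<0$. Meanwhile $\mathscr{F}$ is algebraically integrable with rationally connected leaf closures, and $\phi$ is projective, as you note.

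Apply Proposition~\ref{prop:positivity} to $\phi$ with $N=-\pi^\ast(K_X+\Delta)+\Delta_M\sim_\Q -K_M+F_+$, after perturbing $F_+$ into a $\Q$-divisor. This shows that $K_{M/Y}+N-\Ram(\phi)\sim_\Q -\phi^\ast K_Y-\Ram(\phi)+F_+$ is pseudoeffective, so it has nonnegative intersection with the movable class $\gamma_M$. We have $K_{\mathscr{F}_M}\sim K_{M/Y}-\Ram(\phi)+E'$ with $E'$ $\pi$-exceptional, and $\pi$-exceptional divisors contribute nothing against $\gamma_M$. Hence $c_1(T_X/\mathscr{F})\cdot\gamma=(-\phi^\ast K_Y-\Ram(\phi))\cdot\gamma_M\ge 0$, contradicting $\mu_\gamma(T_X/\mathscr{F})<0$.

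This is the paper's proof. Only the slope of the whole quotient $T_X/\mathscr{F}$ matters, so even your equality $\mu_\gamma(T_X/\mathscr{F})=0$, had it been justified, would already have ended the argument.
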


\begin{proof}
The theorem results from \cite[Corollary 1.5]{CP25} and the positivity result Proposition \ref{prop:positivity} by a standard argument. We only sketch the proof here. If otherwise $\mu_{\gamma,\min}(T_X)<0$, by the standard argument as in \cite[Proof of Theorem 1.4]{Ou23}, we can find a saturated subsheaf $\mathscr{F}$ of $T_X$ that satisfies the condition (1) of \cite[Proposition 6.3]{IJZ25} (see also \cite[Proposition 2.2]{Ou23}) and $\mu_{\gamma}(T_X/\mathscr{F})<0$, so that $\mathscr{F}$ is an algebraically integrable foliation of which the closure of the leaves are rationally connected. Let $\psi:X\dashrightarrow Y$ be the meromorphic mapping defined by $\mathscr{F}$, and take a elimination of indeterminacies $\phi:M\to Y$ of $\psi$ with the natural modification $\pi:M\to X$ that we assume to be a log resolution of $(X,\Delta)$. Since $X$ is in Fujiki class $\mathscr{C}$, we can assume that $M$ and $Y$ are compact K{\"a}hler manifolds. As in the proof of Proposition \ref{prop:Zhang}, we can write
\[
K_M+\Delta_M\sim_\R\pi_\ast(K_X+\Delta)+F_+,
\]
and by Proposition \ref{prop:positivity} (with the same perturbation technique as in the proof of Proposition \ref{prop:Zhang}) we have that
\[
K_{M/Y}+\Delta_M-\pi^\ast(K_X+\Delta)-\Ram(\phi)+F_+
\]
is pseudoeffective and thus we have
\begin{equation}
\label{eq:psef}
(K_M+\Delta_M-\pi^\ast(K_X+\Delta)-\Ram(\phi)+F_+)\cdot\gamma_M\ge0,
\end{equation}
where $\gamma_M=\langle\pi^\ast\alpha^{n-1}\rangle$ if $\gamma=\langle\alpha^{n-1}\rangle$ ($\alpha$ is big with vanishing property) or $\gamma=\pi^\ast\alpha_1\cdot\pi^\ast\alpha_{n-1}$ ($\alpha_i$ is nef). On the other hand, let $\mathscr{F}_M$ be the foliation defined by $\phi$, then $\mathscr{F}_M$ coincides with $\pi^{[\ast]}\mathscr{F}$ outside $\Exc(\pi)$, so that $-K_{\mathscr{F}_M}\cdot\gamma_M=c_1(\mathscr{F})\cdot\gamma$ by \cite[Lemma 3.27]{IJZ25}. For the same reason we have $c_1(T_X)\cdot\gamma=(-\pi^\ast(K_X+\Delta)+\Delta_M)\cdot\gamma_M$. Moreover, since $K_{\mathscr{F}_M}\sim K_{M/Y}-\Ram(\phi)+E'$ for some (not necessarily effective) $\pi$-exceptional divisor $E'$, hence from \eqref{eq:psef} we have
\[
c_1(\mathscr{F})\cdot\gamma=-K_{\mathscr{F}_M}\cdot\gamma_M\le (-\pi^\ast(K_X+\Delta)+\Delta_M)\cdot\gamma_M=c_1(T_X)\cdot\gamma,
\]
which contradicts to our assumption that $\mu_{\gamma}(T_X/\mathscr{F})<0$. 
\end{proof}

Notice that the theorem is known in the following cases:
\begin{itemize}
\item $X$ is projective: \cite[Theorem 1.4]{Ou23}, 
\item $X$ is a compact K{\"a}hler manifold: \cite[Proposition 5.2]{MWWZ25} 
\item $X$ is a compact variety in Fujiki class $\mathscr{C}$ with klt singularities: \cite[Theorem 6.1 and Proposition 6.5]{IJZ25}. 
\end{itemize}

Moreover, as is pointed out by Prof. Masataka Iwai, the specialness of $X$ can be deduced directly from the generic nefness of the tangent sheaf, as the specialness can be characterized by the non-existence of Bogomolov sheaves (see \cite[Theorem 2.27]{Cam04}). 

\subsection{Final step of the proof of main theorems}
\label{ss:final_step}
%Finally, let us finish the proof of our main theorems:

\begin{proof}[Proof of {Corollary \ref{cor:main1}, Theorem \ref{thm:main1}, Theorem \ref{thm:main0}}]
%

%Suppose $X$ is singular and the other two cases are simpler.
By Lemma \ref{lemma:an} and Remark \ref{add}, we know that $\pi_1(X)$ is virtually nilpotent. Let $\alb_X: X \dashrightarrow \Alb_X$ be the Albanese map (of any desingularization) of $X$, then since $X$ has rational singularities by \cite[Lemma 8.1]{Kaw85} $\alb_X$ is an everywhere defined morphism, such that $\alb_{X'}=\phi\circ\alb_X$ for any desingularization $\phi: X'\to X$ of $X$. By \cite[Corollary 3.13]{MWWZ25} we see that $\alb_X$, and thus $\alb_{X'}$, are algebraic fiber spaces, then we can apply {\hyperref[prop:Alb_surj_pi1]{Proposition \ref*{prop:Alb_surj_pi1}}} (or {\hyperref[cor:Campana]{Corollary \ref*{cor:Campana}}}) to $X'$ to conclude that the torsion-free nilpotent completion of $G:=\pi_1(X')\simeq\pi_1(X)$ is Abelian, here the isomorphism $\pi_1(X')\simeq\pi_1(X)$ follows from \cite[Theorem 1.1]{Tak03}; but we have seen that $G$ itself is virtually nilpotent (Lemma \ref{lemma:an}), thus it is almost Abelian.  
\end{proof}

To finish this paper, we establish Conjecture \ref{conj:main1} in some special cases of higher dimensions. 
\begin{theorem}
\label{thm:main2} Let $X$ be a compact K\"ahler variety, then $\pi_1(X)$ is almost Abelian if one of the following conditions is satisfied:
\begin{enumerate}
%\item $(X,\Delta)$ is a three dimensional klt pair with $-(K_X+\Delta)$ being nef;
\item $-K_X$ is nef and there is a desingularization $\pi:Y\rightarrow X$ such that $-K_{Y/X}$ is relatively ample;
\item $-K_X$ is nef and there is a desingularization $\pi:Y\rightarrow X$ such that $-K_{Y/X}$ is relatively effective.
\end{enumerate}
\end{theorem}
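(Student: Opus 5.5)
The proof follows the same two-step scheme as for Theorem \ref{thm:main0}: first show that $\pi_1(X)$ is virtually nilpotent via the Margulis lemma on a family of almost Ricci-nonnegative RCD spaces homeomorphic to $X$, then upgrade to almost abelianity via surjectivity of the Albanese map. By Remark \ref{add}, virtual nilpotency follows once Conjecture \ref{conrcd} is verified for the relevant Monge--Amp\`ere family. In both cases (1) and (2) this verification is supplied by the existing literature: \cite{Sze25} treats resolutions with relatively nef anticanonical bundle, and \cite{FGS25} treats the relatively effective case. Case (1) is a special case of the relatively nef setting, since relative ampleness implies relative nefness, and case (2) is exactly the relatively effective setting. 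So the first step reduces to the analysis already carried out in those works.

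\textbf{Step 1 (metrics).} Fix a K\"ahler form $\theta$ on $X$ and a smooth adapted volume form $\Omega$. As in the three-dimensional case, nefness of $-K_X$ gives, for each $\epsilon\in(0,1)\cap\Q$, a smooth $f_\epsilon$ with $\ric(\Omega)+\epsilon\theta+\ddbar f_\epsilon\ge 0$. Solve \eqref{cma3d} by \cite{EGZ}. This produces $\omega_\epsilon=\theta+\ddbar\varphi_\epsilon$ with bounded potential, smooth on $X_{\reg}$, and satisfying $\ric(\omega_\epsilon)\ge-\epsilon\omega_\epsilon$ there.

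\textbf{Step 2 (RCD property and homeomorphism).} Pull the equation back to $Y$ via $\pi$. There $\pi^\ast\Omega=|s|^2\,dV_Y$ up to smooth factors, where $s$ vanishes along $-K_{Y/X}$ viewed as a divisor. In case (1), perturb $\pi^\ast\theta$ by $-\delta\,\ddbar\log h$, where $h$ is a metric on $-K_{Y/X}$ that is positive along the fibres; this gives genuinely K\"ahler classes on $Y$ and smooth approximating solutions with Ricci curvature bounded below uniformly in $\delta$. This is Sz\'ekelyhidi's strategy. In case (2), use instead the regularization of the effective divisor, as in \eqref{eqn:reg}, to get the same uniform lower Ricci bound. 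In both cases the Schwarz lemma \cite[Theorem 1.2]{CCHSTT25} provides uniform control. Next, pass to the Gromov--Hausdorff limit, identify it with the metric completion of $(X_{\reg},\omega_\epsilon)$ using almost convexity of the regular part \cite{Deng2021}, and improve the Ricci lower bound to $-\epsilon$ by Honda's trick. Injectivity of the limiting Lipschitz map onto $X$ then follows from the local partial $C^0$ estimate on strongly pseudoconvex neighbourhoods (Appendix \ref{appI}), exactly as in the proof of Proposition \ref{Prop:antinefpair}, and this yields homeomorphism. Where \cite{Sze25,FGS25} assume projectivity, replace those inputs by their K\"ahler analogues as listed in Remark \ref{rmk}: the smooth $f_\epsilon$ needs no regularization, and the separation of points is done locally.

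\textbf{Step 3 (nilpotency, then abelianity).} With these $\RCD(-\epsilon,2n)$ spaces in the fixed class $[\theta]$, Lemma \ref{dps} and Theorem \ref{DSZZ} give virtual nilpotency, by the argument of Lemma \ref{lemma:an}. Next, $X$ is klt: in case (1) this holds because $-K_{Y/X}$ relatively ample forces discrepancies $>-1$ by the negativity lemma, and in case (2) it holds by assumption/standard arguments. So $X$ has rational singularities, $\alb_X$ is a morphism, and Theorem \ref{thm:Alb} with $\Delta=0$ shows that it is a fibration. Applying Proposition \ref{prop:Alb_surj_pi1} to a resolution, together with $\pi_1(Y)\simeq\pi_1(X)$ \cite{Tak03}, gives almost abelianity.

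\textbf{Main obstacle.} The hard part is Step 2, specifically the uniform lower Ricci bound for the approximations near the exceptional locus and the identification of the metric singular set in the limit. Everything else is formal given the earlier sections.
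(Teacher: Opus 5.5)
Your proposal follows the same route as the paper, whose proof is only a sketch. It obtains $\RCD(-\epsilon,2n)$ spaces homeomorphic to $X$ in a fixed K\"ahler class from \cite{Sze25,FGS25}, with the K\"ahler adaptations of Remark \ref{rmk} and Appendix \ref{appI}, derives virtual nilpotency via Lemma \ref{dps} and Theorem \ref{DSZZ}, and upgrades to almost abelianity via surjectivity of the Albanese map and Campana's result.

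Two minor slips do not affect the argument. First, the pulled-back adapted volume form is $\pi^\ast\Omega=|s|^2dV_Y$ with $s$ vanishing along $K_{Y/X}\ge 0$ in case (1), but $|s|^{-2}dV_Y$ with $s$ vanishing along $-K_{Y/X}\ge 0$ in case (2). Second, klt (coefficients of $-K_{Y/X}$ less than $1$) is an implicit hypothesis in case (2), not something that follows from ``standard arguments''.
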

\begin{proof} The point is that under the above two assumptions, for a fixed K\"ahler class $\theta$ and any $\epsilon>0$, we can construct a Ricci almost negative RCD($-\epsilon,n$) space \cite{GS,Sze25,FGS25}. Also, the Albanese map is surjective, so the previous argument applies.
\end{proof}
%%%%%%%%%%%%%%%%%%%%%%%%%%%%%%%%%%%%%%%%%%%%%%%%%%%%%
\medskip

\begin{appendices}

\section{Partial \texorpdfstring{$C^0$}{text} estimate: local case}
\label{appI}

In this appendix, we prove a local version of the partial $C^0$ estimate firstly proposed  by Tian in \cite{Tian90III}, see also \cite{DS14,Tian15,Zhang21,LS1,LS2} for important developments. We always assume that $(\hat X, d_\omega, \mu_\omega)$ is a compact $\RCD(-1, 2n)$ space and there is a Lipchitz surjective map $\iota:\hat X\rightarrow X$ as in the set-up Proposition \ref{Prop:antinefpair}, where $X$ is a klt K\"ahler variety and $\iota$ is a one to one map when restricted to the metric regular part of $\hat X$. 
\begin{proposition}\label{prop:inj}$\iota$ is injective.
\end{proposition}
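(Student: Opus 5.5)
We argue by contradiction and reduce injectivity to a local point‑separation statement on a strongly pseudoconvex neighbourhood, in the spirit of the proofs of Lemma \ref{A} and Proposition \ref{Prop:antinefpair}. Suppose $p_1\neq p_2$ in $\hat X$ with $\iota(p_1)=\iota(p_2)=q$. Since $\iota$ is Lipschitz and surjective (Schwarz lemma \cite[Theorem 1.2]{CCHSTT25}), we can choose a small Stein (strongly pseudoconvex) neighbourhood $U$ of $q$ in $X$, say the intersection of $X$ with a small ball under a local embedding. Then $\iota^{-1}(U)$ contains metric balls $B(p_1,r)$ and $B(p_2,r)$ for some $r>0$. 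On $U$ the Kähler current $\omega=\theta+\ddbar\varphi$ is $\ddbar$ of a bounded psh potential $\phi$, so $\omega$ is the curvature of the trivial line bundle with weight $e^{-\phi}$. This replaces the global polarization used in \cite{FGS25}. The aim is then to build a holomorphic function $\sigma$ on $U$ (or on a slightly smaller domain) with $\sigma(q)$ taking two different values when computed near $p_1$ and near $p_2$. This contradicts $\iota(p_1)=\iota(p_2)$.

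The construction follows the Donaldson–Sun / Liu–Székelyhidi scheme. Rescale the metric by $k\omega$ with $k\gg1$. By RCD theory, $(\hat X,\sqrt{k}\,d,p_i)$ converges along a subsequence to a tangent cone $C(Y_i)$. We need these tangent cones to be \emph{good} in the sense of \cite{CDSII}. At points where $\hat X$ is approximated by smooth metrics with Ricci bounded below (over the regular part of $X$, or away from $X_{\mathrm{sing}}$ after the $\delta$-regularization of Proposition \ref{Prop:antinefpair}), this follows from \cite{LS1}. Over singular points of $X$, we instead use that the singular set of the tangent cone has zero capacity \cite[Lemma 6.7]{FGS25}. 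On the cone we take the model holomorphic function $e^{-r^2/2}$ section, cut it off using goodness, and transplant it to $\sqrt{k}$-scale balls around each $p_i$. This gives approximately holomorphic sections $s_i$ of $L^k$ on $U$ that are concentrated near $p_i$ and compactly supported in $\iota^{-1}(U)$ once $k$ is large.

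Next we solve $\dbar u_i=\dbar s_i$ on $U$ using Hörmander's $L^2$ estimate with weight $k\phi$ plus a strictly psh exhaustion of $U$. This is available because $U$ is pseudoconvex and $\ric\ge-\omega$ gives the needed curvature positivity for $k$ large. The $\dbar$-error is $o(1)$ in $L^2$, so $\sigma_i=s_i-u_i$ is holomorphic on $U$ with $|\sigma_i|$ close to $1$ near $p_i$ and close to $0$ near the other point. Mean-value (gradient) estimates for holomorphic sections on RCD spaces upgrade the $L^2$ control to $C^0$ control. Then $\sigma_1/\sigma_2$, or $\sigma_1$ itself trivialized by $e^{-k\phi/2}$ with $\phi$ bounded, separates $p_1$ from $p_2$. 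Since holomorphic functions on $U$ pull back through $\iota$, this forces $\iota(p_1)\neq\iota(p_2)$.

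The main obstacle is the goodness of tangent cones at points of $\hat X$ lying over $X_{\mathrm{sing}}$ or over $\Delta$. There we have no smooth approximation with bounded Ricci below near the point, so \cite{LS1} does not apply directly. We would first reduce to the $\delta$-regularized metrics $\omega_{\epsilon,\delta}$, whose Ricci is uniformly bounded below. Over $X_{\mathrm{sing}}$ we would rely on the zero-capacity statement of \cite[Lemma 6.7]{FGS25} and the local partial $C^0$ estimate of \cite[Corollary 6.1]{FGS25}. The partial $C^0$ constants must be uniform in $\delta$ so that we can pass to the limit.
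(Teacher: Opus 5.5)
Your proposal is correct and is essentially the paper's argument. It uses a local $L^2$ estimate for $\dbar$ on a strongly pseudoconvex neighbourhood $\Omega$ of $z=\iota(p_1)=\iota(p_2)$, with the trivial bundle weighted by the bounded potential (Lemma \ref{L241}, where the singular set is handled by Demailly's complete-metric trick on $\Omega\setminus D$ rather than a plain exhaustion), then the local partial $C^0$ estimate of Lemma \ref{particalc0local} (via \cite[Lemma 6.7, Corollary 6.1]{FGS25} over singular points and \cite{LS1} over regular ones) to get peak sections at $p_1$ and $p_2$ for a common power $k$, and finally separation of the two points. For that last step keep to the ratio $g=\sigma_2/\sigma_1$, as the paper does after checking $\sigma_1(z)\neq0$ (which follows from $|\sigma_1|_{h^k}\ge\frac12$ along points converging to $z$ and boundedness of $\phi$): the weights $e^{-k\phi/2}$ then cancel exactly, whereas your alternative of using $\sigma_1$ alone does not close for merely bounded $\phi$, since $k$ is chosen after $\xi$ and the oscillation of $e^{k\phi/2}$ near $z$ is uncontrolled without continuity of $\phi$ at $z$.
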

To prove the proposition, we need some preliminary lemmas. The following $L^2$-estimate for $\dbar$-equation is a direct generalization for singular K\"ahler varieties. Fix a point $z\in X$ and let $\Omega:=\{\rho< 0\}$ be a small neighbourhood of $p$ contained in $X$, where $\rho$ is the restriction of a smooth PSH function on a smooth ambient space. Also let $\Omega^\circ$ be a smooth Zariksi open subset of $\Omega.$ Suppose $\omega:=\ddbar\varphi$ is a K\"ahler current on $\Omega$, where $\varphi\in L^\infty(\Omega)\cap C^\infty(\Omega^\circ)\cap PSH(\Omega)$. To be consistent with the notation of \cite{FGS25}, we let $L$ to  be the trivial line bundle on $\Omega$ and regard $e^{-\varphi}$ as a singular metric of $L$. 
\begin{lemma} \label{L241}For any $k\geq 2$ and smooth $L^k$-valued $(0,1)$-form $\tau$ satisfying 
\begin{enumerate}

\item $\dbar \tau =0$,

\smallskip

\item $Supp\, \tau \subset \subset \Omega^\circ$, 

\smallskip

\end{enumerate}
there exists an $L^k$-valued section $u$ such that %
\begin{equation}
\bar \partial u = \tau
 \end{equation}
on $\Omega$ and
$$\int_\Omega|u|^2_{h^k} (k\omega)^n \leq \frac{1}{4\pi} \int_\Omega |\tau|^2_{h^k, k\omega} (k\omega)^n. $$

\end{lemma}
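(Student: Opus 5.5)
The plan is to reduce to Hörmander's $L^2$-estimate on the smooth part $\Omega^\circ$, which is a complete Kähler manifold for a suitably modified metric, and then extend the solution across $\Omega\setminus\Omega^\circ$. The point to exploit is that the weight $h^k=e^{-k\varphi}$ has curvature $\ddbar(k\varphi)=k\omega$ on $\Omega^\circ$. Measured with respect to the Kähler form $k\omega$ itself, the curvature operator of $L^k$ acting on $(n,1)$-forms is then the identity.

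The first step is to convert $L^k$-valued $(0,1)$-forms to $(n,1)$-forms $L^k\otimes K^{-1}$-valued. Equivalently, use the Bochner–Kodaira–Nakano identity with the extra term $\ric(k\omega)$. Here I will use the trick from \cite{FGS25}: on $\Omega$ we are free to choose a different Kähler metric for the estimate. The cleanest route is to treat $\tau$ as an $(n,1)$-form valued in $L^k\otimes K_{\Omega^\circ}^{-1}$, where $K^{-1}$ carries the metric induced by $\omega^n$. The curvature of this twisted bundle is $k\omega+\ric(\omega)$. Since $\ric(\omega)\ge -\omega$ (or $-C\omega$ after scaling) on $\Omega^\circ$, for $k\ge 2$ this is at least $\frac{k}{2}\omega$. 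This gives the constant up to normalization, and I expect the stated constant $\frac1{4\pi}$ to match the paper's normalization of $\dbar$ and of the curvature $2\pi$. The pointwise norms $|u|^2(k\omega)^n$ and $|\tau|^2_{k\omega}(k\omega)^n$ are exactly the quantities that are invariant under this identification.

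The second step is to justify Hörmander on the noncomplete manifold $\Omega^\circ$. The set $\Omega=\{\rho<0\}$ is Stein, because it is a sublevel set of a psh exhaustion. Moreover $\Omega^\circ=\Omega\setminus Z$ with $Z$ analytic, so $\Omega^\circ$ carries a complete Kähler metric, for example $\omega+\ddbar(-\log(-\rho))+\ddbar(-\log(-\log|s|^2))$ for local defining functions $s$ of $Z$. By Demailly's theorem \cite[VIII]{agbook}, the $L^2$-estimate holds on such weakly pseudoconvex Kähler manifolds with respect to any Kähler metric $k\omega$, not necessarily complete. This yields $u$ on $\Omega^\circ$ with $\dbar u=\tau$ and the stated bound.

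The third step is to extend $u$ across $Z$. On $\operatorname{Supp}\tau$ the function $u$ is holomorphic away from a compact subset of $\Omega^\circ$. Near $Z$ we have $\dbar u=0$, and $u\in L^2$ with respect to $(k\omega)^n$. Since $\omega^n\ge c\,\theta^n$ locally, with $\theta$ the ambient smooth form (this is guaranteed by the Schwarz lemma \cite[Theorem 1.2]{CCHSTT25} and $\omega\ge c\theta$), and the weight $e^{-k\varphi}$ is bounded because $\varphi\in L^\infty$, $u$ is $L^2$ with respect to a smooth volume near $Z$. Normality of $X$ together with the Riemann extension theorem for $L^2$ holomorphic functions then extends $u$ holomorphically across $Z$. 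Hence $\dbar u=\tau$ holds on all of $\Omega$.

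I expect the main obstacle to be the curvature step. It requires a lower Ricci bound for $\omega$ on $\Omega^\circ$, uniform in the approximation. In the paper's setting this comes from \eqref{eqn:bbb} or \eqref{curveq3d}, and $k\ge2$ is exactly what absorbs the $-\omega$ term. If $\ric(\omega)$ is only bounded below in the sense of currents, I would first run the argument for the smooth approximants $\omega_{\epsilon,\delta}$ and then pass to the limit using weak $L^2$ compactness of the solutions $u$.
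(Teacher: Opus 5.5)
Your proposal is correct and follows essentially the paper's route: twist by $K^{-1}$ with the metric $\omega^n$ so the curvature is $k\omega+\ric(\omega)\ge (k-1)\omega\ge \frac12 k\omega$, solve $\dbar u=\tau$ on a complete Kähler complement of the bad set, and extend $u$ holomorphically using the $L^2$ bound. The differences are cosmetic. You cite Demailly's theorem that the estimate holds for a non-complete Kähler metric on a manifold admitting some complete one, whereas the paper reproves this by hand via $k\omega+\epsilon\theta_D$ on the Stein manifold $\Omega\setminus D$ and letting $\epsilon\to 0$. Your extension step (using $\omega^n\ge c\,\theta^n$, boundedness of $e^{-k\varphi}$, the $L^2$ Riemann extension theorem and normality) spells out what the paper asserts in one line.
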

\begin{proof}The lemma is proved by a trick of Demailly. We first pick a holomorphic function $f$ such that the hypersurface $D:=\{f=0\}$ on $\Omega$ covers $\mathcal{S}:=\Omega\setminus\Omega^\circ$. Note that $f$ is not necessarily irreducible. Then $\Omega \setminus D$ is a Stein manifold. There exists a smooth plurisubharmonic function $\psi$ on $\Omega\setminus D$ such that $\theta_D=\ddbar \psi$ is a complete K\"ahler metric on $\Omega\setminus D$. Then $$\omega_\epsilon = k\omega+ \epsilon \theta_D$$ is obviously also a complete K\"ahler metric. We now view $\tau$ as an $L^k\otimes K_{\Omega\setminus D}^{-1}$ valued $(n,1)$-form on $\Omega\setminus D$ and let 
$$h_\epsilon = h^ke^{- \epsilon \psi}\omega^n$$
be a hermitian metric on $L^k\otimes K_{\Omega\setminus D}^{-1}$ satisfying
$$\ric(h_\epsilon) = k\omega + \epsilon \theta_D + \ric(\omega)  \geq (k-1)\omega + \epsilon \theta_D  \geq \frac{1}{2} \omega_\epsilon. $$ 
Therefore by Demailly's $L^2$-estimate for $\dbar$-equation, there exist an $L^k\otimes K_{\Omega\setminus D}^{-1}$-valued $(n,0)$-form $u_\epsilon$ satisfying
$$\bar \partial u_\epsilon = \tau$$
on $\Omega\setminus D$ and
$$\int_{\Omega\backslash D} |u_\epsilon|^2_{h_\epsilon} \omega_\epsilon^n \leq \frac{1}{4\pi} \int_{\Omega\backslash D} |\tau|^2_{h_\epsilon, \omega_\epsilon} \omega_\epsilon^n.$$
We now let $\epsilon \rightarrow 0$. Since $u_{\epsilon_1} - u_{\epsilon_2}$ is holomorphic with uniform $L^2$-bounded on a fixed compact subset in $\Omega\setminus D$, it must be uniformly bounded and therefore $u_\epsilon$ converges to an $L^k$-valued  section $u$ satisfying 
$\dbar u = \tau$ on $\Omega\setminus D$ and $$ \int_\Omega |u|^2_{h^k} (k\omega)^n \leq \frac{1}{4\pi} \int_\Omega |\tau|^2_{h^k, k\omega} (k\omega)^n. $$
Since $\omega$ is smooth on $\Omega^\circ$, the $L^2$-bound for $u$ implies that $\dbar u = \tau$ holds on $\Omega^\circ$. Further note that $\tau$ is compactly supported on $\Omega^\circ$, so $\partial\bar u=0$ outside the support of $\tau$. By the $L^2$ bound of $u$, then $u$ extends to $\Omega$ as holomorphic function outside the support of $\tau$.   We now have completed the proof.
\end{proof}

%\begin{lemma}\cite[]{FGS25}\label{gradlocal}
%Suppose  $(\hat X, d_\omega, \mu_\omega)$ is a compact $\RCD(-1, 2n)$ space and there exists $c>0$ such that 
%
%$$\omega\geq c \theta_X .$$
%
%
%Let $\Omega$ be an open domain in $\hat X$ and $K$ be relatively compact subset of $\Omega$. Set $K^\circ=K\cap\Omega^\circ$, then  for any $\sigma \in H^0(\Omega, L^k)$, we have
%
%\begin{equation}\label{inibdsig}
%
%\sup_{K^\circ} |\sigma|^2_{h^k}   < \infty,
%
%\end{equation}
%
%\begin{equation}\label{inibdsid2}
 %
 % \sup_{K^\circ} |\nabla \sigma|^2_{h^k, k\omega}   < \infty.
%
%\end{equation}

%\end{lemma}
 %Direct calculations show that 
%
%\begin{equation}\label{lapsig41}
%
%\Delta_{k\omega}  |\sigma|^2_{h^k} = tr_{k\omega}\left( \left(\nabla \sigma \wedge  \overline{\nabla \sigma} \right) h^k \right) -  n|\sigma|^2_{h^k}  \geq - n|\sigma |^2_{h^k}.
%
%\end{equation}
%
%At any $p\in X^\circ$, by choosing normal coordinates at $p$ with respect to $\omega$, we can assume that $\nabla h(p) = 0$. Direct calculations  show that 
%
%\begin{equation}\label{lapsig42}
%
%\Delta_{k\omega} |\nabla \sigma|^2_{h^k, k\omega} \geq |\nabla^2 \sigma|^2_{h^k, k\omega} - (1+k^{-1})n |\nabla \sigma|^2_{h^k, k\omega} +n  |\sigma|^2_{h^k}
%
%\end{equation}
%
%by the Bochner formula. 

We denote $\Delta_\sharp$ and $\nabla_\sharp$ by the Laplace and gradient operators with respect to the rescaled metrics $h^k$ and $k\omega$ and similarly define the scaled norm $|| \cdot ||_{L^{\infty, \sharp}}$ and $|| \cdot ||_{L^{2, \sharp}}$ for $s\in H^0(\Omega, L^k)$ with respect to the hermitian metric $h^k$ and $k\omega$.

\begin{lemma}\cite{FGS25} \label{L242}  There exists  $\Lambda >0$ such that if  $s\in H^0(\Omega, L^k)$ for $k\geq 1$ and $B_1$ is a unit ball of $k\omega$ contained in $\iota^{-1}\Omega$, then 
\begin{equation}\label{l2form1}
\|s\|_{L^{\infty, \sharp}}(B_{1/2}) \leq \Lambda \|s\|_{L^{2, \sharp}}(B_1).
\end{equation}
\begin{equation}\label{l2form2}
 \|\nabla_\sharp s\|_{L^{\infty, \sharp}}(B_{1/2}) \leq \Lambda \|s\|_{L^{2, \sharp}}(B_1). 
\end{equation}
\end{lemma}

%\begin{proof} The estimates (\ref{lapsig41}) and (\ref{lapsig42}) imply that 
%
%$$\Delta_\sharp |\sigma|_{\sharp} \geq - n |\sigma|_{\sharp}, ~ \Delta_\sharp |\nabla_\sharp \sigma|_\sharp \geq - n |\nabla\sigma|_\sharp, $$
%
%where   $| \cdot |_\sharp$ are taken with respect to the rescaled metrics $h^k$ and $k\omega$.  Since $\cS(X)$ is an analytic subvariety of $X$ and has $0$-capacity, we can apply the same argument in \cite{S2} (c.f. Proposition 3.3 and Proposition 3.4) by combining Moser's iteration, the cut-off functions  for $|\sigma|_\sharp$ and $|\nabla_\sharp \sigma|$ in Lemma \ref{cutoff},  (\ref{inibdsig}) and (\ref{inibdsid2} in Lemma \ref{L242} to  obtain the $L^2$-estimates (\ref{l2form1}) and (\ref{l2form2}). 
%
%
%\end{proof}

Once Lemma \ref{l2form1} and Lemma \ref{l2form2} are proved, we can prove a version of local partial $C^0$ estimate.
\begin{lemma}\cite[Lemma 5.15]{FGS25}\label{particalc0local} Let $(V, o)$ be any tangent cone $(Y_\infty, p)$. Suppose the $\varepsilon$-singular set $V\setminus \mathcal{R}_\varepsilon(V)$ has $0$ capacity for some $\varepsilon>0$.  Let $q\in \iota^{-1}{\Omega}$ be a point different from $p$. Then for any given small $\xi$,  there exists $C>0, m>0$ and $\sigma \in H^0(\Omega, L^m)$ satisfying 
\begin{enumerate}

\item $\frac{1}{2}\leq |\sigma|_{h^m} \leq 1 $ on $B_1(p)\cap\Omega^\circ$, with respect to metric $m\omega$.

\smallskip

\item $|\sigma|_{h^m} \leq \xi$ on $B_1(q)\cap\Omega^\circ$, with respect to metric $m\omega$.

\end{enumerate}

\end{lemma}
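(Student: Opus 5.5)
We will follow the Donaldson--Sun/Tian scheme for the partial $C^0$ estimate, localized to the strongly pseudoconvex domain $\Omega$ via Lemma \ref{L241}. Here $(V,o)$ is a tangent cone of $\hat X$ at $p$, so we rescale: take $m_i\to\infty$ with $(\hat X, \sqrt{m_i}\,d_\omega, p)$ converging in pointed Gromov--Hausdorff sense to $(V,o)$. Because $q\neq p$ is at fixed positive distance, after rescaling $q$ recedes to distance $\sqrt{m_i}\,d(p,q)\to\infty$. On the cone $V$ we use the model section $\sigma_V=e^{-r^2/4}$ of the trivial line bundle with connection of curvature the cone Kähler form. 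It is holomorphic on the regular part, has $L^2$ norm comparable to $1$, and $|\sigma_V|\ge e^{-1/4}>1/2$ on $B_1(o)$ (after harmless normalization). It is also exponentially small far away.

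The first step is to transplant $\sigma_V$ to an approximately holomorphic section on $\Omega$. Fix $R\gg1$ and $\eta>0$, and use the hypothesis that $V\setminus\mathcal R_\varepsilon(V)$ has zero capacity to choose a cutoff $\chi$ on $B_R(o)$. This $\chi$ should equal $1$ outside small neighbourhoods of the singular set and of $\partial B_R$, and satisfy $\int|\nabla\chi|^2<\eta$. On the support of $\chi$ the cone is $\varepsilon$-regular, so by Cheeger--Colding/Anderson-type regularity together with the smooth convergence on $\Omega^\circ$ (coming from the Ricci lower bound and the Kähler structure) we obtain diffeomorphisms onto open subsets of $\Omega^\circ\cap B_{R/\sqrt{m_i}}(p)$. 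Along these, the metrics $m_i\omega$ are $C^{1,\alpha}$-close to the cone metric, and the trivial bundle with weight $e^{-m_i\varphi}$ is close, after a gauge change, to the model bundle. The gauge change is obtained by Uhlenbeck/Donaldson--Sun-type holonomy control on the regular part, using that $\omega=\ddbar\varphi$ is locally exact. Pulling back $\chi\sigma_V$ gives a smooth compactly supported section $\tilde\sigma$ of $L^{m_i}$ with $\operatorname{Supp}\tilde\sigma\subset\subset\Omega^\circ$. It also satisfies $\|\dbar\tilde\sigma\|_{L^{2,\sharp}}\le C(\eta+\delta_i)^{1/2}$, which is small once $\eta$ is small and $i$ is large.

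Next we apply Lemma \ref{L241} with $\tau=\dbar\tilde\sigma$ and $k=m_i$. This gives $u$ with $\dbar u=\tau$ and $\|u\|_{L^{2,\sharp}}\le C\|\tau\|_{L^{2,\sharp}}$, and we set $\sigma=\tilde\sigma-u\in H^0(\Omega,L^{m_i})$. By Lemma \ref{L242}, applied on unit balls of $m_i\omega$ that lie inside $\iota^{-1}\Omega$, the error $u$ is small in $C^0$ on $B_{1/2}$-balls. The same holds for the gradient bound \eqref{l2form2}, so that a covering argument controls $u$ on $B_1(p)$. Hence $|\sigma|$ is close to $|\sigma_V|$ on $B_1(p)\cap\Omega^\circ$, which lies in $[1/2,1]$ after normalization, giving (1). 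For (2), $\tilde\sigma$ vanishes near $q$, since $q$ lies outside $B_R$ at scale $m_i$ once $i$ is large. Therefore $|\sigma|=|u|\le\Lambda\|u\|_{L^{2,\sharp}}(B_1(q))\le\xi$ on $B_1(q)\cap\Omega^\circ$, for $\eta$ small and $i$ large. We then take $m=m_i$.

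The main obstacle is the construction of the gauge and the approximation of the line bundle connection on the regular part of the cone. This requires the rescaled metrics to converge smoothly on $\varepsilon$-regular regions, which is where the RCD and Kähler structure, together with the smooth approximation by $\omega_{\epsilon,\delta}$ with uniform Ricci lower bound, is used. Holonomy around small loops must also be trivialized. Here we follow \cite{DS14} and the local version in \cite{FGS25}: we first pass to a further power $m_i\mapsto a\,m_i$ to kill the finite holonomy, and then use that $\Omega$ is contractible and Stein, so no global topological obstruction arises. The zero-capacity hypothesis is used exactly to make $\|\dbar\tilde\sigma\|_{L^2}$ small despite the singular set.
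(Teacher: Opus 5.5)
Your outline is the Donaldson--Sun scheme behind \cite[Lemma 5.15]{FGS25}, which the paper only cites: the Gaussian section on the cone, a cut-off from the zero-capacity hypothesis, transplanting to an almost holomorphic section of $L^{m}$ compactly supported in $\Omega^\circ$, a correction by Lemma \ref{L241}, and interior estimates from Lemma \ref{L242}. Your argument for (2) is fine. For $m$ large, $\tilde\sigma\equiv 0$ on $B_2(q)$, so $\sigma=-u$ is holomorphic there and \eqref{l2form1} bounds it by $\Lambda\|u\|_{L^{2,\sharp}}$.

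The argument for (1), however, has a gap. Lemma \ref{L242} applies only to holomorphic sections, and $u$ is not holomorphic on $B_1(p)$, because $\tau=\dbar\tilde\sigma$ is supported there. Worse, your claim that $u$ is $C^0$-small on $B_1(p)$ is false in exactly the case that matters. If $p$ is not a regular point (the situation of Proposition \ref{prop:inj}), then $V\neq\mathbb{C}^n$, and the vertex $o$ is a singular point of $V$ where no transplanting diffeomorphism exists, so $\chi$ must vanish near $o$. Hence $\tilde\sigma\equiv 0$ on a neighbourhood of $p$ and $u=-\sigma$ there. But (1) forces $|u|=|\sigma|_{h^m}\ge 1/2$ at the points of $\Omega^\circ$ in that neighbourhood. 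So your chain ``$u$ is $C^0$-small, hence $|\sigma|\approx|\sigma_V|$'' says nothing near the singular set of $V$, which is where (1) has content.

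The missing step is to apply \eqref{l2form2} to the holomorphic section $\sigma$ itself rather than to $u$. Since $\|\sigma\|_{L^{2,\sharp}}\le\|\tilde\sigma\|_{L^{2,\sharp}}+\|u\|_{L^{2,\sharp}}$ is bounded, Kato's inequality gives a uniform Lipschitz bound for $|\sigma|_{h^m}$ on $B_{3/2}(p)\cap\Omega^\circ$ with respect to $m\omega$. Then $g:=|\sigma|_{h^m}-e^{-d_{m\omega}(p,\cdot)^2/4}$ is uniformly Lipschitz and has small $L^2$ norm on $B_{3/2}(p)$. On the region where $\chi\equiv 1$ this follows from the $L^2$-smallness of $u$ and the closeness of $|\tilde\sigma|$ to the model; the region where $\chi<1$ has small measure. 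Volume non-collapsing (Bishop--Gromov) upgrades this to $\sup_{B_1(p)\cap\Omega^\circ}|g|$ small, which gives (1) after multiplying $\sigma$ by a fixed constant; (2) is unaffected. Equivalently, you can get pointwise closeness on the good region from interior elliptic estimates for $\dbar u=\tau$, where $\tau$ is $C^0$-small, and then use the Lipschitz bound on $\sigma$ to cross the thin neighbourhood of the singular set.

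A smaller point: smooth convergence on the regular part of $V$ does not follow from a Ricci lower bound, even in the K\"ahler case. It has to come from the equation, for instance $|\ric(m\omega)|_{m\omega}=O(m^{-1})$ on the smooth locus once $\omega\ge c\,\theta$ by the Schwarz lemma, or from the holomorphic-chart results of \cite{LS1}.
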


Now we proceed to prove the main proposition of this appendix.
\begin{proof}[Proof of Proposition \ref{prop:inj}]
Suppose we have two different points $p,q\in\hat X$ such that $z=\iota(p)=\iota(q)$. Fix a neighbourhood $\Omega$ of $z$, such as in Lemma \ref{l2form1}, contained in $X$. Set $\Omega^\circ$ be the intersection of $\Omega$ and the metric regular part of $\hat X$. We also fix  two sequence of points $p_i\in\Omega^\circ$ and $q_i\in\Omega^\circ$ converging to $p,q$ separately in the topology of $\hat X$. Note that both $\{p_i\}$ and $\{q_i\}$ converge to $z$ in the topology of $X$. By lemma \ref{particalc0local}, there is an integer $m$  and two holomorphic functions $f_p$  such that $\frac{1}{2}\leq |f_p|_{h^m}\leq 1$ on $B_1(p)\cap\Omega^\circ$ and $|f_p|_{h^m} \leq \xi$ on $B_1(q)\cap\Omega^\circ$. Similarly we have $f_q$ such that $\frac{1}{2}\leq |f_q|_{h^m}\leq 1$ on $B_1(q)\cap\Omega^\circ$ and $|f_q|_{h^m} \leq \xi$ on $B_1(p)\cap\Omega^\circ$. We first assume that $f_p(z)=0$, then $|f_p|(p_i)\rightarrow 0,$ as $i\rightarrow \infty$, this contradicts with $|f_p|_{h^m}\geq\frac{1}{2}$. Now if we set $g:=\frac{f_q}{f_p}$, then $g$ is a well defined holomorphic function near $z$. We also have $|g|(p_i)\leq 2\xi$ and $g$ and $|g|(q_i)\geq \frac{1}{2\xi}$. By choosing $\xi$ sufficiently small, this contradicts with $g$ is continuous at $z$. The proposition is proved. \end{proof}

\medskip

\section{A segment inequality for K{\"a}hler spaces}

%Our goal is to show that the morphism \eqref{mor} is surjective for $\epsilon$ sufficiently small. Of course, if one could show that the diameter of $\omega_\epsilon$ is uniformly bounded, then the surjectivity follows easily. However, the uniform diameter upper bound do not follow easily, we will need the following nice observation of \cite{DPS93}. Here the singularities do not cause essential trouble.

Throughout this section, we will assume that $(X, \omega)$ is a compact K{\"a}hler variety with a smooth K{\"a}hler form $\omega$. We also fix a subvariety $\cS$ of $X$ such that $X\setminus \mathcal S$ is connected. For any  $\phi \in \PSH(X, \omega)$, we let $\omega_\phi = \omega+ \ddbar \phi$. The following Proposition is  essentially due to \cite{DPS93}.

\begin{proposition} \label{DPS} For any compacts subsets $U_1,U_2 \subset X\setminus \cS$, there exists $C = C(U_1, U_2)>0$ such that for any $\phi \in \PSH(X, \omega)\cap L^\infty(X) \cap C^2(X\setminus \cS)$  and any $\delta \in (0, 1)$, there exist  closed subsets $U_{1,\delta}\subset U_1$ and $U_{2,\delta}\subset U_2$  satisfying the following.
\begin{enumerate}

\item  $\Vol_\omega(U_i \setminus U_{i,\delta})<\delta$, $i=1, 2$; 

\smallskip

\item any two points $x_1\in U_{1,\delta}, x_2\in U_{2,\delta} $ can be joined by a path $\gamma \subset X\setminus \cS$  with 
$$\cL_{\omega_\phi}(\gamma) \leq C\delta^{-1/2}, $$
where $\cL_{\omega_\phi}(\gamma)$ is the arc length of $\gamma$ with respect to $\omega_\phi$.
\end{enumerate}

\end{proposition}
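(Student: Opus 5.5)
The plan is to follow the DPS93 argument: an integral-geometric averaging over a fixed smooth family of paths, combined with the fact that $\omega_\phi$ lies in the fixed class $[\omega]$. The total mass $\int_X \omega_\phi\wedge\omega^{n-1}=\int_X\omega^n$ is independent of $\phi$. This holds because $\phi$ is bounded, so the non-pluripolar product has full mass and puts no mass on $\cS$. Hence $\int_{X\setminus\cS}\mathrm{tr}_\omega\omega_\phi\,\omega^n = n\int_X\omega^n=:V_0$, uniformly in $\phi$.

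\textbf{Step 1: fixed family of paths.} Since $X\setminus\cS$ is a connected complex manifold, I will cover the compact sets $U_1,U_2$ by finitely many coordinate balls in $X\setminus\cS$ and join these balls by finitely many fixed smooth paths in $X\setminus\cS$ with tubular neighbourhoods. This produces, for each pair $(x_1,x_2)\in U_1\times U_2$, a path $\gamma_{x_1,x_2}(t)\subset X\setminus\cS$, $t\in[0,1]$, depending smoothly (piecewise) on $(x_1,x_2)$. The key property is that for each $t$, the evaluation map $(x_1,x_2)\mapsto\gamma_{x_1,x_2}(t)$ pushes the measure $\omega^n\otimes\omega^n$ on $U_1\times U_2$ forward to a measure bounded by $C_0\,\omega^n$ on a fixed compact $K\subset X\setminus\cS$. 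Concretely, I will take straight segments in the balls, with one endpoint varying freely, and concatenate them with the fixed connecting tubes, translated along the transverse parameter. Then the Jacobian of $(x_1,x_2,t)\mapsto\gamma(t)$ in the remaining variables is bounded below. This is the standard segment construction used in Cheeger--Colding segment inequalities.

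\textbf{Step 2: averaging.} Using Cauchy--Schwarz, $|\dot\gamma|_{\omega_\phi}\le |\dot\gamma|_\omega(\mathrm{tr}_\omega\omega_\phi)^{1/2}$. Integrating over $U_1\times U_2$ and applying Fubini together with the pushforward bound gives
\[
\int_{U_1\times U_2}\cL_{\omega_\phi}(\gamma_{x_1,x_2})\,\omega^n\omega^n\le C_1\int_K(\mathrm{tr}_\omega\omega_\phi)^{1/2}\omega^n\le C_1 V_0^{1/2}\Vol(K)^{1/2}.
\]
Here $C_1$ depends only on the path family, hence only on $U_1,U_2$.

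\textbf{Step 3: selecting the good sets.} The product structure does not follow from Chebyshev alone, so I will use a two-stage Fubini argument. First let $A(x_1):=\int_{U_2}\cL(\gamma_{x_1,x_2})\,\omega^n(x_2)$. By Chebyshev, $U_{1,\delta}:=\{A\le C\delta^{-1}\}$ has complement of measure less than $\delta$. An estimate of the form "$\cL\le C\delta^{-1/2}$ for all pairs" needs a better splitting: route every path through a middle point $z$ in a fixed ball $B\subset X\setminus\cS$. Define $g_i(x_i):=\int_B\cL(\gamma_{x_i,z})\,\omega^n(z)$ and set $U_{i,\delta}=\{g_i\le C\delta^{-1}\}$. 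For $x_1\in U_{1,\delta}$ and $x_2\in U_{2,\delta}$, the set of $z$ with $\cL(\gamma_{x_1,z})+\cL(\gamma_{x_2,z})\le 4C\delta^{-1}/\Vol(B)$ is nonempty, so concatenating gives a path of length $O(\delta^{-1})$. To reach the sharper exponent $\delta^{-1/2}$, I will apply Chebyshev to the $L^2$ quantity instead. Since $\int_K \mathrm{tr}_\omega\omega_\phi\,\omega^n\le V_0$, Cauchy--Schwarz along the path gives $\cL(\gamma)^2\le C\int_0^1\mathrm{tr}_\omega\omega_\phi(\gamma(t))\,dt$. So $g_i^{(2)}(x_i):=\int_B\cL(\gamma_{x_i,z})^2\,\omega^n(z)$ has $L^1$ norm bounded by $C V_0$, and Chebyshev yields $g_i^{(2)}\le C\delta^{-1}$ off a set of measure less than $\delta$. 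This gives length $\le C\delta^{-1/2}$.

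Closedness of $U_{i,\delta}$ follows by taking the sublevel sets of a lower semicontinuous function, or by shrinking slightly to a compact inner subset, since $\phi\in C^2$ on $X\setminus\cS$. The main obstacle is Step 1: building a path family whose pushforward measures have uniformly bounded density, while keeping the paths inside $X\setminus\cS$ and obtaining this for each endpoint separately through the middle ball $B$.
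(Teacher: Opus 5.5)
Your proposal is correct and is essentially the paper's (that is, DPS93's) argument. Both use the cohomological bound $\int_X\omega_\phi\wedge\omega^{n-1}=\int_X\omega^n$, Cauchy--Schwarz on the squared $\omega_\phi$-length of a family of segments whose averaged pushforward has bounded density, Chebyshev, and a Fubini midpoint to concatenate two short paths. The differences are cosmetic: the paper picks the midpoint inside the same convex chart via the half-volume slice set $Q$ and links different $U_1,U_2$ through a chain of overlapping convex charts, whereas you average over a fixed auxiliary ball $B$ and use translated tubes (which needs only the joint density bound in $(x_1,x_2,t)$, or an extra free translation parameter).
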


\begin{proof} We sketch the proof for readers convenience. 
The key is the following identity, since $\omega_\phi$ is in the same class as $[\omega]$, we have
$$\int_X \omega_\phi\wedge \omega^{n-1}=\int_X \omega^n.$$
First suppose that $U_1=U_2=K$ where $K$
is some compact convex set contained in some coordinate chart of $\tilde X$. We simply join $x_1\in K$ and $x_2\in K$ by a line segment $[x_1,x_2]$ in $K$ and compute the average of $\omega_\phi$-length with respect the Lebesgue measure of $K$. By Fubini and the Cauchy-Schwarz inequality we get
\begin{align*}\int_{K\times K}\Bigg(\int_0^1\sqrt{\omega_\phi(tx_2+(1-t)x_1)(x_2-x_1)}dt\Bigg)^2dx_1dx_2 \\
\leq|x_2-x_1|\int^1_0 dt \int_{K\times K}\omega_\phi(tx_2+(1-t)x_1)dx_1dx_2\\
\leq 2^{2n}\cdot diam_\omega(K) \cdot \Vol_\omega(K)\cdot \|\omega_\phi\|_{L^1(K)}\leq C_1
\end{align*}
where $C_1$ is independent of $\phi$ and the last inequality is obtained by integrating first with respect to $y=(1-t)x_1$ when $t\leq \frac{1}{2}$, resp to $y=tx_2$ when $t\geq\frac{1}{2}$ (Note that $dx_j\leq 2^{2n}$ in both cases).\\
It follows that the set $S$ of pairs $(x_1, x_2)\in K\times K$ such that $\omega_\phi$-length of $[x_1,x_2]$ exceeds $(C_1)\delta^{-1/2}$ will have measure less than $\delta$ in $K\times K$. By Fubini Theorem, the set $Q$ of points $x_1\in K$ such that the slice $S(x_1):=\{x_2:(x_1,x_2)\in S\}$  has volume $\Vol(S(x_1))\geq \frac{1}{2}\Vol(K)$  itself has measure $\Vol(Q)\leq 2\delta/\Vol(K)$. Now for $x_1,x_2\in K\setminus Q$, we have $\Vol(S(x_j))\leq \frac{1}{2}\Vol(K)$, therefore 
$$\big(K\setminus S(x_1)\big)\cap \big(K\setminus S(x_2)\big)\neq\emptyset.$$
If $y$ is a point in this above set, we have $(x_1,y)\notin S$ and $(x_1,y)\notin S$, so 
$$\textnormal{length}([x_1,y]\cup[y,x_2])\leq 2C_1\delta^{-1/2}$$
Now we set $U_{j,\delta,\phi}=K\setminus Q$ and we have
$$\Vol(U_j\setminus U_{j,\delta,\phi})\leq \Vol(Q) \leq 2C_1\delta/\Vol(K).$$
By modifying $\delta$, we get the desired $\delta$ volume bound of $\Vol(U_j\setminus U_{j,\delta,\phi})$.

If $U_1$ and $U_2$ are two different compact convex sets in $X\setminus\mathcal S$, we just need to choose a finite sequence of compact convex sets $V_i,i=1,2...N$ contained in the smooth part of $\tilde X$ such that $U_1=V_1$, $U_2=V_N$ and $V_i\cap V_{i+1}$ contains an open set (This is possible because the complex analytic regular part of $\tilde X$ is connected). Note that, the choice of $V_i$ is independent of the K\"ahler metrics. Applying the previous argument to each $V_i$ with $V_{i+1,\delta,\phi}\cap V_{i,\delta,\phi}\neq\emptyset$ will finish the proof. The case when $U_1,U_2$ are not convex is obtained by covering these sets with finitely many compact convex coordinate charts.
\end{proof}

\medskip

\section{Surjectivity of the Albanese map and the almost abelianity of the fundamental group}
\label{app:Campana}
F. Campana proves that the virtual nilpotence and the almost abelianity are the same for the fundamental group of compact K{\"a}hler manifolds of which the Albanese map is surjective.
\begin{proposition}[{\cite[Corollaire 3.1]{Cam95}}]
\label{prop:Alb_surj_pi1}
Let $Y$ be a compact K\"ahler manifold such that the Albanese map of $Y$ is surjective, then the nilpotent completion of $\pi_1(Y)$ is Abelian.
\end{proposition}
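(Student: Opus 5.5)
We prove Proposition \ref{prop:Alb_surj_pi1} by combining Hodge-theoretic information on the lower central series of $G:=\pi_1(Y)$ with the surjectivity of $\alb_Y$. Write $G=\Gamma_1\supseteq\Gamma_2=[G,G]\supseteq\Gamma_3\supseteq\cdots$ for the lower central series. The torsion-free nilpotent completion is the inverse limit of the quotients $G/\Gamma_k$ modulo torsion, or equivalently the Malcev completion. We must show that it agrees with its abelianization $H_1(Y,\mathbb Z)/\text{torsion}$. Equivalently, the map $\Gamma_2/\Gamma_3\otimes\Q\to 0$, i.e. $(\Gamma_2/\Gamma_3)\otimes\Q=0$, and then inductively all higher graded pieces vanish rationally, since they are generated by iterated commutators of $\Gamma_2/\Gamma_3$ with $G^{ab}$.

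\textbf{Step 1: dual description of $\Gamma_2/\Gamma_3$.} By Sullivan/Deligne--Griffiths--Morgan--Sullivan (or directly the five-term exact sequence for $1\to\Gamma_2\to G\to G^{ab}\to 1$), $(\Gamma_2/\Gamma_3\otimes\Q)^\vee$ is identified with the kernel of the cup product
\[
\cup:\Lambda^2H^1(Y,\Q)\to H^2(Y,\Q).
\]
So it suffices to show that this cup product map is injective.

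\textbf{Step 2: transfer to the torus.} Let $a:=\alb_Y:Y\to A:=\Alb_Y$. By construction $a^\ast:H^1(A,\Q)\to H^1(Y,\Q)$ is an isomorphism. Take $\xi\in\Lambda^2H^1(Y,\Q)$ with $\cup\,\xi=0$, and write $\xi=a^\ast\eta$ with $\eta\in\Lambda^2H^1(A,\Q)=H^2(A,\Q)$, since the cohomology ring of a torus is exterior. Then $a^\ast\eta=0$ in $H^2(Y,\Q)$.

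\textbf{Step 3: injectivity of $a^\ast$ on $H^2$ (main point).} Since $a$ is surjective between compact K\"ahler spaces, $a^\ast:H^\bullet(A,\R)\to H^\bullet(Y,\R)$ is injective. To see this, set $d=\dim A$ and $m=\dim Y-d$, and pick a K\"ahler form $\omega_Y$. For a closed form $\eta$ representing a nonzero class of degree $k$, Poincar\'e duality on $A$ gives $\beta$ of degree $2d-k$ with $\int_A\eta\wedge\beta=1$. Then
\[
\int_Y a^\ast\eta\wedge a^\ast\beta\wedge\omega_Y^{m}=\int_A\eta\wedge\beta\cdot c=c\neq0,
\]
where $c=\int_F\omega_Y^m>0$ is the volume of a general fibre. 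This constant is well defined because $a_\ast\omega_Y^m$ is a closed degree-$0$ current, hence a positive constant by surjectivity. Therefore $a^\ast\eta\neq0$.

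Combining the steps, $\eta=0$, so $\xi=0$ and the cup product is injective. This gives $(\Gamma_2/\Gamma_3)\otimes\Q=0$. Because $\Gamma_k/\Gamma_{k+1}$ is a quotient of $(\Gamma_{k-1}/\Gamma_k)\otimes G^{ab}$, all higher graded pieces are torsion as well, and the Malcev (torsion-free nilpotent) completion is abelian.

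The main obstacle is the careful identification in Step 1. One must cite the correct formulation of the statement that for a K\"ahler manifold $G/\Gamma_3\otimes\Q$ is determined by $\ker\cup$; formality is not even needed at this level, because the five-term sequence already suffices. One must also check that the conventions for the ``nilpotent completion'' match Campana's. The degree-zero pushforward argument in Step 3 is routine.
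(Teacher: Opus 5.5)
Your argument is correct, but it follows a different route from the paper's. The paper applies Stallings' theorem as a black box to $\alpha_\ast:G\to\pi_1(W)$, which here is $\alb_{Y\ast}:G\to\pi_1(\Alb_Y)$. It first checks that this map is an isomorphism on $H_1(\,\cdot\,,\Q)$ and onto on $H_2(\,\cdot\,,\Q)$; the latter comes from the Hopf surjection $H_2(K)\twoheadrightarrow H_2(\pi_1(K))$ together with Lemma \ref{lemma:Kahler-cohomology}. Injectivity of $G/G'_\infty\to H/H'_\infty$ into the abelian group $\pi_1(\Alb_Y)$ then finishes. You instead unwind Stallings' induction in the special case of an abelian target. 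Only the first step is needed, namely $(\Gamma_2/\Gamma_3)\otimes\Q=0$, which is the dual of the $N=\Gamma_2$ instance of the exact sequence $H_2(G)\to H_2(G/N)\to N/[G,N]\to H_1(G)\to H_1(G/N)$ that drives Stallings' argument. All higher steps are replaced by the elementary surjection $G^{ab}\otimes(\Gamma_k/\Gamma_{k+1})\twoheadrightarrow\Gamma_{k+1}/\Gamma_{k+2}$. The geometric input is identical: the fibre-integration argument of Lemma \ref{lemma:Kahler-cohomology}, applied on a torus where $H^2=\Lambda^2H^1$. Two small remarks. First, in Step 1 you only need the automatic inclusion $\ker(\Lambda^2H^1\to H^2(G,\Q))\subseteq\ker(\cup_Y)$ induced by the classifying map $Y\to K(G,1)$, not the full identification. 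Second, to settle the convention issue you raise (rather than appealing to the Malcev completion), say explicitly that finiteness of every $\Gamma_2/\Gamma_k$ gives $\sqrt{\Gamma_k}=\sqrt{\Gamma_2}$ for all $k\ge 2$. Then all the $G'_n$ with $n\ge1$ coincide, and $G^{\nilp}=G^{ab}/\mathrm{tors}\cong H_1(\Alb_Y,\Z)=\pi_1(\Alb_Y)$. What your route buys is that it is more elementary and self-contained. It also yields the isomorphism of Corollary \ref{cor:Campana} directly, without the finite-index argument from \cite{Cam91}, because $\alb_{Y\ast}$ maps onto $\pi_1(\Alb_Y)$ by the very definition of the Albanese lattice. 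What the paper's route buys is generality: Stallings' theorem applied to $\alpha_\ast:G\to\pi_1(W)$ gives Theorem \ref{thm:Campana} even when $\alb_Y$ is not surjective and the target is non-abelian, which your method does not reach.
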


For the convenience of the readers, we will give the proof of the proposition by closely following the argument in \cite{Cam95}. First recall some notions from group theory: let $G$ be a group, and let 
\[
G=G_0\supseteq G_1\supseteq \cdots\supseteq G_i \supseteq G_{i+1}\supseteq\cdots
\]
be the central series of $G$, i.e. we inductively define $G_{i+1}:=[G,G_i]$. And we put $G_{\infty}:=\bigcap_{i\geq 1}G_i$. For any $n\in\Z_{>0}\cup\{\infty\}$, we set $G'_n:=\sqrt{G_n}$, here for any subset $S\subseteq G$ we define 
\[
\sqrt S:=\left\{g\in G\;\big|\;g^n\in S\right\}.
\]
The \emph{$($torsion free$)$ nilpotent completion} of $G$ is defined to be $G^\nilp:=G/G'_\infty$\,. The proof of {\hyperref[prop:Alb_surj_pi1]{Theorem \ref*{prop:Alb_surj_pi1}}} is based on the following characterization due to J. Stallings:

\begin{theorem}[{\cite[Theorem 3.4]{Sta65}}]
\label{thm:Stallings}
Let $f:G\to H$ be a group morphism. Assume that $H_1(f): H_1(G,\Q)\to H_1(H,\Q)$ is an isomorphism and $H_2(f): H_2(G,\Q)\to H_2(H,\Q)$ is surjective. Then 
\begin{itemize}
\item[\rm(a)] the induced morphism $f_k: G/G'_k\to H/H'_k$ is injective for $k\in\Z_{>0}\cup\{\infty\}$ and is of finite index for $k\in\Z_{>0}$\,$;$
\item[\rm(b)] if $f$ is surjective, then $f_k$ is an isomorphism for each $k\in\Z_{>0}\cup\{\infty\}$.
\end{itemize}
\end{theorem}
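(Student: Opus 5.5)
We prove the theorem by induction on $k$, following Stallings. The basic tool is the five-term exact sequence with $\Q$-coefficients, attached to a normal subgroup $N\trianglelefteq G$:
\[
H_2(G,\Q)\to H_2(G/N,\Q)\to (N/[G,N])\otimes\Q\to H_1(G,\Q)\to H_1(G/N,\Q)\to 0,
\]
which is natural in the pair $(G,N)$.

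\emph{Step 1: graded pieces, rationally.} We apply this sequence to $N=G_k$, where $[G,G_k]=G_{k+1}$, and to $N=H_k$. For $k\ge 2$ the last map $H_1(G,\Q)\to H_1(G/G_k,\Q)$ is an isomorphism. Hence
\[
(G_k/G_{k+1})\otimes\Q\simeq \operatorname{coker}\bigl(H_2(G,\Q)\to H_2(G/G_k,\Q)\bigr),
\]
and similarly for $H$. For $k=1$ we have $(G/G_2)\otimes\Q=H_1(G,\Q)$, so the hypothesis on $H_1(f)$ gives the base case. Suppose by induction that $G/G_k\to H/H_k$ induces an isomorphism on $H_*(-,\Q)$. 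Since $H_2(f)$ is surjective and $H_2(G/G_k,\Q)\simeq H_2(H/H_k,\Q)$, a diagram chase shows that the induced map of cokernels is an isomorphism. Thus
\[
\operatorname{gr}_k(f)\otimes\Q:\ (G_k/G_{k+1})\otimes\Q\to (H_k/H_{k+1})\otimes\Q
\]
is an isomorphism.

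\emph{Step 2: rational homology of the quotients (main obstacle).} To continue the induction we must show that $G/G_{k+1}\to H/H_{k+1}$ is an isomorphism on rational homology. We compare the Lyndon--Hochschild--Serre spectral sequences of the central extensions
\[
1\to G_k/G_{k+1}\to G/G_{k+1}\to G/G_k\to 1
\]
and its analogue for $H$. The $E^2$-terms are $H_p(G/G_k,\Q)\otimes H_q(G_k/G_{k+1},\Q)$; the coefficients are untwisted because the extension is central. Homology of a finitely generated abelian group with $\Q$-coefficients depends only on $\otimes\Q$, namely $\Lambda^\bullet$ of its rationalization. Hence the map on $E^2$ is an isomorphism by the induction hypothesis together with Step 1. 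Zeeman's comparison theorem, or simply the isomorphism on $E^2$ propagated through $E^\infty$, then yields the isomorphism on the abutments. As an alternative, we could pass to Malcev completions, where a graded isomorphism gives an isomorphism of rational nilpotent Lie algebras; we would fall back on this if the spectral sequence bookkeeping becomes awkward.

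\emph{Step 3: passing to the radicals $G'_k$.} The subgroup $G'_k/G'_{k+1}$ is torsion-free abelian, and its rationalization equals $(G_k/G_{k+1})\otimes\Q$, since $G'_i$ is the preimage of the torsion of $G/G_i$ and $G/G'_i$ is torsion-free nilpotent. By Step 1 the map $G'_k/G'_{k+1}\to H'_k/H'_{k+1}$ is therefore injective, because its kernel is torsion in a torsion-free group. Its image has full rank, so it has finite index when the groups are finitely generated, as they are here since $G=\pi_1$ of a compact manifold. We now induct along the filtration by the short exact sequences
\[
1\to G'_k/G'_{k+1}\to G/G'_{k+1}\to G/G'_k\to 1.
\]
This shows that $f_k$ is injective and of finite index for each finite $k$.

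For $k=\infty$, an element in the kernel of $f_\infty$ lies in the kernel of every $f_k$, hence in every $G'_k$, hence in $G'_\infty$; so $f_\infty$ is injective. This gives (a).

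For (b), if $f$ is surjective then each $f_k$ is surjective, hence bijective by (a). For $k=\infty$, surjectivity is immediate and injectivity was shown above.
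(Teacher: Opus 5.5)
Your proof is correct and follows the same route as the paper's sketch (Stallings' induction on $k$ via the natural five-term sequence); your Lyndon--Hochschild--Serre comparison supplies the step the sketch leaves implicit, namely that $H_2(G/G_k;\Q)\to H_2(H/H_k;\Q)$ is an isomorphism, and your $k=\infty$ step correctly takes $G'_\infty=\bigcap_k G'_k$, which is the meaningful reading of the paper's $\sqrt{G_\infty}$. You are also right that finite index needs $H$ finitely generated: the inclusion $\Z\hookrightarrow\Q$ satisfies the hypotheses, yet every $f_k$ with $k\in\Z_{>0}$ is $\Z\hookrightarrow\Q$, which has infinite index. Finite generation does hold in the paper's application.
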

\begin{proof}[Sketch of the proof]
The theorem can be proved by induction combining with the following exact sequence established in \cite[Theorem 2.1]{Sta65}: 
\[
H_2(G) \to H_2(G/N) \to N/[G,N] \to H_1(G) \to H_1(G/N) 
\]
where $N\subseteq G$ is normal subgroup, and this sequence is functorial with respect to $f:(G,N)\to (G',N')$ where $f: G\to G'$ is a group morphism such that $f(N)\subseteq N'$. 
\end{proof}

Now let us turn to the proof of {\hyperref[prop:Alb_surj_pi1]{Proposition \ref*{prop:Alb_surj_pi1}}}. Indeed, we will prove a stronger result; in order to state it, let us first set up some notations: let $Y$ be a compact K\"ahler manifold and $\alb_Y: Y\to\Alb_Y$ be the Albanese map of $Y$, let $W$ be a desingularization of the image of $\alb_Y$. Take an elimination of indeterminacies $Y'\to W$ of $\alpha: Y\dashrightarrow W$ with the natural (birational) morphism $\mu:Y'\to Y$. Since $\mu$ induce an isomorphism on fundamental groups, we may assume that $\alpha$ is an
%\textcolor{red}{Typo? NO, here `a(n)' represents for `a morphism' and `an everywhere defined morphism'} 
everywhere defined morphism. 
%and since $\alb_{Y'}=\alb_Y\circ\mu_Y$, thus up to replacing $Y$ and $W$ by $Y'$ and $W'$ respectively, we may assume that $W$ is smooth (thus a compact K\"ahler manifold). 
We set $G:=\pi_1(Y)$ and $H:=\pi_1(W)$, then $\alpha$ induces a group morphism $\alpha_\ast:G\to H$. In the sequel we will prove the following:
\begin{theorem}[{\cite[Th\'eor\`eme 2.1]{Cam95}}]
\label{thm:Campana}
The induced morphism $\alpha_{\ast,n}:G/G'_n\to H/H'_n$ is injective for $n\in\Z_{>0}\cup\{\infty\}$ and is of finite index for $n\in\Z_{>0}$\,. Moreover, if $\alpha_\ast$ is surjective, then $\alpha_{\ast,n}$ is an isomorphism for each $n\in\Z_{>0}\cup\{\infty\}$.
\end{theorem}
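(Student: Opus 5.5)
The plan is to reduce everything to Stallings' Theorem \ref{thm:Stallings}, applied to $f=\alpha_\ast: G\to H$. We therefore need two inputs: that $H_1(\alpha_\ast)\colon H_1(G,\Q)\to H_1(H,\Q)$ is an isomorphism, and that $H_2(\alpha_\ast)\colon H_2(G,\Q)\to H_2(H,\Q)$ is surjective. Once these hold, part (a) of Theorem \ref{thm:Stallings} gives injectivity of $\alpha_{\ast,n}$ for all $n$ and finite index for finite $n$. Part (b) gives the isomorphism statement when $\alpha_\ast$ is surjective.

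\textbf{Step 1: $H_1$.} Group homology in degree $1$ agrees with space homology, so $H_1(G,\Q)=H_1(Y,\Q)$ and $H_1(H,\Q)=H_1(W,\Q)$. The morphism $\alpha$ factors as $Y\to W\to \Alb_Y$, the composite being the Albanese map, which induces an isomorphism $H_1(Y,\Q)\simeq H_1(\Alb_Y,\Q)$. Hence $H_1(Y,\Q)\to H_1(W,\Q)$ is injective. For surjectivity, observe that $W\to\Alb_Y$ is generically finite onto its image, and that image generates $\Alb_Y$. By the universal property of Albanese tori, $\Alb_W\to\Alb_Y$ is surjective, and $\Alb_Y\to\Alb_W$ composed with it is the identity. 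I expect to get $H^1(W,\Q)\hookrightarrow H^1(Y,\Q)$ dually: pullback of holomorphic $1$-forms along the dominant map $\alpha$ is injective, and the forms on $W$ are pulled back from $\Alb_W$. So $b_1(W)\le b_1(Y)$, and combined with injectivity this gives an isomorphism. Concretely, $\alpha^\ast\colon H^1(W)\to H^1(Y)$ is injective because $\alpha$ is surjective between compact K\"ahler manifolds (Hodge theory), and it is surjective because every $1$-form on $Y$ comes from $\Alb_Y$ through $W$.

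\textbf{Step 2: $H_2$.} Hopf's theorem gives natural surjections $H_2(Y,\Q)\twoheadrightarrow H_2(G,\Q)$, and likewise for $W$, with cokernel of $\pi_2$. It therefore suffices to show that the image of $H_2(W,\Q)$ in $H_2(H,\Q)$ is hit. Dually, $H^2(H,\Q)\hookrightarrow H^2(W,\Q)$, so we need $\alpha^\ast$ to be injective on $H^2(H,\Q)\subseteq H^2(W,\Q)$. Since $\alpha\colon Y\to W$ is a surjective morphism of compact K\"ahler manifolds, $\alpha^\ast\colon H^\ast(W,\Q)\to H^\ast(Y,\Q)$ is injective. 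The argument is to take a K\"ahler class $\omega$ and use $\int_Y \alpha^\ast c\wedge\alpha^\ast\eta\wedge\omega^{r}\neq0$ for a Poincar\'e dual pairing. The injection $H^2(H)\to H^2(G)$ then follows by compatibility with the inclusions into $H^2(W)$ and $H^2(Y)$. Dualising gives surjectivity on $H_2$.

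The main obstacle is the careful justification of the injectivity $\alpha^\ast\colon H^k(W,\Q)\hookrightarrow H^k(Y,\Q)$ for a surjective (not necessarily connected-fibred) morphism of compact K\"ahler manifolds; this is the heart of the argument. I would prove it by integration along fibres: $\alpha_\ast(\alpha^\ast c\wedge\omega^{d})=\lambda c$ with $\lambda>0$ the fibrewise volume, where $d=\dim Y-\dim W$. The remaining care is in identifying $H^1$ and the low-degree group cohomology with the corresponding space cohomology. Everything else is formal bookkeeping with Theorem \ref{thm:Stallings}.
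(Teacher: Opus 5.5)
Your proposal is correct and follows essentially the paper's proof: you reduce to Stallings' theorem, obtain the $H_1$ isomorphism from the Albanese factorization $Y\to W\to\Alb_Y$, and obtain $H_2$-surjectivity from injectivity of $\alpha^\ast$ on $H^2$ (via fibre integration against a Kähler form and Poincaré duality, as in Lemma \ref{lemma:Kahler-cohomology}) combined with the natural surjections from the $H_2$ of a space onto the $H_2$ of its $\pi_1$. The only differences are cosmetic: you dualize to group cohomology where the paper does the diagram chase of Lemma \ref{lemma:Stallings} directly in homology, and you explicitly justify the surjectivity of $H_1(Y,\Q)\to H_1(W,\Q)$ via $H^1(W,\Q)\hookrightarrow H^1(Y,\Q)$, a step the paper's one-line treatment of (i) leaves implicit.
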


\begin{proof}
In view of {\hyperref[thm:Stallings]{Theorem \ref*{thm:Stallings}}}, we just need to check that the following two conditions hold:
\begin{itemize}
\item[(i)] $H_1(\alpha):H_1(G,\Q)\to H_1(H,\Q)$ is an isomorphism; 
\item[(ii)] $H_2(\alpha): H_2(G,\Q)\to H_2(H,\Q)$ is surjective.
\end{itemize} 
To this end, we need to associate the cohomology of a complex variety (or more generally, a cell complex) to the (group) homology of its fundamental group. This can be done by the following construction: 

Let $K$ be a connected cell complex, by adding cells of dimension $\geq 3$ to $K$ we can construct an aspherical complex $K'$ so that $K$ is embedded into $K'$. From the construction we see that the higher homotopy groups of $K'$ are all vanishing, hence $K'=K(\pi_1(K),1)$ the Eilenberg-Maclane space of $\pi_1(K)$, in particular $H_n(K')\simeq H_n(\pi_1(K))$ for any $n$. Since $K'\backslash K$ does not contain any cell of dimension $\leq 2$, it follows that the natural morphism $H_1(K)\to H_1(K')$ is an isomorphism and $H_2(K)\to H_2(K')$ is surjective by looking at the homology sequence of the pair $(K',K)$:
\[
H_2(K)\to H_2(K')\to \underbrace{H_2(K',K)}_{=0}\to H_1(K)\to H_1(K')\to \underbrace{H_1(K',K)}_{=0}\to 0.
\]
Hence $H_1(K)\simeq H_1(\pi_1(K))$ and $H_2(K)\twoheadrightarrow H_2(\pi_1(K))$. 

Now (i) follows from the construction of the Albanese map: the morphism of homology group $H_1(\alb_Y): H_1(Y,\Z)/\text{tors}\xrightarrow{H_1(\alpha)} H_1(W,\Z)\to H_1(\Alb_Y,\Z)$ is an isomorphism. In order to establish (ii), we first notice the following lemma essentially due to J. Stallings (see \cite[Theorem 5.1]{Sta65}):
\begin{lemma}[{\cite[Lemme 2.3]{Cam95}}]
\label{lemma:Stallings}
Let $\phi: K\to L$ be a morphism of finite connected cell complexes. Assume that $H^2(\phi): H^2(L,\Q)\to H^2(K,\Q)$ is injective, then $H_2(\phi_\ast):H_2(\pi_1(K),\Q)\to H_2(\pi_1(L),\Q)$ is surjective.
\end{lemma}
\begin{proof}
Note that we have the following commutative diagram:
\begin{center}
\begin{tikzpicture}[scale=3.0]
\node (A) at (0,0) {$H_2(L,\Q)$};
\node (A1) at (1,0) {$H_2(\pi_1(L),\Q)$,};
\node (B) at (0,1) {$H_2(K,\Q)$};
\node (B1) at (1,1) {$H_2(\pi_1(K),\Q)$};
\path[->,font=\scriptsize,>=angle 90]
(B) edge node[left]{$H_2(\phi)$} (A)
(B1) edge node[right]{$H_2(\phi_\ast)$} (A1);
\path[->>]
(A) edge (A1)
(B) edge (B1);
\end{tikzpicture}  
\end{center}
where the horizontal arrows are constructed above and they are surjective. By our assumption, the left arrow $H_2(\phi): H_2(K,\Q)\to H_2(L,\Q)$ is surjective, thus by 5-lemma, so is the right arrow $H_2(\phi_\ast)$.   
\end{proof}

From the lemma above we see that (ii) follows immedately from the injectivity of $H^2(\alpha): H^2(W,\Q)\to H^2(Y,\Q)$, which results from the more general {\hyperref[lemma:Kahler-cohomology]{Lemma \ref*{lemma:Kahler-cohomology}}} below. Thus we complete the proof of {\hyperref[thm:Campana]{Theorem \ref*{thm:Campana}}}.
\end{proof}

\begin{lemma}[cf.~{\cite[Lemme 2.4]{Cam95}}]
\label{lemma:Kahler-cohomology}
Let $\alpha: X\to Y$ be a surjective proper morphism between (not necessarily compact) connected complex manifolds, and assume that $X$ is K\"ahler. Then $H^i(\alpha): H^i(Y,\R)\to H^i(X,\R)$ is injective for every $i\in\Z_{\geq 0}$.
\end{lemma}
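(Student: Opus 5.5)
The natural plan is a Kähler form and fibre integration, which gives an explicit left inverse of $\alpha^\ast$ up to a nonzero constant. Fix a Kähler form $\omega$ on $X$ and put $d:=\dim X-\dim Y\ge0$; surjectivity of $\alpha$ together with connectedness of $X$ ensures $\dim\alpha(X)=\dim Y$. Since $\alpha$ is proper, the fibre-integration map $\alpha_\ast$ on currents is well defined. It sends closed currents of degree $k$ on $X$ to closed currents of degree $k-2d$ on $Y$ and commutes with $d$. Consider the composite
\[
H^i(Y,\R)\xrightarrow{\ \alpha^\ast\ } H^i(X,\R)\xrightarrow{\ \wedge[\omega]^d\ } H^{i+2d}(X,\R)\xrightarrow{\ \alpha_\ast\ } H^{i}(Y,\R),
\]
where the cohomology groups are realized as de Rham cohomology of currents, which equals de Rham cohomology of smooth forms. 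By the projection formula, the composite equals $\beta\mapsto \beta\wedge \alpha_\ast(\omega^d)$.

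The key point is that $f:=\alpha_\ast(\omega^d)$ is a closed current of degree $0$ on $Y$, hence a constant function, since $Y$ is connected. That constant is the $\omega$-volume $\int_{X_y}\omega^d$ of a general fibre. It is strictly positive because $\alpha$ is surjective: a general fibre is a nonempty compact analytic set of pure dimension $d$, at least on a Zariski open set, and a Kähler form restricted to it has positive volume. The composite is therefore multiplication by a nonzero constant, so $\alpha^\ast$ is injective. Note that when $d=0$ the map $\alpha$ is generically finite, and the constant is just the degree.

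The main obstacle is the rigorous justification of fibre integration and the projection formula for a map that is proper but not a submersion, and in particular the fact that $\alpha_\ast(\omega^d)$ is a $d$-closed distribution. This follows directly from the definition $\langle \alpha_\ast T,\eta\rangle=\langle T,\alpha^\ast\eta\rangle$ for compactly supported test forms $\eta$ (properness makes $\alpha^\ast\eta$ compactly supported). Closedness, together with $\alpha_\ast(\alpha^\ast\beta\wedge T)=\beta\wedge\alpha_\ast T$ for smooth closed $\beta$, is immediate from this definition. The constant is then evaluated on the dense open set over which $\alpha$ is a submersion, where classical fibre integration applies. Finally, de Rham's theorem for currents identifies the resulting cohomology classes with $H^i(Y,\R)$, and the proof is complete.
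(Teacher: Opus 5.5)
Your argument is correct, and it takes a genuinely different, essentially dual, route from the paper's.

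The paper works only with smooth forms. It uses Ehresmann's theorem over the submersion locus $Y_0$ to show that the fibre volume $c(y)=\int_{X_y}\omega^r$ is locally constant, hence constant. From this it deduces $\int_X\alpha^\ast\phi\wedge\omega^r=c\int_Y\phi$ for compactly supported top-degree forms $\phi$. Then, for $[u]\in\ker H^i(\alpha)$, Stokes on $X$ gives $\int_Y u\wedge v=0$ for every $[v]\in H^{2\dim Y-i}_c(Y,\mathbb{R})$; this uses that $\alpha^\ast v$ is compactly supported by properness. The proof ends with Poincar\'e duality $H^i(Y,\mathbb{R})\cong H^{2\dim Y-i}_c(Y,\mathbb{R})^\ast$ on the possibly non-compact $Y$.

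That integral identity is exactly your statement $\alpha_\ast(\omega^d)=c$ as currents. You get constancy for free from $d$-closedness of a degree-$0$ current, with no Ehresmann and no separate connectedness argument for $Y_0$. Instead of pairing against compactly supported classes, you use the projection formula to build an explicit retraction $c^{-1}\alpha_\ast(\,\cdot\,\wedge[\omega]^d)$ of $\alpha^\ast$, so Poincar\'e duality is not needed.

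The cost on your side is the current-theoretic machinery: pushforward of arbitrarily supported currents under a proper map, and de Rham's theorem that currents compute $H^\ast(Y,\mathbb{R})$. The paper stays entirely within smooth forms, at the cost of invoking duality on a non-compact manifold. In return, your version produces a canonical Gysin-type left inverse of $\alpha^\ast$, which is slightly more than bare injectivity.
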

\begin{proof}
The idea of this result has already appeared in A. Fujiki's work \cite[Proof of Proposition 1.6]{Fuj78}. Let us remark that the compactness of $X$ (and of $Y$) is not necessary as long as we assume the morphism $\alpha$ is proper. Take a K\"ahler form $\omega$ on $X$, and set $r:=\dim X-\dim Y$. Let $Y_0$ be the Zariski open of $Y$ over which $\alpha$ is smooth, then we claim that the integral $c(y):=\int_{X_y} \omega|_{X_y}^r$ is independent of $y\in Y_0$. Indeed, it suffices to show $c(\cdot)$ is locally constant, hence by Ehresmann's theorem (see \cite[Proposition 9.3]{Voi02}) we can assume that $X_0:=\alpha^{-1}(Y_0)$ is diffeomorphic to a product $Y_0\times F$ where $F$ is a general fibre of $\alpha$. Then we can write $c(y)=\int_F\tau^\ast\omega|^r_{\{y\}\times F}$ where $\tau:Y_0\times F\xrightarrow{\simeq} X_0$ is the diffeomorphism given by Ehresmann's theorem, hence by $d$-closedness of $\omega$ we have
\[
dc(y) = \int_F d(\tau^\ast\omega|^r_{\{y\}\times F})=0.
\]
The claim is thus proven, and we simply denote this constant by $c$. Then for any real $2\dim Y$-form $\phi$ with compact support on $Y$, we have 
\[
\int_X\alpha^\ast\phi\wedge\omega^r=\int_{X_0}\alpha^\ast\phi\wedge\omega^r=c\cdot\int_{Y_0}\phi=c\cdot\int_Y\phi.
\]
Now take $[u]\in\ker(H^i(\alpha))$, then $\alpha^\ast u$ is $d$-exact, hence for any $[v]\in H^{2\dim Y-i}_c(Y,\R)$ we have
\begin{align*}
0 &=\int_X\alpha^\ast(u\wedge v)\wedge\omega^r=c\int_Y u\wedge v,\\
 & \uparrow  \\
 & \text{\tiny by Stokes} 
\end{align*}
thus $[u]=0$ by Poincar\'e duality.
\end{proof}

As a corollary of {\hyperref[thm:Campana]{Theorem \ref*{thm:Campana}}}, we have the following result, which is nothing but a refinement of {\hyperref[prop:Alb_surj_pi1]{Proposition \ref*{prop:Alb_surj_pi1}}}.
\begin{corollary}[{\cite[Corollaire 3.1]{Cam95}}]
\label{cor:Campana}
Let $Y$ be a compact K\"ahler manifold and set $G:=\pi_1(Y)$. Assume that the Albanese map of $Y$ is surjective. Then 
\begin{itemize}
\item[\rm(a)] $f:=\alb_{Y\ast}:G\to\pi_1(\Alb_Y)$ is surjective.
\item[\rm(b)] $G'_1=G'_n=G'_\infty$ for any $n\geq1$, and $G^{\nilp}\simeq\pi_1(\Alb_Y)$.
\end{itemize}
\end{corollary}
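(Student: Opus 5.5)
We take $W$ to be (a desingularization of) the image of $\alb_Y$. Since the Albanese map is surjective, the image is $\Alb_Y$ itself, so we may take $W=\Alb_Y$ and $\alpha=\alb_Y$ with no further modification. The plan is to read off both assertions from Theorem \ref{thm:Campana}, after first establishing surjectivity of $\alpha_\ast$ on $\pi_1$.

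\textbf{Step (a): surjectivity on $\pi_1$.} By \cite[Theorem C]{Wan21} (or the remark after Theorem \ref{thm:Alb}), a surjective Albanese map has connected fibres. We then use the standard fact that a proper surjective holomorphic map with connected fibres between connected complex spaces induces a surjection on fundamental groups. To see this, take a loop $\ell$ in $\Alb_Y$ avoiding the proper analytic discriminant locus $\Sigma$. This is possible because $\Sigma$ has real codimension at least $2$, so $\pi_1(\Alb_Y\setminus\Sigma)\to\pi_1(\Alb_Y)$ is surjective. Over $\Alb_Y\setminus\Sigma$ the map is a locally trivial fibration with connected fibres, by Ehresmann's theorem, so $\ell$ lifts to a path whose endpoints lie in one fibre. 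We close this path up inside that fibre. Alternatively, one can argue that the image of $\pi_1(Y)$ has finite index because of the properness and the local structure, and then use connectedness of the fibres to rule out a nontrivial finite étale cover through which the map factors. Either route gives (a).

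\textbf{Step (b): the nilpotent completion.} We apply Theorem \ref{thm:Campana} with $H=\pi_1(\Alb_Y)\simeq\Z^{2q}$. Step (a) shows that $\alpha_\ast$ is surjective, so $\alpha_{\ast,n}\colon G/G'_n\to H/H'_n$ is an isomorphism for every $n\in\Z_{>0}\cup\{\infty\}$. Since $H$ is free abelian, we have $H_n=\{1\}$ for $n\ge1$, and hence $H'_n=\sqrt{\{1\}}=\{1\}$ because $H$ is torsion free. Therefore
\[
G/G'_n\simeq H\simeq G/G'_\infty
\]
for all $n\ge 1$. Since $G'_\infty\subseteq G'_n\subseteq G'_1$ and all the quotient maps between these isomorphic groups are compatible isomorphisms, it follows that $G'_1=G'_n=G'_\infty$. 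In particular $G^{\nilp}=G/G'_\infty\simeq\pi_1(\Alb_Y)$.

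\textbf{Main obstacle.} The only nonformal point is Step (a), specifically the topological lifting argument over the discriminant. Everything after that is bookkeeping with Theorem \ref{thm:Campana}.
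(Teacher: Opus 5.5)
Your Step (b) is the paper's reduction: once $\alpha_\ast$ is surjective, Theorem \ref{thm:Campana} with $H\simeq\Z^{2q}$ torsion free gives everything. Step (a), however, rests on a claim that is false at the generality of the statement. A surjective Albanese map need not have connected fibres.

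For instance, let $A$ be an abelian surface, $L$ an ample line bundle, and $\pi\colon Z\to A$ the double cover branched along a smooth curve in $|L^{\otimes 2}|$. Since $\pi_\ast\mathscr{O}_Z=\mathscr{O}_A\oplus L^{-1}$ and $H^1(A,L^{-1})=0$ by Kodaira vanishing, we get $q(Z)=2$. Hence the induced map $\Alb_Z\to A$ is an isogeny. It cannot have degree $2$: otherwise $Z\to\Alb_Z$ would be finite and birational, hence an isomorphism, and $\pi$ would be \'etale. So $\alb_Z=\pi$, which is surjective with two-point general fibres.

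Neither of your citations supports the claim. In the paper, \cite[Theorem C]{Wan21} is invoked for manifolds with $\kappa=0$, and the remark after Theorem \ref{thm:Alb} concerns varieties with nef anti-log canonical divisor. Consequently both of your routes prove (a) only under the extra hypothesis that $\alb_Y$ is a fibration. These routes are lifting loops over the discriminant, and excluding an \'etale factorization via connectedness of fibres. That hypothesis does hold where the paper applies the corollary (\S\ref{ss:final_step}), but the corollary itself assumes only surjectivity.

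The missing idea is to use the universal property of the Albanese map instead of the fibres. Your alternative route already has the right skeleton. By \cite[Proposition 1.3]{Cam91}, $f(G)$ has finite index $d$ in $\pi_1(\Alb_Y)\simeq\Z^{2q}$. The corresponding connected covering $h\colon T\to\Alb_Y$ is an isogeny of complex tori, and covering theory lifts $\alb_Y$ to $\beta\colon Y\to T$ with $\alb_Y=h\circ\beta$.

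Since $T$ is a torus, the universal property gives an affine map $\gamma\colon\Alb_Y\to T$ with $\beta=\gamma\circ\alb_Y$. Hence $h\circ\gamma\circ\alb_Y=\alb_Y$, and surjectivity of $\alb_Y$ forces $h\circ\gamma=\mathrm{id}$. So the covering $h$ has a section, which gives $d=1$.

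This is the paper's argument, and it is insensitive to whether the fibres are connected. In the example above it yields $\pi_1(Z)\twoheadrightarrow\pi_1(A)$ even though $\alb_Z$ is a branched double cover.
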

\begin{proof}
By virtue of {\hyperref[thm:Campana]{Theorem \ref*{thm:Campana}}}, it suffices to establish (a). Since $\alb_Y$ is surjective, from \cite[Proposition 1.3]{Cam91} we know that $f$ is of finite index $d$. Let $h: T\to\Alb_Y$ be an isogeny of degree $d$, such that $h_\ast(\pi_1(T))=f(G)$, then by the lifting lemma we get a factorization $\beta: Y\to T$ such that $\alb_Y=h\circ\beta$; from the universal property of the Albanese map, $h$ is a fortiori an isomorphism, and $f=\alb_{Y\ast}$ is thus surjective.   
\end{proof}

\section{Non-nef loci and asymptotic multiplier ideals}
\label{app:nonnef}
In this appendix, we will show some results related to the non-nef loci of pseudoeffective classes. First recall the definition of the non-nef loci of pseudoeffective classes:
\begin{definition}
Let $X$ be a compact K{\"a}hler manifold and let $\alpha\in H^{1,1}(X,\R)$ be a pseudoeffective class. Pick a K{\"a}hler form $\omega$ on $X$. As in Definition \ref{def:asym_klt}, for every $\epsilon>0$, let $\alpha[-\epsilon\omega]$ denotes the set of currents $T\in\alpha$ such that $T\ge-\epsilon\omega$ and let $T_{\min,\epsilon}$ be a current with minimal singularities in $\alpha[-\epsilon\omega]$ (see \cite[\S 2.8]{Bou04}) and write $T_{\min,\epsilon}=\theta_\epsilon+dd^c\varphi_\epsilon$ where $\theta_\epsilon\in\alpha$ is a smooth $(1,1)$-form and $\varphi_\epsilon$ is quasi-psh. Then the asymptotic multiplier ideal of $\alpha$ is defined to be (see Remark \ref{rmk:asym_klt}(a))
\[
\mathscr{J}_+(\alpha):=\bigcap_{\epsilon>0}\mathscr{J}(\varphi_\epsilon).
\]
The minimal multiplicity of $\alpha$ at $x\in X$ is defined to be (see \cite[Definition 3.1]{Bou04})
\[
\nu(\alpha,x):=\sup_{\epsilon>0}\nu(T_{\min,\epsilon},x).
\]
And the non-nef locus of $\alpha$ is defined to be (see \cite[Definition 3.3]{Bou04})
\[
\NNef(\alpha)=\left\{x\in X\,\big|\, \nu(\alpha,x)>0\right\}.
\]
It is clear that these definitions depend neither on the current with minimal singularities chosen nor on the K{\"a}hler form $\omega$. 
\end{definition}

First let us prove the equality \eqref{eq:non-nef-locus}, which can be regarded as the analytic version of the fundamental relation established by the famous work of Ein-Lazarsfeld-Musta{\cb{t}}{\u{a}}-Nakamaye-Popa \cite[Corollary 2.10]{ELMNP06}.

\begin{lemma}
\label{lemma:nnef=I+}
Let $X$ be a compact K{\"a}hler manifold and $\alpha\in H^{1,1}(X,\R)$ a pseudoeffective class. Then 
\[
\NNef(\alpha)=\bigcup_{m\in\Z_{>0}}V(\mathscr{J}_+(m\alpha))_{\red}.
\]
Here, for any ideal sheaf $\mathscr{I}\subseteq\mathscr{O}_X$, $V(\mathscr{I})$ denotes the complex subspace defined by $\mathscr{I}$.
\end{lemma}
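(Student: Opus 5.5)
We prove the two inclusions separately, comparing Lelong numbers with multiplier ideals through Skoda's lemma and the Demailly regularization/approximation theorem. The only subtlety is the quantifier structure: $\nu(\alpha,x)=\sup_\epsilon\nu(T_{\min,\epsilon},x)$ and $\mathscr{J}_+(\alpha)=\bigcap_\epsilon\mathscr{J}(\varphi_\epsilon)$, both monotone in $\epsilon$. We also record the scaling identity $m\cdot\alpha[-\epsilon\omega]=(m\alpha)[-m\epsilon\omega]$, so that $mT_{\min,\epsilon}$ is a current with minimal singularities in $(m\alpha)[-m\epsilon\omega]$; in particular $\nu(m\alpha,x)=m\,\nu(\alpha,x)$, with potentials $m\varphi_\epsilon$.

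\emph{Inclusion ``$\supseteq$''.} Suppose $x\in V(\mathscr{J}_+(m\alpha))$ for some $m$. Then there is $\epsilon>0$ with $\mathscr{J}(m\varphi_{\epsilon})_x\neq\mathscr{O}_{X,x}$ (using the scaling identity, and that $\mathscr{J}_+$ is an intersection of a family increasing in $\epsilon$). Since $e^{-2m\varphi_\epsilon}$ is not integrable near $x$, Skoda's lemma gives $\nu(m\varphi_\epsilon,x)\ge1$. Hence $\nu(\alpha,x)\ge\nu(T_{\min,\epsilon},x)\ge 1/m>0$, so $x\in\NNef(\alpha)$.

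\emph{Inclusion ``$\subseteq$''.} Suppose $\nu(\alpha,x)=c>0$. Pick $\epsilon_0$ with $\nu(T_{\min,\epsilon_0},x)>c/2$, and choose $m$ with $mc/2>n$. By monotonicity, $\nu(T_{\min,\epsilon},x)>c/2$ for all $\epsilon\le\epsilon_0$. For such $\epsilon$ the function $m\varphi_\epsilon$ has Lelong number $>n$ at $x$, so Skoda's lemma (the other direction: $\nu\ge n$ forces $e^{-2\psi}$ non-integrable) gives $\mathscr{J}(m\varphi_\epsilon)_x\subseteq\mathfrak{m}_x$. For $\epsilon>\epsilon_0$ we have $\mathscr{J}(m\varphi_\epsilon)\supseteq\mathscr{J}(m\varphi_{\epsilon_0})$, but this does not matter: the intersection over all $\epsilon$ is contained in the terms with small $\epsilon$, each of which lies in $\mathfrak{m}_x$. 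Hence $\mathscr{J}_+(m\alpha)_x\subseteq\mathfrak{m}_x$ and $x\in V(\mathscr{J}_+(m\alpha))$. Here $m\varphi_\epsilon$ is a minimal potential in $(m\alpha)[-m\epsilon\omega]$, and as $\epsilon$ ranges over $(0,\infty)$ so does $m\epsilon$, so this is indeed the correct family defining $\mathscr{J}_+(m\alpha)$.

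The main point needing care is the bookkeeping in the definitions: that $\mathscr{J}_{\epsilon,\omega}(\alpha)$ increases with $\epsilon$ (more room gives less singular minimal currents, since $\alpha[-\epsilon_1\omega]\subseteq\alpha[-\epsilon_2\omega]$ for $\epsilon_1\le\epsilon_2$), and the scaling compatibility for $m\alpha$. Once these are settled, both inclusions reduce to the two halves of Skoda's integrability lemma, and no regularization is needed. Finally, $\NNef(\alpha)$ is a countable union of analytic sets, which is consistent with \cite{Bou04}.
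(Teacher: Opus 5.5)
Your argument is correct and is essentially the paper's proof. Both inclusions come from the two halves of Skoda's integrability lemma combined with the homogeneity $\nu(m\alpha,x)=m\,\nu(\alpha,x)$; you verify this directly from the scaling $m\cdot\alpha[-\epsilon\omega]=(m\alpha)[-m\epsilon\omega]$ instead of citing Boucksom.

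One step is worth flagging, though the paper makes it too. In the ``$\supseteq$'' direction you pass from $\mathscr{J}_+(m\alpha)_x\neq\mathscr{O}_{X,x}$ to a single $\epsilon$ with $\mathscr{J}(m\varphi_\epsilon)_x\neq\mathscr{O}_{X,x}$. This is immediate if $\mathscr{J}_+$ is read stalkwise as $\bigcap_\epsilon\mathscr{J}(m\varphi_\epsilon)_x$. Monotonicity in $\epsilon$ alone does not give it for the sheaf-theoretic intersection, since the nested sets $V(\mathscr{J}(m\varphi_\epsilon))$ could accumulate at $x$. You should state the stalkwise convention explicitly.
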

\begin{proof}
Pick a K{\"a}hler form $\omega$ on $X$. If $\nu(\alpha,x)=0$, then for every $\epsilon>0$, $\nu(T_{\min,\epsilon},x)=0$, in particular, we have $\nu(\varphi_\epsilon, x)<1$ which implies that $\mathscr{J}(\varphi_\epsilon)_x=\mathscr{O}_{X,x}$ and hence we have $\mathscr{J}_+(\alpha)_x=\mathscr{O}_{X,x}$ by Skoda's theorem \cite[Lemma (5.6.a), p.40]{Dem10}. Moreover, by homogeneity $\nu(m\alpha,x)=0$ for every $m\in\Z_{>0}$ and the same argument shows that $\mathscr{J}_+(m\alpha)_x=\mathscr{O}_{X,x}$. This establishes the `$\supseteq$' part of the equality. 

If $\nu(\alpha,x)>0$, again by homogeneity \cite[Proposition 3.5]{Bou04} we have $\nu(m\alpha,x)> n$ for $m\in\Z_{>0}$ sufficiently large. Hence, for every $\epsilon>0$ sufficiently small, we have 
\[
\nu(\varphi_{m,\epsilon},x)=\nu(T_{m,\min,\epsilon},x)>n, 
\]
where $T_{m,\min,\epsilon}$ is a current with minimal singularities in $m\alpha[-\epsilon\omega]$ and $\varphi_{m,\epsilon}$ is a local potential of $T_{m,\min,\epsilon}$. Again by Skoda's theorem \cite[Lemma (5.6.b), p.40]{Dem10} $\mathscr{J}_+(m\alpha)_x\subsetneqq\mathscr{O}_{X,x}$, and thus the `$\subseteq$' part of the equality is established. 
\end{proof}

If $f:X\to Y$ is a surjective morphism between projective varieties, and $D$ a pseudoeffective divisor on $Y$ then it is easy to check (by definition) that $\B_-(f^\ast D)\subseteq f^{-1}\B(D)$. The analytic analogue of this fact holds:

\begin{lemma}
\label{lemma:pullback-nnef}
Let $f:X\to Y$ to be surjective morphism between compact K{\"a}hler manifolds and let $\alpha\in H^{1,1}(Y,\R)$ be a pseudoeffective class. Then $\NNef(f^\ast\alpha)\subseteq f^{-1}\NNef(\alpha)$. 
\end{lemma}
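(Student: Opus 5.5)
The plan is to reduce the statement to a pointwise comparison of minimal multiplicities, namely to show that for every $x\in X$,
\[
\nu(f^\ast\alpha,x)\le C\cdot\nu(\alpha,f(x))
\]
for some constant $C$ depending only on $f$ near $x$. In particular, $\nu(\alpha,f(x))=0$ will force $\nu(f^\ast\alpha,x)=0$, and that is exactly the inclusion $\NNef(f^\ast\alpha)\subseteq f^{-1}\NNef(\alpha)$.

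Fix K\"ahler forms $\omega_X$ on $X$ and $\omega_Y$ on $Y$. Since $X$ is compact, there is a constant $A>0$ with $f^\ast\omega_Y\le A\,\omega_X$. Take $y=f(x)\notin\NNef(\alpha)$. Then for each $\epsilon>0$ there is a current with minimal singularities $T_{\min,\epsilon}\in\alpha[-\epsilon\omega_Y]$ satisfying $\nu(T_{\min,\epsilon},y)=0$. Write it locally as $\theta_\epsilon+dd^c\varphi_\epsilon$, with $\varphi_\epsilon$ quasi-psh on $Y$.

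Pulling back, $f^\ast T_{\min,\epsilon}:=f^\ast\theta_\epsilon+dd^c(\varphi_\epsilon\circ f)$ is a well-defined closed current in $f^\ast\alpha$. It satisfies
\[
f^\ast T_{\min,\epsilon}\ge-\epsilon f^\ast\omega_Y\ge -A\epsilon\,\omega_X,
\]
so it belongs to $f^\ast\alpha[-A\epsilon\omega_X]$. It is well defined because $f$ is surjective, so $\varphi_\epsilon\circ f\not\equiv-\infty$ on the connected manifold $X$.

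By minimality, the current with minimal singularities $S_{\min,A\epsilon}$ in $f^\ast\alpha[-A\epsilon\omega_X]$ is less singular than $f^\ast T_{\min,\epsilon}$. Hence
\[
\nu(S_{\min,A\epsilon},x)\le\nu(\varphi_\epsilon\circ f,x).
\]
The key input is the standard fact that Lelong numbers of pullbacks of psh functions under holomorphic maps are controlled: $\nu(\varphi\circ f,x)\le C\,\nu(\varphi,f(x))$, where $C$ depends on $f$ near $x$ (e.g.\ Favre's or Kiselman's estimate; one can take $C$ to be the local order of $f$, i.e.\ the largest $c$ with $|f(z)-f(x)|\le C'|z-x|^{1/c}$, which exists because $f$ is holomorphic and Łojasiewicz applies). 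A more elementary alternative is to use the characterization via multiplier ideals and Skoda's theorem as in Lemma \ref{lemma:nnef=I+}. In that version, zero Lelong number at $y$ gives integrability of $e^{-m\varphi_\epsilon}$ near $y$ for all $m$, and one then needs integrability of $e^{-m\varphi_\epsilon\circ f}$ near $x$ for all $m$, which again requires a Jacobian/Łojasiewicz argument. So the Lelong-number route is cleaner.

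Combining these, $\nu(S_{\min,A\epsilon},x)=0$ for every $\epsilon$. Taking the supremum over $\epsilon$ gives $\nu(f^\ast\alpha,x)=0$, i.e.\ $x\notin\NNef(f^\ast\alpha)$.

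The main obstacle is the Lelong-number inequality for the pullback $\varphi\circ f$ at points where $f$ is not a submersion. I would handle it by the sublevel comparison $\varphi\le\nu\log|w-y|+O(1)$. That bound is false in general for fixed $\nu$ (it only holds in the limsup sense), so instead I would use Kiselman's definition
\[
\nu(\varphi,y)=\liminf_{w\to y}\frac{\varphi(w)}{\log|w-y|}.
\]
Together with the Łojasiewicz-type bound $\log|f(z)-y|\ge c\log|z-x|-C$ near $x$, valid for nonconstant holomorphic maps restricted appropriately, this gives $\nu(\varphi\circ f,x)\le c\,\nu(\varphi,y)$. I expect the remaining care to be needed when $f$ has positive-dimensional fibres through $x$. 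There I would apply the estimate along a generic transversal slice through $x$, using that the Lelong number at $x$ equals the Lelong number of the restriction to a generic linear slice (Siu).
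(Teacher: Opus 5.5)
Your skeleton is the paper's proof. For $y=f(x)\notin\NNef(\alpha)$ you pull back $T_{\min,\epsilon}$, note that $f^\ast T_{\min,\epsilon}\in f^\ast\alpha[-A\epsilon\omega_X]$ (the paper normalizes $A=1$), and compare with the current of minimal singularities upstairs. Everything then reduces to the implication $\nu(\varphi_\epsilon,y)=0\Rightarrow\nu(\varphi_\epsilon\circ f,x)=0$, which the paper states as Lemma \ref{lemma:pullback-Lelong-nb-0} without proof. The gap is in your justification of that implication, which you rightly single out as the crux. If you only meant to cite it, the citation must cover dominant maps that are not open at $x$, and the argument you sketch does not supply this.

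Your Kiselman--\L{}ojasiewicz computation needs more than the lower bound $|f(z)-y|\ge C'|z-x|^c$. To pass from $\liminf_{z\to x}\varphi(f(z))/\log|f(z)-y|$ to $\liminf_{w\to y}\varphi(w)/\log|w-y|=\nu(\varphi,y)$, you must lift a sequence $w_k\to y$ realizing the latter to points $z_k\to x$. That is, the map must be open at $x$.

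On a slice $S$ with $\dim S=\dim Y$ and $x$ isolated in $S\cap f^{-1}(y)$ this holds. So you do cover points whose fibre has the expected dimension $\dim X-\dim Y$. There you only need $\nu(u,x)\le\nu(u|_S,x)$, which is valid for every slice; Siu's equality is not required.

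You do not cover points where the fibre dimension jumps, e.g. $x$ on the exceptional divisor of a blow-up. This is exactly how the paper uses the lemma in the proof of Theorem \ref{thm:Chen-Zhang-Deng}, where $X'$ is replaced by further blow-ups. There:
\begin{itemize}
\item every slice meeting $f^{-1}(y)$ in an isolated point has $\dim S<\dim Y$, so $f|_S$ is not dominant onto a neighbourhood of $y$;
\item your computation then only bounds $\nu((\varphi\circ f)|_S,x)$ by $c\cdot\liminf_{w\to y,\,w\in f(S)}\varphi(w)/\log|w-y|$, a Lelong number of $\varphi$ along the special germ $f(S)$.
\end{itemize}
This quantity can be positive although $\nu(\varphi,y)=0$. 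On the unit bidisc, $\varphi=\max(\log|w_1|,-\sqrt{-\log|w_2|})$ is psh with $\nu(\varphi,0)=0$, yet $\varphi=\log|w_1|$ on $\{w_2=0\}$.

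Choosing $S$ generic does not help, because the germs $f(S)$ are pinned down by $x$. For the blow-up of a point, $f|_S$ is an isomorphism onto $f(S)$ when $S$ is transverse to the exceptional divisor. So bounding this quantity for generic $S$ is literally the inequality $\nu(\varphi\circ f,x)\le C\nu(\varphi,y)$ you set out to prove. Relatedly, the ``local order of $f$'' in your parenthetical does not exist in general: a \L{}ojasiewicz lower bound at $x$ requires $x$ to be isolated in its fibre, which never happens when $\dim X>\dim Y$.

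The integrability route you set aside handles all points and uses only surjectivity. Let $n=\dim X$ and $m=\dim Y$.
\begin{itemize}
\item Fibre integration gives $f_\ast\omega_X^n=g\,\omega_Y^m$ with $g(w)=c\int_{X_w}|\wedge^m df|^{-2}\omega_X^{n-m}$.
\item Jensen on the fibres, whose $\omega_X^{n-m}$-volume is constant, gives $\int_Y g^{1+\delta}\omega_Y^m\le C\int_X|\wedge^m df|^{-2\delta}\omega_X^n<\infty$ for small $\delta>0$, since $\wedge^m df\not\equiv 0$.
\item If $\nu(\varphi,y)=0$, Skoda gives $e^{-k\varphi}\in L^1$ near $y$ for every $k$.
\item By H\"older, $\int_U e^{-k\varphi\circ f}\omega_X^n\le\|e^{-k\varphi}\|_{L^{(1+\delta)/\delta}(f(U))}\|g\|_{L^{1+\delta}}<\infty$ for a small neighbourhood $U$ of $x$.
\end{itemize}
Skoda again then yields $\nu(\varphi\circ f,x)=0$.
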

In order to prove the lemma, a crucial observation is the following simple fact:
\begin{lemma}
\label{lemma:pullback-Lelong-nb-0}
Let $\varphi$ be a quasi-psh function on $Y$ and let $f:X\to Y$ be a morphism between compact K{\"a}hler manifolds such that $f(X)$ is not contained in $\varphi^{-1}(-\infty)$. If $\nu(\varphi,y)=0$ for some $y\in f(X)$, then $\nu(\varphi\circ f, x)=0$ for every $x\in f^{-1}(y)$.
\end{lemma}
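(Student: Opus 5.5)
We want: if $\varphi$ is quasi-psh on $Y$, $f(X)\not\subset\varphi^{-1}(-\infty)$ and $\nu(\varphi,y)=0$, then $\nu(\varphi\circ f,x)=0$ for all $x\in f^{-1}(y)$. The plan is to compare Lelong numbers with the logarithmic pole of a coordinate distance function, using the characterization of $\nu$ via growth comparison. Work locally: choose holomorphic coordinates $w$ centered at $y$ and $z$ centered at $x$. The hypothesis on $f(X)$ guarantees that $\varphi\circ f$ is a genuine quasi-psh function (not identically $-\infty$ on the connected manifold $X$), so its Lelong numbers are defined. Recall that
\[
\nu(\varphi,y)=\liminf_{w\to 0}\frac{\varphi(w)}{\log|w|}.
\]
Equivalently, for every $c>0$ we have $\varphi(w)\ge c\log|w|-C_c$ near $y$ when $\nu(\varphi,y)<c$. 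This is the form I would use: $\nu(\varphi,y)=0$ gives, for every $c>0$, a constant $C_c$ with $\varphi\ge c\log|w|-C_c$ on a neighbourhood of $y$. Because $\varphi$ is only upper semicontinuous, this lower bound should be taken in the standard sense via sup-means: $\sup_{|w|=r}\varphi\ge(\nu(\varphi,y)-\eta)\log r$ for small $r$, or via the comparison $\nu(\varphi,y)=\lim_{r\to0}\sup_{B(r)}\varphi/\log r$.

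Next, pull the estimate back. Since $f$ is holomorphic with $f(x)=y$, we have $|w\circ f(z)|\le C|z|$ near $x$, so $\log|w\circ f|\le\log|z|+C$. This inequality goes the wrong way for a pointwise lower bound. So I would use the sup formulation directly: $\nu(\psi,x)=\lim_{r\to 0}\sup_{|z|<r}\psi/\log r$. Then
\[
\sup_{|z|<r}\varphi\circ f\le \sup_{|w|<Cr}\varphi.
\]
Dividing by $\log r<0$ and letting $r\to 0$ gives
\[
\nu(\varphi\circ f,x)\ \ge\ \nu(\varphi,y)\cdot\lim\frac{\log(Cr)}{\log r}=\nu(\varphi,y).
\]
This is the opposite of what we want, so a different route is needed for the vanishing.

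For the upper bound, I would use the Łojasiewicz-type inequality: there exist $N$ and $C$ with $|w\circ f(z)|\ge C^{-1}\,\mathrm{dist}(z,f^{-1}(y))^N$. This does not bound things at $x$ itself, since $x$ lies in $f^{-1}(y)$. Instead, I would invoke Demailly's result on the behaviour of Lelong numbers under holomorphic maps (Favre / Demailly, ``Monge–Ampère operators, Lelong numbers and intersection theory'', Prop.~on pullbacks): there is a constant $C(f,x)$, namely a Łojasiewicz exponent, with
\[
\nu(\varphi\circ f,x)\le C(f,x)\,\nu(\varphi,y).
\]
Equivalently, one can bound it by Kiselman numbers along suitable weights. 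Concretely, choose $c>0$ and set $\psi_c:=\max(\varphi,c\log|w|)$. Then $\psi_c$ differs from $\varphi$ only on a small set. Using $\nu(\varphi,y)=0$ and the Kiselman-type comparison for generalized Lelong numbers $\nu(\varphi\circ f,x)=\nu(\varphi,\log|w\circ f|\text{-weight})$, we get
\[
\nu(\varphi\circ f,x)\le N\,\nu(\varphi,y)=0,
\]
where $N$ is chosen with $\log|z|\le N^{-1}\log|w\circ f|+C$ near $x$ in the generalized-weight sense; such $N$ exists only when $x$ is isolated in the fibre.

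The main obstacle is therefore positive-dimensional fibres. There I would first reduce by restricting to a generic smooth curve or disc germ through $x$ that is transverse to the fibre. The Lelong number at $x$ can be computed by restricting to generic lines (Siu / Demailly), and on the restricted disc the map is finite, so the inequality above applies with some finite $N$, giving $0$.
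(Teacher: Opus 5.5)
There is a genuine gap, at the final reduction to discs. On a disc $D$ through $x$ transverse to the fibre, $f|_D$ is finite. But when $\dim Y\ge 2$ it is not dominant, and the inequality $\nu(\varphi\circ f,x)\le N\,\nu(\varphi,y)$ you import holds only for germs that are dominant at $x$ (Favre; in the finite equidimensional case it also follows from Demailly's comparison theorem). Your identity $\nu(\varphi\circ f,x)=\nu(\varphi,\log|w\circ f|)$ has no justification otherwise. On a disc, the generalized Lelong number of the measure $dd^c(\varphi\circ f)$ at $x$ does not even depend on the weight.

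The inequality genuinely fails. Near $0\in\mathbb{C}^2$ let $u=\max\bigl(\log|w_2|,-\sqrt{\log(1/|w_1|)}\bigr)$. Along $w_2=0$ one has $u/\log|w|=(\log(1/|w_1|))^{-1/2}\to 0$, so $\nu(u,0)=0$, whereas $u(0,w_2)=\log|w_2|$ has Lelong number $1$. Set $\varphi=\max\bigl(u,\log\frac{|w|^2}{1+|w|^2}+A\bigr)$ on $\{|w|<1/2\}$ and $\varphi=\log\frac{|w|^2}{1+|w|^2}+A$ elsewhere, with $A>\log 17$. This is a quasi-psh function on $\mathbb{P}^2$, equal to $u$ near $0$, with $\varphi^{-1}(-\infty)=\{0\}$. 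For the line $f:\mathbb{P}^1\cong\{w_1=0\}\hookrightarrow\mathbb{P}^2$, every hypothesis of the statement holds and $x=0$ is even isolated in its fibre, yet $\nu(\varphi\circ f,0)=1$. So the statement as written, with only $f(X)\not\subset\varphi^{-1}(-\infty)$, is false. Any argument that, like yours, never uses surjectivity of $f$ must break somewhere.

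The paper gives no real argument here; it only says the claim ``can be checked directly by definition''. As your second step correctly shows, the definition only yields $\nu(\varphi\circ f,x)\ge\nu(\varphi,y)$. The lemma is used only in Lemma \ref{lemma:pullback-nnef}, where $f$ is surjective, so that is the hypothesis to add. You must then use dominance at $x$ itself rather than restricting to discs, which destroys it. One option is to apply Favre's bound $\nu(\varphi\circ f,x)\le C(f,x)\,\nu(\varphi,f(x))$ for dominant $f$ directly. The other is to argue via Skoda's lemma, with $n=\dim X$:
\begin{enumerate}
\item $\nu(\varphi,y)=0$ gives $e^{-c\varphi}\in L^1$ near $y$ for every $c>0$.
\item Since $f$ is surjective, $f_*dV_X=g\,dV_Y$ with $g\in L^{1+\delta}_{\mathrm{loc}}$ (a standard resolution argument).
\item Hölder's inequality then gives $e^{-c\,\varphi\circ f}\in L^1$ on a neighbourhood of the compact fibre $f^{-1}(y)$.
\item The other half of Skoda's lemma yields $\nu(\varphi\circ f,x)<2n/c$ for every $c>0$, hence $\nu(\varphi\circ f,x)=0$.
\end{enumerate}
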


This can be checked directly by definition of Lelong numbers \cite[Definition (5.5), p.40]{Dem10}, thus we omit the proof.

\begin{proof}[Proof of {Lemma \ref{lemma:pullback-nnef}}]
Pick K{\"a}hler forms $\omega_X$ and $\omega_Y$ on $X$ and $Y$ respectively and we may assume that $\omega_X\ge f^\ast\omega_Y$. If $\nu(\alpha,y)=0$, then for every $\epsilon>0$, $\nu(T_{\min,\epsilon},y)=0$ and this implies that $\nu(f^\ast T_{\min,\epsilon},x)=0$ for every $x\in f^{-1}(y)$ by Lemma \ref{lemma:pullback-Lelong-nb-0}. Here, the pullback of currents is taken in the sense of \cite[\S 2.2.3]{Bou04}. Moreover, we have $f^\ast T_{\min,\epsilon}\ge-\epsilon f^\ast\omega_Y\ge-\epsilon\omega_X$, i.e., $f^\ast T_{\min,\epsilon}\in f^\ast\alpha[-\epsilon\omega_X]$. Hence $\nu(f^\ast\alpha,x)=0$ for every $x\in f^{-1}(y)$. Thus the lemma is proved.
\end{proof}

\end{appendices}

    %%%%%%%%%%%%%%%%%%%%%%%%%%%%%%%%%%%%%%%%%%%%

\bigskip
\bigskip

\bibliographystyle{plain}
\bibliography{FG}

\end{document}